\definecolor{darkblue}{rgb}{0, 0, .6}
\definecolor{grey}{rgb}{.7, .7, .7}
\newtheorem{theorem}{Theorem}[section]
\newtheorem{lemma}[theorem]{Lemma}
\newtheorem{corollary}[theorem]{Corollary}
\newtheorem{proposition}[theorem]{Proposition}
\theoremstyle{definition} 
\newtheorem{definition}[theorem]{Definition}
\newtheorem{example}[theorem]{Example}
\newtheorem{remark}[theorem]{Remark}
\newcommand{\Z}{\mathbb{Z}}
\newcommand{\N}{\mathbb{N}}
\newcommand{\A}{\mathcal{A}}
\newcommand{\C}{\widetilde{C}}
\renewcommand{\O}{\mathcal{O}}
\newcommand{\E}{\mathcal{E}}
\renewcommand{\S}{\mathcal{S}}
\newcommand{\z}{\mathsf{z}}
\newcommand{\x}{\mathsf{x}}
\newcommand{\y}{\mathsf{y}}
\renewcommand{\u}{\mathsf{u}}
\renewcommand{\v}{\mathsf{v}}
\newcommand{\wtri}{\vartriangle}
\newcommand{\btri}{\blacktriangle}
\renewcommand{\a}{\mathbf{a}}
\newcommand{\TL}{\mathrm{TL}}
\newcommand{\DTL}{\mathbb{D}\mathrm{TL}}
\renewcommand{\P}{\mathcal{P}}
\newcommand{\V}{\mathcal{V}}
\newcommand{\D}{\mathbb{D}}
\newcommand{\B}{\mathbf{B}}
\newcommand{\I}{\mathcal{I}}
\newcommand{\Diag}{\mathcal{D}}
\newcommand{\Irr}{\mathrm{Irr}}
\newcommand{\wcirc}{\circ}
\newcommand{\bcirc}{\bullet}
\newcommand{\supp}{\mathrm{supp}}
\renewcommand{\L}{\mathcal{L}}
\newcommand{\R}{\mathcal{R}}
\renewcommand{\(}{\left(}
\renewcommand{\)}{\right)}
\newcommand{\w}{\mathsf{w}}
\renewcommand{\H}{\mathcal{H}}
\renewcommand{\r}{\mathbf{r}}
\renewcommand{\emph}{\textbf}
\def\iddots{\mathinner{\mkern1mu\raise\p@ 
\vbox{\kern7\p@\hbox{.}}\mkern2mu 
\raise4\p@\hbox{.}\mkern2mu\raise7\p@\hbox{.}\mkern1mu}} 
\begin{document}

\setlength{\parindent}{0pt}

\begin{titlepage}
\ 
\vspace{5cm}

{\huge \textbf{A diagrammatic representation of an affine $C$\\
Temperley--Lieb algebra\footnote{This is a revised version of the author's Ph.D. thesis, which was directed by Richard M. Green at the University of Colorado at Boulder.  See \hyperref[comments]{Comments} page for a complete list of revisions.}}}

\bigskip

Ph.D. Thesis, University of Colorado at Boulder, 2008

\bigskip

\bigskip

\href{http://oz.plymouth.edu/~dcernst}{\Large Dana C. Ernst}\\
Plymouth State University\\
Department of Mathematics\\
MSC 29, 17 High Street\\
Plymouth, NH 03264\\
\href{mailto:dcernst@plymouth.edu}{{dcernst@plymouth.edu}}

\end{titlepage}

\pagenumbering{roman}
\pagestyle{plain}

%%%%%%%%%Abstract%%%%%%%%%%%

\chapter*{Abstract}\normalsize
\addcontentsline{toc}{chapter}{Abstract}

In this thesis, I present an associative diagram algebra that is a faithful representation of a particular Temperley--Lieb algebra of type affine $C$, which has a basis indexed by the fully commutative elements of the Coxeter group of the same type.  The Coxeter group of type affine $C$ contains an infinite number of fully commutative elements, and so the corresponding Temperley--Lieb algebra is of infinite rank. With the exception of type affine $A$, all other generalized Temperley--Lieb algebras with known diagrammatic representations are of finite rank.  In the finite rank case, counting arguments are employed to prove faithfulness, but these techniques are not available in the affine $C$ case.  To prove faithfulness, I classify the fully commutative elements in Coxeter groups of types $B$ and affine $C$ that are irreducible under weak star reductions.  The classification of these irreducible elements provides the groundwork for inductive arguments that are used to prove faithfulness.  The classification of the weak star irreducible elements of type $B$ also verifies C.K. Fan's unproved claim about about the set of fully commutative elements in a Coxeter group of type $B$ having no generator appearing in the left or right descent set that can be left or right cancelled, respectively.  The results of this thesis will be used to construct a trace on the Hecke algebra of type affine $C$, which will then be used to compute leading coefficients of certain Kazhdan--Lusztig polynomials in a non-recursive way.

%%%%%%%%%Comments%%%%%%%%%%%

\chapter*{Comments}\label{comments}\normalsize
\addcontentsline{toc}{chapter}{Comments}

This is a revised version of the author's Ph.D. thesis, which was directed by Richard M. Green at the University of Colorado at Boulder.  The numbering of definitions, theorems, remarks, and examples is identical in both versions.  Here is a complete list of revisions:

\begin{enumerate}[label=\rm{(\arabic*)}]

\item This version was typeset using the \texttt{report} document class instead of the University of Colorado \texttt{thesis} document class.  As a result, there were some modifications in formatting, all of which were cosmetic.

\item This version was typeset using the \texttt{hyperref} package which has provided hyperlinks within the document.

\item Entries in the bibliography have been updated.

\item The last paragraph before Definition \ref{decorated.pseudo.diagram.def} has been modified to emphasize that all diagrams with $\a$-value 0 (including those with loops) are undecorated.

\item Fixed trivial typo in Example \ref{first.dec.diagram.ex}\ref{first.dec.diagram.ex.diagram2} (deleted extra ``is'').

\item The original statement of Remark \ref{LR-decorated}\ref{undammed.dec.loops} was wrong, but the claim had no bearing on subsequent statements.  I added the phrase ``by both types of decorations'' and now the statement is correct.

\end{enumerate}

%%%%%%%%%Dedication%%%%%%%%%%%

\chapter*{Dedication}\normalsize
\addcontentsline{toc}{chapter}{Dedication}

This thesis is dedicated to my older brother Brandt C. Ernst.  He is, and will always be, my inspiration. 

%%%%%%%%%Acknowledgements%%%%%%%%%%%

\chapter*{Acknowledgements}\normalsize
\addcontentsline{toc}{chapter}{Acknowledgements}

I am especially grateful to my advisor Richard M. Green for patiently guiding me on this journey and teaching me how to talk, write, and explore mathematics.  I am also grateful to my second reader  Nathaniel Thiem for taking the time to read my thesis and provide me with useful comments.  In addition, I would like to thank Chanyoung Lee Shader, Arlan Ramsay, and Marty Walter for serving on my committee.  For providing useful tips on using {\tt mfpic} (which I used to draw all of my figures), I thank Daniel Luecking.  I would also like to thank my office mate Jonas D'Andrea for being my sounding board and encouraging me along the way.  Lastly, I would like to thank my wife Jen and my two sons, Tristan and Kaden, for being so supportive and patient with me.  Without them this would not have been possible.

\tableofcontents

%%%%%%%  chapter 1 %%%%%%%%%%%%%

\begin{chapter}{Coxeter groups and fully commutative elements}

\pagenumbering{arabic}

In this first chapter, we will set up our notation and review some of the necessary background material.  For a reader unfamiliar with Coxeter groups, we recommend either the classic text by Humphreys \cite{Humphreys.J:A} or the recent text by Bj\"orner and Brenti \cite{Bjorner.A;Brenti.F:A} for a more combinatorial treatment.

\begin{section}{Coxeter groups}

A \emph{Coxeter group} is a group $W$ with together with a distinguished set of generating involutions $S$ subject to relations of the form $(st)^{m(s,t)}=1$, where $m(s, s) = 1$ and $m(s, t) = m(t, s)$.  In other words, $W$ is given by the presentation 
	$$W = \langle S :(st)^{m(s, t)} = 1 \text{ for } m(s, t) < \infty \rangle,$$ 
where $m(s, s) = 1$ and $m(s, t) = m(t, s)$.  It turns out that the elements of $S$ are distinct as group elements, and that $m(s, t)$ is the order of $st$.  We will only be interested in Coxeter groups in which the generating set $S$ is finite.  We call the pair $(W,S)$ a \emph{Coxeter system}.  Given a Coxeter system $(W,S)$, the associated \emph{Coxeter graph} is the graph $X$ on the generating set $S$ with edges connecting $s_{i}$ and $s_{j}$, labeled $m(s_{i},s_{j})$, for all pairs $i,j$ with $m(s_{i},s_{j})>2$.  If $m(s_{i},s_{j})=3$, it is customary to leave the corresponding edge unlabeled.  Given a Coxeter graph $X$, we can uniquely reconstruct the corresponding Coxeter system $(W,S)$.  When we have a particular Coxeter graph $X$ in mind, we will denote the underlying Coxeter group and distinguished generating set by $W(X)$ and $S(X)$, respectively.

\bigskip

\begin{example}
In this example, we introduce three specific Coxeter groups that will turn up frequently throughout this thesis.
\begin{itemize}
\item[($A$)] The Coxeter graph of type $A_{n}$ ($n\geq 1$) is as follows.
\begin{center}
%-- New mfpic environment, number 1 of 189. (size of end split: 2, should be 2)  ------------------->
\includegraphics{ThesisFigs1.001}
\end{center}

Then $W(A_{n})$ is generated by $S(A_{n})=\{s_{1}, s_{2}, \dots, s_{n}\}$ and is subject only to the relations
\begin{enumerate}[label=\rm{(\arabic*)}]
\item $s_{i}^{2}=1$ for all $i$;
\item $s_{i}s_{j}=s_{j}s_{i}$ if $|i-j|>1$;
\item $s_{i}s_{j}s_{i}=s_{j}s_{i}s_{j}$ if $|i-j|=1$.
\end{enumerate}
It is well-known (see \cite[Chapter 1]{Humphreys.J:A})  that $W(A_{n})$ is isomorphic to the symmetric group, $S_{n+1}$, under the correspondence 
	$$s_{i}\mapsto (i\ i+1),$$
where $(i\ i+1)$ is the adjacent transposition exchanging $i$ and $i+1$.

\bigskip

\item[($B$)] The Coxeter graph of type $B_{n}$ ($n\geq 2$) is as follows.
\begin{center}
%-- New mfpic environment, number 2 of 189. (size of end split: 2, should be 2)  ------------------->
\includegraphics{ThesisFigs1.002}
\end{center}

In this case, $W(B_{n})$ is generated by $S(B_{n})=\{s_{1}, s_{2}, \dots, s_{n}\}$ and is subject only to the relations
\begin{enumerate}[label=\rm{(\arabic*)}]
\item $s_{i}^{2}=1$ for all $i$;
\item $s_{i}s_{j}=s_{j}s_{i}$ if $|i-j|>1$;
\item $s_{i}s_{j}s_{i}=s_{j}s_{i}s_{j}$ if $|i-j|=1$ and $1<i,j\leq n$;
\item $s_{1}s_{2}s_{1}s_{2}=s_{2}s_{1}s_{2}s_{1}$.
\end{enumerate}

\item[($\C$)] The Coxeter graph of type $\C_{n}$ ($n \geq 2$), pronounced ``affine $C_{n}$'', is as follows.
\begin{center}
%-- New mfpic environment, number 3 of 189. (size of end split: 2, should be 2)  ------------------->
\includegraphics{ThesisFigs1.003}
\end{center}

Here, we see that $W(\C_{n})$ is generated by $S(\C_{n})=\{s_{1}, s_{2}, \dots, s_{n}, s_{n+1}\}$ and is subject only to the relations
\begin{enumerate}[label=\rm{(\arabic*)}]
\item $s_{i}^{2}=1$ for all $i$;
\item $s_{i}s_{j}=s_{j}s_{i}$ if $|i-j|>1$;
\item $s_{i}s_{j}s_{i}=s_{j}s_{i}s_{j}$ if $|i-j|=1$ and $1< i,j < n+1$;
\item $s_{i}s_{j}s_{i}s_{j}=s_{j}s_{i}s_{j}s_{i}$ if $\{i,j\}=\{1,2\}$ or $\{n,n+1\}$.
\end{enumerate}
\end{itemize}
\end{example}

Let $X$ be an arbitrary Coxeter graph.  An \emph{expression} is any product of generators from $S(X)$.  The \emph{length} $l(w)$ of an element $w \in W(X)$ is the minimum number of generators appearing in any expression for the element $w$.  Such a minimum length expression is called a \emph{reduced expression}.  (Any two reduced expressions for $w \in W(X)$ have the same length.)  A product $w_{1}w_{2}\cdots w_{r}$ with $w_{i} \in W(X)$ is called \emph{reduced} if $l(w_{1}w_{2}\cdots w_{r})=\sum l(w_{i})$.  Each element $w \in W(X)$ can have several different reduced expressions that represent it.  Given $w \in W(X)$, if we wish to emphasize a fixed, possibly reduced, expression for $w$, we represent it in \textsf{sans serif} font, say $\w=\w_{1} \w_{2}\cdots \w_{r}$, where each $\w_{i} \in S(X)$.  Note that if we write $w_{i}$ without the sans serif font, then we are referring to a group element that is not necessarily a generator.  The context should also clarify any potential confusion.

\bigskip

\begin{example}
Let $w \in W(B_{3})$ with expression $\w=s_{1}s_{2}s_{1}s_{2}s_{3}s_{1}$.  Since $s_{1}s_{2}s_{1}s_{2}=s_{2}s_{1}s_{2}s_{1}$, $s_{1}s_{3}=s_{3}s_{1}$, and $s_{1}^{2}=1$ in $W(B_{3})$, we see that
	$$s_{1}s_{2}s_{1}s_{2}s_{3}s_{1}=s_{2}s_{1}s_{2}s_{1}s_{1}s_{3}=s_{2}s_{1}s_{2}s_{3}.$$
This shows that $\w$ is not reduced.  However, it is true (but not immediately obvious) that $s_{2}s_{1}s_{2}s_{3}$ is a reduced expression for $w$, so that $l(w)=4$.
\end{example}

The following theorem is a well-known result, attributed variously to Matsumoto \cite[Theorem 1.2.2]{Geck.M;Pfeiffer.G:A}, \cite{Matsumoto.H:A} and Tits \cite{Tits.J:A}.

\begin{theorem}[Matsumoto's Theorem]\label{Tits}
Let $X$ be an arbitrary Coxeter graph and let $w \in W(X)$.  Then every reduced expression for $w$ can be obtained from any other by applying a sequence of \emph{braid moves} of the form 
	$${\underbrace{s_i s_j s_i s_j \cdots }_{m(s_{i},s_{j})} } \mapsto {\underbrace{s_j s_i s_j s_i \cdots}_{m(s_{i},s_{j})}}$$
where $s_i, s_j \in S(X)$, and each factor in the move has $m(s_{i},s_{j})$ letters.  \hfill $\qed$
\end{theorem}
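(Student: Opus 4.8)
The plan is to argue by induction on the length $l(w)$, exploiting the fact that a braid move replaces an alternating block $\underbrace{s_i s_j s_i \cdots}_{m(s_i,s_j)}$ by the block $\underbrace{s_j s_i s_j \cdots}_{m(s_i,s_j)}$ of the \emph{same} length. Hence a braid move preserves both the group element and its length, so it carries reduced expressions to reduced expressions. Write $\u \sim \v$ to mean that $\u$ can be transformed into $\v$ by a finite sequence of braid moves; this is an equivalence relation, and the goal is to show that any two reduced expressions for a fixed $w$ are $\sim$-equivalent. The base cases $l(w) \le 1$ are immediate, since then $w$ has a unique reduced expression.

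For the inductive step, let $\u = s_{i_1} \cdots s_{i_k}$ and $\v = s_{j_1} \cdots s_{j_k}$ be reduced expressions for $w$ with $k = l(w) \ge 2$, and set $s = s_{i_1}$ and $t = s_{j_1}$. If $s = t$, then $s_{i_2} \cdots s_{i_k}$ and $s_{j_2} \cdots s_{j_k}$ are reduced expressions for $sw$, which has length $k-1$; by the inductive hypothesis they are $\sim$-equivalent, and prepending $s$ gives $\u \sim \v$. The essential case is $s \ne t$, where both $s$ and $t$ are left descents of $w$, meaning $l(sw) < l(w)$ and $l(tw) < l(w)$. The crux is the following claim: if $l(sw) < l(w)$ and $l(tw) < l(w)$ with $s \ne t$, then $m = m(s,t) < \infty$ and $w$ admits a reduced expression beginning with the alternating word $\underbrace{sts\cdots}_{m}$.

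Granting this claim, I would finish as follows. Write $w = \underbrace{sts\cdots}_m \, w'$ for such a reduced factorization. Since $\underbrace{sts\cdots}_m$ and $\underbrace{tst\cdots}_m$ represent the same element of $\langle s,t\rangle$, the expressions $\mathsf{p} = \underbrace{sts\cdots}_m w'$ and $\mathsf{q} = \underbrace{tst\cdots}_m w'$ are both reduced for $w$ and differ by a single braid move, so $\mathsf{p} \sim \mathsf{q}$. Now $\u$ and $\mathsf{p}$ both begin with $s$, so the $s=t$ case gives $\u \sim \mathsf{p}$; likewise $\v$ and $\mathsf{q}$ both begin with $t$, so $\v \sim \mathsf{q}$. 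Chaining yields $\u \sim \mathsf{p} \sim \mathsf{q} \sim \v$, completing the induction.

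The remaining obligation is the claim, which I regard as the main obstacle. I would establish it using the decomposition of $w$ relative to the parabolic subgroup $W_J = \langle s,t\rangle$ with $J = \{s,t\}$: there is a unique factorization $w = u v$ with $u \in W_J$ and $v$ of minimal length in the coset $W_J w$, and this factorization is reduced, so that $l(xv) = l(x) + l(v)$ for every $x \in W_J$. Because $v$ has no left descent lying in $J$, a generator $r \in J$ is a left descent of $w$ if and only if it is a left descent of $u$; hence both $s$ and $t$ are left descents of $u$ inside the dihedral group $W_J$. If $m(s,t) = \infty$, then $W_J$ is infinite dihedral and every nonidentity element has a unique alternating reduced expression, hence a single left descent, which is impossible; so $m < \infty$. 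In the finite dihedral group $W_J$ the only element having both $s$ and $t$ as left descents is its longest element $\underbrace{sts\cdots}_m = \underbrace{tst\cdots}_m$, forcing $u$ to equal it and giving the desired reduced expression for $w$. The technical heart is thus the parabolic factorization (equivalently, the Exchange Condition); if one prefers not to invoke it, the claim can instead be proved directly by repeatedly applying the Exchange Condition to build the alternating prefix of $w$ one letter at a time.
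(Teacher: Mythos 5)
Your proof is correct, but it is worth pointing out that the paper itself contains no proof of this theorem: it is stated with a terminal \qed and attributed to Matsumoto and Tits, with citations to Geck--Pfeiffer, Matsumoto, and Tits, i.e., it is invoked as a known black box. What you have written is essentially the standard textbook argument for the ``word property'' found in those references: induct on $l(w)$; when the two reduced expressions share their first letter $s$, apply the inductive hypothesis to $sw$; when they begin with distinct letters $s \neq t$, reduce everything to the claim that $w$ then admits a reduced expression beginning with the alternating word of length $m(s,t)$, i.e., the longest element of the dihedral parabolic subgroup $W_J = \langle s,t \rangle$. Your chaining $\mathsf{u} \sim \mathsf{p} \sim \mathsf{q} \sim \mathsf{v}$ is sound, since the two outer equivalences only invoke the inductive hypothesis on the length-$(k-1)$ elements $sw$ and $tw$, and the middle one is a single braid move. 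Your proof of the claim via the parabolic factorization $w = uv$ with $u \in W_J$, $v$ of minimal length in its coset, and $l(xv) = l(x) + l(v)$ for all $x \in W_J$ is also correct: it cleanly yields that $s,t$ are both left descents of $u$, that $m(s,t) = \infty$ is impossible (in the infinite dihedral group every nonidentity element has a unique reduced expression, hence a unique left descent), and that in the finite case $u$ must be the longest element of $W_J$. The one dependency to flag explicitly is that this parabolic decomposition (equivalently, the Exchange Condition) must be established before the word property; since that is how every standard development proceeds, there is no circularity, and your proof supplies exactly the argument the paper delegates to its references.
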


Let the \emph{support} of an element $w \in W(X)$, denoted $\supp(w)$, be the set of all generators appearing in any reduced expression for $w$, which is well-defined by Matsumoto's Theorem.

\bigskip

Given a reduced expression $\w=\w_{1}\w_{2}\cdots \w_{r}$ for $w \in W(X)$, we define a \emph{subexpression} of $\w$ to be any expression obtained by deleting some subsequence of generators in the expression for $\w$.  If $x \in W(X)$ has an expression that is equal to a subexpression of $\w$, then we write $x \leq w$.  This is a well-defined partial order \cite[Chapter 5]{Humphreys.J:A} on $W(X)$ and is called the \emph{(strong) Bruhat order}.  We will refer to a consecutive subexpression of $\w$ as a \emph{subword}.

\begin{example}
Let $w, u, v \in W(\C_{3})$ have reduced expressions $\w=s_{1}s_{3}s_{2}s_{4}s_{1}s_{3}$, $\u=s_{1}s_{2}s_{1}$, and $\v=s_{3}s_{2}s_{4}$, respectively.  Then $\u$ and $\v$ are both subexpressions for $\w$, while only $\v$ is a subword of $\w$.  Then we have $u, v \leq w$.
\end{example}

Let $w \in W(X)$.  We write
	$$\L(w)=\{s \in S(X): l(sw) < l(w)\}$$
and
	$$\R(w)=\{s \in S(X): l(ws) < l(w)\}.$$
The set $\L(w)$ (respectively, $\R(w)$) is called the \emph{left} (respectively, \emph{right}) \emph{descent set} of $w$.  It turns out that $s \in \L(w)$ (respectively, $\R(w)$) if and only if $w$ has a reduced expression beginning (respectively, ending) with $s$.

\begin{example}
Let $w \in W(B_{3})$ have reduced expression $\w=s_{1}s_{3}s_{2}s_{1}$.  Since $s_{1}$ and $s_{3}$ commute, but $s_{2}$ commutes with neither $s_{1}$ nor $s_{3}$, it follows from Matsumoto's Theorem (Theorem \ref{Tits}) that $\L(w)=\{s_{1}, s_{3}\}$ and $\R(w)=\{ s_{1}\}$.
\end{example}

It is known to be true (see \cite[Chapter 5]{Humphreys.J:A}) that we can obtain $W(B_{n})$ from $W(\C_{n})$ by removing the generator $s_{n+1}$ and the corresponding relations.  We also obtain a Coxeter group of type $B$ if we remove the generator $s_{1}$ and the corresponding relations.  To distinguish these two cases, we let $W(B_{n})$ denote the subgroup of $W(\C_{n})$ generated by $S(\C_{n})\setminus \{s_{n+1}\}=\{s_{1}, s_{2}, \dots, s_{n}\}$ and we let $W(B'_{n})$ denote the subgroup of $W(\C_{n})$ generated by $S(\C_{n})\setminus \{s_{1}\}= \{s_{2}, s_{3}, \dots, s_{n+1}\}$.  

\bigskip

It is well-known that $W(\C_{n})$ is an infinite Coxeter group while $W(B_{n})$ and $W(B'_{n})$ are both finite.  For a proof of this fact, see \cite[Chapters 2 and 6]{Humphreys.J:A}.

\end{section}

\begin{section}{Fully commutative elements of Coxeter groups}

Let $X$ be an arbitrary Coxeter graph and let $w \in W(X)$.  Following Stembridge \cite{Stembridge.J:B}, we define a relation $\sim$ on the set of reduced expressions for $w$.  Let $\w$ and $\w'$ be two reduced expressions for $w$.  We define $\w \sim \w'$ if we can obtain $\w'$ from $\w$ by applying a single \emph{commutation move} of the form $s_i s_j \mapsto s_j s_i$, where $m(s_{i},s_{j}) = 2$.  Now, define the equivalence relation $\approx$ by taking the reflexive transitive closure of $\sim$.  We refer to each equivalence class under $\approx$ as a \emph{commutation class}.  Note that $\w \approx \w'$ if and only if we can obtain $\w'$ from $\w$ by a sequence of commutation moves.  If $w$ has a single commutation class, then we say that $w$ is \emph{fully commutative}.    According to \cite[Proposition 2.1]{Stembridge.J:B}, an element $w$ is fully commutative if and only if no reduced expression for $w$ contains a subword of the form $s_i s_j s_i s_j \cdots$ of length $m(s_{i},s_{j}) \geq 3$.  This also follows from Matsumoto's Theorem (Theorem \ref{Tits}).  We will denote the set of all fully commutative elements of $W(X)$ by $W_{c}(X)$. 

\begin{remark}\label{illegal.convex.chains}
The fully commutative elements of $W(\C_{n})$ are precisely those such that all reduced expressions avoid subwords of the following types:
\begin{enumerate}[label=\rm{(\arabic*)}]
\item[(1)] $s_{i}s_{j}s_{i}$ for $|i-j|=1$ and $1< i,j < n+1$;
\item[(2)] $s_{i}s_{j}s_{i}s_{j}$ for $\{i,j\}=\{1,2\}$ or $\{n,n+1\}$.
\end{enumerate}
Note that the fully commutative elements of $W(B_{n})$ and $W(B'_{n})$ avoid the respective subwords above.
\end{remark}

\begin{example}
Let $w, w' \in W(\C_{3})$ have reduced expressions $\w=s_{1}s_{3}s_{2}s_{1}s_{2}$ and $\w'=s_{1}s_{2}s_{1}s_{3}s_{2}$, respectively.  Since $s_{1}$ and $s_{3}$ commute, we can write
	$$w=s_{1}s_{3}s_{2}s_{1}s_{2}=s_{3}s_{1}s_{2}s_{1}s_{2}.$$
This shows that $w$ has a reduced expression containing $s_{1}s_{2}s_{1}s_{2}$ as a subword, which implies that $w$ is not fully commutative.  On the other hand, we will never be able to rewrite $w'$ to produce an illegal subword, since the only relation we can apply is $s_{1}s_{3} \mapsto s_{3}s_{1}$ and this does not provide an opportunity to apply any additional relations.  So, $w'$ is fully commutative. 
\end{example}

In \cite{Stembridge.J:B}, Stembridge classified the Coxeter groups that contain a finite number of fully commutative elements.  According to \cite[Theorem 5.1]{Stembridge.J:B}, $W(\C_{n})$ contains an infinite number of fully commutative elements.  Since $W(B_{n})$ is a finite group, $W(B_{n})$, and hence $W(B'_{n})$, contains only finitely many fully commutative elements.  There are examples of infinite Coxeter groups that contain a finite number of fully commutative elements. 

\end{section}

\begin{section}{Star and weak star reductions}

The notion of star operation was originally defined by Kazhdan and Lusztig in \cite[\textsection 4.1]{Kazhdan.D;Lusztig.G:A} for simply laced Coxeter groups (i.e., $m(s,t)=3$ for all $s$ and $t$ that are adjacent in the Coxeter graph) and was later generalized to arbitrary Coxeter groups in \cite[\textsection 10.2]{Lusztig.G:A}.  If $I=\{s,t\}$ is a pair of noncommuting generators for $W(X)$, then $I$ induces four partially defined maps from $W(X)$ to itself, known as star operations. A star operation, when it is defined, respects the partition $W(X) = W_{c}(X)\ \dot{\cup}\  (W(X) \setminus W_{c}(X) )$ of the Coxeter group, and increases or decreases the length of the element to which it is applied by 1.  For our purposes, it is enough to define star operations that decrease length by 1, and so we will not develop the full generality.
 
\bigskip

Let $X$ be an arbitrary Coxeter graph and let $w \in W(X)$.  Suppose that $s \in \L(w)$.  Then we define  $w$ to be \emph{left star reducible} by $s$ to $sw$ if there exists $t \in \L(sw)$ with $m(s,t) \geq 3$.  In this case, we say that $w$ is left star reducible by $s$ with respect to $t$, and we define
	$$\bigstar_{s,t}^{L}(w)=sw,$$
and refer to $\bigstar_{s,t}^{L}$ as a \emph{left star reduction}.  We analogously define \emph{right star reducible} and \emph{right star reduction}.  If $w$ is right star reducible by $s$ with respect to $t$, then we define
	$$\bigstar_{s,t}^{R}(w)=ws.$$
If $w$ is not left (respectively, right) star reducible by $s$ with respect to $t$, then $\bigstar_{s,t}^{L}(w)$ (respectively, $\bigstar_{s,t}^{R}(w)$) is undefined.  Observe that if $m(s,t)\geq 3$, then $w$ is left (respectively, right) star reducible by $s$ with respect to $t$ if and only if $w=stv$ (respectively, $w=vts$), where the product is reduced.

\begin{example}
Let $w \in W(B_{n})$ (for any $n\geq 2$) have reduced expression $\w=s_{1}s_{2}$.  Then
	$$\bigstar_{s_{1},s_{2}}^{L}(w)=s_{1}s_{1}s_{2}=s_{2} \text{\quad and\quad} \bigstar_{s_{2},s_{1}}^{R}(w) = s_{1}s_{2}s_{2}=s_{1}.$$
If $u \in W(B_{n})$ has reduced expression $\u=s_{1}s_{3}$, then $\bigstar_{s,t}^{L}(u)$ and $\bigstar_{s,t}^{R}(u)$ are undefined for any noncommuting $s$ and $t$.  
\end{example}

If there is a (possibly trivial) sequence 
	$$u = w_0, w_1, \ldots, w_k = w,$$ 
where, for each $0 \leq i < k$, $w_{i+1}$ is left or right star reducible to $w_i$, we say that $w$ is \emph{star reducible to $u$}.  If $w$ is star reducible to some $u \neq w$, then we say that $w$ is \emph{star reducible}.  

\bigskip

We say that a Coxeter group $W(X)$, or its Coxeter graph $X$, is \emph{star reducible} if every element of $W_c(X)$ is star reducible to a product of commuting generators from $S(X)$.  In \cite{Green.R:P}, Green classified the star reducible Coxeter groups.  It turns out that $W(\C_{n})$ is star reducible if and only if $n$ is odd (recall that $W(\C_{n})$ has $n+1$ generators).  However, both $W(B_{n})$ and $W(B'_{n})$ are star reducible, regardless of the parity of $n$ (see \cite[Theorem 6.3]{Green.R:P}).

\begin{example}
Let $w \in W_{c}(\C_{4})$ have reduced expression $\w=s_{1}s_{3}s_{5}s_{2}s_{4}s_{1}s_{3}s_{5}$.  Then $\bigstar_{s,t}^{L}(w)$ and $\bigstar_{s,t}^{R}(w)$ are undefined for all pairs of noncommuting generators $s$ and $t$.  To see that this is true, first observe that $\L(w)=\{s_{1}, s_{3}, s_{5}\}=\R(w)$.  Then for $s \in \{s_{1}, s_{3}, s_{5}\}$, we have $l(sw)=l(w)-1=l(ws)$, where we obtain $sw$ (respectively, $ws$) by removing the leftmost (respectively, rightmost) occurrence of $s$ in $\w$.  Then we see that $\L(sw)$ (respectively, $\R(ws)$) does not contain a generator not commuting with $s$.  So, $w$ is neither left nor right star reducible to a product of commuting generators.  Therefore, $W(\C_{4})$ is not star reducible.  We can construct similar examples for $W(\C_{n})$ for any even $n$.
\end{example}

We now introduce the concept of weak star reducible, which is related to Fan's notion of cancellable in \cite{Fan.C:A}.  Similar to ordinary star reductions, if $I=\{s,t\}$ is a pair of noncommuting generators for $W(X)$, then $I$ induces four partially defined maps, called weak star reductions, from $W_{c}(X)$ to itself.  (Note our restriction to the set of fully commutative elements.)

\begin{definition}
Let $X$ be a Coxeter graph and let $w \in W_{c}(X)$.  Then $w$ is \emph{left weak star reducible} by $s$ with respect to $t$ to $sw$ if 
\begin{enumerate}[label=\rm{(\arabic*)}]
\item  $w$ is left star reducible by $s$ with respect to $t$; 
\end{enumerate}
and
\begin{enumerate}[label=\rm{(\arabic*)}, resume]
\item $tw \notin W_{c}(X)$.
\end{enumerate}
Observe that (1) implies that $m(s,t) \geq 3$ and that $s \in \L(w)$.  Furthermore, (2) implies that $l(tw)>l(w)$.  We analogously define \emph{right weak star reducible}.  If $w$ is left weak star reducible by $s$ with respect to $t$, then we define
	$$\star^{L}_{s,t}(w)=sw$$
and refer to $\star_{s,t}^{L}$ as a \emph{left weak star reduction}.  Similarly, if $w$ is right weak star reducible by $s$ with respect to $t$, we define
	$$\star^{R}_{s,t}(w)=ws$$
and refer to $\star_{s,t}^{R}$ as a \emph{right weak star reduction}.  If there is a (possibly trivial) sequence 
	$$u = w_0, w_1, \ldots, w_k = w,$$ 
where, for each $0 \leq i < k$, $w_{i+1}$ is left weak star reducible or right weak star reducible to $w_i$, we say that $w$ is \emph{weak star reducible to $u$}.  If $w$ is weak star reducible to some $u \neq w$, then we say that $w$ is \emph{weak star reducible}.  If $w$ is not weak star reducible, then we say that $w$ is \emph{weak star irreducible}, or simply \emph{irreducible}.
\end{definition}

\begin{example}
Let $w,w' \in W_{c}(\C_{n})$ have reduced expressions $\w=s_{1}s_{2}$ and $\w'=s_{1}s_{2}s_{1}$, respectively.  We see that $w'$ is left (and right) weak star reducible by $s_{2}$ with respect to $s_{1}$, and so $w'$ is not irreducible.  However, $w$ is irreducible.    
\end{example}

\begin{remark}\label{weak=ordinary}
We make several comments about weak star reducibility.
\begin{enumerate}[label=\rm{(\arabic*)}]
\item Note the similarity in notation for star reductions versus weak star reductions.  We use $\bigstar$ for ordinary star reductions and $\star$ for weak star reductions. There should be no confusion because we will only use weak star reductions in the remainder of this thesis.  

\item Observe that we restrict the definition of weak star reducible to fully commutative elements.  It follows from the subword characterization of full commutativity \cite[Proposition 2.1]{Stembridge.J:B} that if $w \in W_{c}(X)$ is left or right weak star reducible to $u$, then $u$ is also fully commutative.  

\item As the terminology suggests, if $w$ is weak star reducible to $u$, then $w$ is also star reducible to $u$.  However, there are examples of fully commutative elements that are star reducible, but not weak star reducible.  For example, consider $w=s_{1}s_{2} \in W_{c}(B_{2})$.  We see that $w$ is star reducible, but not weak star reducible since $tw$ and $wt$ are still fully commutative for any $t \in S(X)$.  Moreover, observe that if $m(s,t)=3$, then the definition of a weak star reduction agrees with the definition of a star reduction.  In particular, weak star reductions are equivalent to ordinary star reductions in a Coxeter group of type $A$.

\end{enumerate}
\end{remark}

If $w \in W_{c}(\C_{n})$, then $w$ is left weak star reducible by $s$ with respect to $t$ if and only if $w=stv$ (reduced) when $m(s,t)=3$, or $w=stsv$ (reduced) when $m(s,t)=4$.  In this case, we have
	$$\star^{L}_{s,t}(w)=sw=\begin{cases}
  	 tv,   & \text{if } m(s,t)=3, \\
  	 tsv,   & \text{if } m(s,t)=4.
	\end{cases}$$
Again, observe that the characterization above applies to $W_{c}(B_{n})$ and $W_{c}(B'_{n})$.  In Chapter \ref{chaptBwsrm}, we will classify the irreducible elements of $W(B_{n})$ and $W(B'_{n})$, which verifies Fan's unproved claim in \cite[\textsection 7.1]{Fan.C:A} about the set of $w \in W_{c}(B_{n})$ having no element of $\L(w)$ or $\R(w)$ that can be left or right cancelled, respectively.  We will then use the classification of the type $B$ and $B'$ irreducible elements to classify the irreducible elements in $W_{c}(\C_{n})$ (see Chapter \ref{chaptCwsrm}).

\end{section}

\end{chapter}

%%%%%%%%%% Chapter 2 %%%%%%%%%%%

\begin{chapter}{Heaps}

Every reduced expression can be associated with a partially ordered set called a heap that we define below.  This partially ordered set allows us to visualize a reduced expression as a set of lattice points while preserving the necessary information about the relations among the generators.  Cartier and Foata \cite{Cartier.P;Foata.D:A} were among the first to study heaps of dimers, and these were generalized to other settings by Viennot \cite{Viennot.G:A}.  Later, Stembridge studied enumerative aspects of heaps \cite{Stembridge.J:B,Stembridge.J:A} in the context of fully commutative elements, which is our motivation here.  In this chapter, we mimic the development found in \cite{Billey.S;Jones.B:A}, \cite{Jones.B:B}, and \cite{Stembridge.J:B}.  

\begin{section}{Heaps for elements of Coxeter groups}

Let $X$ be a Coxeter graph.  Suppose $\w = \w_1 \cdots \w_r$ is a fixed reduced expression for $w \in W(X)$.  As in \cite{Stembridge.J:B}, we define a partial ordering on the indices $\{1, \dots, r\}$ by the transitive closure of the relation $\lessdot$ defined via $j \lessdot i$ if $i < j$ and $\w_{i}$ and $\w_{j}$ do not commute.  In particular, $j \lessdot i$ if $i < j$ and $\w_i = \w_j$ (since we took the transitive closure).  It follows from \cite[Proposition 2.2]{Stembridge.J:B} that if $\w$ and $\w'$ are two reduced expressions for $w \in W(X)$ that are in the same commutativity class, then the labeled posets of $\w$ and $\w'$ are isomorphic, where $i$ is labeled by $\w_{i}$.  This isomorphism class of labeled posets is called the \emph{heap} of $\w$.  In particular, if $w$ is fully commutative then it has a single commutativity class, and so there is a unique heap associated to $w$.

\begin{example}\label{first.heap.ex}
Let $\w = s_3 s_2 s_1 s_2 s_5s_{4}s_{6}s_{5}$ be a reduced expression for $w \in W_{c}(\C_{5})$.  We see that $\w$ is indexed by $\{1, 2, 3, 4, 5, 6, 7, 8\}$.  As an example, we see that $3 \lessdot 2$ since $2 < 3$ and the second and third generators do not commute.  In general, the labeled Hasse diagram for the unique heap poset of $w$ is shown below.

\medskip

\begin{center}
%-- New mfpic environment, number 4 of 189. (size of end split: 2, should be 2)  ------------------->
\includegraphics{ThesisFigs1.004}
\end{center}
\end{example}

\begin{remark}
We remark that some authors define the heap of $\w$ to be the labeled poset that results from reversing the partial order.  That is, some authors draw their heaps upside-down relative to ours.  Our convention is appropriate because it more naturally aligns with the construction of our desired diagram algebra.
\end{remark}

\end{section}

\begin{section}{Representations of heaps for elements of $W(\C_{n})$}

Now, consider $X=\C_{n}$ and let $N(\C_{n})=\{1, 2, \dots, n, n+1\}$.  (Note that $N(\C_{n})$ is simply the index set for $S(\C_{n})$.)  Let $\w = \w_1 \cdots \w_r$ be a fixed reduced expression for $w \in W(X)$.  As in \cite{Billey.S;Jones.B:A} and \cite{Jones.B:B}, we will represent a heap as a set of lattice points embedded in $N(\C_{n}) \times \mathbb{N}$.  To do so, we assign coordinates (not unique) $(x,y) \in N(X) \times \mathbb{N}$ to each entry of the labeled Hasse diagram for the heap of $\w$ in such a way that:
\begin{enumerate}[label=\rm{(\arabic*)}]
\item if an entry represented by $(x,y)$ is labeled $s_i$ in the heap, then $x = i$; 
\end{enumerate}
and
\begin{enumerate}[label=\rm{(\arabic*)}, resume]
\item if an entry represented by $(x,y)$ is greater than an entry
represented by $(x',y')$ in the heap, then $y > y'$.
\end{enumerate}

Recall that a finite poset is determined by its covering relations.  In the case of $\C_{n}$, it follows from the definition that $(x,y)$ covers $(x',y')$ in the heap if and only if $x = x' \pm 1$, $y > y'$, and there are no entries $(x'', y'')$ such that $x'' \in \{x, x'\}$ and $y'< y'' < y$.  Hence, we can completely reconstruct the edges of the Hasse diagram and the corresponding heap poset from a lattice point representation. This representation enables us to make arguments ``by picture'' that are otherwise cumbersome.  

\begin{definition}
Let $\w$ be a reduced expression for $w \in W(\C_{n})$.  We let $H(\w)$ denote a lattice representation of the heap poset in $N(\C_{n}) \times \N$ constructed as described in the preceding paragraph.  \end{definition}

If $w$ is fully commutative, then the choice of reduced expression for $w$ is irrelevant. In this case, we may write $H(w)$ (note the absence of sans serif font) and we will refer to $H(w)$ as the heap of $w$.

\bigskip

Although there are many possible coordinate assignments for a given heap, the $x$-coordinates of each entry are fixed for all of them, and the coordinate assignments of any two entries only differ in the amount of vertical space between them.  We will say that all entries having the same $x$-coordinate lie in the same \emph{column}, where two columns are adjacent if they correspond to adjacent vertices in the Coxeter graph.  

\bigskip

Let $\w=\w_{1}\cdots \w_{r}$ be a reduced expression for $w \in W_{c}(\C_{n})$.  If $\w_{i}$ and $\w_{j}$ are adjacent generators in the Coxeter graph with $i<j$, then we must place the point labeled by $\w_i$ at a level that is \textit{above} the level of the point labeled by $\w_{j}$.  Because generators that are not adjacent in the Coxeter graph do commute, points that lie in non-adjacent columns can slide past each other or land at the same level.  To emphasize the covering relations of the lattice representation we will enclose each entry of the heap in a rectangle in such a way that if one entry covers another, the rectangles overlap halfway.

\begin{example}\label{second.heap.ex}
Let $w$ be as in Example \ref{first.heap.ex}.  Then one possible representation for $H(w)$ is as follows.
\begin{center}
%-- New mfpic environment, number 5 of 189. (size of end split: 2, should be 2)  ------------------->
\includegraphics{ThesisFigs1.005}
\end{center}
Here is another possible representation for $H(w)$.
\begin{center}
%-- New mfpic environment, number 6 of 189. (size of end split: 2, should be 2)  ------------------->
\includegraphics{ThesisFigs1.006}
\end{center}
\end{example}

As in \cite{Billey.S;Jones.B:A}, when $w$ is fully commutative, we wish to make a canonical choice for the representation $H(w)$ by ``coalescing'' the entries in a particular way.  To do this, we give all entries corresponding to elements in $\L(w)$ the same vertical position; all other entries in the heap should have vertical position as high as possible.  One possible interpretation of this canonical choice is that adjacent columns represent overlapping stacks of cafeteria trays, where there are springs under each column of trays that maintain the height of the top row.  In Example \ref{second.heap.ex}, the first representation of $H(w)$ that we provided is the canonical representation.  When illustrating heaps, we will adhere to this canonical choice, but our arguments should always be viewed as referring to the underlying heap poset.  In particular, when we consider the heaps of arbitrary reduced expressions, we will only allude to the relative vertical positions of the entries, and never their absolute coordinates.  

\bigskip

Given a canonical representation of a heap, it makes sense to refer to the $k$th row of the heap, and we will occasionally do this when no confusion will arise.  (The first row of the heap corresponds to the left descent set.)  If $w \in W_{c}(\C_{n})$, let $\r_{k}$ denote the $k$th row of the canonical representation for $H(w)$.  We will write $s_{i} \in \r_{k}$ to denote that there is an entry occurring in the $k$th row labeled by $s_{i}$.  If $\r_{k}$ consists entirely of entries labeled by $s_{i_{1}}, s_{i_{2}}, \dots, s_{i_{r}}$, where $i_{j}<i_{j+1}$, then we will write $\r_{k}=s_{i_{1}} s_{i_{2}} \cdots s_{i_{r}}$.  For example, consider the canonical representation of $H(w)$ in Example \ref{second.heap.ex}.  Then $s_{5} \in \r_{3}$ and $\r_{2}=s_{2}s_{4}s_{6}$.

\begin{remark}
Our canonical representation of heaps of fully commutative elements corresponds exactly to the Cartier--Foata normal form for monomials \cite{Cartier.P;Foata.D:A, Green.R:P}.
\end{remark}

Let $w \in W_{c}(X)$ have reduced expression $\w=\w_{1}\cdots \w_{r}$ and suppose $\w_{i}$ and $\w_{j}$ are a pair of entries in the heap of $\w$ that correspond to the same generator $s_k$, so that they lie in the same column $k$ of the heap.  We say that $\w_{i}$ and $\w_{j}$ are \emph{consecutive} if there is no other occurrence of $s_{k}$ occurring between them in $\w$.  In this case, $\w_{i}$ and $\w_{j}$ are consecutive in $H(w)$, as well.  For example the two occurrences of the generator $s_{2}$ in the heaps given in Example \ref{second.heap.ex} are consecutive.  In general, for $\w$ to be reduced, there must exist at least one generator not commuting with $s_{k}$ that occurs between $\w_{i}$ and $\w_{j}$.

\begin{remark}
Since $W(B_{n})$ and $W(B'_{n})$ are both subgroups of $W(\C_{n})$, we naturally have heap representations of elements from these groups, where the appropriate entries never appear.  That is, the heap for a fully commutative element $w$ from $W(B_{n})$ (respectively, $W(B'_{n})$) is the same as the heap for $w$ when considered as an element of $W(\C_{n})$.
\end{remark}

\end{section}

\begin{section}{Saturated and convex subheaps}

Let $\w=\w_{1} \cdots \w_{r}$ be a reduced expression for $w \in W(\C_{n})$.  Then $H(w)$ is a representation of the heap poset on the set $\{1, \dots, r\}$, where $i$ is labeled by the generator $\w_{i}$.   We define a heap $H'$ to be a \emph{subheap} of $H(\w)$ if $H'=H(\w')$, where $\w'=\w_{i_{1}}\w_{i_{2}} \cdots \w_{i_{k}}$ is a subexpression of $\w$.  The subheap $H'$ is a representation of the heap poset of the set $\{i_{1}, \dots, i_{k}\} \subseteq \{1, \dots, r\}$.  We emphasize that the subexpression need not be a subword (i.e., a consecutive subexpression).  

\bigskip

A subheap $H'$ of $H(\w)$ is called a \emph{saturated subheap} if whenever $\w_{i}$ and $\w_{j}$ occur in $H'$ such that there exists a saturated chain from $i$ to $j$ in the underlying poset for $H(\w)$, there also exists a saturated chain $i=i_{k_{1}}<i_{k_{2}}< \cdots < i_{k_{l}}=j$ in the underlying poset for $H'$ such that $i=i_{k_{1}}<i_{k_{2}}< \cdots < i_{k_{l}}=j$ is also a saturated chain in the underlying poset for $H(\w)$.  (Note that, to the best of our knowledge, the notion of saturated subheap has not appeared in the literature before.)  

\begin{example}\label{third.heap.ex}
Let $\w= s_3 s_2 s_1 s_2 s_5s_{4}s_{6}s_{5}$ as in Example \ref{first.heap.ex}.  Also, let $\w'=s_3 s_1 s_{5}$ be the subexpression of $\w$ that results from deleting all but the first, third, and last generators of $\w$.  Then $H(\w')$ equals
\begin{center}
%-- New mfpic environment, number 7 of 189. (size of end split: 2, should be 2)  ------------------->
\includegraphics{ThesisFigs1.007}
\end{center}
and is a subheap of $H(\w)$.  However, $H(\w')$ is not a saturated subheap of $H(\w)$ since there is a saturated chain in $H(\w)$ from the lower occurrence of $s_{5}$ to the occurrence of $s_{3}$, but there is not a chain between the corresponding entries in $H(\w')$.  Note that we could obtain a heap with an identical representation by considering the subexpression that results from deleting all but the first, third, and fifth generators of $\w$.  Now, let $\w''=s_{5}s_{4}s_{5}$ be the subexpression of $\w$ that results from deleting all but fifth, sixth, and last generators of $\w$.  Then $H(\w'')$ equals 
\begin{center}
%-- New mfpic environment, number 8 of 189. (size of end split: 2, should be 2)  ------------------->
\includegraphics{ThesisFigs1.008}
\end{center}
and is a saturated subheap of $H(\w)$.  
\end{example}

Recall that a subposet $Q$ of $P$ is called \emph{convex} if $y \in Q$ whenever $x < y < z$ in $P$ and $x, z \in Q$.  We will refer to a subheap as a \emph{convex subheap} if the underlying subposet is convex.  (Note that all convex subheaps are saturated, but not all saturated subheaps are convex.)

\begin{example}
Let $\w$ and $\w''$ be as in Example \ref{third.heap.ex}.  Then $H(\w'')$ is not a convex subheap since there is an entry in $H(\w)$ labeled by $s_{6}$ occurring between the two consecutive occurrences of $s_{5}$ that does not occur in $H(\w'')$.  However, if we do include the entry labeled by $s_{6}$, then
\begin{center}
%-- New mfpic environment, number 9 of 189. (size of end split: 2, should be 2)  ------------------->
\includegraphics{ThesisFigs1.009}
\end{center}
is a convex subheap of $H(\w)$.  Let $\w'''=s_2 s_1 s_2$ be the subexpression that results from deleting all but the second, third, and fourth generators of $\w$.  Then $H(\w''')$ is equal to
\begin{center}
%-- New mfpic environment, number 10 of 189. (size of end split: 2, should be 2)  ------------------->
\includegraphics{ThesisFigs1.010}
\end{center}
and is a convex subheap of $H(\w)$.  

\end{example}

From this point on, if there can be no confusion, we will not specify the exact subexpression that a subheap arises from.

\bigskip

The following fact is implicit in the literature (in particular, see the proof of Proposition 3.3 in \cite{Stembridge.J:B}) and follows easily from the definitions.

\begin{proposition}
Let $w \in W_{c}(X)$.  Then $H'$ is a convex subheap of $H(w)$ if and only if $H'$ is the heap for some subword of some reduced expression for $w$.   
\hfill $\qed$
\end{proposition}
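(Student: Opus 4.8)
The plan is to translate everything into the heap poset and then use two standard features of heaps of fully commutative elements. Fix a reduced expression $\w = \w_1 \cdots \w_r$ for $w$, and let $P$ be the heap poset on the index set $\{1,\dots,r\}$, where $i$ carries the label $\w_i$. Two facts about $P$ drive the argument: the covering relations of $P$ occur only between adjacent columns, so the two endpoints of any cover carry Coxeter-adjacent (hence non-commuting) generators; and every relation $a <_P b$ forces the index of $b$ to be strictly smaller than that of $a$, since a saturated chain of $P$ decreases the index at each step. On top of this I use the standard dictionary for fully commutative $w$: because $w$ has a single commutation class, the reduced expressions for $w$ are exactly the words read off from the linear extensions of $P$ (adjacent transpositions of incomparable elements correspond to commutation moves, and every linear extension is reachable from $\w$ this way). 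Under this dictionary a subword corresponds precisely to an interval of consecutive entries of a linear extension. Since convexity is invariant under passing to the order dual, I may orient $P$ so that reading a linear extension top-to-bottom produces a reduced expression.

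For the ``if'' direction, let $\w_i \cdots \w_j$ be a subword of a reduced expression, with index set $I = \{i,\dots,j\}$. Recall that the elements of $P$ \emph{are} the indices. If $a, c \in I$ and $a <_P b <_P c$, then because each relation of $P$ reverses the natural order of indices we get $c <_{\N} b <_{\N} a$; as $i \le c$ and $a \le j$, this yields $i \le b \le j$, i.e. $b \in I$, so $I$ is convex. Moreover, a subword deletes no entry lying strictly between positions $i$ and $j$, so $H(\w_i \cdots \w_j)$ is literally the subposet of $P$ induced on $I$; being induced on a convex set, it is a convex subheap.

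For the ``only if'' direction, let $H'$ be a convex subheap with convex index set $I' \subseteq \{1,\dots,r\}$. The key step is to produce a linear extension of $P$ in which $I'$ occurs as a single consecutive block. I would do this by contracting $I'$ to a new point $*$: on $(P \setminus I') \cup \{*\}$ I keep the order of $P \setminus I'$, declare $p < *$ whenever $p <_P q$ for some $q \in I'$, and $* < p$ whenever $q <_P p$ for some $q \in I'$. Convexity guarantees no outside element is simultaneously below and above $I'$, and a short cycle analysis shows that any directed cycle through $*$ would force an outside element strictly between two elements of $I'$, contradicting convexity; hence the contracted relation is a genuine partial order. A linear extension of the contraction, with $*$ expanded into any linear extension of $P|_{I'}$, is a linear extension of $P$ in which $I'$ is consecutive. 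Reading it off gives a reduced expression for $w$ in which $I'$ appears as a subword. Finally I invoke the auxiliary fact that on a convex set the subexpression-heap coincides with the induced subposet: every cover of $P$ between two elements of $I'$ joins Coxeter-adjacent generators and so survives as a relation in the subexpression-heap, whence the induced order is already generated inside $I'$. Therefore that subword's heap equals $H'$.

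The main obstacle is the interval-realization step, namely that a convex subset of a finite poset can always be made consecutive in some linear extension; the contraction construction is the natural vehicle, and the crux is verifying acyclicity, where convexity is used precisely to forbid an outside element wedged strictly between two elements of $I'$. A secondary delicate point is the identification of the subexpression-heap with the induced subposet on a convex set: without convexity these genuinely differ (as the non-saturated subheap examples show), so this is where the convexity hypothesis is consumed a second time. Everything else is routine bookkeeping inside the heap-to-reduced-expression dictionary.
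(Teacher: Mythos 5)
Your proof is correct, but there is no argument in the paper to compare it against: the proposition is stated with the proof omitted, accompanied only by the remark that it is implicit in the literature (the proof of Proposition 3.3 in Stembridge) and ``follows easily from the definitions.'' What you have written supplies exactly the content being waved at, and both places where you flag convexity as doing real work are the right ones. The index-sandwich observation (heap relations reverse the order of positions, so any element of the heap lying between two entries of a consecutive block of positions has its position trapped in that block) gives the ``if'' direction; the contraction-to-a-point construction, whose acyclicity is precisely what convexity guarantees, realizes a convex subset as a consecutive block of some linear extension and gives the ``only if'' direction. Equally essential is the auxiliary identification you make at the end: on a convex set the subexpression-heap coincides with the induced subposet, because every saturated chain of $H(w)$ between two elements of the set stays inside it and every cover of the heap joins non-commuting labels, hence survives as a generating relation. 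That identification is exactly what fails for saturated-but-nonconvex subheaps, such as the paper's example of the subexpression $s_5 s_4 s_5$ whose heap misses the intervening $s_6$ entry, so it cannot be skipped. The one external fact you lean on --- that the words obtained from linear extensions of the heap are precisely the commutation class of the chosen reduced expression, hence are all reduced expressions for $w$ once $w$ is fully commutative --- is the same standard fact (Stembridge's Proposition 2.2 together with the definition of full commutativity) that the paper's heap machinery rests on throughout, so invoking it is legitimate rather than circular.
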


\end{section}

\begin{section}{Heaps for fully commutative elements in $W(\C_{n})$}

It will be extremely useful for us to be able to recognize when a heap corresponds to a fully commutative element in $W(\C_{n})$.  The following lemma follows immediately from Remark \ref{illegal.convex.chains} and is also a special case of \cite[Proposition 3.3]{Stembridge.J:B}.

\begin{lemma}\label{impermissible.heap.configs}
Let $w \in W_{c}(\C_{n})$.  Then $H(w)$ cannot contain any of the following convex subheaps, where we use $\emptyset$ to emphasize that no element of the heap occupies that position.
\begin{enumerate}[label=\rm{(\roman*)}]
\item \begin{tabular}[c]{ccc}
\begin{tabular}[c]{c}
%-- New mfpic environment, number 11 of 189. (size of end split: 2, should be 2)  ------------------->
\includegraphics{ThesisFigs1.011}
\end{tabular}
& \begin{tabular}[c]{c}
and 
\end{tabular} &
\begin{tabular}[c]{c}
%-- New mfpic environment, number 12 of 189. (size of end split: 2, should be 2)  ------------------->
\includegraphics{ThesisFigs1.012}
\end{tabular}
\end{tabular};
\item \begin{tabular}[c]{ccc}
\begin{tabular}[c]{c}
%-- New mfpic environment, number 13 of 189. (size of end split: 2, should be 2)  ------------------->
\includegraphics{ThesisFigs1.013}
\end{tabular}
& \begin{tabular}[c]{c}
and 
\end{tabular} &
\begin{tabular}[c]{c}
%-- New mfpic environment, number 14 of 189. (size of end split: 2, should be 2)  ------------------->
\includegraphics{ThesisFigs1.014}
\end{tabular}
\end{tabular};

\item \begin{tabular}[c]{cccc}
\begin{tabular}[c]{c}
%-- New mfpic environment, number 15 of 189. (size of end split: 2, should be 2)  ------------------->
\includegraphics{ThesisFigs1.015}
\end{tabular}
& \begin{tabular}[c]{c}
and
\end{tabular} &
\begin{tabular}[c]{c}
%-- New mfpic environment, number 16 of 189. (size of end split: 2, should be 2)  ------------------->
\includegraphics{ThesisFigs1.016}
\end{tabular},
&
\begin{tabular}[c]{c}
 where $1<k<n+1$.
\end{tabular}
\end{tabular}
\end{enumerate} \hfill $\qed$
\end{lemma}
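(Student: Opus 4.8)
The plan is to derive the lemma directly from two facts already in hand: the Proposition of the previous section, which says that a convex subheap of $H(w)$ is exactly the heap of a subword of some reduced expression for $w$, and the subword description of full commutativity in Remark \ref{illegal.convex.chains}, namely that no reduced expression for $w \in W_{c}(\C_{n})$ contains a subword $s_{i}s_{j}s_{i}$ with $|i-j|=1$ and $1<i,j<n+1$, nor a subword $s_{i}s_{j}s_{i}s_{j}$ with $\{i,j\}=\{1,2\}$ or $\{n,n+1\}$. Given these, the lemma is a translation of a forbidden-subword statement into a forbidden-subheap statement, so the argument should be short once the pictures are read correctly.

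First I would identify each displayed configuration as the heap of exactly one of these forbidden subwords. The two configurations in (iii) are the heaps of $s_{k}s_{k+1}s_{k}$ and $s_{k}s_{k-1}s_{k}$ for an interior generator $s_{k}$ — which is precisely why the hypothesis $1<k<n+1$ appears — each consisting of two consecutive entries in column $k$ with a single entry in one adjacent column wedged between them. The configurations in (i) are the heaps of the length-four alternating word at the end $\{1,2\}$, and those in (ii) the heaps at the end $\{n,n+1\}$, in each case displaying both orientations, i.e.\ both choices of which generator occupies the top of the zigzag. In every picture there is one interior generator that occurs twice consecutively, and this is the entry whose convexity must be controlled.

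The purpose of the markers $\emptyset$ is exactly to guarantee that the displayed poset is convex inside $H(w)$. For a configuration coming from $s_{k}s_{k-1}s_{k}$ the two copies of $s_{k}$ are consecutive, so in a reduced word some generator not commuting with $s_{k}$ must appear between them; the markers assert that the \emph{other} neighbouring column (here column $k+1$) carries no entry strictly between the two copies of $s_{k}$, so that the three displayed entries form a convex subposet rather than having a fourth element wedged inside. The same reading applies to (i) and (ii): the empty cell marks the unused neighbour of the interior end-generator ($s_{3}$ next to $s_{2}$, and $s_{n-1}$ next to $s_{n}$), again forcing convexity. I would carry out this convexity verification picture by picture, since it is the one place where the $\emptyset$ hypotheses are actually used.

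With these identifications the conclusion follows by contradiction: if $H(w)$ contained one of the listed convex subheaps, then by the Proposition that subheap would be the heap of a subword of some reduced expression for $w$, and that subword would be one of the words barred by Remark \ref{illegal.convex.chains}, contradicting $w\in W_{c}(\C_{n})$. The only genuine work, and the main obstacle, is the bookkeeping of the middle two steps — correctly matching each picture to its short word, accounting for the mirror-image orientations, and checking that the $\emptyset$ markers supply exactly the convexity required to invoke the Proposition; everything else is a formal consequence of results already established (and is subsumed by \cite[Proposition 3.3]{Stembridge.J:B}).
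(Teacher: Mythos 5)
Your proposal is correct and follows exactly the route the paper takes: the paper justifies this lemma by citing Remark \ref{illegal.convex.chains} together with the correspondence between convex subheaps and subwords of reduced expressions (and notes it is a special case of \cite[Proposition 3.3]{Stembridge.J:B}), which is precisely your argument with the details of the picture-to-subword matching written out.
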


\begin{remark}
It is important to note that since $m(s_{1},s_{2})=m(s_{n},s_{n+1})=4$ in $W(\C_{n})$, the heap of a fully commutative element may contain the following convex chains:

\begin{center}
\begin{tabular}{ccccc}
\begin{tabular}[c]{c}
%-- New mfpic environment, number 17 of 189. (size of end split: 2, should be 2)  ------------------->
\includegraphics{ThesisFigs1.017}
\end{tabular}
&& and &&
\begin{tabular}[c]{c}
%-- New mfpic environment, number 18 of 189. (size of end split: 2, should be 2)  ------------------->
\includegraphics{ThesisFigs1.018}
\end{tabular}.
\end{tabular}
\end{center}
\end{remark}

\begin{definition}\label{n-value.def}
Let $w \in W_{c}(\C_{n})$.  We define $n(w)$ to be the maximum integer $k$ such that $w$ has a reduced expression of the form $w = u x v$ (reduced), where $u, x, v \in W_{c}(\C_{n})$, $l(x)=k$, and $x$ is a product of commuting generators.
\end{definition}

Note that $n(w)$ may be greater than the size of any row in the canonical representation of $H(w)$.  Also, it is known that $n(w)$ is equal to the size of a maximal antichain in the heap poset for $w$.

\begin{example}
Let $\w=s_{2}s_{1}s_{3}s_{5}$ be a reduced expression for $w \in W_{c}(\C_{4})$.  Then
	$$H(w)= \begin{tabular}[c]{c}
%-- New mfpic environment, number 19 of 189. (size of end split: 2, should be 2)  ------------------->
\includegraphics{ThesisFigs1.019}
\end{tabular}$$
and $n(w)=3$, where $x$ from Definition \ref{n-value.def} equals $s_{1}s_{3}s_{5}$. 
\end{example}

\end{section}

\begin{section}{Weak star operations on heaps for fully commutative elements}

We conclude this chapter with a few observations regarding heaps and weak star reductions.  Let $\w=\w_{1} \cdots \w_{r}$ be a reduced expression for $w \in W_{c}(\C_{n})$.  Then $w$ is left weak star reducible by $s$ with respect to $t$ if and only if
\begin{enumerate}[label=\rm{(\arabic*)}]
\item there is an entry in $H(\w)$ labeled by $s$ that is not covered by any other entry; 
\end{enumerate}
and
\begin{enumerate}[label=\rm{(\arabic*)}, resume]
\item the heap $H(t\w)$ contains one of the convex subheaps of Lemma \ref{impermissible.heap.configs}.
\end{enumerate}

Of course, we have an analogous statement for right weak star reducibility.

\begin{example}
Let $\w=s_{3}s_{5}s_{2}s_{4}s_{6}s_{1}s_{2}$ be a reduced expression for $w \in W_{c}(\C_{5})$.  Then
	$$H(w)= \begin{tabular}[c]{c}
%-- New mfpic environment, number 20 of 189. (size of end split: 2, should be 2)  ------------------->
\includegraphics{ThesisFigs1.020}
\end{tabular}$$
and we see that $w$ is only left weak star reducible by $s_{3}$ with respect to $s_{2}$ and only right weak star reducible by $s_{2}$ with respect to $s_{1}$.  That is,
	$$H\(\star^{L}_{3,2}(w)\)=\begin{tabular}[c]{c}
%-- New mfpic environment, number 21 of 189. (size of end split: 2, should be 2)  ------------------->
\includegraphics{ThesisFigs1.021}
\end{tabular}$$
and
	$$H\(\star^{R}_{2,1}(w)\)=\begin{tabular}[c]{c}
%-- New mfpic environment, number 22 of 189. (size of end split: 2, should be 2)  ------------------->
\includegraphics{ThesisFigs1.022}
\end{tabular}.$$
All other weak star reductions on $w$ are undefined.
\end{example}

\end{section}

\end{chapter}

%%%%%%%%%% Chapter 3 %%%%%%%%%%%%%

\begin{chapter}{The type I and type II elements of $W(\C_{n})$}

In this chapter, we study the combinatorics of Coxeter groups of types $B$ and $\C$.  Our immediate goal is to define two classes of fully commutative elements of $W(\C_{n})$ that play a central role in the remainder of this thesis.  Some of these elements will turn out to be on our list of irreducible elements which appears in Chapter \ref{chaptCwsrm}.

\begin{section}{The type I elements}

\begin{definition}\label{def.zigzags}
Define the following elements of $W(\C_{n})$.
\begin{enumerate}[label=\rm{(\arabic*)}]
\item If $i<j$, let
	$$\z_{i,j}=s_{i}s_{i+1}\cdots s_{j-1}s_{j}$$
and
	$$\z_{j,i}=s_{j}s_{j-1}\cdots s_{i-1}s_{i}.$$
We also let $\z_{i,i}=s_{i}$.

\item If $1< i \leq n+1$ and $1 < j \leq n+1$, let
	$$\z^{L,2k}_{i,j}=\z_{i,2}(\z_{1,n}\z_{n+1,2})^{k-1}\z_{1,n}\z_{n+1,j}.$$
	
\item If $1< i \leq n+1$ and $1 \leq j < n+1$, let
	$$\z^{L,2k+1}_{i,j}=\z_{i,2}(\z_{1,n}\z_{n+1,2})^{k}\z_{1,j}.$$

\item If $1\leq i < n+1$ and $1 \leq j <  n+1$, let
	$$\z^{R,2k}_{i,j}=\z_{i,n}(\z_{n+1,2}\z_{1,n})^{k-1}\z_{n+1,2}\z_{1,j}.$$
	
\item If $1\leq i < n+1$ and $1 < j \leq  n+1$, let 
	$$\z^{R,2k+1}_{i,j}=\z_{i,n}(\z_{n+1,2}\z_{1,n})^{k}\z_{n+1,j}.$$

\end{enumerate}
We will refer to the elements in (1)--(5) as the \emph{type I} elements.
\end{definition}

\begin{remark}\label{typeI.n(w)=1}
The heaps (which motivated the notation above) corresponding to the type I elements are as follows.
\begin{enumerate}[label=\rm{(\arabic*)}]

\item If $i<j$, then
	$$H\(\z_{i,j}\)=\begin{tabular}[c]{c}
%-- New mfpic environment, number 23 of 189. (size of end split: 2, should be 2)  ------------------->
\includegraphics{ThesisFigs1.023}
\end{tabular} \quad \text{ and } \quad H\(\z_{j,i}\)=\begin{tabular}[c]{c}
%-- New mfpic environment, number 24 of 189. (size of end split: 2, should be 2)  ------------------->
\includegraphics{ThesisFigs1.024}
\end{tabular}.$$

\item If $1< i \leq n+1$ and $1 < j \leq n+1$, then
	$$H\(\z^{L,2k}_{i,j}\)=\begin{tabular}[c]{c}
%-- New mfpic environment, number 25 of 189. (size of end split: 2, should be 2)  ------------------->
\includegraphics{ThesisFigs1.025}
\end{tabular},$$
where we encounter an entry labeled by either $s_{1}$ or $s_{n+1}$ a combined total of $2k$ times if $i\neq n+1$ and $2k+1$ times if $i=n+1$.

\item If $1< i \leq n+1$ and $1 \leq j < n+1$, then
	$$H\(\z^{L,2k+1}_{i,j}\)=\begin{tabular}[c]{c}
%-- New mfpic environment, number 26 of 189. (size of end split: 2, should be 2)  ------------------->
\includegraphics{ThesisFigs1.026}
\end{tabular},$$
where we encounter an entry labeled by either $s_{1}$ or $s_{n+1}$ a combined total of $2k+1$ times if $i\neq n+1$ and $2k+2$ times if $i=n+1$.

\item If $1\leq i < n+1$ and $1 \leq j <  n+1$, then
	$$H\(\z_{i,j}^{R,2k}\)=\begin{tabular}[c]{c}
%-- New mfpic environment, number 27 of 189. (size of end split: 2, should be 2)  ------------------->
\includegraphics{ThesisFigs1.027}
\end{tabular},$$
where we encounter an entry labeled by either $s_{1}$ or $s_{n+1}$ a combined total of $2k$ times if $i\neq 1$ and $2k+1$ times if $i=1$.

\item If $1\leq i < n+1$ and $1 < j \leq  n+1$, then
	$$H\(\z^{R,2k+1}_{i,j}\)=\begin{tabular}[c]{c}
%-- New mfpic environment, number 28 of 189. (size of end split: 2, should be 2)  ------------------->
\includegraphics{ThesisFigs1.028}
\end{tabular},$$
where we encounter an entry labeled by either $s_{1}$ or $s_{n+1}$ a combined total of $2k+1$ times if $i\neq 1$ and $2k+2$ times if $i=1$.

\end{enumerate}

\end{remark}

\begin{remark}
We gather a few remarks about the type I elements.
\begin{enumerate}[label=\rm{(\arabic*)}]
\item Every type I element is rigid, in the sense that each has a unique reduced expression.  This implies that every type I element is fully commutative (there are no relations of any kind to apply).  

\item The index $i$ (respectively, $j$) tells us which generator is in the left (respectively, right) descent set.  These are the unique elements occurring in the left and right descent sets since each element is rigid.  The $L$ (respectively, $R$) in the notation means that we begin multiplying $s_{i}$ by $s_{i-1}$ (respectively, $s_{i+1}$).  Also, $2k+1$ (respectively, $2k$) indicates the number of times we should encounter an end generator (i.e., $s_{1}$ or $s_{n+1}$) after the first occurrence of $s_{i}$ as we zigzag through the generators.  If $s_{i}$ is an end generator, it is not included in this count.  However, if $s_{j}$ is an end generator, it is included. 

\item It is clear from looking at the heaps for the type I elements that if $w$ is of type I, then $n(w)=1$ (see Definition \ref{n-value.def}).  Conversely, it follows by induction on $l(w)$ that if $n(w)=1$ for some $w \in W_{c}(\C_{n})$, then $w$ must be of type I. 

\item Lastly, note that there is an infinite number of type I elements (there is no limit to the zigzagging that the heaps of the type I elements can do).
\end{enumerate}
\end{remark}

\begin{example}
Consider $W(\C_{4})$.  Then
	$$\z^{L,2}_{2,3}=s_{2}s_{1}s_{2}s_{3}s_{4}s_{5}s_{4}s_{3}$$
with
	$$H\(\z^{L,2}_{2,3}\)=\begin{tabular}[c]{c}
%-- New mfpic environment, number 29 of 189. (size of end split: 2, should be 2)  ------------------->
\includegraphics{ThesisFigs1.029}
\end{tabular}.$$
Also, we have
	$$\z^{R,2}_{1,1}=s_{1}s_{2}s_{3}s_{4}s_{5}s_{4}s_{3}s_{2}s_{1}$$
with
	$$H\(\z^{R,2}_{1,1}\)=\begin{tabular}[c]{c}
%-- New mfpic environment, number 30 of 189. (size of end split: 2, should be 2)  ------------------->
\includegraphics{ThesisFigs1.030}
\end{tabular}.$$
\end{example}

The next proposition follows immediately from the discussion above.

\begin{proposition}\label{zigzags}
If $w \in W(\C_{n})$ is of type I, then $w$ is fully commutative with $n(w)=1$.  Conversely, if $n(w)=1$, then $w$ is one of the elements on the list in Definition \ref{def.zigzags}.  \hfill $\qed$
\end{proposition}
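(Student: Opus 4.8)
The plan is to prove the two implications separately, leaning heavily on the geometric/heap descriptions already assembled in Remark~\ref{typeI.n(w)=1} and the preceding discussion. For the forward direction, suppose $w$ is of type I. By part~(1) of the remark gathering facts about type I elements, each such element is rigid (has a unique reduced expression), and since there are no braid or commutation moves to apply, $w$ is automatically fully commutative. To see $n(w)=1$, I would read off the heaps displayed in Remark~\ref{typeI.n(w)=1}: each is a single ``zigzag'' path, so every antichain in the heap poset has size $1$. Since $n(w)$ equals the size of a maximal antichain (noted just after Definition~\ref{n-value.def}), this gives $n(w)=1$ immediately. This direction is essentially bookkeeping once the heap pictures are accepted.

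The substantive direction is the converse: assuming $w \in W_c(\C_n)$ with $n(w)=1$, I must show $w$ appears on the list of Definition~\ref{def.zigzags}. The approach I would take is induction on $l(w)$, as flagged in part~(3) of the remarks on type I elements. The base case $l(w)\le 1$ is trivial ($w=1$ or $w=s_i=\z_{i,i}$). For the inductive step, since $n(w)=1$ the heap $H(w)$ has all antichains of size $1$, which forces the heap to be a chain, i.e.\ a totally ordered poset: no two entries can be incomparable, so the canonical representation has exactly one entry in each row. I would then examine the top entry (the unique element $s\in\L(w)$) and consider $w'=sw$, which again satisfies $n(w')=1$ and has smaller length, so by induction $w'$ is of type I. The heart of the argument is then to check that prepending $s$ to a type I heap yields another type I heap: the single top entry must be labeled by a generator adjacent to the (unique) generator in $\L(w')$, and tracking how the index moves—toward an end generator $s_1$ or $s_{n+1}$, then reflecting back—reproduces exactly the recursive zigzag structure encoded in parts (2)--(5) of Definition~\ref{def.zigzags}.

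The main obstacle is the case analysis in the inductive step, specifically controlling the behavior at the two ends of the Coxeter graph where $m(s_1,s_2)=m(s_n,s_{n+1})=4$. When the zigzag reaches an end generator, full commutativity permits a length-two ``bounce'' (the convex chains allowed in the Remark following Lemma~\ref{impermissible.heap.configs}), and I must verify that the chain condition coming from $n(w)=1$ together with the avoidance of the impermissible configurations of Lemma~\ref{impermissible.heap.configs} forces the path to turn around cleanly rather than branch or repeat in a way that would create an antichain of size $2$ or an illegal subword. Concretely, I would argue that at an interior generator the chain must continue monotonically (any reversal would create a forbidden $s_i s_j s_i$ convex subheap or an antichain), whereas at an end generator the reflection is forced; assembling these local constraints shows the only chains realizable are precisely the $\z$, $\z^{L,m}$, and $\z^{R,m}$ families. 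Since the author states this proposition ``follows immediately from the discussion above,'' I expect the intended proof is exactly this reading-off of the heap pictures rather than a fully formalized induction, so I would present the heap-chain characterization as the key structural observation and let the explicit enumeration of zigzag types complete the classification.
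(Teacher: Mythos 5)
Your forward direction is fine, and your converse follows exactly the route the paper itself sketches (the unlabeled remark after Remark~\ref{typeI.n(w)=1} asserts the converse ``follows by induction on $l(w)$''): $n(w)=1$ forces the heap to be a chain, then induct on length. However, the pivotal step of your induction is false, and it fails precisely where the statement itself is defective. You claim that ``at an interior generator the chain must continue monotonically (any reversal would create a forbidden $s_is_js_i$ convex subheap or an antichain).'' A reversal at the interior vertex $s_2$ arriving from $s_1$ produces the convex chain $s_1s_2s_1$, which is \emph{not} forbidden: Remark~\ref{illegal.convex.chains} only excludes $s_is_js_i$ when $1<i,j<n+1$, and since $m(s_1,s_2)=4$ the word $s_1s_2s_1$ is reduced, rigid, and fully commutative. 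Its heap is a $3$-chain, so $n(s_1s_2s_1)=1$; yet $s_1s_2s_1$ appears on none of the lists (1)--(5) of Definition~\ref{def.zigzags}, because every non-monotone type I element turns around only at the end vertices, via the subwords $s_2s_1s_2$ or $s_ns_{n+1}s_n$, never via $s_1s_2s_1$ or $s_{n+1}s_ns_{n+1}$. Equivalently, your inductive step already breaks at $w'=\z_{2,1}=s_2s_1$ with $s=s_1$: prepending $s_1$ yields a fully commutative element with chain heap that is not of type I.

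This gap cannot be closed by a better argument, since $s_1s_2s_1$ and $s_{n+1}s_ns_{n+1}$ are genuine counterexamples to the converse as literally stated; they are also the \emph{only} ones, because a chain containing the pattern $s_1s_2s_1$ can be extended neither on the left nor on the right without creating the forbidden length-four pattern $s_2s_1s_2s_1$ or $s_1s_2s_1s_2$ (the only neighbor of $s_1$ is $s_2$), and symmetrically at the other end. The paper's own justification---``follows immediately from the discussion above''---glosses over the same point, so you have reproduced, in more detail, the paper's intended but flawed argument. The damage downstream is limited: both exceptional elements are weak star reducible (e.g.\ $s_2\cdot s_1s_2s_1=s_2s_1s_2s_1\notin W_c(\C_n)$, so $s_1s_2s_1$ is left weak star reducible by $s_1$ with respect to $s_2$), hence the implication actually invoked in the proof of Theorem~\ref{affineCwsrm}---that an \emph{irreducible} element with $n(w)=1$ is of type I---remains true. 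The statement your method can and should prove is: if $w\in W_c(\C_n)$ has $n(w)=1$, then $w$ is of type I or $w\in\{s_1s_2s_1,\; s_{n+1}s_ns_{n+1}\}$; with that amendment, your chain-walk analysis (no turns at pivots $3,\dots,n-1$; turns at pivots $1$ and $n+1$ give the zigzag families; turns at pivots $2$ and $n$ force the walk to terminate immediately on both sides) goes through.
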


\begin{remark}\label{irred.zigzags}
It is easily seen that if $w$ is of type I beginning (respectively, ending) with $s_{i}$, then $w$ is left (respectively, right) weak star reducible by $s_{i}$ if and only if $i \notin \{1,n+1\}$.  That is, $\z_{1,1}^{R,2k}$, $\z_{1,n+1}^{R,2k+1}$, $\z_{n+1,n+1}^{L,2k}$, and $\z_{n+1,1}^{L,2k+1}$ are the only type I elements that are irreducible.  We will see these elements appearing on our list of irreducible elements in Chapter \ref{chaptCwsrm}.
\end{remark}

\end{section}

\begin{section}{The type II elements}

It will be helpful for us to define $l=\lceil \frac{n-1}{2}\rceil$.  Then regardless of whether $n$ is odd or even, $2l$ will always be the largest even number in $\{1,2,\dots, n, n+1\}$.  Similarly, $2l+1$ will always be the largest odd number in $\{1,2,\dots, n, n+1\}$.

\begin{definition}
Define $\O=\{1,3, \dots, 2l-1, 2l+1\}$ and $\E=\{2, 4, \dots, 2l-2, 2l\}$.  Note that $\O$ consists of all of the odd indices and $\E$ consists of all of the even indices amongst $\{1, 2, \dots, n, n+1\}$.  
\begin{enumerate}[label=\rm{(\arabic*)}]
\item Let $i$ and $j$ be of the same parity with $i<j$.  Then we define 
	$$\x_{i,j}=s_{i}s_{i+2}\cdots s_{j-2}s_{j}.$$
Also, define
	$$\x_{\O}=\x_{1,2l+1}=s_{1}s_{3}\cdots s_{2l-1}s_{2l+1},$$
and
	$$\x_{\E}=\x_{2,2l}=s_{2}s_{4}\cdots s_{2l-2}s_{2l}.$$
\item Let $1\leq k < \infty$.  Define
	$$\y_{k}=\underbrace{(\x_{\O}\x_{\E})\cdots (\x_{\O}\x_{\E})}_{k \text{ copies}}.$$
\end{enumerate}
We will refer to finite alternating products of $\x_{\O}$ and $\x_{\E}$ as \emph{type II} elements.  That is, the type II elements are
\begin{enumerate}[label=\rm{(\arabic*)}]
\item $\x_{\O}$;
\item $\x_{\E}$;
\item $\y_{k}$;
\item $\x_{\E}\y_{k}$;
\item $\x_{\E}\y_{k}\x_{\O}$;
\item $\y_{k}\x_{\O}$.
\end{enumerate}
\end{definition}

Note that $\x_{\O}$ and $\x_{\E}$ are products of commuting generators, and so the order of the multiplication is immaterial.  The heaps corresponding to the type II elements are as follows.
\begin{enumerate}[label=\rm{(\arabic*)}]
\item $H\(\x_{\O}\)=\begin{tabular}[c]{c}
%-- New mfpic environment, number 31 of 189. (size of end split: 2, should be 2)  ------------------->
\includegraphics{ThesisFigs1.031}
\end{tabular}.$

\item $H\(\x_{\E}\)=\begin{tabular}[c]{c}
%-- New mfpic environment, number 32 of 189. (size of end split: 2, should be 2)  ------------------->
\includegraphics{ThesisFigs1.032}
\end{tabular}.$
	
\item If $n$ is even, then
	$$H\(\y_{k}\)=\begin{tabular}[c]{c}
%-- New mfpic environment, number 33 of 189. (size of end split: 2, should be 2)  ------------------->
\includegraphics{ThesisFigs1.033}
\end{tabular}.$$
If $n$ is odd, then
	$$H\(\y_{k}\)=\begin{tabular}[c]{c}
%-- New mfpic environment, number 34 of 189. (size of end split: 2, should be 2)  ------------------->
\includegraphics{ThesisFigs1.034}
	\end{tabular}.$$
In each case, the canonical representation of $H(\y_{k})$ has $2k$ rows.

\item  If $n$ is even, then
	$$H\(\x_{\E}\y_{k}\)=\begin{tabular}[c]{c}
%-- New mfpic environment, number 35 of 189. (size of end split: 2, should be 2)  ------------------->
\includegraphics{ThesisFigs1.035}
	\end{tabular}.$$
If $n$ is odd, then
	$$H\(\x_{\E}\y_{k}\)=\begin{tabular}[c]{c}
%-- New mfpic environment, number 36 of 189. (size of end split: 2, should be 2)  ------------------->
\includegraphics{ThesisFigs1.036}
	\end{tabular}.$$
In each case, the canonical representation of $H\(\x_{\E}\y_{k}\)$ has $2k+1$ rows.

\item If $n$ is even, then
	$$H\(\x_{\E}\y_{k}\x_{\O}\)=\begin{tabular}[c]{c}
%-- New mfpic environment, number 37 of 189. (size of end split: 2, should be 2)  ------------------->
\includegraphics{ThesisFigs1.037}
	\end{tabular}.$$
If $n$ is odd, then
	$$H\(\x_{\E}\y_{k}\x_{\O}\)=\begin{tabular}[c]{c}
%-- New mfpic environment, number 38 of 189. (size of end split: 2, should be 2)  ------------------->
\includegraphics{ThesisFigs1.038}
	\end{tabular}.$$
In each case, the canonical representation of $H\(\x_{\E}\y_{k}\x_{\O}\)$ has $2k+2$ rows.

\item If $n$ is even, then
	$$H\(\y_{k}\x_{\O}\)=\begin{tabular}[c]{c}
%-- New mfpic environment, number 39 of 189. (size of end split: 2, should be 2)  ------------------->
\includegraphics{ThesisFigs1.039}
	\end{tabular}.$$
If $n$ is odd, then
	$$H\(\y_{k}\x_{\O}\)=\begin{tabular}[c]{c}
%-- New mfpic environment, number 40 of 189. (size of end split: 2, should be 2)  ------------------->
\includegraphics{ThesisFigs1.040}	
\end{tabular}.$$
In each case, the canonical representation of $H\(\y_{k}\x_{\O}\)$ has $2k+1$ rows.
\end{enumerate}

The next proposition follows immediately from looking at the heaps above.

\begin{proposition}\label{sandwich_stacks}
If $w \in W(\C_{n})$ is of type II, then $w$ is fully commutative with $n(w)=l=\lceil \frac{n-1}{2}\rceil$.  

\hfill $\qed$
\end{proposition}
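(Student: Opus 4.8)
The two assertions of the proposition---full commutativity and the evaluation $n(w)=l$---I would establish separately, in each case reading the relevant combinatorics directly off the heaps displayed above. For full commutativity the plan is to apply the heap criterion underlying Lemma~\ref{impermissible.heap.configs} (equivalently, the subword description in Remark~\ref{illegal.convex.chains}): it suffices to show that the heap of each of the six shapes of type~II element contains none of the forbidden convex subheaps, i.e.\ no interior triple $s_i s_{i+1} s_i$ with $1<i,i+1<n+1$ and no end quadruple $s_1 s_2 s_1 s_2$ or $s_n s_{n+1} s_n s_{n+1}$. First I would verify that each defining product is reduced, so that the pictured alternating arrangement really is its heap; since $\x_\O$ and $\x_\E$ are each products of pairwise commuting generators, every comparability in the heap arises between entries in adjacent columns, and this forces exactly the brick-wall stacking shown. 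The crux is then that no candidate forbidden chain is ever \emph{convex}: between two occurrences of $s_i$ lying two rows apart, the intervening row always contributes an entry in a neighbouring column $s_{i-1}$ or $s_{i+1}$ that lies in the convex hull of the two copies of $s_i$, so the would-be chain fails to close up as a convex subheap. I would note that the requisite blocking column (an $s_3$ near the left end, an $s_{n-1}$ near the right end) is present for every $n\ge 2$, and that at the two double bonds only the length-four chains are illegal, the length-three chains $s_1 s_2 s_1$ and $s_n s_{n+1} s_n$ being permitted.

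For the value of $n(w)$ I would use the fact recorded just after Definition~\ref{n-value.def} that $n(w)$ equals the size of a largest antichain of the heap of $w$. Because any two occurrences of noncommuting generators are comparable in a heap (the later directly covers or is dominated by the earlier), and because two entries in the same column are likewise comparable, an antichain can consist only of pairwise commuting, hence distinct, generators; that is, it is an independent set in the Coxeter graph, which for $\C_n$ is a path on $n+1$ vertices. A largest such independent set is realized by a full alternating row of the heap, of size $l$, and this simultaneously gives the upper bound $n(w)\le l$. The matching lower bound is immediate, since each type~II element has a reduced expression in which one full commuting row appears as a consecutive block $x$, so that $w=uxv$ (reduced) in the sense of Definition~\ref{n-value.def}.

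The evaluation of $n(w)$ is the routine half; the main obstacle is the full-commutativity verification, and within it the behaviour at the double-bond ends. There the length-three chains are allowed while the length-four chains are not, so the argument must confirm---case by case over the six shapes and over the parity of $n$, which governs which generators occupy the two end columns---that the four-element chains $s_1 s_2 s_1 s_2$ and $s_n s_{n+1} s_n s_{n+1}$ are in every instance interrupted by a neighbouring-column entry in their convex hull, rather than merely checking that no single displayed reduced expression happens to spell them out.
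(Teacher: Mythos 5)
Your strategy coincides with the paper's: the paper offers no argument beyond ``follows immediately from looking at the heaps above,'' and your proposal is the honest fleshing-out of that inspection. The full-commutativity half is sound. The defining products are reduced, comparabilities in the brick-wall heaps arise only between adjacent columns, and each forbidden configuration of Lemma~\ref{impermissible.heap.configs} fails to be \emph{convex} because a neighbouring-column entry lies strictly between its endpoints: both neighbours $s_{i\pm 1}$ block an interior triple $s_is_{i\pm1}s_i$, and the entries in column $3$ (respectively $n-1$) block the length-four chains at the left (respectively right) end, while the permitted triples $s_1s_2s_1$ and $s_ns_{n+1}s_n$ are harmless. That is exactly what ``looking at the heaps'' amounts to.

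The $n(w)$ half, however, contains a step that fails, and it is where the real content sits. Your upper bound is fine: the labels of an antichain form an independent set in the Coxeter graph, a path on $n+1$ vertices, so $n(w)\le\lceil\frac{n+1}{2}\rceil$. But this number is not the paper's $l=\lceil\frac{n-1}{2}\rceil$; it exceeds it by one. (The off-by-one is really an erratum in the paper itself: the element $s_2s_1s_3s_5\in W_{c}(\C_{4})$ with $n(w)=3$ is offered immediately after the proposition as an example with ``$n(w)=l$,'' and Chapter 5 uses $\x_{\O}=s_1s_3$, $\x_{\E}=s_2s_4$ when $n=3$; both are consistent only with $l=\lceil\frac{n+1}{2}\rceil$. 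Still, a proof must commit to one definition, and yours silently swaps it.) More substantively, your ``matching lower bound is immediate'' does not close for every family: when $n$ is even, $\x_{\O}$ is a product of $\frac{n}{2}+1$ commuting generators but $\x_{\E}$ of only $\frac{n}{2}$, and since $\x_{\E}$ by itself is on the list of type II elements and $n(w)$ cannot exceed the number of letters in a reduced expression, we get $n(\x_{\E})=\frac{n}{2}$ while $n(\x_{\O})=\frac{n}{2}+1$. Hence for even $n$ no single constant makes the statement true across all six families, and your argument---which needs a \emph{maximum} independent set to occur as a row of the given heap---breaks down precisely for $\x_{\E}$. A careful write-up must either treat $\x_{\E}$ (for $n$ even) separately and record its smaller $n$-value, or flag the inconsistency in the statement and in the definition of $l$.
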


\begin{remark}
It is easily seen by inspecting the heaps for the type II elements that if $w$ is of type II, then $w$ is irreducible.  We will see these elements appearing on our list of irreducible elements in the next chapter.
\end{remark}

Note that if $w \in W_{c}(\C_{n})$, then $l$ is the maximum value that $n(w)$ can take.  There are fully commutative elements with $n(w)=l$ that are not of type II.  For example, if $w=s_{2}s_{1}s_{3}s_{5} \in W_{c}(\C_{4})$, then $n(w)=3$, but $w$ is not of type II.  Note, however, that there is an infinite number of type II elements.  

\bigskip

In \cite{Green.R:P}, Green proved that when $n$ is odd, $W(\C_{n})$ is a star reducible Coxeter group.
Note that if $n$ is even, then every $\y_{k}\x_{\O}$ from (5) above is not star reducible.  This fact is implicit in \cite{Green.R:P} and is easily verified.  We conjecture that these elements are the only non-star reducible elements in $W_{c}(\C_{n})$ (with $n$ even) other than products of commuting generators.  This conjecture will be a topic of future research.  

\end{section}

\end{chapter}

%%%%%%%%%% Chapter 4 %%%%%%%%%%%

\begin{chapter}{Classification of the weak star irreducible elements in $W(B_{n})$}\label{chaptBwsrm}

The goal of this chapter is to classify the irreducible elements of $W(B_{n})$.  We will use this classification to prove the classification of the irreducible elements of $W(\C_{n})$, which plays a pivotal role in the proof of our main result in the final chapter.

\begin{section}{Statement of theorem}

As mentioned in Chapter 1, our definition of weak star reducible is related to Fan's notion of cancellable in \cite{Fan.C:A}.  In addition, the next theorem verifies Fan's unproved claim in \cite[\textsection 7.1]{Fan.C:A} about the set of $w \in W_{c}(B_{n})$ having no element of $\L(w)$ or $\R(w)$ that can be left or right cancelled, respectively.  

\begin{theorem}\label{Bwsrm}
Let $w \in W_{c}(B_{n})$.  Then $w$ is irreducible if and only if $w$ is equal to one of the elements on the following list.
\begin{enumerate}[label=\rm{(\roman*)}]
\item $w_{p}$;
\item $s_{1}s_{2}w_{p}$, where $s_{1}, s_{2}, s_{3} \notin \supp(w_{p})$;
\item $s_{2}s_{1}w_{p}$, where $s_{1}, s_{2}, s_{3} \notin \supp(w_{p})$;
\end{enumerate}
where in each case $w_{p}$ is equal to a product of commuting generators in $W_{c}(B_{n})$.

\bigskip
We have an analogous statement for $W_{c}(B'_{n})$, where $s_{1}$ and $s_{2}$ are replaced with $s_{n+1}$ and $s_{n}$, respectively.
\end{theorem}

To prove this theorem, we require several lemmas.

\end{section}

\begin{section}{Preparatory lemmas}

In this section, we state and prove several technical lemmas that will be used to prove Theorem \ref{Bwsrm}.  A few of these lemmas will also be useful in later chapters.  Note that all statements about $W(\C_{n})$ also apply to $W(B_{n})$ and $W(B'_{n})$.

\bigskip

All of the results in this section are new. 

\bigskip

Before proceeding, we make a comment on notation.  Let $w \in W_{c}(\C_{n})$.  When representing saturated and convex subheaps of $H(w)$, we will use the symbol $\emptyset$ to emphasize the absence of an entry in this location in $H(w)$.  It is important to note that the occurrence of the symbol $\emptyset$  implies that an entry from $H(w)$ cannot be shifted vertically from above or below to occupy the location of the symbol $\emptyset$.  

\begin{example}
In the following heap, the region enclosed by the dotted line and labeled with $\emptyset$ indicates that no entry of the heap may occupy this region.

%-- New mfpic environment, number 41 of 189. (size of end split: 2, should be 2)  ------------------->
\begin{center}
\includegraphics{ThesisFigs1.041}
\end{center}
In this example, if the heap is a saturated subheap of some larger heap, then the subheap is convex.
\end{example}

\begin{lemma}\label{is_zigzag}
Let $w \in W_{c}(\C_{n})$.  Suppose that $w$ has a reduced expression having one of the following fully commutative elements as a subword:  
\begin{enumerate}[label=\rm{(\roman*)}]
\item $\z^{L,2}_{2,n}=s_{2}s_{1}s_{2}s_{3} \cdots s_{n-1}s_{n}s_{n+1}s_{n}$,
\item $\z^{R,2}_{n,2}=s_{n}s_{n+1}s_{n}s_{n-1}\cdots s_{3}s_{2}s_{1}s_{2}$,
\item $\z^{R,2}_{1,1}=s_{1}s_{2}\cdots s_{n}s_{n+1}s_{n}\cdots s_{2}s_{1}$,
\item $\z^{L,2}_{n+1,n+1}=s_{n+1}s_{n}\cdots s_{2}s_{1}s_{2}\cdots s_{n}s_{n+1}$ .
\end{enumerate}
Then $w$ is of type I.
\end{lemma}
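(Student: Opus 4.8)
The plan is to prove the statement by showing that $n(w)=1$, since Proposition \ref{zigzags} then forces $w$ to be of type I. Recall that $n(w)$ equals the size of a maximal antichain in the heap poset, so $n(w)=1$ is equivalent to $H(w)$ being totally ordered (a single chain). Each of the four subwords in (i)--(iv) is a zigzag in which every pair of consecutive letters is a pair of adjacent (hence noncommuting) generators, so its heap is itself a single chain $C$ that spans all columns $1,2,\dots,n+1$ and ``bounces'' at both ends. By the convex-subheap/subword correspondence, $C$ appears as a convex subheap of $H(w)$. Since (ii) and (iv) are the left--right mirror images of (i) and (iii), it suffices to treat (i) and (iii), and I would record at the outset that in case (iii) the chain visits column $1$ twice (at the global top and bottom) and each interior column $p\in\{2,\dots,n\}$ exactly twice, at a higher entry $u_p$ and a lower entry $d_p$ with $u_p>d_p$ consecutive in $C$.

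Arguing by contraposition, I would assume $n(w)\geq 2$ and produce an entry $z$ of $H(w)$ that is incomparable to some entry of $C$; say $z$ lies in column $p$. Because any two entries in the same column are comparable, $z$ is comparable to the chain entries $u_p,d_p$ in its own column, which pins $z$ into one of three positions: above $u_p$, strictly between $d_p$ and $u_p$, or below $d_p$. The cleanest case is the middle one for an interior column $p$: inserting the extra $s_p$-entry $z$ between the consecutive occurrences $u_p,d_p$ destroys their consecutiveness, so reducedness forces an adjacent generator between $u_p$ and $z$ and another between $z$ and $d_p$. Since the only chain entries lying between $u_p$ and $d_p$ in the relevant level band are the $s_{p+1}$-entries $u_{p+1}>d_{p+1}$, one obtains a convex subword $s_{p+1}s_p s_{p+1}$, which is one of the impermissible configurations of Lemma \ref{impermissible.heap.configs}(iii) whenever $p\leq n-1$, contradicting $w\in W_c(\C_n)$.

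For the remaining positions I would use the end-bounces, which is exactly where cases (i) and (iii) pull their weight: if $z$ sits above $u_p$ (or below $d_p$), propagating the forced adjacent-generator condition upward (respectively downward) along the chain drives $z$ toward the repeated end-column, where the double touch at column $1$ (or the bounce $s_n s_{n+1} s_n$ at column $n+1$) manufactures a forbidden $s_1s_2s_1s_2$ or $s_n s_{n+1}s_n s_{n+1}$ pattern, i.e.\ Lemma \ref{impermissible.heap.configs}(i),(ii). In every position $z$ can occupy, a forbidden convex subheap appears, so no such incomparable $z$ exists; hence $H(w)$ is a chain, $n(w)=1$, and $w$ is of type I by Proposition \ref{zigzags}.

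I expect the main obstacle to be the boundary columns $p\in\{2,n\}$: there the naive three-letter pattern $s_{n+1}s_n s_{n+1}$ (or $s_1 s_2 s_1$) is \emph{legal} because $m(s_n,s_{n+1})=m(s_1,s_2)=4$, so the quick contradiction above fails and one must instead either extend the configuration to the four-letter forbidden chain of Lemma \ref{impermissible.heap.configs}(ii) or shift the analysis to the neighbouring column $n-1$ (respectively $3$) to recover an interior $s_{k-1}s_k s_{k-1}$. Carefully verifying that one of these always occurs, uniformly across the structural differences between case (i) (which bounces but does not double every interior column) and case (iii), is the technical heart of the argument; the remaining bookkeeping is routine given Lemma \ref{impermissible.heap.configs} and the convex-subheap characterization.
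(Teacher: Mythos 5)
Your high-level plan---prove $n(w)=1$ and then invoke Proposition \ref{zigzags}---is legitimate, but the execution has two genuine gaps, and they sit exactly where the content of the lemma lies. First, the reduction step fails as stated: from $n(w)\geq 2$ you claim to ``produce an entry $z$ of $H(w)$ that is incomparable to some entry of $C$.'' This does not follow. An antichain of size two may a priori consist of two entries that are each comparable to \emph{every} entry of $C$, for instance two incomparable entries both lying above the maximum of $C$; this is precisely the possibility that the heap ``branches'' above the zigzag, which is what the lemma must rule out, and your trichotomy never engages it. Second, the case you work out in detail is vacuous. Since the zigzag is a subword of a reduced expression for $w$, its heap $C$ is a \emph{convex} subheap of $H(w)$ (this is the proposition that convex subheaps of $H(w)$ are exactly the heaps of subwords of reduced expressions for $w$), so no entry outside $C$ can satisfy $u_p>z>d_p$ with $u_p,d_p\in C$: such a $z$ would itself lie in $C$. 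Thus your ``cleanest case'' cannot occur, while the cases carrying all the difficulty---entries above or below $C$---are dispatched in one sentence (``propagating the forced adjacent-generator condition \dots drives $z$ toward the repeated end-column''). That propagation is not routine bookkeeping; it is the theorem. (A smaller slip in the same spot: reducedness forces \emph{some} noncommuting entry between consecutive same-column entries, not a \emph{chain} entry, so you cannot immediately read off a convex $s_{p+1}s_ps_{p+1}$.)

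What a complete argument requires---and what the paper supplies---is the covering analysis for the region above and below the zigzag: writing $w=u\,\z\,v$ (reduced), one shows that any entry covering the top entry of the zigzag must be labeled by the unique neighbor that continues the zigzag pattern, because the other choice creates one of the impermissible convex chains of Lemma \ref{impermissible.heap.configs} (here the end ``bounces'' in (i)--(iv) are what produce the four-letter chains $s_ns_{n+1}s_ns_{n+1}$ and $s_2s_1s_2s_1$, since $m(s_1,s_2)=m(s_n,s_{n+1})=4$), and then iterates this constraint up the heap; symmetrically below. This forces $u\z$ and $\z v$ to be longer zigzags, hence $w$ is of type I; the paper also reduces case (iii) to case (i) by noting that a nonempty $u$ forces an $s_2$ covering the top $s_1$ of $\z^{R,2}_{1,1}$, recreating the subword $\z^{L,2}_{2,n}$. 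Until you carry out this covering propagation, your proposal is a strategy wrapped around a missing core---and once you do carry it out, it essentially coincides with the paper's proof rather than providing an alternative route.
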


\begin{proof}
First, we prove (i); (ii) follows by a symmetric argument.  Assume that $\z^{L,2}_{2,n}$ is a subword of $w$.  Then

%-- New mfpic environment, number 42 of 189. (size of end split: 2, should be 2)  ------------------->
\begin{center}
\includegraphics{ThesisFigs1.042}
\end{center}

is a convex subheap of $H(w)$.  Since $\z^{L,2}_{2,n}$ is a subword of $w$, we can write $w=u \z^{L,2}_{2,n} v$ (reduced).  If $u$ and $v$ are empty, we are done.  Without loss of generality, assume that $u$ is nonempty.  If the higher occurrence of $s_{n}$ in the heap for $\z^{L,2}_{2,n}$ is covered by an entry in the heap for $w$ labeled by $s_{n+1}$, we would obtain one of the impermissible convex chains of Lemma \ref{impermissible.heap.configs}, which would contradict $w \in W_{c}(\C_{n})$.  This implies that the entry in the heap for $\z^{L,2}_{2,n}$ labeled by $s_{n}$ may only be covered by the entry labeled by $s_{n-1}$.  Iterating, we see that each entry in the heap for $\z^{L,2}_{2,n}$ labeled by $s_{i}$, for $3\leq i \leq n+1$, may only be covered by an entry labeled by $s_{i-1}$.  Then 

%-- New mfpic environment, number 43 of 189. (size of end split: 2, should be 2)  ------------------->
\begin{center}
\includegraphics{ThesisFigs1.043}
\end{center}

must be a convex subheap of $w$ for some $3 \leq i \leq n+1$, where each of the entries labeled by the same generator are consecutive occurrences.   Choose the largest such $i$.  By repeating similar arguments, we quickly see that $u \z^{L,2}_{2,n}$ must be of type I; otherwise, at some point, we contradict $w \in W_{c}(\C_{n})$.  A similar argument shows that $\z^{L,2}_{2,n} v$ must also be of type I.  Therefore, $w$ is of type I.

\bigskip

Now, we prove (iii); (iv) follows by a symmetric argument.  Assume that $\z^{R,2}_{1,1}$ is a subword of $w$.  Then

%-- New mfpic environment, number 44 of 189. (size of end split: 2, should be 2)  ------------------->
\begin{center}
\includegraphics{ThesisFigs1.044}
\end{center}

is a convex subheap of $H(w)$, where each of the entries labeled by the same generator are consecutive occurrences.  We can write $w=u \z^{R,2}_{1,1} v$ (reduced).  If $u$ and $v$ are empty, there is nothing to prove.  Without loss of generality, assume that $u$ is nonempty.  An argument identical to the above shows that the higher occurrence of the entry in the heap for $\z^{R,2}_{1,1}$ labeled by $s_{i}$, for $2 \leq i \leq n+1$, can only be covered by $s_{i-1}$.  Since $u$ is nonempty, the higher occurrence of the entry in the heap for $\z^{R,2}_{1,1}$ labeled by $s_{1}$ must be covered by an entry in the heap of $w$ labeled by $s_{2}$.  But this implies that 

%-- New mfpic environment, number 45 of 189. (size of end split: 2, should be 2)  ------------------->
\begin{center}
\includegraphics{ThesisFigs1.045}
\end{center}

is a convex subheap of $H(w)$.  So, $\z^{L,2}_{2,1}$ is a subword of $w$.  In particular, $\z^{L,2}_{2,n}$ is then a subword of $w$.  By case (i), $w$ must be of type I.
\end{proof}

The purpose of the next three lemmas (Lemmas \ref{all.filled.in}, \ref{zigzag_subword}, and \ref{typeI.or.nothing.above}) is to prove Lemma \ref{main_zigzag_lemma}.

\begin{lemma}\label{all.filled.in}
Let $w \in W_{c}(\C_{n})$.  If

%-- New mfpic environment, number 46 of 189. (size of end split: 2, should be 2)  ------------------->
\begin{center}
\includegraphics{ThesisFigs1.046}
\end{center}

is a saturated subheap of $H(w)$, where $i \neq n+1$, then

%-- New mfpic environment, number 47 of 189. (size of end split: 2, should be 2)  ------------------->
\begin{center}
\includegraphics{ThesisFigs1.047}
\end{center}

is a convex subheap of $H(w)$, where the shaded triangle labeled by $*$ means that every possible entry occurs in this region (i.e., convex closure) of the subheap.
\end{lemma}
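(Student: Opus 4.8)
The plan is to exploit the saturated subheap hypothesis together with the impermissible configurations of Lemma \ref{impermissible.heap.configs}, using the same ``reading off the covers'' technique that drove the proof of Lemma \ref{is_zigzag}. First I would identify the entries of the given saturated subheap as tracing out the boundary of the triangular region, and observe that the saturated subheap condition forces, between any two of these boundary entries that are joined by a saturated chain in $H(w)$, a saturated chain lying inside the subheap that is itself saturated in $H(w)$. In particular the bounding diagonals are genuine covering chains of $H(w)$: consecutive boundary entries sit in adjacent columns, one directly covering the next, with nothing of $H(w)$ interposed between them.

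The heart of the argument is to show that no lattice position inside the convex closure of the boundary can be vacant. I would suppose for contradiction that some interior position in column $k$ (with $1 < k < n+1$) carries no entry, and then examine the nearest occupied entries immediately above and below it in columns $k-1$, $k$, and $k+1$. Because the position is interior to the triangle, entries in the two neighbouring columns are present and lie at the appropriate relative heights, so a vacancy in column $k$ produces a convex subheap of the form $s_{k-1}s_k s_{k-1}$ or $s_{k+1}s_k s_{k+1}$ (case (iii) of Lemma \ref{impermissible.heap.configs}), contradicting $w \in W_{c}(\C_{n})$. Iterating this over every interior position fills the region, exactly as the repeated application of the forbidden-configuration argument filled out the zigzag in the proof of Lemma \ref{is_zigzag}. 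Once every position in the convex closure is known to be occupied, the region is literally the convex hull of its entries, hence convex as a subposet, so it is a convex subheap by the definition of convexity; this settles the final claim.

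The main obstacle I anticipate is the treatment of positions adjacent to the end columns, where $m(s_1,s_2) = m(s_n,s_{n+1}) = 4$. There the length-three pattern is no longer forbidden — as noted in the remark following Lemma \ref{impermissible.heap.configs}, the heap of a fully commutative element may legitimately contain an $s_1 s_2 s_1 s_2$ or $s_n s_{n+1} s_n s_{n+1}$ chain — so the simple $s_{k-1}s_k s_{k-1}$ contradiction must be replaced by the length-four impermissible pattern of case (i) or (ii) of Lemma \ref{impermissible.heap.configs} when $k \in \{2, n\}$ abuts an end column. The hypothesis $i \neq n+1$ is precisely what keeps the relevant corner of the triangle clear of the column where $m(s_n,s_{n+1})=4$ would otherwise break the straightforward filling step, so I would dispose of the $s_1$ end by the length-four pattern and note that the triangle never extends past column $1$ in a way that manufactures a permissible $s_1 s_2 s_1 s_2$ chain, thereby preserving the contradiction in every case.
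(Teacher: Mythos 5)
Your overall strategy is the paper's own: the paper's proof is the one-line observation that the claim "follows quickly from Lemma \ref{impermissible.heap.configs}; all other configurations will violate $w$ being fully commutative," and your plan (use saturation to make the bounding diagonals genuine covering chains of $H(w)$, then rule out vacancies in the convex closure by exhibiting forbidden convex chains) is exactly that argument spelled out. Your first step is also correct as stated. The gap is in the pivotal contradiction: a vacancy at a position in column $k$ cannot produce a convex chain of the form $s_{k-1}s_{k}s_{k-1}$ or $s_{k+1}s_{k}s_{k+1}$, because the middle entry of such a chain would have to occupy precisely the position you assumed empty. What a vacancy at position $(k,r)$ actually produces is a pair of consecutive occurrences of $s_{k+1}$ (at $(k+1,r-1)$ and $(k+1,r+1)$) whose only intervening non-commuting entry is the single $s_{k+2}$ at $(k+2,r)$, i.e.\ the convex chain $s_{k+1}s_{k+2}s_{k+1}$, or the mirror chain $s_{k-1}s_{k-2}s_{k-1}$ on the other side; and if the far column's position is empty as well, one gets two consecutive identical generators with nothing non-commuting between them, contradicting reducedness. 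So the vacant column sits next to the \emph{outer} entries of the forbidden chain, not in its middle.

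This re-indexing also straightens out your end-column discussion, which as written addresses a difficulty that does not arise. Fill the triangle from the apex outward: the first step kills the vacancy at column $i-2$ via the chain $s_{i-1}s_{i}s_{i-1}$, and every subsequent vacancy at an interior column $c$ (including $c=1$, which your stated range $1<k<n+1$ accidentally omits) is killed by the rightward chain $s_{c+1}s_{c+2}s_{c+1}$. Since interior columns satisfy $c\leq i-2$ and the hypothesis gives $i\leq n$, every bond used satisfies $c+1\geq 2$ and $c+2\leq n$, hence has $m=3$; the length-four patterns (i) and (ii) of Lemma \ref{impermissible.heap.configs} are never needed, and no permissible $s_{2}s_{1}s_{2}$ chain is ever invoked. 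The hypothesis $i\neq n+1$ enters exactly once, at the apex: it guarantees $m(s_{i-1},s_{i})=3$, whereas for $i=n+1$ the apex chain $s_{n}s_{n+1}s_{n}$ is permissible and the lemma genuinely fails (the heap of the fully commutative element $\z^{R,2}_{1,1}$ is an unfilled zigzag). Finally, filling in this fixed order from the apex also removes the circularity lurking in "entries in the two neighbouring columns are present": at each step the entries needed for the forbidden chain are either boundary entries (covers supplied by saturation) or positions already filled.
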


\begin{proof}
This follows quickly from Lemma \ref{impermissible.heap.configs}; all other configurations will violate $w$ being fully commutative.
\end{proof}

\begin{example}
Let $w \in W_{c}(\C_{n})$ for $n\geq 5$, so that $m(s_{4}, s_{5})=3$.  If

%-- New mfpic environment, number 48 of 189. (size of end split: 2, should be 2)  ------------------->
\begin{center}
\includegraphics{ThesisFigs1.048}
\end{center}

is a saturated subheap of $H(w)$, then

%-- New mfpic environment, number 49 of 189. (size of end split: 2, should be 2)  ------------------->
\begin{center}
\includegraphics{ThesisFigs1.049}
\end{center}
is also a saturated subheap of $H(w)$ and
	$$s_{1}s_{2}s_{1}s_{3}s_{2}s_{4}s_{1}s_{3}s_{5}s_{2}s_{4}s_{1}s_{3}s_{2}s_{1}$$
is a subword of some reduced expression for $w$.
\end{example}

\begin{lemma}\label{zigzag_subword}
Let $w \in W_{c}(\C_{n})$.  Suppose that there exists $i$ with $1<i < n+1$ such that $s_{i+1}$ does not occur between two consecutive occurrences of $s_{i}$ in $w$.  Then $\z^{L,1}_{i,i}$ is a subword of some reduced expression for $w$.  In terms of heaps, if  $H(w)$ has two consecutive occurrences of entries labeled by $s_{i}$ such that there is no entry labeled by $s_{i+1}$ occurring between them, then 

%-- New mfpic environment, number 50 of 189. (size of end split: 2, should be 2)  ------------------->
\begin{center}
\includegraphics{ThesisFigs1.050}
\end{center}

is a convex subheap of $H(w)$. 
\end{lemma}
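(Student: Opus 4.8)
We are given two consecutive occurrences of $s_i$ (with $1 < i < n+1$) in $H(w)$ with no $s_{i+1}$ between them, and we want to produce the convex subheap $\z^{L,1}_{i,i} = s_i s_{i-1} \cdots s_2 s_1 s_2 \cdots s_{i-1} s_i$ as a subword of some reduced expression for $w$. Geometrically, this is a ``downward-then-upward zigzag'' connecting the two $s_i$ entries through the left wall. The plan is to show that the region of the heap strictly between the two consecutive $s_i$ entries is forced, by full commutativity, to contain exactly such a zigzag.

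**Approach.** The key observation is that for $w$ to be reduced, there must be at least one generator not commuting with $s_i$ occurring between the two consecutive occurrences of $s_i$ (as noted at the end of the heaps section). Since $s_{i+1}$ is excluded by hypothesis, the only candidate is $s_{i-1}$, so there is an entry labeled $s_{i-1}$ strictly between the two $s_i$'s. First I would apply Lemma~\ref{impermissible.heap.configs} to rule out the configuration where a single $s_{i-1}$ sits between the two $s_i$'s with nothing else: that would create an $s_i s_{i-1} s_i$ convex chain, which is forbidden since $1 < i < n+1$ forces $m(s_{i-1}, s_i) = 3$. Hence there must be \emph{two} occurrences of $s_{i-1}$ trapped between the two $s_i$ entries, one below and one above, and by the same reduced-word reasoning applied to this pair of $s_{i-1}$'s, there must be a generator not commuting with $s_{i-1}$ between \emph{them}.

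**Inductive descent and ascent.** The heart of the argument is an induction (descending from $i$ toward $1$) that builds the zigzag. Between the two consecutive $s_{i-1}$ entries, the noncommuting neighbor is $s_{i-2}$ or $s_i$; but the higher $s_i$ occurrence lies \emph{above} the upper $s_{i-1}$ and the lower $s_i$ lies \emph{below} the lower $s_{i-1}$, and consecutivity of the $s_i$ pair prevents an intervening $s_i$, so the relevant neighbor is $s_{i-2}$. Iterating, I force consecutive pairs of $s_{i-1}, s_{i-2}, \dots, s_2$, and finally reach the left wall: between the two consecutive $s_2$ entries a noncommuting neighbor must appear, and since $m(s_1,s_2)=4$ in $\C_n$ the chain $s_2 s_1 s_2$ is permissible (unlike the interior case), so a single $s_1$ at the apex completes the zigzag. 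Collecting the lower halves (the descent $s_i s_{i-1} \cdots s_1$) and upper halves (the ascent $s_1 s_2 \cdots s_i$) of these forced consecutive pairs yields $\z^{L,1}_{i,i}$ as a convex subword; convexity follows because each step is a covering relation in the heap with no room for intervening entries of the relevant columns.

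**Main obstacle.** The delicate part is bookkeeping the convexity and the ``no intervening entry'' claims at each level: I must verify that when I descend from column $i-1$ to column $i-2$, the forced $s_{i-2}$ entries are genuinely consecutive and that no entry of an adjacent column sneaks in to either break convexity or create a forbidden $s_k s_{k-1} s_k$ chain for interior $k$. This is where Lemma~\ref{all.filled.in} and Lemma~\ref{impermissible.heap.configs} do the real work, and where the case distinction between the interior generators (with $m=3$) and the boundary pair $\{s_1, s_2\}$ (with $m=4$) must be handled carefully so that the zigzag terminates correctly at $s_1$ rather than being forced to continue or to violate full commutativity.
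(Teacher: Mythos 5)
Your overall strategy --- descend from the consecutive pair of $s_{i}$'s toward the left wall, trapping consecutive pairs of $s_{i-1}, s_{i-2}, \dots, s_{2}$ and finally a single $s_{1}$ at the apex --- is essentially the paper's proof, which organizes this descent as an induction on $i$ (base case $i=2$, where $m(s_{1},s_{2})=4$ makes $s_{2}s_{1}s_{2}$ permissible; inductive step applies the hypothesis to the trapped pair of $s_{i-1}$'s). However, there is a genuine gap at the step where you claim there must be \emph{two} occurrences of $s_{i-1}$ trapped between the two $s_{i}$ entries. Your reasoning (reducedness forces at least one noncommuting neighbor; a single $s_{i-1}$ would give the forbidden convex chain $s_{i}s_{i-1}s_{i}$ when $i>2$) only yields \emph{at least} two. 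Nothing in your proposal rules out three or more occurrences of $s_{i-1}$, and this upper bound is exactly what the lemma needs: if a third $s_{i-1}$ lay between the two $s_{i}$'s, then $\z^{L,1}_{i,i}$ would fail to be a \emph{convex} subheap (the extra entry would be order-between entries of the zigzag without belonging to it), so the subword conclusion would not follow. You flag this as the ``main obstacle'' and correctly name Lemma \ref{all.filled.in} as relevant, but you never supply the argument, and it is the heart of the proof rather than bookkeeping.

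The paper closes this gap as follows: if there are three or more entries labeled $s_{i-1}$ between the minimal pair of $s_{i}$'s, then, since no $s_{i}$ occurs between any two of them, the inductive hypothesis applies to each consecutive pair of $s_{i-1}$'s, producing two stacked copies of $\z^{L,1}_{i-1,i-1}$; this stacked configuration is a saturated subheap of the form treated in Lemma \ref{all.filled.in}, whose conclusion forces the convex closure between the top two occurrences of $s_{1}$ to be completely filled in, and that filled-in region contains a convex chain corresponding to $s_{2}s_{1}s_{2}s_{1}$, contradicting $w \in W_{c}(\C_{n})$ by Lemma \ref{impermissible.heap.configs}. Only after this contradiction can one assert ``exactly two'' at each level and conclude convexity. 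One smaller point: in your second paragraph you assert that $1<i<n+1$ forces $m(s_{i-1},s_{i})=3$; this is false for $i=2$ (there $m(s_{1},s_{2})=4$), and for $i=2$ the single trapped $s_{1}$ is not a forbidden configuration but precisely the desired conclusion --- you recover this correctly at the apex of your descent, but the blanket claim should be restricted to $i>2$.
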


\begin{proof}
We proceed by induction on $i$.  For the base case, let $i=2$ and suppose that there exist two consecutive occurrences of $s_{2}$ such that $s_{3}$ does not occur between them.  Then

%-- New mfpic environment, number 51 of 189. (size of end split: 2, should be 2)  ------------------->
\begin{center}
\includegraphics{ThesisFigs1.051}
\end{center}

must be a convex subheap of $H(w)$, which implies that $s_{2}s_{1}s_{2}$ is a subword of $w$, as desired.  For the inductive step, assume that for $2\leq j \leq i-1$, whenever the hypotheses are met for $j$, $\z_{j,j}^{L,1}$ is a subword of $w$.  Now, assume that hypotheses are true for $i$.  Consider the entries in $H(w)$ corresponding to the two consecutive occurrences of $s_{i}$.  Since there is no entry labeled by $s_{i+1}$ occurring between these entries and $w$ is fully commutative, there must be at least two entries labeled by $s_{i-1}$ occurring between the consecutive occurrences of $s_{i}$.  For sake of a contradiction, assume that there are three or more entries in $H(w)$ labeled by $s_{i-1}$ occurring between the minimal pair of entries labeled by $s_{i}$.  By induction (on $i-1$), 

%-- New mfpic environment, number 52 of 189. (size of end split: 2, should be 2)  ------------------->
\begin{center}
\includegraphics{ThesisFigs1.052}
\end{center}

is a saturated subheap of $H(w)$.  But by Lemma \ref{all.filled.in}, the convex closure of the saturated subheap occurring between the top two occurrences of $s_{1}$ must be completely filled in.  This produces a convex chain that corresponds to the subword $s_{2}s_{1}s_{2}s_{1}$, which contradicts $w \in W_{c}(\C_{n})$.  Therefore, between the consecutive occurrences of entries labeled by $s_{i}$, there must be exactly two occurrences of an entry labeled by $s_{i}$.  That is, by induction,

%-- New mfpic environment, number 53 of 189. (size of end split: 2, should be 2)  ------------------->
\begin{center}
\includegraphics{ThesisFigs1.053}
\end{center}
is a convex subheap of $H(w)$ and hence $\z_{i,i}^{L,1}$ is a subword of some reduced expression for $w$.
\end{proof}

\begin{lemma}\label{typeI.or.nothing.above}
Let $w \in W_{c}(\C_{n})$ such that $s_{2}s_{1}s_{2}$ is a subword of some reduced expression for $w$ and let $i$ be the largest index such that

%-- New mfpic environment, number 54 of 189. (size of end split: 2, should be 2)  ------------------->
\begin{center}
\includegraphics{ThesisFigs1.054}
\end{center}

is a saturated subheap of $H(w)$.  Then one or both of the following must be true about $w$: 
\begin{enumerate}[label=\rm{(\roman*)}]
\item $w$ is of type I; 
\item the following subheap is the northwest corner of $H(w)$.

%-- New mfpic environment, number 55 of 189. (size of end split: 2, should be 2)  ------------------->
\begin{center}
\includegraphics{ThesisFigs1.055}
\end{center}

In particular, the entry labeled by $s_{i}$ in the heap above is not covered.
\end{enumerate}
\end{lemma}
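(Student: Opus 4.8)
The plan is to analyze the single entry at the top of the chevron $H(\z^{L,1}_{i,i})$ --- the occurrence of $s_i$ beginning the descending branch --- and to determine, via full commutativity, what may cover it. First I would invoke Lemma \ref{zigzag_subword} (using the hypothesis that $s_2 s_1 s_2$ is a subword) together with the constraints of Lemma \ref{impermissible.heap.configs} to confirm that the chevron running from this top $s_i$ down to the apex $s_1$ and back up to the lower $s_i$ is genuinely present as a convex subheap of $H(w)$, so that I may argue by picture. Since the top $s_i$ lies in column $i$, Lemma \ref{impermissible.heap.configs} says the only entries that can cover it lie in column $i-1$ or column $i+1$. Thus there are exactly three cases: the top $s_i$ is uncovered, it is covered by $s_{i-1}$, or it is covered by $s_{i+1}$.

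The case in which the top $s_i$ is covered by $s_{i-1}$ I would dispose of immediately. The top $s_i$ already covers the $s_{i-1}$ directly beneath it on the descending branch, so an $s_{i-1}$ above it would place $s_i$ between two consecutive occurrences of $s_{i-1}$, producing the convex chain $s_{i-1}s_i s_{i-1}$ (or, at the boundary $i=2$, the length-four chain $s_1 s_2 s_1 s_2$). Both are forbidden by Lemma \ref{impermissible.heap.configs}, contradicting $w \in W_c(\C_n)$, so this case cannot occur.

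If the top $s_i$ is uncovered, I would argue that we are in conclusion (ii). The key point is that along the descending branch each entry $s_k$ with $1<k<i$ is already covered from the upper right by $s_{k+1}$; were it also covered from the upper left by $s_{k-1}$, we would again manufacture a forbidden chain $s_{k-1}s_k s_{k-1}$ (or $s_1 s_2 s_1 s_2$ when $k=2$). Consequently nothing sits immediately above and to the left of the descending branch, the apex $s_1$ already occupies the leftmost column, and the top $s_i$ is uncovered by hypothesis; together these force exactly the emptiness indicated in the displayed subheap, so that the chevron occupies the northwest corner of $H(w)$ with its top $s_i$ uncovered, which is (ii).

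The remaining case, in which the top $s_i$ is covered by $s_{i+1}$ --- and, at the boundary $i=n+1$, covered instead by $s_n$, since $m(s_n,s_{n+1})=4$ permits $s_n s_{n+1} s_n$ --- is where I expect the real work to lie and is the main obstacle. Here the left branch of the chevron extends upward, and I would iterate the covering analysis: by maximality of $i$ the chevron cannot be completed to a symmetric chevron of top index $i+1$, so the upward extension is forced to be one-sided, and repeatedly forbidding the patterns $s_{k-1}s_k s_{k-1}$ (together with their length-four boundary analogues) shows that the heap above the chevron is a single ascending zigzag. Pushing this growth to an end generator produces inside $w$ one of the zigzag subwords $\z^{L,2}_{2,n}$, $\z^{L,2}_{n+1,n+1}$, $\z^{R,2}_{1,1}$, or $\z^{R,2}_{n,2}$ of Lemma \ref{is_zigzag}, whereupon that lemma forces $w$ to be of type I, giving (i). The delicate part is bookkeeping the maximality hypothesis against possible wrap-around at both boundaries and verifying that the one-sided growth truly terminates in a recognized zigzag rather than branching off into a size-two antichain; carefully handling these boundary interactions is the crux of the argument.
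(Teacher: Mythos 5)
Your first two cases (top entry uncovered, or covered by $s_{i-1}$) are correct and coincide with the paper's argument, but the third case --- the one you identify as the crux --- contains a genuine gap, and it originates in a misreading of the hypothesis. The saturated subheap in the statement is not the full symmetric chevron $H(\z^{L,1}_{i,i})$; it is the one-sided configuration consisting of a single ascending diagonal $s_i, s_{i-1}, \dots, s_2, s_1$ together with one further $s_2$ below the apex, i.e., the heap of $s_i s_{i-1} \cdots s_2 s_1 s_2$. (This is visible in the paper's proof, which speaks of ``the'' entry labeled by $s_3$ and ``the'' diagonal in the singular; in Remark \ref{remark.typeI.or.nothing.above}, which lists distinct northwest and southwest versions --- these would coincide for a symmetric chevron; and in the proof of Lemma \ref{main_zigzag_lemma}, where the two arms are extended to possibly different indices $k$ and $l$ and the two versions are applied separately.) With the correct reading, maximality of $i$ disposes of your third case at once: if an entry labeled $s_{i+1}$ covered the top $s_i$, then the same one-sided configuration with index $i+1$ would be a saturated subheap of $H(w)$, contradicting the choice of $i$, whenever $i < n+1$. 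The only situation maximality cannot exclude is $i=n+1$, where the cover must be labeled $s_n$; that immediately exhibits $\z^{R,2}_{n,2}=s_n s_{n+1} s_n s_{n-1} \cdots s_2 s_1 s_2$ as a subword, and Lemma \ref{is_zigzag} gives type I. No iteration, and no ``pushing to an end generator,'' is needed.

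Under your reading the lemma is in fact false, which is why your iteration cannot be completed. Maximality over symmetric chevrons only forbids symmetric completion, and nothing forces the ascending growth above the chevron to reach an end generator: it may terminate at an uncovered entry, producing no subword from Lemma \ref{is_zigzag}. Concretely, take $n \geq 5$ and $w = s_4 s_3 s_2 s_1 s_2 s_3 s_6 \in W_{c}(\C_n)$; this is fully commutative with $n(w)=2$, so $w$ is not of type I by Proposition \ref{zigzags}. The largest index for which a full symmetric chevron is a saturated subheap of $H(w)$ is $3$, and the top $s_3$ of its upper arm is covered by $s_4$ --- your third case --- yet the growth stops at the uncovered $s_4$, and neither of your conclusions holds for the index $3$: $w$ is not of type I, and the index-$3$ configuration is not the northwest corner since its top entry is covered. (Under the correct reading the example is harmless: the one-sided configuration has maximal index $4$, its top entry $s_4$ is uncovered, and conclusion (ii) holds.) So the difficulty you flagged --- ruling out premature termination of the one-sided growth, or its ``branching off into a size-two antichain'' --- is not delicate bookkeeping that careful work will settle; it is the symptom that this case should have been eliminated outright by the maximality hypothesis rather than argued through.
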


\begin{proof}
The higher entry labeled by $s_{2}$ cannot be covered by an entry labeled by $s_{1}$; otherwise, we produce one of the impermissible configurations of Lemma \ref{impermissible.heap.configs} and violate $w$ being fully commutative.  Then the entry labeled by $s_{3}$ cannot be covered by an entry labeled by $s_{2}$; again, we would contradict Lemma \ref{impermissible.heap.configs}.  Iterating, we see that each entry on the diagonal of the subheap labeled by $s_{j}$, for $2 \leq j \leq i-1$, can only be covered by an entry labeled by $s_{j+1}$.  If $i<n+1$, we are done since the entry labeled by $s_{i}$ cannot be covered by an entry labeled by $s_{i-1}$.  Assume that $i=n+1$.  If the entry labeled by $s_{n+1}$ at the top of the diagonal in the subheap is covered by an entry labeled by $s_{n}$, then by Lemma \ref{is_zigzag}, $w$ is of type I.  If the entry labeled by $s_{n+1}$ is not covered, then we are done, as well.
\end{proof}

\begin{remark}\label{remark.typeI.or.nothing.above}
The previous lemma has versions corresponding to the southwest, northeast, and southeast corners of $H(w)$.  
\end{remark}

As stated above, the purpose of the previous three lemmas was to aid in the proof of the next important lemma.  

\begin{lemma}\label{main_zigzag_lemma}
Let $w \in W_{c}(\C_{n})$.  Suppose that there exists $i$ with $1<i < n+1$ such that $s_{i+1}$ does not occur between two consecutive occurrences of $s_{i}$ in $w$.    Then one or both of the following must be true about $w$:
\begin{enumerate}[label=\rm{(\roman*)}]
\item $w$ is of type I; 
\item $w$ is of the form
$$u\z^{L,1}_{i,i}v=u s_{i}s_{i-1}\cdots s_{2}s_{1}s_{2}\cdots s_{i-1}s_{i} v,$$
where $\supp(u), \supp(v) \subseteq \{s_{i+1}, s_{i+2}, \dots, s_{n}, s_{n+1}\}$.  
\end{enumerate}
In terms of heaps, if $H(w)$ has two consecutive occurrences of entries labeled by $s_{i}$ such that there is no entry labeled by $s_{i+1}$ occurring between them, then either $w$ is of type I or

%-- New mfpic environment, number 56 of 189. (size of end split: 2, should be 2)  ------------------->
\begin{center}
\includegraphics{ThesisFigs1.056}
\end{center}

is a convex subheap of $H(w)$ and there are no other occurrences of entries labeled by $s_{1}, s_{2}, \dots, s_{i}$ in $H(w)$.
\end{lemma}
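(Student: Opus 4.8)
The plan is to build the whole triangle first and then show that it cannot be enlarged on the low-index side without forcing $w$ to be of type I. By hypothesis there are two consecutive occurrences of $s_i$ with no $s_{i+1}$ between them, so Lemma \ref{zigzag_subword} immediately gives that $\z^{L,1}_{i,i}=s_is_{i-1}\cdots s_2s_1s_2\cdots s_{i-1}s_i$ is a convex subheap of $H(w)$; in particular $s_2s_1s_2$ is a subword of $w$. I would then assume from the outset that $w$ is not of type I, so that the goal becomes: no entry labeled $s_1,\dots,s_i$ occurs outside this triangle. This is exactly the assertion that $\supp(u),\supp(v)\subseteq\{s_{i+1},\dots,s_{n+1}\}$, since the triangle occupies columns $1$ through $i$ and everything remaining splits into the part $u$ lying above the top $s_i$ and the part $v$ lying below the bottom $s_i$.

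Next I would control the two diagonal arms of the triangle using Lemma \ref{typeI.or.nothing.above} together with its symmetric corner versions (Remark \ref{remark.typeI.or.nothing.above}). Because $s_2s_1s_2$ is a subword, let $i_0\ge i$ be the largest index for which the ascending diagonal $s_2,s_3,\dots,s_{i_0}$ emanating from the upper arm is a saturated subheap at the corresponding corner of $H(w)$. Since $w$ is not of type I, Lemma \ref{typeI.or.nothing.above} forces this diagonal to be a corner of $H(w)$ with its tip (the topmost $s_{i_0}$) uncovered; in particular the extreme entries of columns $1,2,\dots,i_0$ along that boundary are exactly $s_1,s_2,\dots,s_{i_0}$, with nothing beyond them on the low-index side. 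Applying the version of the lemma attached to the lower arm pins down the opposite boundary in the same way. Together these two applications show that along the entire low-index boundary of $H(w)$ the only entries in columns $1,\dots,i$ are those belonging to the triangle.

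It then remains to rule out stray entries labeled $s_1,\dots,s_i$ lying \emph{inside} the convex hull of the chevron, between the two arms. Here I would argue directly from Lemma \ref{impermissible.heap.configs}: an additional entry labeled $s_j$ with $j\le i$ placed between the two arm entries in column $j$ must, for reducedness, be flanked by occurrences of $s_{j-1}$ or $s_{j+1}$ supplied by the arms, producing a convex subword of the form $s_{j-1}s_js_{j-1}$ when $2<j<n+1$, or an $s_1s_2s_1s_2$ pattern near the apex; both violate full commutativity. Combining this with the previous step, every occurrence of $s_1,\dots,s_i$ lies in the triangle, the top $s_i$ and bottom $s_i$ are uncovered on their low-index sides, and so $u$ and $v$ are supported on $\{s_{i+1},\dots,s_{n+1}\}$, which is conclusion (ii); the alternative, in which an arm instead extends all the way to $s_{n+1}$ and is covered so as to complete a long zigzag of Lemma \ref{is_zigzag}, is precisely the type I case (i) already isolated by Lemma \ref{typeI.or.nothing.above}.

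The main obstacle will be the boundary bookkeeping at the two ends of the triangle. At the $s_1$/$s_2$ end the relation $m(s_1,s_2)=4$ makes $s_1s_2s_1$ legal, so the forbidden pattern I must invoke there is the longer $s_1s_2s_1s_2$ rather than $s_2s_1s_2$, and the base index $i=2$ must be checked separately for this reason. At the high-index end I must confirm that an arm growing toward $s_{n+1}$ is exactly the situation that Lemma \ref{is_zigzag} (via Lemma \ref{typeI.or.nothing.above}) converts into the type I alternative. Making these two endpoint analyses precise, and verifying that the uncovered-corner conclusions really do exclude every low-index entry outside the triangle rather than merely those on the boundary, is where the genuine work lies.
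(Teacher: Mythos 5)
Your proposal is correct and follows essentially the same route as the paper's proof: Lemma \ref{zigzag_subword} produces the convex chevron, the two diagonal arms are extended maximally, and the northwest/southwest corner versions of Lemma \ref{typeI.or.nothing.above} (via Remark \ref{remark.typeI.or.nothing.above}) pin down the low-index boundary, with the case of a covered $s_{n+1}$ tip handed to Lemma \ref{is_zigzag} to yield the type I alternative. The only cosmetic differences are that your step ruling out strays \emph{inside} the chevron is already subsumed by the convexity furnished by Lemma \ref{zigzag_subword}, and the paper simplifies bookkeeping by choosing the largest index $i$ satisfying the hypothesis at the outset.
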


\begin{proof}
Choose the largest index $i$ with $1<i<n+1$ such that $s_{i+1}$ does not occur between two consecutive occurrences of $s_{i}$ in $w$.  By Lemma \ref{zigzag_subword}, $\z_{i,i}^{L,1}$ is a subword of some reduced expression for $w$ and 

%-- New mfpic environment, number 57 of 189. (size of end split: 2, should be 2)  ------------------->
\begin{center}
\includegraphics{ThesisFigs1.057}
\end{center}

is a convex subheap of $H(w)$.  Let $k$ be largest index with $i\leq k \leq n+1$ such that each entry labeled by $s_{j}$ on the upper diagonal in the above subheap covers an entry labeled by $s_{j-1}$ for $j\leq k$.  Similarly, let $l$ be the largest index with $i\leq l \leq n+1$ such that each entry on the lower diagonal in the above subheap labeled by $s_{j}$ is covered by an entry labeled by $s_{j-1}$ for $j\leq l$.  Then 

%-- New mfpic environment, number 58 of 189. (size of end split: 2, should be 2)  ------------------->
\begin{center}
\includegraphics{ThesisFigs1.058}
\end{center}

is a convex subheap of $H(w)$.  By the northwest and southwest versions of Lemma \ref{typeI.or.nothing.above}, 

%-- New mfpic environment, number 59 of 189. (size of end split: 2, should be 2)  ------------------->
\begin{center}
\includegraphics{ThesisFigs1.059}
\end{center}

must be a convex subheap of $H(w)$.  If neither of $k$ or $l$ are equal to $n+1$, then we are done since the entry labeled by $s_{k}$ (respectively, $s_{l}$) cannot be covered by (respectively, cover) an entry labeled by $s_{k-1}$ (respectively, $s_{l-1}$); otherwise, we contradict $w \in W_{c}(\C_{n})$.  Assume that $k=n+1$; the case $l=n+1$ follows by a symmetric argument.  If the entry labeled by $s_{n+1}$ is covered, it must be covered by an entry labeled by $s_{n}$.  But by Lemma \ref{is_zigzag}, $w$ would then be of type I.
\end{proof}

\begin{remark}
Note that all of the previous lemmas of this section have analogous statements where $s_{1}$ and $s_{2}$ are replaced with $s_{n+1}$ and $s_{n}$, respectively.
\end{remark}

The next three lemmas are generalizations of Lemma 5.3 in \cite{Green.R:P}.   In both of these lemmas, we assume that $w \in W_{c}(\C_{n})$ is irreducible.

\bigskip

\begin{lemma}\label{weak.star.lemma.middle}
Let $w \in W_{c}(\C_{n})$ for $n \geq 4$ and suppose that $w$ is irreducible and not of type I.   Consider the canonical representation of $H(w)$.  If $s_{i} \in \r_{k+1}$ with $i \notin \{1,2,n,n+1\}$, then the entry labeled by $s_{i} \in \r_{k+1}$ is covered by entries labeled by $s_{i-1}$ and $s_{i+1}$, where at least one of these occurs in $\r_{k}$.
\end{lemma}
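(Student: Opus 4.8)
The plan is to work entirely in the canonical representation of $H(w)$ and argue by contradiction. First I would record the easy structural facts: since $s_i \in \r_{k+1}$ with $k \ge 1$, the entry $s_i$ is not in the left descent set, hence is not maximal and so is covered by at least one entry; because every covering relation in $H(w)$ joins adjacent columns, every cover of $s_i$ is labelled $s_{i-1}$ or $s_{i+1}$. Moreover, the defining property of the canonical representation (each non-maximal entry is pushed as high as possible, i.e.\ placed immediately below its lowest cover) forces the lowest cover of $s_i$ to lie in $\r_k$. Thus it suffices to prove that $s_i$ is covered by \emph{both} $s_{i-1}$ and $s_{i+1}$; the row-$k$ clause is then automatic. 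Using the graph automorphism $s_j \mapsto s_{n+2-j}$ (which preserves $W_c(\C_n)$, irreducibility, and the property of not being type I), I may assume the lowest cover is an $s_{i+1} \in \r_k$, and suppose toward a contradiction that no $s_{i-1}$ covers $s_i$; then $s_{i+1}$ is the unique label covering $s_i$.

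The base case $k=1$ is immediate: then $s_{i+1} \in \r_1$ is maximal and covers $s_i$, and since $3 \le i \le n-1$ gives $m(s_i,s_{i+1})=3$, the element $w$ is left weak star reducible by $s_{i+1}$ with respect to $s_i$ (equivalently $w = s_{i+1}s_i v$ reduced), contradicting irreducibility. For $k \ge 2$ I would build a forced saturated chain upward from $s_i$. Following covers, the chain begins $s_i \lessdot s_{i+1}$; an entry labelled $s_j$ on the chain cannot be covered by another $s_j$ in its own column (there is always an intervening adjacent generator) and cannot be covered by the neighbour it just came from, since that would create one of the impermissible convex chains $s_j s_{j\pm1} s_j$ of Lemma \ref{impermissible.heap.configs}. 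Hence each interior step is forced to the next higher column, producing $s_i \lessdot s_{i+1} \lessdot \cdots$. Irreducibility forbids the chain from terminating at a maximal interior entry: a maximal $s_j$ with $m=3$ to the neighbour it covers would make $w$ left weak star reducible. The chain is therefore pushed to the boundary column $s_{n+1}$, where the relation $m(s_n,s_{n+1})=4$ permits the turn $s_n s_{n+1} s_n$; here I must treat the $m=4$ reductions carefully, checking that the only way to avoid an immediate reduction or an impermissible $s_n s_{n+1} s_n s_{n+1}$ is for the chain to bend and continue back toward the interior, and symmetrically at the left boundary $s_1$.

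The crux is the endgame. The forced chain either terminates at an entry that is visibly weak star reducible (contradicting irreducibility), or it is forced to bounce off both boundaries, at which point it realizes one of the zigzag subwords $\z^{L,2}_{2,n}$, $\z^{R,2}_{n,2}$, $\z^{R,2}_{1,1}$, $\z^{L,2}_{n+1,n+1}$ of Lemma \ref{is_zigzag}; in that case Lemma \ref{is_zigzag} forces $w$ to be of type I, contradicting the hypothesis. I expect the main obstacle to be precisely this boundary bookkeeping: showing that the chain cannot escape at a boundary corner (where a maximal $s_1$ or $s_{n+1}$ need not be reducible) without having first traversed far enough to contain a full Lemma \ref{is_zigzag} subword, and organizing the repeated application of Lemmas \ref{impermissible.heap.configs} and \ref{typeI.or.nothing.above} in all four corner versions so that every branch ends in either a reduction or a type I conclusion. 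The omitted symmetric possibility (the unique cover being $s_{i-1}$ rather than $s_{i+1}$) is handled by the same automorphism, which completes the argument.
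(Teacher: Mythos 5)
Your reduction to the ``unique cover'' hypothesis, your base case $k=1$, and your forced-chain propagation are sound in outline, with one bookkeeping caveat: Lemma \ref{impermissible.heap.configs} forbids \emph{convex} chains, not merely saturated ones, so at each step you must use the unique-cover property inherited from the previous step to check that the triple $s_j\lessdot s_{j\pm1}\lessdot s_j$ you exhibit really is convex (this works, but it is where the argument lives). Likewise, once the chain has turned at both walls it contains $\z^{L,2}_{2,n}$ or one of its three companions as a subword, and Lemma \ref{is_zigzag} gives type I, as you say.

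The genuine gap is the corner escape, and your proposed patch for it is false rather than merely unfinished. Nothing in your argument prevents the forced chain from running straight from $s_i$ up to a \emph{maximal} entry labeled $s_{n+1}$: the configuration $w=s_{n+1}s_n\cdots s_{i+1}s_iv$ (reduced), in which the up-set of the $s_i$ entry is exactly the chain, satisfies every constraint you have derived, contains no zigzag subword of Lemma \ref{is_zigzag}, and yields no weak star reduction from the chain data alone, because $m(s_n,s_{n+1})=4$: maximality of the $s_{n+1}$ entry only gives $w=s_{n+1}s_nv'$ reduced, which does not make $w$ reducible. The same escape recurs, after a single turn, at a maximal $s_1$. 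So the claim that ``the chain cannot escape at a boundary corner without having first traversed a full zigzag subword'' is simply untrue, and Lemma \ref{typeI.or.nothing.above} cannot rescue it, since that lemma presupposes a turn ($s_2s_1s_2$ as a subword) and in this configuration there is none. To kill this configuration you must leave the chain entirely and analyze the rest of the heap: use \emph{right} weak star irreducibility to show the $s_i$ entry covers something, propagate a second forced staircase \emph{downward}, rule out entries hanging off its side using left irreducibility, and only then arrive at a zigzag or a reduction. That downward analysis is exactly where the paper spends most of its effort (its Case (2), subcase (a)), and it is also why the paper casts the whole proof as an induction on the row index $k$: the inductive hypothesis, applied to the cover $s_{i\pm1}\in\r_k$, dispatches the straight-to-the-corner configuration in two lines in the generic case and confines the hard work to the columns $i\in\{3,n-1\}$. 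Without either the induction or a worked-out downward-staircase argument, your proof does not close.
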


\begin{proof}
We proceed by induction on $k$.  For the base case, assume that $k=1$, so that $s_{i} \in \r_{2}$.  Then this entry is covered by at least one of $s_{i-1}$ or $s_{i+1}$.  Note that our restrictions on $i$ and $n$ force $m(s_{i}, s_{i-1})=m(s_{i}, s_{i+1})=3$.  This implies that both $s_{i-1}$ and $s_{i+1}$ occur in $\r_{1}$;  otherwise, $w$ is left weak star reducible.  For the inductive step, assume that the theorem is true for $2 \leq k' \leq k-1$ for some $k$.  Now, suppose that $s_{i} \in \r_{k+1}$ with $i \notin \{1,2, n, n+1\}$.  Then at least one of $s_{i-1}$ or $s_{i+1}$ occur in $\r_{k}$.  We consider two cases: (1) $i \notin \{1,2,3,n-1,n,n+1\}$ and (2) $i \in \{3,n-1\}$.

\bigskip

Case (1): Assume that $i \notin \{1,2,3,n-1,n,n+1\}$.  Observe that this forces $n \geq 6$.  Without loss of generality, assume that $s_{i-1} \in \r_{k}$.  The case with $s_{i+1} \in \r_{k}$ is symmetric since the restrictions on $i$ imply that we may apply the induction hypothesis to either $i-1$ or $i+1$.  By induction, the entry labeled by $s_{i-1}$ occurring in $\r_{k}$ is covered by an entry labeled by $s_{i-2}$ and an entry labeled by $s_{i}$.  This implies that the entry labeled by $s_{i}$ occurring in $\r_{k+1}$ must be covered by an entry labeled by $s_{i+1}$; otherwise, we produce a convex chain corresponding to the subword $s_{i}s_{i-1}s_{i}$.  This yields our desired result.

\bigskip

Case (2):  For the second case, assume that $i \in \{3, n-2\}$.  Without loss of generality, assume that $i=3$, the other case being similar.  Then $s_{3} \in \r_{k+1}$.  This entry is covered by either (a) an entry labeled by $s_{2}$, (b) an entry labeled by $s_{4}$, or (c) both.  If we are in situation (c), then we are done.  For sake of a contradiction, assume that exactly one of (a) or (b) occurs.

\bigskip

First, assume that (a) occurs, but (b) does not.  That is, an entry labeled by $s_{2}$ covers the entry labeled by $s_{3}$ that occurs in $\r_{k+1}$.  Meanwhile, the entry labeled by $s_{3}$ that occurs in $\r_{k+1}$ is not covered by an entry labeled by $s_{4}$.  This implies that $s_{2} \in \r_{k}$.  Since $k \geq 2$, the entry labeled by $s_{2}$ occurring in $\r_{k}$ is covered by at least one of $s_{1}$ or $s_{3}$.  Since $w$ is fully commutative, the entry labeled by $s_{2}$ occurring in $\r_{2}$ cannot be covered by an entry labeled by $s_{3}$.  Thus, $s_{1} \in \r_{k-1}$.  

\bigskip

For sake of a contradiction, assume that $k \neq 2$, so that $\r_{k-1}$ is not the top row of the canonical representation for $H(w)$.  Then we must have $s_{2} \in \r_{k-2}$.  Since $w$ is not left weak star reducible, we cannot have $k=3$; otherwise, $w$ is left weak star reducible by $s_{2}$ with respect to $s_{1}$.  So, $k>3$, which implies that the entry labeled by $s_{2}$ occurring in $\r_{k-2}$ is covered.  This entry cannot be covered by $s_{1}$ since $w$ is fully commutative.  Therefore, we have $s_{3} \in \r_{k-3}$.  But by induction, this entry is covered by an entry labeled by $s_{2}$ and an entry labeled by $s_{4}$.  But this produces a convex chain that corresponds to the subword $s_{2}s_{3}s_{2}$, which contradicts $w$ being fully commutative.  We have just shown that $k=2$.  

\bigskip

Thus far, we have $s_{1} \in \r_{1}$, $s_{2} \in \r_{2}$, and $s_{3} \in \r_{3}$, while $s_{4}$ does not cover the entry labeled by $s_{3}$ occurring in $\r_{3}$ and $s_{3}$ does not cover the entry labeled by $s_{2}$ occurring in $\r_{2}$.  Since $w$ is not left weak star reducible, $s_{1} \notin \r_{3}$.  Since $w$ is not right weak star reducible, the entry labeled by $s_{3}$ occurring in $\r_{3}$ must cover an entry.  However, since $w$ is fully commutative, we cannot have $s_{2} \in \r_{4}$, and so, we must have $s_{4} \in \r_{4}$.  Continuing with similar reasoning, we see that

%-- New mfpic environment, number 60 of 189. (size of end split: 2, should be 2)  ------------------->
\begin{center}
\includegraphics{ThesisFigs1.060}
\end{center}

is a saturated subheap of $H(w)$, where $s_{j}$ occurs in $\r_{j}$.  

\bigskip

For sake of a contradiction, assume that the occurrence of $s_{n}$ in $\r_{n}$ is covered by an entry labeled by $s_{n+1}$.  In this case, the occurrence of $s_{n+1} \in \r_{n+1}$ cannot cover any other entry, namely one labeled by $s_{n}$, since $w$ is fully commutative; otherwise, we produce a convex chain that corresponds to the subword $s_{n+1}s_{n}s_{n+1}s_{n}$.  But then $w$ is right weak star reducible by $s_{n+1}$ with respect to $s_{n}$, which contradicts $w$ being irreducible.  Therefore, the entry labeled by $s_{n}$ occurring in $\r_{n}$ is only covered by the entry labeled by $s_{n-1}$ occurring in $\r_{n-1}$.    

\bigskip

Since $w$ is fully commutative and we must avoid the convex chains of Lemma \ref{impermissible.heap.configs}, we can quickly conclude that

%-- New mfpic environment, number 61 of 189. (size of end split: 2, should be 2)  ------------------->
\begin{center}
\includegraphics{ThesisFigs1.061}
\end{center}

is the top $n+1$ rows of the canonical representation of $H(w)$.  Since $w$ is not of type I, this cannot be all of $w$.  The only possibility is that $s_{n} \in \r_{n+2}$.  Since $w$ is not right weak star reducible, this cannot be all of $H(w)$ either.  So, at least one of $s_{n-1}$ or $s_{n+1}$ occur in $\r_{n+3}$.  We cannot have $s_{n+1} \in \r_{n+3}$ because $w$ is fully commutative.  Thus, $s_{n-1} \in \r_{n+3}$ while $s_{n+1}$ is not.  Again, since $w$ is not right weak star reducible, we must have $s_{n-2} \in \r_{n+4}$ while $s_{n} \notin \r_{n+4}$.  Continuing with similar reasoning, we quickly see that

%-- New mfpic environment, number 62 of 189. (size of end split: 2, should be 2)  ------------------->
\begin{center}
\includegraphics{ThesisFigs1.062}
\end{center}

is a convex subheap of $H(w)$.  But then by Lemma \ref{is_zigzag}, $w$ is of type I, which is a contradiction.  Therefore, we cannot have possibility (a) occurring while (b) does not.

\bigskip

The only remaining possibility is that (b) occurs, but (a) does not.  That is, $s_{3} \in \r_{k+1}$ and $s_{4} \in \r_{k}$, while the entry labeled by $s_{3}$ occurring in $\r_{k+1}$ is not covered by an entry labeled by $s_{2}$.  Observe that the case $n=4$ is covered by an argument that is symmetric to the argument made above when we assumed that (a) occurs, but (b) does not, where we take $i=n-1$ instead of $i=3$.  So, assume that $n>4$.  Then by induction, we must have the entry labeled by $s_{4}$ occurring in $\r_{k}$ covered by an entry labeled by $s_{3}$ (and by an entry labeled by $s_{5}$).  But then we produce a convex chain corresponding to the subword $s_{3}s_{4}s_{3}$, which contradicts $w$ being fully commutative.  Therefore, if $i=3$, then the entry labeled by $s_{i}$ occurring in $\r_{k+1}$ is covered by an entry labeled by $s_{i-1}=s_{2}$ and by an entry labeled by $s_{i+1}=s_{4}$, as desired.

\bigskip

We have exhausted all possibilities, and hence we have our desired result.
\end{proof}

\begin{lemma}\label{weak.star.lemma.end.n=3}
Let $w \in W_{c}(\C_{3})$ and suppose that $w$ is irreducible and not of type I.   Consider the canonical representation of $H(w)$.  Then
\begin{enumerate}[label=\rm{(\roman*)}]
\item if $s_{2} \in \r_{k+1}$ is covered by an entry labeled by $s_{3}$, then an entry labeled by $s_{1}$ covers the entry labeled by $s_{2} \in \r_{k+1}$;
\item if $s_{3} \in \r_{k+1}$ is covered by an entry labeled by $s_{2}$, then an entry labeled by $s_{4}$ covers the entry labeled by $s_{3} \in \r_{k+1}$.
\end{enumerate}
\end{lemma}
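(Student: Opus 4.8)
The plan is to exploit the graph automorphism $\sigma$ of the Coxeter graph of type $\C_3$ that interchanges $s_1 \leftrightarrow s_4$ and $s_2 \leftrightarrow s_3$; since both end-bonds have $m=4$ and the central bond has $m=3$, this $\sigma$ preserves all of the Coxeter data. It induces a length-preserving automorphism of $W(\C_3)$ that restricts to $W_c(\C_3)$, preserves irreducibility and the property of not being of type I, and acts on heaps by the left-right reflection exchanging columns $1\leftrightarrow 4$ and $2\leftrightarrow 3$. Under this reflection an $s_2$-entry covered by $s_3$ (respectively by $s_1$) becomes an $s_3$-entry covered by $s_2$ (respectively by $s_4$), so statement (i) for $w$ is exactly statement (ii) for $\sigma(w)$. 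Hence it suffices to prove (i), and (ii) will follow by applying (i) to $\sigma(w)$.

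I would prove (i) by strong induction on $k$. For the base case $k=1$, suppose $s_2 \in \r_2$ is covered by $s_3$; the covering entry must lie in the top row, so $s_3 \in \L(w)$. If this $s_2$ were \emph{not} also covered by $s_1$, then $s_3$ would be its only cover, so deleting that top $s_3$ would leave the $s_2$ uncovered; that is, $w = s_3 s_2 v$ reduced. Since $m(s_2,s_3)=3$, the element $s_2 w = s_2 s_3 s_2 v$ contains the impermissible subword $s_2 s_3 s_2$ of Remark \ref{illegal.convex.chains}, so $s_2 w \notin W_c(\C_3)$ and $w$ is left weak star reducible by $s_3$ with respect to $s_2$, contradicting irreducibility. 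Hence $s_2 \in \r_2$ is covered by $s_1$.

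For the inductive step I would assume (i), and hence by the symmetry above also (ii), for all row indices strictly below $k$. Given $s_2 \in \r_{k+1}$ covered by $s_3$ but, for contradiction, not by $s_1$, I would trace the covering structure upward through columns $\{2,3\}$: the covering $s_3$ occupies some row $\le k$, and at each stage the induction hypothesis applied to the entries already placed in rows $\le k$ forces the neighbouring entry to appear (exactly as in Case (2) of Lemma \ref{weak.star.lemma.middle}), while Lemma \ref{impermissible.heap.configs} rules out the forbidden $s_2 s_3 s_2$ and $s_3 s_2 s_3$ configurations. This propagation has only two possible terminations. Either the trace reaches the top row at an uncovered entry whose removal creates an impermissible configuration, so that $w$ admits a left or right weak star reduction, contradicting irreducibility; or the forced entries build one of the zigzag subwords of Lemma \ref{is_zigzag} (equivalently, trigger Lemma \ref{main_zigzag_lemma}), forcing $w$ to be of type I, again a contradiction. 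Either way the assumption fails, so $s_2 \in \r_{k+1}$ is covered by $s_1$, completing the induction; statement (ii) then follows from $\sigma$.

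The main obstacle is the bookkeeping in the inductive step around the two $m=4$ end-bonds $\{s_1,s_2\}$ and $\{s_3,s_4\}$. Because $m(s_1,s_2)=m(s_3,s_4)=4$, the words $s_2 s_1 s_2$ and $s_3 s_4 s_3$ are perfectly legal, and only their length-four extensions $s_1 s_2 s_1 s_2$ and $s_3 s_4 s_3 s_4$ are forbidden; thus the short forbidden-subword deductions that drive Lemma \ref{weak.star.lemma.middle} for $n \ge 4$ must be replaced by their length-four analogues, and considerable care is needed to separate the case in which the upward trace \emph{terminates} at the top (yielding a weak star reduction) from the case in which it \emph{extends} into a full zigzag (yielding type I), all while checking that no spurious $s_1 s_2 s_1 s_2$ or $s_3 s_4 s_3 s_4$ is introduced. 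This is precisely the point at which $n=3$ genuinely differs from the $n \ge 4$ middle case, and is why it requires a separate lemma.
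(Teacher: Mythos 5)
Your symmetry reduction via the graph automorphism of $\C_{3}$ and your base case $k=1$ are both correct, and your overall strategy (contradiction, forced heap structure, termination in either a weak star reduction or a type I element) is the same as the paper's, which disposes of (ii) ``by a symmetric argument'' and of $k=1$ by exactly your observation. The genuine gap is the inductive step: the dichotomy you assert for the trace --- either it exits the top row and yields a weak star reduction, or the forced entries build a zigzag subword of Lemma \ref{is_zigzag} --- is precisely what has to be proved, and in the pivotal case it fails as stated. Take $k=2$: the forced configuration is $s_{2}\in\r_{3}$ covered by $s_{3}\in\r_{2}$ covered by $s_{4}\in\r_{1}$, so your upward trace reaches the top row immediately, at an uncovered $s_{4}$. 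No weak star reduction results: $w$ would be left weak star reducible by $s_{4}$ with respect to $s_{3}$ only if $s_{3}w\notin W_{c}(\C_{3})$, but prepending $s_{3}$ creates only the chain $s_{3}s_{4}s_{3}$, which is permissible because $m(s_{3},s_{4})=4$. Nor has any zigzag of Lemma \ref{is_zigzag} been built: only three entries, in columns $2$, $3$, $4$, have been forced, while every zigzag subword for $n=3$ involves all four generators. At this point the paper abandons upward tracing and propagates \emph{downward} from $\r_{3}$ through all four columns --- using irreducibility, full commutativity, and repeatedly the hypothesis that $w$ is not of type I --- to force $s_{1}\in\r_{4}$, then $s_{2}\in\r_{5}$, $s_{3}\in\r_{6}$, $s_{4}\in\r_{7}$, i.e.\ the full zigzag $s_{4}s_{3}s_{2}s_{1}s_{2}s_{3}s_{4}$, before Lemma \ref{is_zigzag} can be invoked. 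Your sketch, confined to upward motion in columns $\{2,3\}$, never produces this; your closing paragraph names exactly this difficulty but does not resolve it, so the case that makes $n=3$ require its own lemma is left unproved.

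A secondary point: the induction buys less than you suggest. Where the upward trace can be completed, the induction hypothesis is used at most once: for $k\geq 4$, after showing that the $s_{3}\in\r_{k-2}$ (which covers the forced $s_{4}\in\r_{k-1}$) can be covered neither by $s_{4}$ (this creates the impermissible chain $s_{4}s_{3}s_{4}s_{3}$) nor by nothing (this makes $w$ left weak star reducible by $s_{3}$ with respect to $s_{4}$), statement (ii) at row index $k-3$ converts ``covered by $s_{2}$'' into ``covered by $s_{4}$,'' a contradiction. The cases $k=2$ and $k=3$ must still be handled directly, with $k=2$ requiring the downward zigzag argument above; your appeal to ``exactly as in Case (2) of Lemma \ref{weak.star.lemma.middle}'' does not transfer, since that lemma's proof assumes $n\geq 4$. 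This is why the paper dispenses with induction altogether and simply runs the explicit trace in the three cases $k=1$, $k=2$, and $k>2$.
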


\begin{proof}
We prove (i); (ii) follows by a symmetric argument.  Note that since $n=3$, $m(s_{2},s_{3})=3$ and $m(s_{3},s_{4})=4$.  If $k=1$, then $s_{2} \in \r_{2}$ and $s_{3} \in \r_{1}$.    This implies that $s_{1}$ must occur in $\r_{1}$, otherwise, $w$ is left weak star reducible by $s_{3}$ with respect to $s_{2}$.  So, assume that $k \geq 2$.  For sake of a contradiction, assume that the entry labeled by $s_{2}$ occurring in $\r_{k+1}$ is not covered by an entry labeled by $s_{1}$.  This forces $s_{3} \in \r_{k}$.  Then at least one of $s_{2}$ or $s_{4}$ must cover the entry labeled by $s_{3}$ occurring in $\r_{k-1}$.  Since $s_{1}$ does not cover the occurrence of $s_{2} \in \r_{k+1}$ and $w$ is fully commutative, it must be the case that $s_{4} \in \r_{k-1}$, while the entries labeled by $s_{3}$ and $s_{2}$ occurring in $\r_{k}$ and $\r_{k+1}$, respectively, are not covered by entries labeled by $s_{2}$ and $s_{1}$, respectively.  Then

%-- New mfpic environment, number 63 of 189. (size of end split: 2, should be 2)  ------------------->
\begin{center}
\includegraphics{ThesisFigs1.063}
\end{center}

is a convex subheap of $H(w)$.  We consider two cases: (1) $k=2$ and (2) $k>2$.

\bigskip

Case (1):  First, assume that $k=2$, so that $s_{2} \in \r_{3}$, $s_{3} \in \r_{2}$, $s_{4} \in \r_{1}$, and neither $s_{1}$ nor $s_{2}$ occur in $\r_{1}$ or $\r_{2}$.  Then the subheap immediately above is the northwest corner of $H(w)$, where the entry labeled by $s_{4}$ occurs in the top row.  We are assuming that $w$ is not of type I, so this cannot be all of $H(w)$.  We cannot have $s_{4} \in \r_{3}$; otherwise, $w$ would be left weak star reducible by $s_{4}$ with respect to $s_{3}$.  The entry labeled by $s_{2}$ occurring in $\r_{3}$ must cover at least one entry labeled by $s_{1}$ or $s_{3}$ in $\r_{4}$.  But it cannot be an entry labeled by $s_{3}$ since we would produce a convex chain corresponding to the subword $s_{3}s_{2}s_{3}$.  This implies that 

%-- New mfpic environment, number 64 of 189. (size of end split: 2, should be 2)  ------------------->
\begin{center}
\includegraphics{ThesisFigs1.064}
\end{center}

is the top of $H(w)$.  This cannot be all of $H(w)$ since we are assuming that $w$ is not of type I.  So, we must have $s_{2} \in \r_{5}$.  Note that this must be all of $\r_{5}$.  Again, this cannot be all of $H(w)$.  So, we must have $s_{1}$ or $s_{3}$ occurring in $\r_{6}$.  Since $w$ is fully commutative, we cannot have $s_{1}$ occurring in $\r_{6}$, and so $s_{3} \in \r_{6}$.  Yet again, this cannot be all of $H(w)$.  At least one of $s_{2}$ or $s_{4}$ must occur in $\r_{7}$, but it cannot be $s_{2}$; otherwise, we produce a convex chain corresponding to the subword $s_{2}s_{3}s_{2}$.  Therefore, the convex subheap

%-- New mfpic environment, number 65 of 189. (size of end split: 2, should be 2)  ------------------->
\begin{center}
\includegraphics{ThesisFigs1.065}
\end{center}

is the top of $H(w)$.  But then by Lemma \ref{is_zigzag}, $w$ must be of type I, which is a contradiction.   This completes the case $k=2$. 

\bigskip

Case (2):  Now, assume that $k>2$.  Recall that we are assuming that $s_{4} \in \r_{k-1}$, but that the entries labeled by $s_{3}$ and $s_{2}$ occurring in $\r_{k}$ and $\r_{k+1}$, respectively, are not covered by entries labeled by $s_{2}$ and $s_{1}$, respectively.  Then the entry in $\r_{k-1}$ labeled by $s_{4}$ must be covered by an entry labeled by $s_{3}$.  This implies that $s_{4}$ cannot occur in $\r_{k+1}$ since $w$ is fully commutative.  Then

%-- New mfpic environment, number 66 of 189. (size of end split: 2, should be 2)  ------------------->
\begin{center}
\includegraphics{ThesisFigs1.066}
\end{center}

is a convex subheap of $H(w)$.  This cannot be all of $H(w)$ since $w$ would then be left weak star reducible by $s_{3}$ with respect to $s_{4}$.  So, the entry labeled by $s_{3}$ in $\r_{k-2}$ must be covered by an entry labeled by $s_{2}$ (this entry cannot be covered by an entry labeled by $s_{4}$ since $w$ is fully commutative).  Again, since $w$ is fully commutative, but not left weak star reducible, the entry labeled by $s_{2}$ occurring in $\r_{k-3}$ must be covered by an entry labeled by $s_{1}$.  This implies that

%-- New mfpic environment, number 67 of 189. (size of end split: 2, should be 2)  ------------------->
\begin{center}
\includegraphics{ThesisFigs1.067}
\end{center}

is a convex subheap of $H(w)$.  Since $w$ is not of type I, this cannot be all of $w$.  Since $w$ is fully commutative, the only two possibilities are: (a) the entry labeled by $s_{1}$ in $\r_{k-4}$ is covered by an entry labeled by $s_{2}$ and (b) the entry labeled by $s_{2}$ in $\r_{k+1}$ covers an entry labeled by $s_{1}$.  In either case, $w$ is of type I by Lemma \ref{is_zigzag}, which is a contradiction.

\bigskip

We have exhausted all possibilities.  Therefore, it must be the case that the entry labeled by $s_{2} \in \r_{k+1}$ is covered by an entry labeled by $s_{1}$, as desired.
\end{proof}

The next lemma is similar to the previous, except that we assume that $n\geq 4$.  We have separated these two lemmas because their proofs are different.

\begin{lemma}\label{weak.star.lemma.end.n>=4}
Let $w \in W_{c}(\C_{n})$ for $n \geq 4$ and suppose that $w$ is irreducible and not of type I.   Consider the canonical representation of $H(w)$.  Then
\begin{enumerate}[label=\rm{(\roman*)}]
\item if $s_{2} \in \r_{k+1}$ is covered by an entry labeled by $s_{3}$, then an entry labeled by $s_{1}$ covers the entry labeled by $s_{2} \in \r_{k+1}$;
\item if $s_{n} \in \r_{k+1}$ is covered by an entry labeled by $s_{n-1}$, then an entry labeled by $s_{n+1}$ covers the entry labeled by $s_{n} \in \r_{k+1}$.
\end{enumerate}
\end{lemma}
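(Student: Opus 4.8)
The plan is to prove (i) in detail; statement (ii) will then follow by the symmetric argument in which $s_1, s_2, s_3, s_4$ are replaced throughout by $s_{n+1}, s_n, s_{n-1}, s_{n-2}$. Throughout I will argue by contradiction: assuming $s_2 \in \r_{k+1}$ is covered by an entry labeled $s_3$ but that \emph{no} entry labeled $s_1$ covers it, I will derive a contradiction with either $w$ being fully commutative or $w$ being irreducible. The observation that makes the $n \geq 4$ case genuinely easier than the $n = 3$ case of Lemma \ref{weak.star.lemma.end.n=3} is that here $3 \notin \{1,2,n,n+1\}$, so $s_3$ is a \emph{middle} generator and Lemma \ref{weak.star.lemma.middle} applies directly to it; in the $n = 3$ case $s_3$ is adjacent to the end generator $s_4$ (with $m(s_3,s_4)=4$), that lemma is unavailable, and one is forced into the long zigzag analysis used there.

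First I would pin down the covering entry. Since the covers of $s_2 \in \r_{k+1}$ lie only in columns $1$ and $3$, and we are assuming there is no $s_1$ covering it, the covering entry labeled $s_3$ is the \emph{unique} cover of this $s_2$; consequently the $s_2$ sits exactly one row below it, so the covering $s_3$ lies in $\r_k$. I then split on whether this $s_3$ is in the top row, i.e. on whether $k \geq 2$ or $k = 1$.

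For the main case $k \geq 2$, the entry labeled $s_3$ lies in $\r_k$ with $k \geq 2$, hence is not in the top row and is itself covered by something. Applying Lemma \ref{weak.star.lemma.middle} to it (legitimate since $n \geq 4$ forces $3 \notin \{1,2,n,n+1\}$) shows it is covered by entries labeled $s_2$ and $s_4$. The resulting configuration---an entry labeled $s_2$ covering the entry labeled $s_3 \in \r_k$, which in turn covers the entry labeled $s_2 \in \r_{k+1}$---is a convex chain realizing the subword $s_2 s_3 s_2$, which is forbidden by Lemma \ref{impermissible.heap.configs}. This contradiction forces an $s_1$ to cover $s_2 \in \r_{k+1}$, as desired.

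The boundary case $k = 1$ is where I expect the only real care to be needed, since the middle lemma does not apply to a top-row entry. Here $s_3 \in \r_1$ (so $s_3 \in \L(w)$) covers $s_2 \in \r_2$, and by assumption no $s_1$ covers this $s_2$. I would verify directly that $w$ is left weak star reducible by $s_3$ with respect to $s_2$, contradicting irreducibility: condition (1) holds because removing the top $s_3$ exposes $s_2$ at the top (its only cover was that $s_3$), so $s_2 \in \L(s_3 w)$ with $m(s_3,s_2)=3$; condition (2) holds because $s_2 \notin \L(w)$ gives $l(s_2 w) = l(w)+1$ with reduced expression beginning $s_2 s_3 s_2 \cdots$, so $s_2 w \notin W_c(\C_n)$. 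Thus the only obstacle is this degenerate top-row situation, where one must fall back on the definition of weak star reducibility and check both of its clauses rather than quoting Lemma \ref{weak.star.lemma.middle}. With (i) established, (ii) follows verbatim after the end-reversing relabeling, invoking Lemma \ref{weak.star.lemma.middle} for the middle generator $s_{n-1}$ (also legitimate since $n-1 \notin \{1,2,n,n+1\}$ for $n \geq 4$) and the forbidden subword $s_n s_{n-1} s_n$.
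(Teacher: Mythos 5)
Your proof is correct and follows essentially the same route as the paper's: the paper also splits into the case $k=1$, where the absence of an $s_{1}$ cover would make $w$ left weak star reducible by $s_{3}$ with respect to $s_{2}$, and the case $k\geq 2$, where Lemma \ref{weak.star.lemma.middle} applied to the covering $s_{3}$ produces an $s_{2}$ above it and the forbidden convex chain $s_{2}s_{3}s_{2}$ of Lemma \ref{impermissible.heap.configs} forces the $s_{1}$ cover. Your write-up is, if anything, slightly more explicit than the paper's (e.g., in checking both clauses of weak star reducibility and in locating the covering $s_{3}$ in $\r_{k}$), but the ideas coincide.
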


\begin{proof}
We prove (i); (ii) follows by a symmetric argument.  Note that since $n\geq 4$, $m(s_{2},s_{3})=3$.  As in the proof of the previous lemma, if $k=1$, then $s_{2} \in \r_{2}$ and $s_{3} \in \r_{1}$.    This implies that $s_{1}$ must occur in $\r_{1}$, otherwise, $w$ is left weak star reducible by $s_{3}$ with respect to $s_{2}$.  Now, assume that $k \geq 2$, so that $s_{2} \in \r_{k+1}$ and this entry is covered by an entry labeled by $s_{3}$.  Then by Lemma \ref{weak.star.lemma.middle}, entries labeled by $s_{2}$ and $s_{4}$ cover the entry labeled by $s_{3}$ occurring in $\r_{k}$.  Since $w$ is fully commutative, we must have the entry labeled by $s_{2}$ occurring in $\r_{k+1}$ covered by an entry labeled by $s_{1}$, as desired; otherwise, we produce one of the impermissible configurations of Lemma \ref{impermissible.heap.configs} and violate $w$ being fully commutative.
\end{proof}

\begin{remark}
Lemmas \ref{weak.star.lemma.middle}, \ref{weak.star.lemma.end.n=3}, and \ref{weak.star.lemma.end.n>=4} all have ``upside-down'' versions, where we replace $k+1$ with $k-1$ and we swap the phrases ``is covered by'' and ``covers.'' 
\end{remark}

\end{section}

\begin{section}{Proof of the classification of the irreducible elements in $W(B_{n})$}

Finally, we are ready to prove the classification of the irreducible elements in $W(B_{n})$.

\bigskip

\textit{Proof of Theorem \ref{Bwsrm}.}  First, observe that if $w$ is irreducible in $W(B_{n'})$ for $n' \leq n$, then $w$ is also irreducible in $W(B_{n})$ when considered as an element of the larger group.  Also, we see that every element on our list is, in fact, irreducible.  It remains to show that our list is complete. We induct on $n$.  

\bigskip

For the base case, consider $n=2$.  An exhaustive check verifies that the only irreducible elements in $W(B_{2})$ are $s_{1}$, $s_{2}$, $s_{1}s_{2}$, and $s_{2}s_{1}$, which agrees with the statement of the theorem.  

\bigskip

For the inductive step, assume that for all $n' \leq n-1$, our list is complete.  Let $w \in W_{c}(B_{n})$ and assume that $w$ is irreducible.  For sake of a contradiction, assume that $w$ is not on our list.  First, we argue that $\supp(w)$ must be all of $S(B_{n})$.  Suppose that there exists $s \notin \supp(w)$.  Say $s=s_{i}$.  Then we can factorize $w$ as $w=uv$ (reduced), where $\supp(u) \subseteq \{s_{1}, \dots, s_{i-1}\}$ and $\supp(v) \subseteq \{s_{i+1},\dots, s_{n}\}$.  Note that since $w$ is irreducible, both $u$ and $v$ are irreducible, as well.  In particular, $u$ is irreducible in $W(B_{i-1})$.  By induction, $u$ is on our list.  Since no generator occurring in $v$ involves a bond of strength 4, all weak star reductions are equivalent to ordinary star reductions.  Then by \cite[Theorem 6.3]{Green.R:P}, $v$ is equal to a product of commuting generators (where we consider $v$ as an element of a type $A$ Coxeter group).  Since $\supp(u) \cap \supp(v) = \emptyset$, $w$ must already be on our list, which contradicts our assumption that it is not.  So, we must have $\supp(w)=S(B_{n})$.  In particular, $s_{n-1}$ and $s_{n}$ occur in $\supp(w)$.  

\bigskip

Next, we argue that there can only be a single occurrence of $s_{n}$.  For sake of a contradiction, assume that there are at least two occurrences of $s_{n}$ in any reduced expression for $w$.  Consider any two consecutive occurrences of $s_{n}$ in the canonical representation of $H(w)$.  Since there is no generator $s_{n+1}$, we can apply Lemma \ref{main_zigzag_lemma} and conclude that 

%-- New mfpic environment, number 68 of 189. (size of end split: 2, should be 2)  ------------------->
\begin{center}
\includegraphics{ThesisFigs1.068}
\end{center}

is a convex subheap of $H(w)$ and there are no other occurrences of entries labeled by $s_{1}, s_{2}, \dots, s_{n}$ in $H(w)$.  But then the heap above must be all of $H(w)$.  We see that $w$ is left weak star reducible by $s_{n}$ with respect to $s_{n-1}$, which contradicts $w$ being irreducible.  Thus, there must be a single occurrence of $s_{n}$ in any reduced expression for $w$.  

\bigskip

Assume that the canonical representation for $H(w)$ has $m$ rows and suppose that the unique occurrence of $s_{n}$ is in $\r_{k}$.  At this point, we will consider three cases: (1) $k=1$, (2) $k=2$, and (3) $k >2$.

\bigskip

Case (1):  First, assume that $k=1$, so that $s_{n} \in \r_{1}$.  Since $\supp(w)=S(B_{n})$, the entry labeled by $s_{n}$ must cover an entry labeled by $s_{n-1}$.  But since there is no generator $s_{n+1}$ available, we contradict the upside-down version of Lemma \ref{weak.star.lemma.end.n=3} if $n=3$ and Lemma \ref{weak.star.lemma.end.n>=4} if $n \geq 4$.

\bigskip

Case (2):  For the second case, assume that $k=2$, so that $s_{n} \in \r_{2}$.  Then we must have $s_{n-1} \in \r_{1}$.  But this implies that $w$ is left weak star reducible by $s_{n-1}$ with respect to $s_{n}$, which contradicts $w$ being irreducible.

\bigskip

Case (3):  For the final case, assume that $k>2$.  For this case, we consider two separate subcases: (a) $n=3$ and (b) $n \geq 4$.

\bigskip

(a): Suppose that $n=3$.  In this case, there is a unique entry of $H(w)$ labeled by $s_{3}$, which occurs in $\r_{k}$ for $k>2$.  Then we must have $s_{2} \in \r_{k-1}$.  Then $s_{1} \in \r_{k-2}$ since this is the only generator that is available to cover an entry labeled by $s_{2}$.  By the upside-down version of Lemma \ref{weak.star.lemma.end.n=3}, the entry labeled by $s_{2}$ occurring in $\r_{k-1}$ must cover an entry labeled by $s_{1}$.  Then 

%-- New mfpic environment, number 69 of 189. (size of end split: 2, should be 2)  ------------------->
\begin{center}
\includegraphics{ThesisFigs1.069}
\end{center}

is a convex subheap of $H(w)$, where we remind the reader that there is a unique occurrence of an entry in $H(w)$ labeled by $s_{3}$.  Since $w$ is fully commutative, we cannot have the higher occurrence of $s_{1}$ above covered an entry labeled by $s_{2}$; otherwise, we violate $w$ being fully commutative.  But then $w$ is left weak star reducible by $s_{1}$ with respect to $s_{2}$, which contradicts $w$ being irreducible.  That is, if $n=3$, our list is complete.

\bigskip

(b): Now, suppose that $n\geq 4$.  We have $s_{n} \in \r_{k}$ with $k>2$ and $n\geq 4$.  Then $s_{n-1} \in \r_{k-1}$.  By Lemma \ref{weak.star.lemma.middle}, we must have the entry labeled by $s_{n-1}$ occurring in $\r_{k-1}$ covered by both an entry labeled by $s_{n-1}$ and an entry labeled by $s_{n}$.  But this contradicts $w$ having a unique occurrence of $s_{n}$.  Thus, if $n \geq 4$, our list is complete.

\bigskip

Therefore, there are no irreducible elements in $W(B_{n})$ with support equal to all of $S(B_{n})$, and so our list must be complete.  

\hfill $\qed$

\end{section}

\end{chapter}

%%%%%%%%%%% Chapter 5 %%%%%%%%%%%%

\begin{chapter}{Classification of the weak star irreducible elements in $W(\C_{n})$}\label{chaptCwsrm}

The goal of this chapter is to classify the irreducible elements of $W(\C_{n})$. 

\begin{section}{Statement of theorem}

The following theorem is a new result.

\begin{theorem}\label{affineCwsrm}
Let $w \in W_{c}(\C_{n})$.  Then $w$ is irreducible if and only if $w$ is equal to one of the elements on the following list.
\begin{enumerate}[label=\rm{(\roman*)}]
\item $uv$, where $u$ is a type $B$ irreducible element and $v$ is a type $B'$ irreducible element with $\supp(u)\cap \supp(v)=\emptyset$;
\item $\z^{R,2k}_{1,1}$, $\z^{L,2k}_{n+1,n+1}$, $\z^{L,2k+1}_{n+1,1}$, and $\z^{R,2k+1}_{1,n+1}$; 
\item Any type II element.
\end{enumerate}
\end{theorem}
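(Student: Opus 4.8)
The plan is to prove both implications, with the ``only if'' direction carrying essentially all of the difficulty. For the ``if'' direction I would check each family in turn. The elements in (ii) are exactly the type I elements whose initial (respectively terminal) generator is an end generator $s_1$ or $s_{n+1}$, so their irreducibility is immediate from Remark \ref{irred.zigzags}. The type II elements in (iii) are irreducible by the remark following Proposition \ref{sandwich_stacks}. For (i), I would argue that a product $uv$ with $\supp(u)\cap\supp(v)=\emptyset$, $u$ a type $B$ irreducible element and $v$ a type $B'$ irreducible element, is $\C_n$-irreducible: since the two supports are separated by a generator not in $\supp(uv)$, the generators of $u$ commute with those of $v$, so any putative weak star reduction $\star^{L}_{s,t}$ or $\star^{R}_{s,t}$ of $uv$ must act entirely within $H(u)$ or within $H(v)$, contradicting the irreducibility of $u$ or of $v$ furnished by Theorem \ref{Bwsrm}. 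This decoupling is the same phenomenon exploited in the support-factorization step of the proof of Theorem \ref{Bwsrm}.

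For the ``only if'' direction, let $w \in W_c(\C_n)$ be irreducible and split on the value of $n(w)$. If $n(w)=1$, then $w$ is of type I by Proposition \ref{zigzags}, and Remark \ref{irred.zigzags} forces $w$ to be one of $\z^{R,2k}_{1,1}$, $\z^{L,2k}_{n+1,n+1}$, $\z^{L,2k+1}_{n+1,1}$, $\z^{R,2k+1}_{1,n+1}$, i.e.\ an element of (ii). So assume $n(w)\geq 2$, whence $w$ is not of type I and the covering lemmas of the previous chapter apply. I would then branch on $\supp(w)$. If some generator $s_i$ is absent, I factor $w=uv$ (reduced) with $\supp(u)\subseteq\{s_1,\dots,s_{i-1}\}$ and $\supp(v)\subseteq\{s_{i+1},\dots,s_{n+1}\}$; as in the proof of Theorem \ref{Bwsrm}, the irreducibility of $w$ forces $u$ and $v$ to be irreducible in the respective parabolic subgroups, which are of type $B$ and type $B'$ (with type $A$ a degenerate subcase). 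Applying Theorem \ref{Bwsrm} to each factor places $w$ into family (i).

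The crux is the remaining case: $w$ irreducible, not of type I, with $\supp(w)=S(\C_n)$; here I must show $w$ is of type II. The plan is to prove, in the canonical representation of $H(w)$, that every row is a \emph{complete} antichain of a single parity (equal to $\x_\O$ or to $\x_\E$) and that consecutive rows alternate parity; once that is established, matching the parities of the first and last rows against the number of rows identifies $w$ with one of the six type II forms $\x_\O,\x_\E,\y_k,\x_\E\y_k,\x_\E\y_k\x_\O,\y_k\x_\O$. The engine is Lemma \ref{weak.star.lemma.middle}, which says each interior entry is covered by both of its neighbors (and, in its upside-down form, covers both), so the presence of any interior generator propagates its same-parity neighbors across a row and its opposite-parity neighbors into the adjacent rows; iterating fills out complete alternating rows, while Lemma \ref{is_zigzag} guarantees that any failure of this pattern would instead exhibit $w$ as a type I element, contrary to assumption. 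Full support is what drives the propagation out to the end columns, and Lemmas \ref{weak.star.lemma.end.n=3} and \ref{weak.star.lemma.end.n>=4}, together with their upside-down versions, control the bond-$4$ ends $s_1,s_2$ and $s_n,s_{n+1}$ where the generic middle argument does not apply. I expect this rigidity argument to be the main obstacle: correctly anchoring the induction at the bond-$4$ ends, ensuring \emph{completeness} of each row rather than merely alternation, and handling the small-$n$ and parity-of-$n$ phenomena — precisely the issues that already forced the previous chapter to treat $n=3$ and $n\geq 4$ separately — is where the real work lies.
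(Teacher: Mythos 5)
Your overall architecture coincides with the paper's: peel off the proper-support case via Theorem \ref{Bwsrm}, dispose of the type I case via Proposition \ref{zigzags}, and show that a full-support, non-type-I irreducible element has a heap whose rows are complete alternating antichains $\x_{\O}$, $\x_{\E}$, hence is of type II. For the crux you name exactly the right engine (Lemmas \ref{weak.star.lemma.middle}, \ref{weak.star.lemma.end.n=3}, \ref{weak.star.lemma.end.n>=4}, \ref{is_zigzag}); the paper packages precisely this rigidity as the ``sandwich stack'' Lemmas \ref{sandwich.stack.lemma.n=2}--\ref{sandwich.stack.lemma.n>4}, anchors the argument by first proving that the \emph{bottom} row $\r_{m}$ equals $\x_{\O}$ or $\x_{\E}$ (a separate case analysis on $n$ that uses irreducibility directly), and then iterates those lemmas upward. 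So your plan amounts to inlining proofs the paper has already written out, and that part would go through.

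However, one step as written fails. You handle $n(w)=1$ \emph{before} the support split and invoke Remark \ref{irred.zigzags} to conclude that every irreducible type I element lies in family (ii). This is false for degenerate (monotone) zigzags: $w=s_{2}s_{1}$ is of type I with $n(w)=1$ and is irreducible --- by the characterization of weak star reducibility in $W_{c}(\C_{n})$, a reduction by $s_{2}$ would require $w$ to begin $s_{2}s_{1}s_{2}$ or $s_{2}s_{3}$, and a reduction by $s_{1}$ would require $w$ to end $s_{1}s_{2}s_{1}$, none of which occur --- yet $w$ is not among $\z^{R,2k}_{1,1}$, $\z^{L,2k}_{n+1,n+1}$, $\z^{L,2k+1}_{n+1,1}$, $\z^{R,2k+1}_{1,n+1}$ (all of these have full support); it sits in family (i) as $s_{2}s_{1}w_{p}$ with $w_{p}=e$. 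Remark \ref{irred.zigzags} is only reliable for zigzags long enough to create a braid, and the paper sidesteps this by establishing $\supp(w)=S(\C_{n})$ \emph{first} and only then invoking the $n(w)=1$ dichotomy; your proof is repaired by the same reordering. A second, smaller slip: in your ``if'' direction for (i) you infer from $\supp(u)\cap\supp(v)=\emptyset$ that the two supports are separated by a missing generator. They need not be: for $n\geq 3$, $u=s_{1}s_{2}$ and $v=s_{3}$ have disjoint supports, and $uv=s_{1}s_{2}s_{3}$ is in fact right weak star reducible by $s_{3}$ with respect to $s_{2}$ (since $s_{1}s_{2}s_{3}s_{2}$ contains the braid $s_{2}s_{3}s_{2}$). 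So the separation is not a consequence of disjointness but a genuinely needed hypothesis; it must be read into family (i) as the meaning of the factorization (which is what the paper's forward-direction construction, splitting across an absent generator, automatically supplies).
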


\begin{remark}
The elements listed in (i) include all possible products of commuting generators.  This includes $\x_{\O}$ and $\x_{\E}$, which are also included in (iii).  The elements listed in (ii) are the type I elements with left and right descent sets equal to either $s_{1}$ or $s_{n+1}$.
\end{remark}

As with the proof of the classification of the irreducible elements of $W(B_{n})$, we require a few lemmas.

\end{section}

\begin{section}{More preparatory lemmas}

In this section, we state and prove a few more technical lemmas that will be used to prove Theorem \ref{affineCwsrm}.  Note that all statements about $W(\C_{n})$ also apply to $W(B_{n})$ and $W(B'_{n})$.  Also, recall that if $w \in W_{c}(\C_{n})$, then $\r_{k}$ is the $k$th row of the canonical representation of $H(w)$.

\begin{lemma}\label{sandwich.stack.lemma.n=2}
Let $w \in W_{c}(\C_{2})$ and suppose that $w$ is irreducible.  Consider the canonical representation of $H(w)$.  If $w$ is not of type I, then $\r_{k+1}=\x_{\O}$ (respectively $\x_{\E}$) implies $\r_{k}=\x_{\E}$ (respectively $\x_{\O}$).
\end{lemma}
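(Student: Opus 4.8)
The plan is to argue entirely inside the canonical heap $H(w)$, exploiting the special shape of $\C_2$: since $s_1$ and $s_3$ commute while $s_2$ is adjacent to both, every row of $H(w)$ is one of $\{s_1\}$, $\{s_2\}$, $\{s_3\}$, or $\{s_1,s_3\}=\x_{\O}$ (no row can contain an adjacent pair), and here $\x_{\E}=\{s_2\}$. I would first record two facts. The \emph{no-sharing principle}: consecutive rows $\r_k$ and $\r_{k+1}$ cannot share a generator, since two occurrences of the same generator in adjacent rows would be consecutive in their column with no non-commuting generator between them, contradicting reducedness. And the \emph{cover fact}: the lowest entry covering an entry of $\r_{k+1}$ (for $k\geq 1$) lies in $\r_k$.

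For the implication $\r_{k+1}=\x_{\O}\Rightarrow\r_k=\x_{\E}$ the no-sharing principle does all the work. Given $\r_{k+1}=\{s_1,s_3\}$ with $k\geq 1$, the row $\r_k$ is nonempty and can contain neither $s_1$ nor $s_3$, hence $\r_k=\{s_2\}=\x_{\E}$; note this half needs neither irreducibility nor the hypothesis that $w$ is not of type I.

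For the converse $\r_{k+1}=\x_{\E}\Rightarrow\r_k=\x_{\O}$, no-sharing gives $s_2\notin\r_k$, so $\r_k$ is $\{s_1\}$, $\{s_3\}$, or $\{s_1,s_3\}$; by the $s_1\leftrightarrow s_3$ symmetry of $\C_2$ it suffices to exclude $\r_k=\{s_1\}$. Assume it. Using the cover fact and $\r_k=\{s_1\}$ I would check that $s_2\in\r_{k+1}$ is covered by $s_1$ and by no $s_3$ (when $k>1$ the forced row $\r_{k-1}=\{s_2\}$ blocks any column-$3$ cover from above). The heart is then a rigidity claim: the pair $\{s_1\}$ over $\{s_2\}$ forces $H(w)$ to be a single zigzag. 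Tracing upward, $\r_{k-1}=\{s_2\}$, and then an $s_1$ in $\r_{k-2}$ would complete a forbidden $s_1s_2s_1s_2$ convex chain through $\r_{k-2},\r_{k-1},\r_k,\r_{k+1}$ (Lemma \ref{impermissible.heap.configs}), forcing $\r_{k-2}=\{s_3\}$; iterating up and symmetrically down makes every row a singleton. By Definition \ref{n-value.def} this gives $n(w)=1$, so $w$ is of type I by Proposition \ref{zigzags}, contradicting the hypothesis.

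The main obstacle is the boundary of this trace. The forbidden-chain argument that forces singletons needs a row on \emph{both} sides of the bad pair; at the top ($k=1$, so $\r_1=\{s_1\}$, $\r_2=\{s_2\}$) the chain may fail to close and the heap is genuinely allowed to thicken one step below (as in $w=s_1s_2s_1s_3$). This is precisely where irreducibility enters: in such a boundary configuration one prepends $s_2$ and observes that $s_2w$ contains a forbidden $s_2s_1s_2s_1$ chain, i.e. $w$ is left weak star reducible by $s_1$ with respect to $s_2$, contradicting irreducibility; the degenerate $w=s_1s_2=\z_{1,2}$, which does not thicken, is instead excluded as type I. I expect the real work to be the clean bookkeeping of these boundary cases (the top, and by the up--down symmetry the bottom) so that every branch ends either in ``$n(w)=1$, hence type I by Proposition \ref{zigzags}'' or in an explicit weak star reduction contradicting irreducibility, with Lemma \ref{is_zigzag} available as an alternate certificate of type I once a long enough zigzag subword has been exposed.
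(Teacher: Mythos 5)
Your proposal is correct and takes essentially the same approach as the paper's proof: after disposing of the easy direction, the paper likewise assumes (WLOG) $\r_k=\{s_1\}$ under $\r_{k+1}=\x_{\E}$, splits into the boundary case $k=1$ and the interior case $k\geq 2$ (forcing $s_2\in\r_{k-1}$ and then $s_3\in\r_{k-2}$ exactly as in your trace), and ends every branch in either a weak star reduction contradicting irreducibility or a type I certificate. The only organizational difference is the endgame for $k>1$: you iterate the forbidden-chain argument until the whole heap is a chain and conclude $n(w)=1$, citing Proposition \ref{zigzags}, whereas the paper stops as soon as a five-letter zigzag subword ($s_2s_3s_2s_1s_2$ or $s_3s_2s_1s_2s_3$) is exposed and cites Lemma \ref{is_zigzag}.
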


\begin{proof}
Note that when $n=2$, we have $\x_{\O}=s_{1}s_{3}$ and $\x_{\E}=s_{2}$.  If $\r_{k+1}=s_{1}s_{3}$, then it is clear that $\r_{k}=s_{2}$.  Now, assume that $\r_{k+1}=s_{2}$.  Then at least one of $s_{1}$ or $s_{3}$ occurs in $\r_{k}$.  For sake of a contradiction, assume that only one of these occurs in $\r_{k}$, and without loss of generality, assume that $s_{1} \in \r_{k}$ while $s_{3} \notin \r_{k}$ (this makes sense since $n=2$).  We consider two cases: (1) $k=1$ and (2) $k \geq 2$.

\bigskip

Case (1):  First, assume that $k=1$, so that $s_{1} \in \r_{1}$ and $s_{2} \in \r_{2}$, while $s_{3} \notin \r_{1}$.  This cannot be all of $H(w)$ since $w$ is not of type I.  So, the occurrence of $s_{2}$ in $\r_{2}$ must cover an entry labeled by either $s_{1}$ or $s_{3}$.  Since $w$ is not left weak star reducible, we cannot have the occurrence of $s_{2}$ in $\r_{2}$ covering an entry labeled by $s_{1}$.  So, we must have $s_{3} = \r_{3}$.  Thus far, the top three rows of the canonical representation for $H(w)$ are as follows.

%-- New mfpic environment, number 70 of 189. (size of end split: 2, should be 2)  ------------------->
\begin{center}
\includegraphics{ThesisFigs1.070}
\end{center}

Since $w$ is not of type I, this cannot be all of $H(w)$.  Then $s_{2} \in \r_{4}$.  Again, this cannot be all of $H(w)$ since $w$ is not of type I.  So, we must have at least one of $s_{1}$ or $s_{3}$ occurring in $\r_{5}$.  However, since $w$ is fully commutative, $s_{3} \notin \r_{5}$, and so $s_{1} \in \r_{5}$.  But then the top five rows of the canonical representation for $H(w)$ are as follows.

%-- New mfpic environment, number 71 of 189. (size of end split: 2, should be 2)  ------------------->
\begin{center}
\includegraphics{ThesisFigs1.071}
\end{center}

According to Lemma \ref{is_zigzag}, $w$ is of type I, which is a contradiction.

\bigskip

Case (2):  For the second case, assume that $k\geq 2$.  Then we must have $s_{2} \in \r_{k-1}$.  Since $w$ is not left weak star reducible, we cannot have $k=2$; otherwise, $w$ is left weak star reducible by $s_{2}$ with respect to $s_{1}$.  Thus, $k>2$, and hence at least one of $s_{1}$ or $s_{3}$ occurs in $\r_{k-2}$.  Since $w$ is fully commutative, we cannot have $s_{1} \in \r_{k-2}$, and so, $s_{3} \in \r_{k-2}$.  Thus far, 

%-- New mfpic environment, number 72 of 189. (size of end split: 2, should be 2)  ------------------->
\begin{center}
\includegraphics{ThesisFigs1.072}
\end{center}

is a convex subheap of $H(w)$.  This cannot be all of $H(w)$ since $w$ is not of type I.  The only possibilities are that $s_{2} \in \r_{k-3}$ or $s_{3} \in \r_{k+2}$ (both possibilities could occur simultaneously).  In either case, $w$ must be of type I by Lemma \ref{is_zigzag}, which is a contradiction.

\bigskip

Therefore, $\r_{k+1}=\x_{\O}$ (respectively $\x_{\E}$) implies $\r_{k}=\x_{\E}$ (respectively $\x_{\O}$).
\end{proof}

The next lemma is of a similar flavor as the previous, except here we take $n=3$.  Also, observe that it is unnecessary to require $w$ to not be of type I.

\begin{lemma}\label{sandwich.stack.lemma.n=3}
Let $w \in W_{c}(\C_{3})$ and suppose that $w$ is irreducible.  Consider the canonical representation of $H(w)$.  Then $\r_{k+1}=\x_{\O}$ (respectively $\x_{\E}$) implies $\r_{k}=\x_{\E}$ (respectively $\x_{\O}$).
\end{lemma}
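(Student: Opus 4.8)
Since $n=3$ the relevant index set is $\{1,2,3,4\}$, so $\x_{\O}=s_{1}s_{3}$ and $\x_{\E}=s_{2}s_{4}$. The plan is to prove the direction ``$\r_{k+1}=\x_{\O}$ implies $\r_{k}=\x_{\E}$'' and then obtain the other direction from the Coxeter graph automorphism of $\C_{3}$ interchanging $s_{1}\leftrightarrow s_{4}$ and $s_{2}\leftrightarrow s_{3}$, which reflects heaps left-to-right (preserving rows, full commutativity, and irreducibility) and swaps $\x_{\O}$ with $\x_{\E}$. First I would explain why $w$ need not be assumed off type I: if $\r_{k+1}=\x_{\O}$ then $\{s_{1},s_{3}\}$ is an antichain of size $2$ in $H(w)$, so $n(w)\geq 2$ by Definition \ref{n-value.def}, whence $w$ is not of type I by Proposition \ref{zigzags}. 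This is what lets me invoke Lemma \ref{weak.star.lemma.end.n=3}.

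Next I would read off the row immediately above. Because $\r_{k+1}$ is not the top row, each of its entries sits at maximal height and so is covered. The entry $s_{1}\in\r_{k+1}$ has $s_{2}$ as its only neighbour, and the canonical convention forces the covering $s_{2}$ into $\r_{k}$; thus $s_{2}\in\r_{k}$, and this same $s_{2}$ also covers $s_{3}\in\r_{k+1}$. Since a row is an antichain of pairwise commuting generators and $s_{2}$ commutes with neither $s_{1}$ nor $s_{3}$, we get $s_{1},s_{3}\notin\r_{k}$ immediately. It then remains only to show $s_{4}\in\r_{k}$; combined with the above this gives $\r_{k}=\{s_{2},s_{4}\}=\x_{\E}$.

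Pinning $s_{4}$ to $\r_{k}$ is the hard part. Lemma \ref{weak.star.lemma.end.n=3}(ii) (applicable since $s_{3}\in\r_{k+1}$ is covered by $s_{2}$) produces an entry labeled $s_{4}$ covering $s_{3}\in\r_{k+1}$, but it does not by itself place that $s_{4}$ in $\r_{k}$. I would argue by contradiction: assume $s_{4}\notin\r_{k}$, so $\r_{k}=\{s_{2}\}$ and the covering $s_{4}$ lies in some $\r_{m}$ with $m\leq k-1$, which forces columns $3$ and $4$ to be empty in rows $m+1,\dots,k$. The case $k=1$ is immediate, since then $\r_{1}$ is the only row above $\r_{2}$ and must contain the covering $s_{4}$. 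For $k\geq 2$, the upside-down form of Lemma \ref{weak.star.lemma.end.n=3}(i) applied to $s_{2}\in\r_{k}$ (which covers $s_{3}\in\r_{k+1}$) yields $s_{1}\in\r_{k-1}$ covering $s_{2}\in\r_{k}$. I would then split on $m$. If $m\leq k-2$, then column $3$ is empty in $\r_{k-1}$, so $s_{1}\in\r_{k-1}$ is covered by $s_{2}\in\r_{k-2}$, giving two consecutive occurrences of $s_{2}$ with no $s_{3}$ between them; Lemma \ref{main_zigzag_lemma} then forces $w$ to be of type I (excluded) or into a corner form with no further occurrence of $s_{1}$, contradicting $s_{1}\in\r_{k+1}$. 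If instead $m=k-1$, then $\r_{k-1}=\{s_{1},s_{4}\}$: for $k\geq 3$ the entries $s_{1}$ and $s_{4}$ of $\r_{k-1}$ would be covered respectively by $s_{2}$ and $s_{3}$ in $\r_{k-2}$, which is impossible since $s_{2},s_{3}$ do not commute, while for $k=2$ the top three rows are $\r_{1}=\{s_{1},s_{4}\}$, $\r_{2}=\{s_{2}\}$, $\r_{3}=\{s_{1},s_{3}\}$, so $s_{1}\in\L(w)$ and $w=s_{1}s_{2}s_{1}v$ is left weak star reducible by $s_{1}$ with respect to $s_{2}$, contradicting irreducibility. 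Every case collapses, so $s_{4}\in\r_{k}$.

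The main obstacle is precisely this last step: Lemma \ref{weak.star.lemma.end.n=3} guarantees a covering $s_{4}$ but not its row, and the real content of the lemma is that an irreducible, non-type-I element cannot ``skip'' $s_{4}$ in $\r_{k}$ while keeping $\r_{k+1}=\x_{\O}$. I expect the bookkeeping about which columns are empty and which coverings are forced to be routine once the case split on the row $m$ of the errant $s_{4}$ is organized as above, with Lemma \ref{main_zigzag_lemma} and the zigzag criterion of Lemma \ref{is_zigzag} detecting type I elements and irreducibility excluding the low-lying weak star reductions.
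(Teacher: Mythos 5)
Your overall strategy is sound and runs largely parallel to the paper's own proof: force $s_{2}\in\r_{k}$ by canonicity, invoke Lemma \ref{weak.star.lemma.end.n=3}(ii) to produce a covering $s_{4}$, dispose of $k=1$ immediately, and for $k\geq 2$ derive contradictions from the forbidden $s_{2}s_{1}s_{2}s_{1}$ configuration (you route this through Lemma \ref{main_zigzag_lemma}, where the paper uses Lemma \ref{impermissible.heap.configs} directly) and from weak star reducibility by $s_{1}$ with respect to $s_{2}$ when the chain reaches the top row. Your two preliminary observations --- that $\r_{k+1}=\x_{\O}$ forces $n(w)\geq 2$, so the non-type-I hypothesis of Lemma \ref{weak.star.lemma.end.n=3} is automatic, and that the second direction follows from the graph automorphism $s_{1}\leftrightarrow s_{4}$, $s_{2}\leftrightarrow s_{3}$ --- are both correct and make explicit what the paper leaves implicit.

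There is, however, a genuine flaw in the step that anchors your case analysis for $k\geq 2$. You claim that the upside-down form of Lemma \ref{weak.star.lemma.end.n=3}(i), applied to $s_{2}\in\r_{k}$ covering $s_{3}\in\r_{k+1}$, ``yields $s_{1}\in\r_{k-1}$ covering $s_{2}\in\r_{k}$.'' It does not. By the paper's convention (replace $k+1$ by $k-1$ and swap ``covers'' with ``is covered by''), the upside-down form of (i) reads: if an entry labeled $s_{2}$ covers an entry labeled $s_{3}$, then that same $s_{2}$ also covers an entry labeled $s_{1}$. Its conclusion concerns entries \emph{below} the $s_{2}$ --- which here is already known, since $s_{2}\in\r_{k}$ covers $s_{1}\in\r_{k+1}$ --- and says nothing about what covers the $s_{2}$ from above; no version of that lemma (upside-down or left-right reflected) has a hypothesis below and a conclusion above. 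The fact you need is true, but it must be argued: by canonicity $s_{2}\in\r_{k}$ (with $k\geq 2$) is covered by an entry of $\r_{k-1}$ labeled $s_{1}$ or $s_{3}$, and under your standing assumption $s_{4}\notin\r_{k}$ the possibility $s_{3}\in\r_{k-1}$ is excluded: if $m\leq k-2$, the covering constraint on the errant $s_{4}\in\r_{m}$ empties columns $3$ and $4$ in row $k-1$; if $m=k-1$, then $s_{3}$ and $s_{4}$ would share a row although they do not commute. Only after this exclusion may you place $s_{1}$ in $\r_{k-1}$, after which your two cases on $m$ do go through. This exclusion is not a technicality --- it is essentially the crux of the paper's argument, which proves the complementary statement (that $s_{1}\in\r_{k-1}$ with $s_{3}\notin\r_{k-1}$ is impossible, so $s_{3}\in\r_{k-1}$ and the covering $s_{4}$ is trapped in $\r_{k}$), so as written your proposal assumes away the hardest point.
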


\begin{proof}
Note that when $n=3$, we have $\x_{\O}=\x_{1,3}=s_{1}s_{3}$ and $\x_{\E}=\x_{2,4}=s_{2}s_{4}$.  Assume that $\r_{k+1}=\x_{1,3}$; the proof of the other case is similar.  Then we must have $s_{2} \in \r_{k}$ since this is the only generator available to cover $s_{1} \in \r_{k+1}$.  By Lemma \ref{weak.star.lemma.end.n=3}, the entry labeled by $s_{3} \in \r_{k+1}$ must be covered by an entry labeled by $s_{4}$.  If $k=1$, then $s_{4} \in \r_{1}=\r_{k}$, as desired.  Assume that $k>1$.  Then at least one of $s_{1}$ or $s_{3}$ occurs in $\r_{k-1}$.  For sake of a contradiction assume that $s_{1} \in \r_{k-1}$, but $s_{3} \notin \r_{k-1}$.  If $k=2$, then $w$ would be left weak star reducible by $s_{1}$ with respect to $s_{2}$.  So, we must have $k>2$, in which case, $s_{2} \in \r_{k-2}$.  But then we have a convex chain corresponding to the subword $s_{2}s_{1}s_{2}s_{1}$, which contradicts $w$ being fully commutative.  Thus,  $s_{3} \in \r_{k-1}$.  If $s_{3} \in \r_{k-1}$, then the entry labeled by $s_{4}$ that covers $s_{3} \in \r_{k+1}$ must occur in $\r_{k}$.  So, $\r_{k}=\x_{\E}$, as desired.
\end{proof}

\begin{lemma}\label{sandwich.stack.lemma.n=4}
Let $w \in W_{c}(\C_{4})$ and suppose that $w$ is irreducible with $\supp(w)=S(\C_{4})$.  Consider the canonical representation of $H(w)$.  Then $\r_{k+1}=\x_{\O}$ (respectively $\x_{\E}$) implies $\r_{k}=\x_{\E}$ (respectively $\x_{\O}$).
\end{lemma}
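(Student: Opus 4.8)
The plan is to reduce the statement to its two directions and exploit the fact that the end generators $s_1,s_5$ (columns $1$ and $5$) each have a unique neighbour. Throughout I note that whenever $\r_{k+1}$ equals $\x_\O$ or $\x_\E$ it contains at least two commuting generators, so $n(w)\geq 2$ and hence $w$ is not of type I; together with the standing hypotheses (irreducible, $\supp(w)=S(\C_4)$) this lets me apply Lemmas \ref{weak.star.lemma.middle} and \ref{weak.star.lemma.end.n>=4}. I also recall three facts about the canonical heap: the entries covering an element of $\r_{k+1}$ lie in $\r_k$; the maximal entries are exactly $\r_1=\L(w)$; and two entries in one column that are consecutive must have a non-commuting generator between them, so no generator of $\r_{k+1}$ can recur in the adjacent row $\r_k$. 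Here $\x_\O=s_1s_3s_5$ and $\x_\E=s_2s_4$.

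For the direction $\r_{k+1}=\x_\O=s_1s_3s_5$, since $\r_k$ exists we have $k\geq 1$, so none of $s_1,s_3,s_5$ is maximal and each is covered. The entry $s_1$ lies in column $1$, whose only neighbour is column $2$, so $s_1$ is covered by an $s_2\in\r_k$; likewise $s_5$ forces $s_4\in\r_k$. Thus $\{s_2,s_4\}\subseteq\r_k$. Conversely $\r_k$ contains none of $s_1,s_3,s_5$ (each would be consecutive with its copy in $\r_{k+1}$ with no room for a separating generator), so $\r_k\subseteq\{s_2,s_4\}$, and $\r_k=\x_\E$. This direction is easy precisely because $\x_\O$ carries both end generators.

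For the harder direction $\r_{k+1}=\x_\E=s_2s_4$, the same consecutive-occurrence argument gives $\r_k\subseteq\{s_1,s_3,s_5\}$, so I must show all three odd generators appear. First I would reduce this to showing $s_3\in\r_k$. Suppose $s_3\in\r_k$ but, say, $s_1\notin\r_k$. The entry $s_2\in\r_{k+1}$ is covered by a neighbour in $\r_k$, and its only available odd neighbour there is $s_3$, so $s_3\in\r_k$ covers $s_2\in\r_{k+1}$. By Lemma \ref{weak.star.lemma.middle} this $s_3\in\r_k$ is itself covered by an $s_2$, and together with the $s_2\in\r_{k+1}$ below it this yields the convex chain $s_2s_3s_2$, contradicting Lemma \ref{impermissible.heap.configs}. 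Hence $s_1\in\r_k$, and symmetrically $s_5\in\r_k$, so $\r_k=\x_\O$.

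It remains to prove $s_3\in\r_k$, equivalently to rule out $\r_k=\{s_1,s_5\}$ (the only subset of $\{s_1,s_3,s_5\}$ omitting $s_3$ that can cover both $s_2$ and $s_4$). This is the heart of the matter, and the step where $\supp(w)=S(\C_4)$ is essential: without it an element could legitimately avoid $s_3$ near $\r_k$. Assuming $\r_k=\{s_1,s_5\}$, column $3$ is empty at levels $k$ and $k+1$, and I would trace the zigzag in columns $1,2$ (and symmetrically $4,5$) upward and downward, exactly as in the proofs of Lemmas \ref{sandwich.stack.lemma.n=2} and \ref{sandwich.stack.lemma.n=3}: at each step full commutativity forbids completing a chain $s_1s_2s_1s_2$, irreducibility forbids the ladder terminating in a maximal $s_2$ (which would make $w$ weak star reducible by $s_2$ with respect to $s_1$), and the only remaining branch feeds an occurrence of $s_3$ back in via Lemma \ref{weak.star.lemma.middle}; pushing this through yields either a forbidden configuration, a weak star reduction, or, through Lemma \ref{is_zigzag}, that $w$ is of type I. Each outcome contradicts the hypotheses. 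I expect this case to be the main obstacle, since the lemmas of the previous section are each individually consistent with $\r_k=\{s_1,s_5\}$, and only the combination of irreducibility with full support excludes it.
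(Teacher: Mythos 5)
Your first direction ($\r_{k+1}=\x_\O\Rightarrow\r_k=\x_\E$) is correct and matches the paper's Case (1). The problems are in the direction $\r_{k+1}=\x_\E\Rightarrow\r_k=\x_\O$, in both halves of your plan. In the half you actually write out, the argument leans on your recalled ``fact'' that the entries covering an element of $\r_{k+1}$ lie in $\r_k$. That is false: in the canonical representation only \emph{at least one} cover of an entry in $\r_{k+1}$ lies in $\r_k$; other covers may sit in rows of strictly smaller index. Consequently, the $s_2$ that Lemma \ref{weak.star.lemma.middle} places above your $s_3\in\r_k$ need not be in $\r_{k-1}$, and an $s_1$ covering the $s_2\in\r_{k+1}$ may lie strictly inside the poset interval between the two $s_2$'s without lying in $\r_k$ --- so your hypothesis $s_1\notin\r_k$ does not exclude it, the chain $s_2\gtrdot s_3\gtrdot s_2$ is then not convex, and its convex closure has the shape $s_2s_1s_3s_2$, which is permissible. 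Concretely, $w=s_2s_5\cdot s_1s_4\cdot s_3s_5\cdot s_2s_4\in W_c(\C_4)$ has canonical rows $\r_1=s_2s_5$, $\r_2=s_1s_4$, $\r_3=s_3s_5$, $\r_4=s_2s_4$: here $\r_4=\x_\E$, $s_3\in\r_3$, $s_1\notin\r_3$, the $s_3\in\r_3$ covers the $s_2\in\r_4$ and is itself covered by the $s_2\in\r_1$, yet no impermissible chain arises because the $s_1\in\r_2$ destroys convexity. (This $w$ is left weak star reducible by $s_5$ with respect to $s_4$, so it is not a counterexample to the lemma; but it satisfies every fact your derivation actually uses, so the contradiction you assert does not follow without a further appeal to irreducibility. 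A smaller issue of the same kind: when $k=1$ you cannot invoke Lemma \ref{weak.star.lemma.middle} at all, since entries of $\r_1$ are covered by nothing; there you need Lemma \ref{weak.star.lemma.end.n>=4}.)

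Second, and more seriously, the case you yourself call ``the heart of the matter'' --- ruling out $\r_k=\{s_1,s_5\}$ --- is only a plan, not a proof: ``trace the zigzag \dots\ pushing this through yields either a forbidden configuration, a weak star reduction, or \dots\ type I'' names the expected outcomes without exhibiting them. This is precisely where the paper's proof does its work. Assuming $s_1,s_5\in\r_k$, it splits into $k=1$ and $k\geq 2$: for $k=1$ it uses weak star irreducibility to exclude $s_1,s_5$ from $\r_3$, then $\supp(w)=S(\C_4)$ to force $s_3\in\r_3$, and the upside-down version of Lemma \ref{weak.star.lemma.middle} to produce the convex chains $s_2s_3s_2$ and $s_4s_3s_4$; for $k\geq 2$ it forces $s_2,s_4\in\r_{k-1}$, uses irreducibility to rule out $k=2$ (and later $k=3$), full commutativity to show these entries cannot be covered by $s_1,s_5$, hence $s_3\in\r_{k-2}$, and again Lemma \ref{weak.star.lemma.middle} for the contradiction. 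Until this case is carried out and the convexity gap above is patched, the proposal does not establish the lemma.
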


\begin{proof}
Note that when $n=4$, we have $\x_{\O}=\x_{1,5}=s_{1}s_{3}s_{5}$ and $\x_{\E}=\x_{2,4}=s_{2}s_{4}$.   We consider two cases: (1) $\r_{k+1}=\x_{\O}$ and (2) $\r_{k+1}=\x_{\E}$.

\bigskip

Case (1):  First, assume that $\r_{k+1}=\x_{\O}$.  By Lemma \ref{weak.star.lemma.middle}, the entry labeled by $s_{3} \in \r_{k+1}$ is covered by labeled by $s_{2}$ and $s_{4}$, where at least one of these occurs in $\r_{k}$.  Since $s_{2}$ (respectively, $s_{4}$) is the only generator that may cover an entry labeled by $s_{1}$ (respectively, $s_{5}$), we must have both $s_{2}$ and $s_{4}$ occurring in $\r_{k}$, as desired.

\bigskip

Case (2):  Now, assume that $\r_{k+1}=\x_{\E}$.  First, we argue that an entry labeled by $s_{3}$ covers the occurrences of $s_{2}$ and $s_{4}$ in $\r_{k+1}$.  For sake of a contradiction, assume otherwise.  Then we must have $s_{1}$ and $s_{5}$ both occurring in $\r_{k}$.  

\bigskip

Assume that $k=1$, so that $\r_{k}=\r_{1}=s_{1}s_{5}$ and $\r_{k+1}=\r_{2}=s_{2}s_{4}$.  We cannot have $s_{1}$ (respectively, $s_{5}$) occurring in $\r_{k+2}$; otherwise $w$ would be left weak star reducible by $s_{1}$ (respectively $s_{5}$) with respect to $s_{2}$ (respectively, $s_{4}$).  Since $\supp(w)=S(\C_{4})$, we must have $s_{3} \in \r_{k+2}=\r_{3}$.  By the upside-down version of  Lemma \ref{weak.star.lemma.middle}, the entry labeled by $s_{3} \in \r_{3}$ must cover entries labeled by $s_{2}$ and $s_{4}$.  But this produces convex chains corresponding to the subwords $s_{2}s_{3}s_{2}$ and $s_{4}s_{3}s_{4}$, which contradicts $w$ being fully commutative.  

\bigskip

Now, assume that $k\geq 2$.  Then we must have $s_{2}$ and $s_{4}$ occurring in $\r_{k-1}$.  Since $w$ is not left weak star reducible, we must have $k > 2$; otherwise, $w$ is left weak star reducible by $s_{2}$ (respectively, $s_{4}$) with respect to $s_{1}$ (respectively, $s_{5}$).  We cannot have the entry labeled by $s_{2}$ (respectively, $s_{4}$) occurring in $\r_{k-1}$ covered by $s_{1}$ (respectively, $s_{5}$); otherwise, we produce one of the impermissible configurations of Lemma \ref{impermissible.heap.configs} and violate $w$ being fully commutative.  So, we must have $s_{3} \in \r_{k-2}$.  If $k=3$, then $w$ would be left weak star reducible by $s_{3}$ with respect to either $s_{2}$ or $s_{4}$.  Thus, $k \geq 4$.  By Lemma \ref{weak.star.lemma.middle}, the entry labeled by $s_{3} \in \r_{k-2}$ is covered by entries labeled by $s_{2}$ and $s_{4}$.  But then we produce convex chains corresponding to $s_{2}s_{3}s_{2}$ and $s_{4}s_{3}s_{4}$, which contradicts $w$ being fully commutative.

\bigskip

We have shown that if $\r_{k+1}=\x_{\E}$, then the entries labeled by $s_{2}$ and $s_{4}$ occurring in $\r_{k+1}$ must be covered by an entry labeled by $s_{3}$.  By Lemma \ref{weak.star.lemma.end.n>=4}, an entry labeled by $s_{1}$ (respectively, $s_{5}$) covers the entry labeled by $s_{2}$ (respectively, $s_{4}$) occurring in $\r_{k+1}$.   If $\r_{k} \neq s_{1}s_{3}s_{5}$, we quickly contradict Lemma \ref{impermissible.heap.configs} or Lemma \ref{weak.star.lemma.middle}.  Therefore, we must have $\r_{k}=\x_{\O}$, as desired.
\end{proof}

\begin{lemma}\label{sandwich.stack.lemma.n>4}
Let $w \in W_{c}(\C_{n})$ for $n > 4$ and suppose that $w$ is irreducible.  Consider the canonical representation of $H(w)$.  Then $\r_{k+1}=\x_{\O}$ (respectively $\x_{\E}$) implies $\r_{k}=\x_{\E}$ (respectively $\x_{\O}$).
\end{lemma}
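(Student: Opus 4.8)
The plan is to proceed by the same case split used in Lemmas \ref{sandwich.stack.lemma.n=2}--\ref{sandwich.stack.lemma.n=4}, treating $\r_{k+1}=\x_{\O}$ and $\r_{k+1}=\x_{\E}$ separately, and in each case to show that the row $\r_{k}$ immediately above must contain every generator of the opposite parity and nothing else. A useful preliminary observation is that, since $\x_{\O}$ and $\x_{\E}$ each contain more than one generator when $n>4$, the hypothesis already forces $n(w)>1$; hence $w$ is \emph{not} of type I (Proposition \ref{zigzags}), and any configuration that would force $w$ to be of type I via Lemma \ref{is_zigzag} is automatically a contradiction. I would also reduce to the case $\supp(w)=S(\C_{n})$: if some $s_{i}\notin\supp(w)$, then $w$ factors as a reduced product of commuting pieces supported on $\{s_{1},\dots,s_{i-1}\}$ and $\{s_{i+1},\dots,s_{n+1}\}$, each irreducible, and the type $B$/$B'$ classification (Theorem \ref{Bwsrm}) together with the smaller-rank cases already handled rules out a full row $\x_{\O}$ or $\x_{\E}$ in such a factor.

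The backbone of each case rests on two structural facts about the canonical representation. First, a parity observation: two entries labelled by the same generator cannot occupy the adjacent rows $\r_{k}$ and $\r_{k+1}$, since consecutive occurrences of a generator must be separated by a non-commuting neighbour and adjacent rows leave no room for one; consequently $\r_{k}$ contains no generator already appearing in $\r_{k+1}$, so $\r_{k}$ consists entirely of generators of the parity opposite to those in $\r_{k+1}$. Second, every entry of $\r_{k+1}$ is covered by some entry of $\r_{k}$, for otherwise the slots directly above it in row $k$ would be empty and the canonical as-high-as-possible convention would push it up into $\r_{k}$.

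For the interior generators the argument is then clean. Taking $\r_{k+1}=\x_{\O}$ for definiteness, let $s_{j}$ be an interior even generator and suppose $s_{j}\notin\r_{k}$. Its odd neighbours $s_{j\pm1}$ lie in $\r_{k+1}$, and by the second fact each is covered in $\r_{k}$; since $s_{j}\notin\r_{k}$, these covers must be $s_{j-2}$ and $s_{j+2}$. But Lemma \ref{weak.star.lemma.middle} says $s_{j-1}$ is \emph{also} covered by an entry labelled $s_{j}$, which then lies in a row above $\r_{k}$; applying Lemma \ref{weak.star.lemma.middle} to that $s_{j}$ produces an $s_{j-1}$ above it, so the chain $s_{j-1}s_{j}s_{j-1}$ appears, contradicting Lemma \ref{impermissible.heap.configs}. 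Hence every interior generator of the opposite parity lies in $\r_{k}$.

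The genuine difficulty, and the step I expect to be the main obstacle, is the pair of bond-strength-$4$ ends. The generator $s_{1}$ (and $s_{n+1}$ when $n$ is even) has a unique neighbour, so the second structural fact immediately places that neighbour in $\r_{k}$; but when an end generator of the opposite parity must itself be shown to lie in $\r_{k}$ (for instance $s_{n+1}$ when $n$ is odd in the $\x_{\O}$ case, or the generators neighbouring the pairs $s_{1}s_{2}$ and $s_{n}s_{n+1}$ in the $\x_{\E}$ case), the value $m(s,s')=4$ permits chains of the form $s\,s'\,s$ that full commutativity does not forbid, so one cannot simply invoke Lemma \ref{impermissible.heap.configs}. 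Here I would use the end lemmas (Lemma \ref{weak.star.lemma.end.n>=4}, and its upside-down form) to convert ``covered by the interior neighbour'' into ``also covered by the end neighbour,'' and then eliminate the residual staircase/zigzag configurations at the end by appealing to irreducibility, namely the absence of a weak star reduction by the end generator, together with Lemma \ref{is_zigzag}: any surviving staircase would extend to one of the forbidden subwords of that lemma and force $w$ to be of type I, which we have excluded. This end analysis is exactly where the $\x_{\E}$ case, mirroring the long Case (2) of Lemma \ref{sandwich.stack.lemma.n=4}, is most delicate, since there one must first establish that the even entries of $\r_{k+1}$ are covered by the interior odd generator before the end lemmas can be brought to bear.
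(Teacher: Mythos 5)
Your strategy coincides with the paper's: derive the result from Lemmas \ref{weak.star.lemma.middle} and \ref{weak.star.lemma.end.n>=4}, ruling out bad configurations with Lemma \ref{impermissible.heap.configs}; and your three preliminary observations (the hypothesis forces $n(w)>1$, so $w$ is not of type I; no generator can label entries in two adjacent rows, so $\r_{k}$ contains only generators of the opposite parity; every entry of $\r_{k+1}$ has a cover in $\r_{k}$) are correct and are exactly what is needed to get started. However, the step you describe as ``clean''---the interior generators---has a genuine gap, and it is precisely the crux of the $k>1$ case.

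Two things go wrong with your claimed contradiction. First, the entry labelled $s_{j}$ that covers $s_{j-1}\in\r_{k+1}$ may lie in the top row of the heap, where Lemma \ref{weak.star.lemma.middle} cannot be applied to it (that lemma concerns entries that are themselves covered). Second, even when that entry lies in a row $\geq 2$, the saturated chain $s_{j-1}\lessdot s_{j}\lessdot s_{j-1}$ you obtain is \emph{not convex}: your own argument has already placed an entry labelled $s_{j-2}$ in $\r_{k}$ covering the lower $s_{j-1}$, and since $s_{j-2}$ and $s_{j-1}$ do not commute, that entry lies strictly between the two occurrences of $s_{j-1}$ in the heap order; Lemma \ref{impermissible.heap.configs} forbids only convex chains, so no contradiction with full commutativity follows. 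This failure cannot be repaired within full commutativity alone: in $W_{c}(\C_{6})$, the heap with canonical rows $\r_{1}=s_{1}s_{4}s_{7}$, $\r_{2}=s_{2}s_{6}$, $\r_{3}=s_{1}s_{3}s_{5}s_{7}=\x_{\O}$ realizes your configuration exactly (take $j=4$; the stray $s_{4}$ sits in the top row), it is fully commutative, and every covering fact your argument extracts holds for it. What fails is irreducibility: this element is left weak star reducible by $s_{1}$ with respect to $s_{2}$ (multiplying by $s_{2}$ on the left creates a convex chain of type $s_{2}s_{1}s_{2}s_{1}$), or, equivalently, it violates the \emph{upside-down} version of Lemma \ref{weak.star.lemma.middle} (see the remark following Lemma \ref{weak.star.lemma.end.n>=4}): the entry $s_{4}\in\r_{1}$ covers entries labelled $s_{3}$ and $s_{5}$, at least one of which would have to lie in $\r_{2}$, and neither does. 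So the contradiction must be extracted from irreducibility via the upside-down lemmas---this is what the paper's proof of the $n=4$ case does in its Case (2), and what the ``repeated applications'' in the paper's terse proof of the present lemma must be taken to include---rather than from Lemma \ref{impermissible.heap.configs}. A smaller point: your reduction to $\supp(w)=S(\C_{n})$ is misjustified, since when $w=uv$ with commuting factors a row of $H(w)$ is split between the two factors rather than lying in one of them; what actually rules this case out is that, by Theorem \ref{Bwsrm}, type $B$ and $B'$ irreducible elements have heaps with at most two rows whose second row has at most one entry, so $\r_{k+1}$ could have at most two entries, fewer than $\x_{\O}$ or $\x_{\E}$ requires when $n>4$. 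In any case that reduction becomes unnecessary once the covering argument is run correctly.
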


\begin{proof}
If $k=1$, then the result follows by Lemmas \ref{weak.star.lemma.middle} and \ref{weak.star.lemma.end.n>=4}.  If $k>1$, the result follows by making repeated applications of Lemmas \ref{weak.star.lemma.middle} and \ref{weak.star.lemma.end.n>=4} while avoiding the convex chains of Lemma \ref{impermissible.heap.configs}.
\end{proof}

\begin{remark}
Lemmas \ref{sandwich.stack.lemma.n=2}, \ref{sandwich.stack.lemma.n=3}, \ref{sandwich.stack.lemma.n=4} and \ref{sandwich.stack.lemma.n>4} all have ``upside-down'' versions since all of the arguments reverse nicely.  That is, if $w$ is irreducible (and not of type I when $n=2$), then $\r_{k}=\x_{\O}$ (respectively, $\x_{\E}$) implies $\r_{k+1}=\x_{\E}$ (respectively, $\x_{\O}$).
\end{remark}

\end{section}

\begin{section}{Proof of the classification of the irreducible elements in $W(\C_{n})$}

We are now ready to prove the classification of the irreducible elements in $W(\C_{n})$.

\bigskip

\textit{Proof of Theorem \ref{affineCwsrm}.}  It is easily seen that every element on our list is irreducible.  For sake of a contradiction, assume that there exists $w \in W_{c}(\C_{n})$ such that $w$ is irreducible, but not on our list.  If there exists $s \notin \supp(w)$, then $w$ is equal to $uv$ (reduced), where $u$ is of type $B$ and $v$ is of type $B'$ and $\supp(u) \cap \supp(v) = \emptyset$.  Since $w$ is irreducible, both $u$ and $v$ are irreducible.  In this case, $w$ must be one of the elements from (i).  So, if $w$ is not on our list, we must have $\supp(w)=S(\C_{n})$.  This implies that $w$ is not a product of commuting generators.  According to Proposition \ref{zigzags}, the only irreducible elements with $n(w)=1$ are already listed in (ii).  Hence $n(w)>1$ (i.e., $w$ is not of type I).  

\bigskip

Now, consider the canonical representation of $H(w)$ and suppose that it has $m$ rows.  Since $w$ is not a product of commuting generators, $m\geq 2$.  We consider three main cases: (1) $n=2$, (2) $n=3$, and (3) $n \geq 4$.

\bigskip

Case (1):  Assume that $n=2$.  In this case, $\x_{\O}=s_{1}s_{3}$ and $\x_{\E}=s_{2}$.  First, we argue that $\r_{m}=\x_{\O}$ or $\x_{\E}$.  If $s_{2} \in \r_{m}$, then $\r_{m}=s_{2}=\x_{\E}$.  Assume that $s_{2} \notin \r_{m}$.  Then at least one of $s_{1}$ or $s_{3}$ occurs in $\r_{m}$.  Then we must have $s_{2} \in \r_{m-1}$.  In fact, $\r_{m-1}=s_{2}=\x_{\E}$.  By the upside-down version of Lemma \ref{sandwich.stack.lemma.n=2}, $\r_{m}=\x_{\O}$.  We have shown that $\r_{m}=\x_{\O}$ or $\x_{\E}$.  Now, by repeated applications of Lemma \ref{sandwich.stack.lemma.n=2}, $w$ must be equal to an alternating product of $\x_{\O}$ and $\x_{\E}$.  This implies that $w$ is of type II, which contradicts that $w$ is not on our list.

\bigskip

Case (2):  For the second case, assume that $n=3$.  In this case, $\x_{\O}=s_{1}s_{3}$ and $\x_{\E}=s_{2}s_{4}$.  As in Case (1), we wish to show that $\r_{m}=\x_{\O}$ or $\x_{\E}$.  For sake of a contradiction, assume that $s_{k} \in \r_{m}$ but $s_{k'} \notin \r_{m}$, where $|k-k'|=2$.  We consider the subcases: (a) $k=1$ and (b) $k=3$.  The cases $k=2$ and $k=4$ are similar.

\bigskip

(a):  Suppose that $k=1$, so that $s_{1} \in \r_{m}$ while $s_{3} \notin \r_{m}$.  Then we must have $s_{2} \in \r_{m-1}$.  Since $\supp(w) = S(\C_{n})$ and $s_{3}$ does not occur in $\r_{m-1}$ or $\r_{m}$, we must have $m \geq 3$.  Then the entry labeled by $s_{2}$ occurring in $\r_{m-1}$ cannot be covered by an entry labeled by $s_{1}$; otherwise, $w$ is right weak star reducible by $s_{1}$ with respect to $s_{2}$.  Thus, $s_{3} \in \r_{m-2}$.  But according to Lemma \ref{weak.star.lemma.end.n=3}, the entry labeled by $s_{2}$ occurring in $\r_{m-1}$ must be covered by an entry labeled by $s_{1}$, which is a contradiction.  

\bigskip

(b):  Next, suppose that $k=3$, so that $s_{3} \in \r_{m}$ while $s_{1} \notin \r_{m}$.  Then at least one of $s_{2}$ or $s_{4}$ occurs in $\r_{m-1}$.  If $s_{2} \in \r_{m-1}$, then $w$ would be right weak star reducible by $s_{3}$ with respect to $s_{2}$.  In fact, if the entry labeled by $s_{3} \in \r_{m}$ is covered by an entry labeled by $s_{2}$, we have the same contradiction.  So, it must be the case that $s_{4} \in \r_{m-1}$, while the entry labeled by $s_{3}$ is not covered by an entry labeled by $s_{2}$.  Since $\supp(w)=S(\C_{3})$, we must have $m\geq 3$; otherwise, $s_{1}, s_{2} \notin \supp(w)$.  This implies that $s_{3} \in \r_{m-2}$.  But then $w$ is right weak star reducible by $s_{3}$ with respect to $s_{4}$, which is a contradiction.

\bigskip

We have shown that $\r_{m}=\x_{\O}$ or $\x_{\E}$.  Now, by repeated applications of Lemma  \ref{sandwich.stack.lemma.n=3}, $w$ must be equal to an alternating product of $\x_{\O}$ and $\x_{\E}$.  This implies that $w$ is of type II, which contradicts that $w$ is not on our list.

\bigskip

Case (3):  Lastly, assume that $n\geq 4$.  As in the previous cases, we first show that $\r_{m}=\x_{\O}$ or $\x_{\E}$.  For sake of a contradiction, assume that $s_{k} \in \r_{m}$ but $s_{k'} \notin \r_{m}$, where $|k-k'|=2$.  Without loss of generality, assume that $1\leq k \leq n-1$ and $k'=k+2$, so that $s_{k+2} \notin \r_{m}$; the remaining cases are similar.  We consider three possibilities: (a) $k=1$, (b) $k=2$, and (c) $3\leq k \leq n-1$.

\bigskip

(a): If $k=1$, then this case is identical to (a) in Case (2), except that we contradict Lemma \ref{weak.star.lemma.end.n>=4}.

\bigskip

(b): Next, assume that $k=2$, so that $s_{2} \in \r_{m}$ while $s_{4} \notin \r_{m}$.  Then the entry labeled by $s_{2} \in \r_{m}$ cannot be covered by an entry labeled by $s_{3}$; otherwise, $w$ would be right weak star reducible by $s_{2}$ with respect to $s_{3}$.  This implies that we must have $s_{1} \in \r_{m-1}$.  Since $\supp(w) = S(\C_{n})$ and $s_{3}$ does not occur in $\r_{m-1}$ or $\r_{m}$, we must have $m \geq 3$.  Then $s_{2} \in \r_{m-2}$.  But then $w$ is right weak star reducible by $s_{2}$ with respect to $s_{1}$, which is a contradiction.

\bigskip

(c): Lastly, assume that $3\leq k \leq n-1$, so that $s_{k} \in \r_{m}$ while $s_{k+2} \notin \r_{m}$.  By Lemma \ref{weak.star.lemma.middle}, the entry labeled by $s_{k} \in \r_{m}$ must be covered by entries labeled by $s_{k-1}$ and $s_{k+1}$.  However, this implies that $w$ is right weak star reducible by $s_{k}$ with respect to $s_{k+1}$, which is a contradiction.

\bigskip

We have shown that $\r_{m}=\x_{\O}$ or $\x_{\E}$.  By making repeated applications of Lemma \ref{sandwich.stack.lemma.n=4} if $n=4$ or Lemma \ref{sandwich.stack.lemma.n>4} if $n>4$, $w$ must be equal to an alternating product of $\x_{\O}$ and $\x_{\E}$.  This implies that $w$ is of type II, which contradicts that $w$ is not on our list.
\bigskip

We have exhausted all possibilities, and hence our list is complete.  \hfill $\qed$

\end{section}

\end{chapter}

%%%%%%%%%%% Chapter 6 %%%%%%%%%%%

\begin{chapter}{Hecke algebras and Temperley--Lieb algebras}

In this chapter, we recall the necessary terminology and facts about Hecke algebras and their associated Temperley--Lieb algebras.

\begin{section}{Hecke algebras}

Let $X$ be an arbitrary Coxeter graph.  We define the \emph{Hecke algebra} of type $X$, denoted by $\H_{q}(X)$, to be the $\Z[q,q^{-1}]$-algebra with basis consisting of elements $T_{w}$, for all $w \in W(X)$, satisfying
	$$T_{s}T_{w}=\begin{cases}
T_{sw},  & \text{if } l(sw)>l(w), \\
qT_{sw}+(q-1)T_{w},  & \text{if } l(sw)<l(w)
\end{cases}$$
where $s \in S(X)$ and $w \in W(X)$.  This is enough to compute $T_{x}T_{w}$ for arbitrary $x, w \in W(X)$.  Also, it follows from the definition that each $T_{w}$ is invertible.

\bigskip

It is convenient to extend the scalars of $\H_{q}(X)$ to produce an $\A$-algebra, $\H(X)=\A \otimes_{\Z[q,q^{-1}]} \H_{q}(X)$, where $\A=\Z[v,v^{-1}]$ and $v^{2}=q$.  For more on Hecke algebras, we refer the reader to \cite[Chapter 7]{Humphreys.J:A}.

\bigskip

The Laurent polynomial $v+v^{-1} \in \A$ will occur frequently and will usually be denoted by $\delta$.

\begin{remark}
Since $W(\C_{n})$ is an infinite group, $\H(\C_{n})$ is an $\A$-algebra of infinite rank.  On the other hand, since $W(B_{n})$ and $W(B'_{n})$ are finite, both $\H(B_{n})$ and $\H(B'_{n})$ are $\A$-algebras of finite rank.
\end{remark}

\end{section}

\begin{section}{Temperley--Lieb algebras}

Let $J(X)$ be the two-sided ideal of $\H(X)$ generated by the set $\{J_{s,t}: 3\leq m(s,t)<\infty\}$, where
	$$J_{s,t}=\sum_{w \in \langle s, t \rangle}T_{w}$$
and $\langle s, t \rangle$ is the subgroup generated by $s$ and $t$.

\bigskip

Following Graham \cite[Definition 6.1]{Graham.J:A}, we define the \emph{(generalized) Temperley--Lieb algebra}, $\TL(X)$, to be the quotient $\A$-algebra $\H(X)/J(X)$.  We denote the corresponding canonical epimorphism by $\phi: \H(X) \to \TL(X)$.  

\begin{remark}
Except for in the case of type $A$, there are many Temperley--Lieb type quotients that appear in the literature.  That is,  some authors define a Temperley--Lieb algebra as a different quotient of $\H(X)$.  In particular, the blob algebra of \cite{Martin.P;Saleur.H:A} is a smaller Temperley--Lieb type quotient of $\H(B_{n})$ than $\TL(B_{n})$.  Also, the symplectic blob algebra of \cite{Green.R;Martin.P;Parker.A:A} and  \cite{Martin.P;Green.R;Parker.A:A} is a finite rank quotient of $\H(\C_{n})$, whereas, as we shall see, $\TL(\C_{n})$ is of infinite rank.  Typically, authors that study these smaller Temperley--Lieb type quotients are interested in representation theory, where our motivation is Kazhdan--Lusztig theory.
\end{remark}

Let $t_{w}=\phi(T_{w})$.  The following fact is due to Graham \cite[Theorem 6.2]{Graham.J:A}.

\begin{theorem}\label{t-basis}
The set $\{t_{w}: w \in W_{c}(X)\}$ is an $\A$-basis for $\TL(X)$. \hfill $\qed$
\end{theorem}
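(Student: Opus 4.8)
The plan is to realize $\TL(X)$ as a quotient of the free $\A$-module $\H(X) = \bigoplus_{w \in W(X)} \A T_w$ in which the fully commutative $T_w$ descend to a basis, by exhibiting an explicit $\A$-module decomposition $\H(X) = J(X) \oplus N$, where $N = \operatorname{span}_{\A}\{T_w : w \in W_{c}(X)\}$. Granting this, $\phi$ restricts to an $\A$-module isomorphism $N \to \TL(X)$, and the theorem follows. The decomposition splits into a spanning statement $\H(X) = J(X) + N$ and an independence statement $J(X) \cap N = 0$.

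First I would prove spanning by induction on $l(w)$, showing that every $t_w$ lies in $\operatorname{span}_{\A}\{t_y : y \in W_{c}(X)\}$. If $w \notin W_{c}(X)$, then by the subword characterization of full commutativity (Stembridge's Proposition 2.1, quoted in Section 2.2) some reduced expression has a contiguous braid subword, so $w$ has a reduced factorization $w = u\, w_I\, v$ with $l(w) = l(u) + m(s,t) + l(v)$, where $w_I$ is the longest element of a dihedral parabolic $\langle s,t\rangle$ and $3 \le m(s,t) < \infty$. Since the factorization is reduced, $T_w = T_u T_{w_I} T_v$, and solving $J_{s,t} = \sum_{x \in \langle s,t\rangle} T_x$ for its unique top-length term gives $T_{w_I} = J_{s,t} - \sum_{x \ne w_I} T_x$. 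Substituting yields $T_w = T_u J_{s,t} T_v - \sum_{x \ne w_I} T_u T_x T_v$; applying $\phi$ kills the first term, while every remaining term is an $\A$-combination of $t_z$ with $l(z) < l(w)$ (each $x \ne w_I$ has length $< m(s,t)$, and products in the $T$-basis only produce elements of length at most the sum of the factor lengths). The inductive hypothesis then expresses $t_w$ through fully commutative terms, so $\{t_w : w \in W_{c}(X)\}$ spans $\TL(X)$.

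The independence statement is the substantive half. For each $w \notin W_{c}(X)$ I would fix a factorization $w = u\, w_I\, v$ as above and set $B_w = T_u J_{s,t} T_v \in J(X)$. The computation of the previous paragraph shows $B_w = T_w + (\text{$\A$-combination of $T_z$ with $l(z) < l(w)$})$, so the family $\{B_w : w \notin W_{c}(X)\}$ is unitriangular against the $T$-basis along a linear refinement of the length order; consequently $\{B_w : w \notin W_{c}(X)\} \cup \{T_w : w \in W_{c}(X)\}$ is an $\A$-basis of $\H(X)$, and in particular $\operatorname{span}_{\A}\{B_w\} \oplus N = \H(X)$. Because each $B_w \in J(X)$, it remains only to prove the reverse containment $J(X) \subseteq \operatorname{span}_{\A}\{B_w : w \notin W_{c}(X)\}$, which forces $J(X) = \operatorname{span}_{\A}\{B_w\}$ and hence $J(X) \cap N = 0$.

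Proving $J(X) \subseteq \operatorname{span}_{\A}\{B_w\}$ is the main obstacle, and is where I expect the real work to lie. Since $J(X)$ is the two-sided ideal generated by the $J_{s,t}$, it is $\A$-spanned by the products $T_a J_{s,t} T_b$ over all $a, b \in W(X)$ and admissible pairs $\{s,t\}$, so it suffices to reduce each such product to a combination of standard $B_w$'s. I would do this by induction on $l(a) + l(b)$, peeling off one generator of $a$ (symmetrically $b$) at a time and using the Hecke multiplication rules together with the absorption identities $T_s J_{s,t} = q J_{s,t} = T_t J_{s,t}$ and their right-handed analogues, which one verifies by pairing $x$ with $sx$ in the dihedral sum. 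The delicate case is a generator $r \notin \{s,t\}$ that fails to commute with $s$ or $t$: here one must check that the resulting leading terms are again indexed by non-fully-commutative elements, so that no fully commutative $T_w$ can arise as a top-length term of an element of $J(X)$. This is precisely the point at which the braid-avoidance defining $W_{c}(X)$ enters, and carrying the bookkeeping through all configurations (essentially Graham's argument in \cite{Graham.J:A}) is the crux; once it is complete, the decomposition $\H(X) = J(X) \oplus N$ is established and the theorem follows.
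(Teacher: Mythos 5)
The paper itself gives no proof of this theorem: it is quoted from Graham \cite{Graham.J:A} (his Theorem 6.2) and stated as a known fact, so the only question is whether your argument stands on its own. It does not, because its crux is never established. Your spanning argument is correct, and so is the unitriangularity claim: fixing for each $w \notin W_{c}(X)$ a reduced factorization $w = u\, w_I\, v$ through the longest element $w_I$ of a finite dihedral parabolic $\langle s,t\rangle$ and setting $B_w = T_u J_{s,t} T_v$, the family $\{B_w : w \notin W_{c}(X)\} \cup \{T_w : w \in W_{c}(X)\}$ is indeed an $\A$-basis of $\H(X)$. But everything then hinges on the containment $J(X) \subseteq \operatorname{span}_{\A}\{B_w : w \notin W_{c}(X)\}$, which you explicitly defer to ``essentially Graham's argument.'' That containment --- equivalently, the assertion that no element of $J(X)$ can have a fully commutative element indexing a maximal-length term of its $T$-expansion --- \emph{is} the substance of Graham's theorem; the reductions surrounding it are routine. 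As written, your proposal reduces the statement to the very result the paper cites, rather than proving it.

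The sketch you offer for the crux also hides real obstacles. To show that $\operatorname{span}_{\A}\{B_w\}$ is an ideal you must in particular handle $T_r B_w$ when $l(ru) > l(u)$ and $r u w_I v$ is reduced: the product is $T_{ru} J_{s,t} T_v$, which is attached to a reduced factorization of $r u w_I v$ that need not be the fixed one defining $B_{r u w_I v}$, so you need a further lemma that any two reduced factorizations of the same non-fully-commutative element yield products differing by a combination of strictly shorter $B_z$'s. Alternatively, if you enlarge the generating family to all reduced products $T_u J_{s,t} T_v$, the indexing is no longer injective, leading terms of distinct generators can cancel, and the clean triangularity argument for $J(X) \cap N = 0$ collapses without that same comparison lemma. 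Finally, when $r u w_I v$ is not reduced, the absorption identities $T_s J_{s,t} = q J_{s,t} = T_t J_{s,t}$ only dispose of generators in $\{s,t\}$ acting adjacent to the $J_{s,t}$ factor; the remaining configurations are exactly where Graham's case analysis lives. Until this bookkeeping is carried out, what you have is a correct outline of the known strategy, not a proof.
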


We will refer to the basis of Theorem \ref{t-basis} as the ``$t$-basis.''  For our purposes, it will be more useful to work a different basis, which we define in terms of the $t$-basis.

\begin{definition}
For each $s \in S(X)$, define $b_{s}=v^{-1}t_{s}+v^{-1}t_{e}$, where $e$ is the identity in $W(X)$.  If $s=s_{i}$, we will write $b_{i}$ in place of $b_{s_{i}}$.  If $w \in W_{c}(X)$ has reduced expression $\w=\w_{1}\cdots \w_{r}$, then we define 
	$$b_{\w}=b_{\w_{1}}\cdots b_{\w_{r}}.$$
\end{definition}

Note that if $\w$ and $\w'$ are two different reduced expressions for $w \in W_{c}(X)$, then $b_{\w}=b_{\w'}$ since $\w$ and $\w'$ are commutation equivalent and $b_{i}b_{j}=b_{j}b_{i}$ when $m(s_{i}, s_{j})=2$.  So, we will write $b_{w}$ if we do not have a particular reduced expression in mind.  It is well-known (and follows from \cite[Proposition 2.4]{Green.R:P}) that the set $\{b_{w}: w \in W_{c}(X)\}$ forms an $\A$-basis for $\TL(X)$.  This basis is referred to as the \emph{monomial basis} or ``$b$-basis.''  We let $b_{e}$ denote the identity of $\TL(X)$.

\begin{remark}
Recall from Chapter 1 that $W(\C_{n})$ contains an infinite number of fully commutative elements, while $W(B_{n})$ and $W(B'_{n})$ contain finitely many.  Hence $\TL(\C_{n})$ is an $\A$-algebra of infinite rank while $\TL(B_{n})$ and $\TL(B'_{n})$ are of finite rank.  (We can have $W(X)$ infinite while $W_{c}(X)$ is finite, in which case, $\H(X)$ is of infinite rank and $\TL(X)$ is of finite rank.)
\end{remark}

\end{section}

\begin{section}{A presentation for $\TL(X)$}

It will be convenient for us to have a presentation for $\TL(X)$ in terms of generators and relations.  We follow \cite{Green.R:P}.  

\begin{definition}\label{chebyshev.polys}
Let $\{U_{k}(x)\}_{k \in \N}$ be the sequence of polynomials defined by the conditions $U_{0}(x)=1$, $U_{1}(x)=x$, and the recurrence relation $U_{k+1}(x)=xU_{k}(x)-U_{k-1}(x)$ for $k>1$.
\end{definition}

The polynomials $U_{n}(2x)$ are sometimes called ``type II Chebyshev polynomials.''  If $f(x) \in \Z[x]$, we define $f^{s,t}_{b}$ to be the element of $\TL(X)$ given by the linear extension of the map sending $x^{k}$ to the product
	$$\underbrace{b_{s}b_{t}\cdots}_{k\text{ factors}}$$
of alternating factors starting with $b_{s}$.  For example, if $f(x)=x^{2}-1$, then $f^{s,t}_{b}=b_{s}b_{t}-1$.

\bigskip

The following theorem is \cite[Proposition 2.6]{Green.R:P}.  

\begin{theorem}
As a unital algebra, $\TL(X)$ is generated by $\{ b_{s}: s \in S(X)\}$ and relations
\begin{enumerate}[label=\rm{(\roman*)}]
\item $b_{s}^{2}=\delta b_{s}$ for all $s \in S(X)$ (where $\delta = v+v^{-1}$);
\item $b_{s}b_{t}=b_{t}b_{s}$ if $m(s,t)=2$;
\item $(xU_{m-1})^{s,t}_{b}(x)=0$ if $2<m=m(s,t)<\infty$.
\end{enumerate} \hfill $\qed$
\end{theorem}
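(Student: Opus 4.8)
The plan is to realise $\TL(X)$ as the algebra defined by the presentation by first producing a surjection from the abstract algebra and then pinning it down against the monomial basis $\{b_w : w \in W_c(X)\}$ furnished by Theorem \ref{t-basis} and the passage to the $b$-basis. Let $A$ be the abstract unital $\A$-algebra generated by symbols $\beta_s$, $s \in S(X)$, subject to the relations (i)--(iii) rewritten in the $\beta_s$. My goal is to build a homomorphism $\psi \colon A \to \TL(X)$ with $\psi(\beta_s) = b_s$, prove it is surjective, and then prove it is injective by a spanning argument; together these give the isomorphism.

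First I would check that relations (i)--(iii) hold among the elements $b_s \in \TL(X)$, so that $\psi$ is well defined. Relations (i) and (ii) are short computations in the $t$-basis: from the Hecke quadratic relation $t_s^2 = v^2 t_e + (v^2-1)t_s$ and $b_s = v^{-1}(t_s+t_e)$ one gets $b_s^2 = \delta b_s$, while $t_s t_t = t_t t_s$ (valid when $m(s,t)=2$, since then $st$ is reduced and fully commutative) gives (ii). Relation (iii) is the substantive one and must be extracted from the definition of $J(X)$. Concretely, I would show that the natural lift of $(xU_{m-1})^{s,t}_b$ to the $T$-basis lies in the ideal $J(X)$ --- indeed that it is built from the generator $J_{s,t}=\sum_{w \in \langle s,t\rangle}T_w$ once relations (i)--(ii) are accounted for --- so that $\phi$ sends it to $0$. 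This is a self-contained computation inside the dihedral parabolic $\langle s,t\rangle$, matching the $2m$ terms of $J_{s,t}$ against the alternating $b$-monomials expanded through the Chebyshev recurrence of Definition \ref{chebyshev.polys}. Surjectivity of $\psi$ is then immediate, since each $b_w$ is by definition a product of the $b_s$.

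For injectivity I would show that $A$ is spanned over $\A$ by the monomials $\beta_w := \beta_{\w_1}\cdots\beta_{\w_r}$ indexed by $w \in W_c(X)$, where $\w=\w_1\cdots\w_r$ is any reduced expression (well defined by relation (ii), exactly as $b_w$ is). It suffices to prove that the $\A$-span $M$ of these monomials is closed under right multiplication by each $\beta_s$, since $M$ contains $1 = \beta_e$ and the generators and $A$ is generated by the $\beta_s$. Fix $w \in W_c(X)$ and $s \in S(X)$ and analyse $\beta_w\beta_s$ by cases. If $s \in \R(w)$, choosing a reduced word ending in $s$ and applying (i) gives $\beta_w\beta_s = \delta\beta_w \in M$. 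If $s \notin \R(w)$ and $ws \in W_c(X)$, then $\beta_w\beta_s = \beta_{ws} \in M$. If $ws \notin W_c(X)$ (forcing $l(ws)>l(w)$), then by Stembridge's subword characterisation of full commutativity \cite{Stembridge.J:B} a reduced word for $w$ can be commuted via (ii) so that $\beta_w\beta_s$ acquires, as a right factor, the length-$m$ alternating product in some noncommuting pair $s,t$, which relation (iii) rewrites as an $\A$-combination of strictly shorter alternating products. I would embed this reduction in an induction on $l(w)$, both to guarantee termination and to confirm that every monomial produced is again of the form $\beta_{w'}$ with $w' \in W_c(X)$.

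Finally, with $\{\beta_w : w \in W_c(X)\}$ spanning $A$ and $\psi(\beta_w) = b_w$, any $\A$-linear dependence among the $\beta_w$ would map to a dependence among the $b_w$; since the latter form a basis of $\TL(X)$, the $\beta_w$ are linearly independent, hence an $\A$-basis of $A$, and $\psi$ carries a basis bijectively to a basis, so it is an isomorphism. I expect the main obstacle to be the case $ws \notin W_c(X)$ in the spanning step: making precise that appending a single generator creates exactly one forbidden $m$-alternating subword, invoking relation (iii) at the correct right factor, and verifying that the resulting shorter terms are themselves fully commutative so that the induction on $l(w)$ closes. The rank-two identification of the relation element with $J_{s,t}$ is the other technical input, but it is local to the dihedral parabolic and does not interact with the global combinatorics.
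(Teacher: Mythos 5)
The paper does not prove this theorem: it is quoted as \cite[Proposition 2.6]{Green.R:P} (resting ultimately on Graham's work \cite{Graham.J:A}), and the $\qed$ marks a citation, not an argument. So there is no internal proof to compare against; your proposal reconstructs the standard proof of the cited result, and it is essentially sound. Both of your flagged technical inputs are the right ones, and both are genuine but standard. For relation (iii), the dihedral computation comes out cleanly: the lift of $(xU_{m-1})^{s,t}_{b}$ to the $T$-basis of $\H(X)$ equals $v^{-m}J_{s,t}$ on the nose --- for $m=3$, writing $B_{s}=v^{-1}(T_{s}+T_{e})$, one checks $B_{s}B_{t}B_{s}-B_{s}=v^{-3}\sum_{w\in\langle s,t\rangle}T_{w}$ --- so it dies in the quotient; no appeal to relations (i)--(ii) is needed, contrary to your parenthetical. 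For the spanning step, the fact you need (if $w\in W_{c}(X)$, $s\notin\R(w)$, and $ws\notin W_{c}(X)$, then $w=x\cdot u\cdot y$ reduced, with $u$ the alternating word in $s,t$ of length $m(s,t)-1$ and every letter of $y$ commuting with $s$) is standard heap combinatorics; the paper itself invokes its type $\C$ instance inside the proof of Lemma \ref{powers_of_2_and_delta_monomials}. One bookkeeping correction: after applying (iii), the resulting terms $\beta_{x}(\text{shorter alternating product})\beta_{y}$ need not be reduced expressions of fully commutative elements, so an induction literally on $l(w)$ ranging over the monomials $\beta_{w'}$ does not close. Induct instead on the number of factors in an arbitrary product of the generators $\beta_{s}$, proving that every such product lies in the span $M$; the shorter terms are then absorbed by the inductive hypothesis whether or not they are reduced or fully commutative. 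Finally, your use of the monomial basis for injectivity is not circular: $\{b_{w}: w\in W_{c}(X)\}$ is a basis by unitriangularity against Graham's $t$-basis (Theorem \ref{t-basis}), independently of the presentation being proved.
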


\begin{remark}\label{affineC.relations}
If $m(s,t)=3$, then the last relation above becomes $b_{s}b_{t}b_{s}-b_{s}=0$.  If $m(s,t)=4$, we have $b_{s}b_{t}b_{s}b_{2}-2b_{s}b_{t}=0$.  This implies that $\TL(\C_{n})$ is generated as a unital algebra by $b_{1}, b_{2}, \dots, b_{n+1}$ with defining relations
\begin{enumerate}[label=\rm{(\arabic*)}]
\item $b_{i}^{2}=\delta b_{i}$ for all $i$ (where $\delta = v+v^{-1}$);
\item $b_{i}b_{j}=b_{i}b_{j}$ if $|i-j|>1$;
\item $b_{i}b_{j}b_{i}=b_{i}$ if $|i-j|=1$ and $1< i,j < n+1$;
\item $b_{i}b_{j}b_{i}b_{j}=2b_{i}b_{j}$ if $\{i,j\}=\{1,2\}$ or $\{n,n+1\}$.
\end{enumerate}
In addition, $\TL(B_{n})$ (respectively, $\TL(B'_{n})$) is generated as a unital algebra by $b_{1}, b_{2}, \dots, b_{n}$ (respectively, $b_{2}, b_{3}, \dots, b_{n+1}$) with the corresponding relations above.  It is known that we can consider $\TL(B_{n})$ and $\TL(B'_{n})$ as subalgebras of $\TL(\C_{n})$ in the obvious way.
\end{remark}

\end{section}

\begin{section}{Weak star reducibility and the monomial basis}

\begin{remark}\label{monomial.weak.star.reductions}
Suppose that $w \in W_{c}(\C_{n})$ is left weak star reducible by $s$ with respect to $t$.  Recall from Chapter 1 that this implies that $w=stv$ (reduced) when $m(s,t)=3$ or $w=stsv$ (reduced) when $m(s,t)=4$.  In this case, we have
	$$b_{t}b_{w}=\begin{cases}
  	 b_{tv},   & \text{if } m(s,t)=3, \\
  	 2b_{tsv},   & \text{if } m(s,t)=4.
	\end{cases}$$
It is important to note that $l(tv)=l(w)-1$ when $m(s,t)=3$ and $l(tsv)=l(w)-1$ when $m(s,t)=4$.  We have a similar characterization for right weak star reducibility.  
\end{remark}

\begin{example}
Let $w \in W_{c}(\C_{4})$ have reduced expression $\w=s_{1}s_{2}s_{1}s_{3}$.  Then $w$ is left weak star reducible by $s_{1}$ with respect to $s_{2}$, and so we have
	$$b_{2}b_{w}=2b_{s_{2}s_{1}s_{3}}.$$
Observe that $s_{1}\w=s_{2}s_{1}s_{3}$ is left weak star reducible by $s_{2}$ with respect to $s_{3}$ to the irreducible element $s_{1}s_{3}$.  Then
	$$b_{3}b_{2}b_{w}=2b_{3}b_{s_{2}s_{1}s_{3}}=2b_{s_{1}s_{3}}.$$
\end{example}

\begin{remark}
It is tempting to think that if $b_{w}$ is a monomial basis element such that $b_{t}b_{w}=2^{c}b_{y}$, where $c\in \{0,1\}$ and $l(y)<l(w)$, then $w$ is weak star reducible by some $s$ with respect to $t$, where $m(s,t)\geq 3$.  But this is not true.  For example, let $w=s_{1}s_{2}s_{3}s_{4} \in W_{c}(\C_{n})$ with $n \geq 3$, so that $m(s_{2},s_{3})=3$, and let $t=s_{3}$.  Then
\begin{align*}
b_{t}b_{w} &= b_{3}b_{1}b_{2}b_{3}b_{4} \\
&= b_{1}b_{3}b_{2}b_{3}b_{4}\\
&= b_{1}b_{3}b_{4} \\
&= b_{s_{1}s_{3}s_{4}}.
\end{align*}
We see that $l(s_{1}s_{3}s_{4}) < l(w)$, but $w$ is not left weak star reducible by $s_{3}$ (or any generator).
\end{remark}

The next lemma is useful for reversing the multiplication of monomials corresponding to weak star reductions.

\begin{lemma}\label{weak.star.reverse}
Let $w \in W_{c}(\C_{n})$ and suppose that $w$ is left weak star reducible by $s$ with respect to $t$.  Then
$$b_{s}b_{t}b_{w}=\begin{cases}
  	 b_{w},   & \text{if } m(s,t)=3, \\
  	 2b_{w},   & \text{if } m(s,t)=4.
	\end{cases}$$
We have an analogous statement if $w$ is right weak star reducible by $s$ with respect to $t$.
\end{lemma}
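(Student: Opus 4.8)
The plan is to reduce everything to the explicit reduced-expression characterization of left weak star reducibility recorded just before the lemma, together with the defining relations of $\TL(\C_n)$ from Remark \ref{affineC.relations}. First I would recall that, since $w$ is left weak star reducible by $s$ with respect to $t$, we may write $w = stv$ (reduced) when $m(s,t)=3$ and $w = stsv$ (reduced) when $m(s,t)=4$. Because $b_w = b_{\w}$ is independent of the chosen reduced expression, this immediately gives $b_w = b_s b_t b_v$ in the first case and $b_w = b_s b_t b_s b_v$ in the second.

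For $m(s,t)=3$ I would simply left-multiply this factorization by $b_s b_t$ and regroup:
$$b_s b_t b_w = b_s b_t (b_s b_t b_v) = (b_s b_t b_s) b_t b_v = b_s b_t b_v = b_w,$$
where the third equality uses $b_s b_t b_s = b_s$ from Remark \ref{affineC.relations}(3). For $m(s,t)=4$ the analogous computation is
$$b_s b_t b_w = b_s b_t (b_s b_t b_s b_v) = (b_s b_t b_s b_t) b_s b_v = 2 b_s b_t b_s b_v = 2 b_w,$$
now invoking $b_s b_t b_s b_t = 2 b_s b_t$ from Remark \ref{affineC.relations}(4). The right weak star reducible statement then follows by the mirror-image argument: write $w = vts$ (reduced) or $w = vsts$ (reduced), so that $b_w = b_v b_t b_s$ or $b_w = b_v b_s b_t b_s$, and right-multiply by $b_t b_s$.

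The computation is essentially mechanical, so I do not anticipate a genuine obstacle. The only points needing care are: (a) confirming that the displayed factorizations of $w$ are genuinely reduced, so that $b_w$ really equals the corresponding product of generators — this is exactly the content of the weak-star characterization preceding the lemma together with the well-definedness of $b_w$ noted after the definition of the monomial basis; and (b) tracking the scalar $2$ correctly in the $m(s,t)=4$ case, which appears precisely because the bond of strength $4$ supplies the relation $b_s b_t b_s b_t = 2 b_s b_t$ in place of $b_s b_t b_s = b_s$. One could alternatively obtain the result from Remark \ref{monomial.weak.star.reductions} by computing $b_s(b_t b_w) = b_s b_{tv}$ (resp. $2 b_s b_{tsv}$), but the direct grouping above seems cleanest and avoids re-deriving that remark.
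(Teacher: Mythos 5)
Your proof is correct and takes essentially the same route as the paper's: both start from the factorization $w=stv$ (reduced) when $m(s,t)=3$ or $w=stsv$ (reduced) when $m(s,t)=4$, and then apply the defining relations of $\TL(\C_{n})$ to collapse $b_{s}b_{t}b_{w}$ back to $b_{w}$ or $2b_{w}$. The only cosmetic difference is that the paper first computes $b_{t}b_{w}=b_{tv}$ (resp.\ $2b_{tsv}$) via Remark \ref{monomial.weak.star.reductions} and then left-multiplies by $b_{s}$, whereas you inline the relations $b_{s}b_{t}b_{s}=b_{s}$ and $b_{s}b_{t}b_{s}b_{t}=2b_{s}b_{t}$ directly---indeed, the alternative you mention in your closing sentence is precisely the paper's own proof.
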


\begin{proof}
Suppose that $w$ is left weak star reducible by $s$ with respect to $t$.  Then we can write $w=stv$ (reduced) when $m(s,t)=3$ or $w=stsv$ (reduced) when $m(s,t)=4$, which implies that
	$$b_{t}b_{w}=\begin{cases}
  	 b_{tv},   & \text{if } m(s,t)=3, \\
  	 2b_{tsv},   & \text{if } m(s,t)=4.
	\end{cases}$$
Therefore, we have
\begin{align*}
b_{s}b_{t}b_{w}=&\begin{cases}
  	 b_{s}b_{tv},   & \text{if } m(s,t)=3, \\
  	 2b_{s}b_{tsv},   & \text{if } m(s,t)=4,
	\end{cases}\\
	=& \begin{cases}
  	 b_{stv},   & \text{if } m(s,t)=3, \\
  	 2b_{stsv},   & \text{if } m(s,t)=4,
	\end{cases}\\
	=& \begin{cases}
  	 b_{w},   & \text{if } m(s,t)=3, \\
  	 2b_{w},   & \text{if } m(s,t)=4,
	\end{cases}
\end{align*}
as desired.
\end{proof}

\end{section}

\begin{section}{A characterization for an arbitrary product of monomials}

It will be useful for us to know what form an arbitrary product of monomial generators takes in $\TL(\C_{n})$.  The next lemma is similar to \cite[Lemma 2.1.3]{Green.R;Losonczy.J:E}, which is a statement involving $W(B_{n})$.

\begin{lemma}\label{powers_of_2_and_delta_monomials}
Let $w \in W_{c}(\C_{n})$ and let $s \in S(\C_{n})$.  Then
	$$b_{s}b_{w}=2^{k}\delta^{m}b_{w'}$$
for some $k, m \in \Z^{+}\cup \{0\}$ and $w' \in W_{c}(\C_{n})$.
\end{lemma}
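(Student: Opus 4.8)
The plan is to induct on $l(w)$, writing $b_{w}=b_{\w_{1}}\cdots b_{\w_{r}}$ for a reduced expression and computing $b_{s}b_{w}$ by pushing $b_{s}$ into the product using only the presentation relations of Remark~\ref{affineC.relations}. Everything is governed by the trichotomy: (a) $s\in\L(w)$; (b) $s\notin\L(w)$ with $sw\in W_{c}(\C_{n})$; (c) $s\notin\L(w)$ with $sw\notin W_{c}(\C_{n})$. The scalar $2^{k}\delta^{m}$ has a transparent origin: powers of $\delta$ arise only from the relation $b_{s}^{2}=\delta b_{s}$, and powers of $2$ arise only from the $m=4$ relation $b_{s}b_{t}b_{s}b_{t}=2b_{s}b_{t}$, so no other coefficients can appear.

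Cases (a) and (b) are immediate and require no induction. In case (a) a reduced expression begins with $s$, say $\w=s\,\w_{2}\cdots\w_{r}$, so $b_{w}=b_{s}b_{sw}$ and $b_{s}b_{w}=b_{s}^{2}b_{sw}=\delta\,b_{s}b_{sw}=\delta\,b_{w}$, which is the claim with $k=0$, $m=1$, $w'=w$. In case (b) the word $s\w$ is reduced and represents the fully commutative element $sw$, so $b_{s}b_{w}=b_{sw}$ with $k=m=0$.

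Case (c) is the heart of the matter. Prepending $s$ destroys full commutativity, so by the subword criterion (Remark~\ref{illegal.convex.chains}) and its heap form (Lemma~\ref{impermissible.heap.configs}) there is a neighbour $t$ of $s$, with $m:=m(s,t)\in\{3,4\}$, for which the new $s$ caps an alternating $s$--$t$ chain; convexity of that forbidden chain forces the \emph{first} generator of $w$ not commuting with $s$ to be exactly $t$. Thus $w$ has a reduced expression $w=p\,t\,q$ in which every generator of the prefix $p$ commutes with $s$, and $q$ contains the highest occurrence of $s$. If $p$ is empty, i.e.\ $t\in\L(w)$, then $w$ is left weak star reducible by $t$ with respect to $s$ and Remark~\ref{monomial.weak.star.reductions} gives $b_{s}b_{w}=b_{w'}$ (when $m=3$) or $b_{s}b_{w}=2\,b_{w'}$ (when $m=4$) outright. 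If $p$ is nonempty I would slide $b_{s}$ past $b_{p}$ (legitimate since $p$ commutes with $s$), reaching $b_{p}\,b_{s}b_{t}b_{q}$; bringing the highest $s$ of $q$ leftward past the intervening generators, which by convexity of the $t$--$s$ cover also commute with $s$, produces a factor $b_{s}b_{t}b_{s}$ (or $b_{s}b_{t}b_{s}b_{t}$), and a single use of relation (3) (or (4)) collapses it, yielding $b_{s}b_{q'}$ (resp.\ $2\,b_{s}b_{q'}$) for a strictly shorter element $q'$. The induction hypothesis then evaluates $b_{s}b_{q'}$ as $2^{k_{0}}\delta^{m_{0}}b_{w_{0}}$.

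The remaining step, and the main obstacle, is the recombination in case (c): one is left with $b_{p}$ times a shorter monomial, and must know that this product is again $2^{k}\delta^{m}$ times a single basis element. This is exactly the affine $C$ analogue of \cite[Lemma~2.1.3]{Green.R;Losonczy.J:E}, the statement that a product of any two monomial basis elements of $\TL(\C_{n})$ has the stated form; the clean way to organise everything is therefore to prove that more general statement by a combined induction (on $l(w)$, with the single-generator trichotomy above as its engine), so that the leftover factor $b_{p}$ is absorbed by the inductive hypothesis. I expect verifying the localisation in case (c) -- that the created braid is genuinely confined to columns $s$ and $t$ with a commuting prefix, so that precisely one braid relation fires and strictly lowers the length -- to be the most delicate part of the write-up.
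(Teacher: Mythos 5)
Your proposal is correct and, at its core, it is the paper's own proof: the same induction on $l(w)$, the same trichotomy (your cases (a), (b), (c) are the paper's cases (2), (1), (3)), and the same localization in case (c) --- $w=utsv$ (reduced) when $m(s,t)=3$ or $w=utstv$ (reduced) when $m(s,t)=4$, with every letter of the prefix commuting with $s$, so that exactly one relation of Remark \ref{affineC.relations} fires and produces $b_{u}b_{s}b_{v}$, respectively $2\,b_{u}b_{s}b_{t}b_{v}$. The one place you genuinely part ways is the recombination, which you call the main obstacle and propose to settle by proving the stronger two-monomial statement (the affine analogue of \cite[Lemma 2.1.3]{Green.R;Losonczy.J:E}). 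The paper avoids any strengthening: since $l(u)+1+l(v)\le l(w)-1$ (resp.\ $l(u)+2+l(v)\le l(w)-1$), it simply applies the single-generator inductive hypothesis $l(u)+1$ (resp.\ $l(u)+2$) times, starting with $b_{s}b_{v}$ (resp.\ $b_{t}b_{v}$) and then peeling the letters of $u$ off one at a time; each application multiplies a single generator against a monomial and increases the length of the indexing element by at most one, so every intermediate element stays strictly shorter than $w$ and the induction closes. Your generalized route also works, but not with induction ``on $l(w)$'' as stated: when you run your trichotomy inside the generalized statement (write $x=ys$ and evaluate $b_{s}b_{w}$ first), your case (b) yields $b_{y}b_{sw}$, whose second factor is strictly longer than $w$, so the induction must instead be on a lexicographic measure such as $\bigl(l(x)+l(w),\,l(x)\bigr)$ --- or, more simply, one proves the single-generator lemma first (which forces the peeling trick anyway) and then deduces the two-monomial statement by a routine induction on $l(x)$. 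What your generalization buys is a uniform statement covering arbitrary products; what the paper's peeling argument buys is that nothing beyond the lemma as stated (together with the implicit bound $l(w')\le l(w)+1$) is ever needed.
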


\begin{proof}
We induct on the length of $w$.  For the base case, assume that $l(w)=0$.  That is, $w=e$.  Then for any $s \in S(\C_{n})$, we have $b_{s}b_{e}=b_{s}$, which gives us our desired result.  Now, assume that $l(w)=p>1$.  There are three possibilities to consider.  

\bigskip

Case (1):  First, if $sw$ is reduced and fully commutative, then
	$$b_{s}b_{w}=b_{sw},$$
which agrees with the statement of the lemma.

\bigskip

Case (2):  Second, if $sw$ is not reduced, then $s \in \L(w)$, as so we must be able to write $w=sv$ (reduced).  In this case, we see that
	$$b_{s}b_{w}=b_{s}b_{s}b_{v}=\delta b_{s}b_{v}=\delta b_{w}.$$
Again, this agrees with the statement of the lemma.

\bigskip

Case (3):  For the final case, assume that $sw$ is reduced, but not fully commutative.  Then $sw$ must have a reduced expression containing the subword $sts$ if $m(s,t)=3$ or $stst$ if $m(s,t)=4$.  So, we must be able to write
	$$w=\begin{cases}
		utsv, & \text{if } m(s,t)=3,\\
		utstv, & \text{if } m(s,t)=4,
	\end{cases}$$
where each product is reduced, $u, v \in W_{c}(\C_{n})$, and $s$ commutes with every element of $\supp(u)$, so that
	$$sw=\begin{cases}
		ustsv, & \text{if } m(s,t)=3,\\
		uststv, & \text{if } m(s,t)=4.
	\end{cases}$$
This implies that
	$$b_{s}b_{w}=\begin{cases}
		b_{u}b_{s}b_{t}b_{s}b_{v}=b_{u}b_{s}b_{v}, & \text{if } m(s,t)=3,\\
		b_{u}b_{s}b_{t}b_{s}b_{t}b_{v}=2b_{u}b_{s}b_{t}b_{v}, & \text{if } m(s,t)=4.
	\end{cases}$$
Note that $l(u)+1+l(v) < p$ in the $m(s,t)=3$ case and that $l(u)+2+l(v) < p$ when $m(s,t)=4$.  So, we can apply the inductive hypothesis $l(u)+1$ (respectively, $l(u)+2$) times if $m(s,t)=3$ (respectively, $m(s,t)=4$) starting with $b_{s}b_{v}$ (respectively, $b_{t}b_{v}$).  Therefore, we obtain
	$$b_{s}b_{w}=2^{k}\delta^{m}b_{w'}$$
for some $k, m \in \Z^{+}\cup \{0\}$ and $w' \in W_{c}(\C_{n})$, as desired.
\end{proof}

\begin{remark}
If $b_{i_{1}}, b_{i_{2}}, \dots, b_{i_{p}}$ is any collection of $p$ monomial generators, then it follows immediately from Lemma \ref{powers_of_2_and_delta_monomials}  that
	$$b_{i_{1}}b_{i_{2}}\cdots b_{i_{p}}=2^{k}\delta^{m} b_{w}$$
for some $k, m \in \Z^{+}\cup \{0\}$ and $w \in W_{c}(\C_{n})$.
\end{remark}

\end{section}

\end{chapter}

%%%%%%%%%%%%% Chapter 7 %%%%%%%%%%%%

\begin{chapter}{Free products of associative algebras}

In the next chapter, we begin describing the construction of our desired diagram algebra.  In order to define the decoration set for our diagram algebra, we will need to describe a method for combining two associative algebras together to create a new associative algebra.  To accomplish this, we will make use of Bergman's diamond lemma \cite{Bergman.G:A}.  We follow the development in \cite{Green.R:B}.

\begin{section}{Bergman's diamond lemma}

Let $R$ be a commutative ring and let $X$ be a nonempty set.  Define $X^*$ to be the free monoid generated by $X$.  Let $\leq_X$ be a semigroup partial order on $X^*$: that is, if $\lambda, \mu, \nu$ are (possibly empty) words in $X^*$ and $\mu \leq \nu$, then we have $\lambda \mu \leq \lambda \nu$ and $\mu \lambda \leq \nu \lambda$.  We say $\leq_X$ satisfies the \emph{descending chain condition} if any sequence $\mu_1 >_X \mu_{2} >_X \cdots$ terminates.  A \emph{reduction system} $\S$ for the module $RX^*$ is a set of rules $s: X^{*} \to RX^{*}$ of the form  
	$$s(\mu) = \sum_{i=1}^{k}\alpha_{i}\lambda_{i},$$
where each $\alpha_{i} \in R$ and each $\lambda_{i} \in X^{*}$.  If a rule $s$ acts nontrivially on a monomial $\mu$, then we will write $\mu=\mu_{s}$ and denote the image of $\mu$ after applying $s$ by $a_{s}$.  That is, $s(\mu_{s})=a_{s}$.  (Note that $a_{s}$ is a linear combination of monomials with coefficients from $R$.)  The $R$-module maps $RX^*\to RX^*$ used to apply rules are known as \emph{reductions}; these may consist of several rules performed sequentially.  The two-sided ideal $\I(\S)$ of $RX^*$ is that generated by all elements $\mu_s - a_s$ for all rules $s \in S$.  We say $s$ is \emph{compatible} with $\leq_X$ if $a_s$ can be written as a linear combination of monomials strictly less than $\mu_s$ in $\leq_X$, and we say $\S$ is compatible with $\leq_X$ if each of its rules is.

\bigskip

An \emph{overlap ambiguity} occurs when there are two rules $s_1$ and $s_2$ such that there exist monomials $\lambda$ and $\nu$ with $\mu_{s_1}\lambda = \nu \mu_{s_2}$; it is said to be \emph{resolvable} if there are reductions $t_1$ and $t_2$ such that $t_1\( a_{s_1}\lambda\) =t_2\( \nu a_{s_2}\)$.  An \emph{inclusion ambiguity} occurs when there are two rules $s_1$ and $s_2$ such that there exist monomials $\lambda$ and $\nu$ with $\lambda \mu_{s_1} \nu=\mu_{s_2}$; it is said to be \emph{resolvable} if there are reductions $t_1$ and $t_2$ such that $t_1\( \lambda a_{s_1} \nu\) =t_2\(a_{s_2}\)$.

\bigskip

A reduction $t$ is said to act trivially on $a \in RX^*$ if $t(a)=a$, and if all reductions act trivally on $a$, we say $a$ is \emph{irreducible}.  The set of irreducible elements arising from $\S$ is denoted $\Irr(\S)$ and has an obvious $R$-module structure.  A \emph{normal form} of $a \in RX^*$ is an element $b \in \Irr(\S)$ to which $a$ can be reduced; it is not immediate that normal forms always exist or that they are unique.

\bigskip

The following theorem is part of Bergman's diamond lemma, which is proved in \cite{Bergman.G:A}.

\begin{theorem}[Bergman's diamond lemma]\label{bergman}
Let $R$ be a commutative ring with 1.  Let $X$ be a nonempty set, let $\leq_X$ be a semigroup partial order on $X^*$, and let $\S$ be reduction system for $RX^*$.  If $\S$ is compatible with $\leq_X$, and $\leq_X$ satisfies the descending chain condition, then the following are equivalent:
\begin{enumerate}[label=\rm{(\roman*)}]
\item All ambiguities in $\S$ are resolvable.
\item Every element $a \in RX^*$ has a unique normal form equaling $t(a)$ for some reduction $t$.
\item $RX^*$ and $\Irr(\S) \oplus \I(\S)$ are isomorphic as $R$-modules. 
\end{enumerate} \hfill $\qed$
\end{theorem}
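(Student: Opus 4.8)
The plan is to prove the cyclic chain of implications (i) $\Rightarrow$ (ii) $\Rightarrow$ (iii) $\Rightarrow$ (i), of which the first is by far the most substantial. Before beginning, I would record the basic consequence of the standing hypotheses: since $\S$ is compatible with $\leq_X$, every rule $s$ rewrites a monomial $\mu_s$ as a linear combination of monomials strictly below $\mu_s$, so each reduction replaces a monomial by strictly smaller ones. Together with the descending chain condition this forces every sequence of reductions applied to a fixed element to terminate; hence every $a \in RX^*$ has at least one normal form, and the entire content of the theorem concerns \emph{uniqueness} of that normal form.

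For (i) $\Rightarrow$ (ii) I would follow the Newman's-lemma strategy adapted to the module setting. Call an element \emph{reduction-unique} if all of its normal forms coincide (reduction-finiteness being automatic here), and for such elements write $r_\S(a)$ for the common value. I would first settle the bookkeeping: the reduction-unique elements form an $R$-submodule of $RX^*$ on which $r_\S$ is $R$-linear, and if $a$ is reduction-unique then so is $\lambda a \nu$ for monomials $\lambda, \nu$. The crux is then a Noetherian induction over $\leq_X$: to show a monomial $\mu$ is reduction-unique I may assume the same for every monomial strictly below $\mu$, and it suffices to establish \emph{local confluence} at $\mu$, namely that any two single rule-applications to $\mu$ can be driven to a common value. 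The relative positions at which two rules act on $\mu$ fall into three cases: the occurrences are disjoint, so the two reductions act on separate parts of $\mu$ and trivially commute; one occurrence contains the other, which is exactly an inclusion ambiguity; or the occurrences overlap partially, which is exactly an overlap ambiguity. Hypothesis (i) resolves the latter two, and because every reduction output lies strictly below $\mu$ the induction hypothesis lets me identify the resulting normal forms. This yields reduction-uniqueness of $\mu$, and $R$-linearity propagates it to all of $RX^*$, which is precisely statement (ii).

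The remaining implications are comparatively routine. For (ii) $\Rightarrow$ (iii), the map $a \mapsto r_\S(a)$ is an $R$-linear idempotent of $RX^*$ with image $\Irr(\S)$; since a single reduction alters an element only by a generator of $\I(\S)$, we have $a - r_\S(a) \in \I(\S)$ for all $a$, giving $RX^* = \Irr(\S) + \I(\S)$. Moreover $r_\S$ vanishes on $\I(\S)$ (it kills each generator $\mu_s - a_s$ by uniqueness and is linear and two-sided-multiplicative in the needed sense), so any irreducible element lying in $\I(\S)$ equals its own normal form, which is $0$; hence $\Irr(\S) \cap \I(\S) = \{0\}$ and the sum is direct. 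For (iii) $\Rightarrow$ (i), given an overlap ambiguity $\mu_{s_1}\lambda = \nu\mu_{s_2}$ (or inclusion ambiguity $\lambda\mu_{s_1}\nu = \mu_{s_2}$), the two competing reductions each differ from the common word by a generator of $\I(\S)$, so their difference lies in $\I(\S)$; reducing both to irreducible form, the difference of these irreducibles lies in $\Irr(\S) \cap \I(\S) = \{0\}$, whence they agree and the ambiguity is resolved.

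I expect the main obstacle to be the (i) $\Rightarrow$ (ii) step, and within it two points demand care. First, the case analysis of rule-positions must be phrased so that the disjoint case genuinely reduces to commuting reductions while the overlapping and containing cases are matched cleanly to the two ambiguity types, with attention to degenerate configurations (for instance where $\lambda$ or $\nu$ is empty). Second, the $R$-linear structure must be tracked throughout: unlike a purely combinatorial rewriting system, a reduction here acts on formal linear combinations, so I must verify that reduction-uniqueness is stable under addition and scalar multiplication before the monomial-level induction can be lifted to arbitrary elements of $RX^*$.
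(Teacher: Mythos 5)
The paper offers no proof of Theorem \ref{bergman}: it is quoted as a known result, with the proof deferred to Bergman's article \cite{Bergman.G:A} (hence the $\qed$ placed immediately after the statement). So the only meaningful comparison is with Bergman's original argument, and your proposal is, in outline, exactly that argument: termination of reductions from compatibility together with the descending chain condition (this step itself deserves a short argument, e.g.\ via the multiset extension of $\leq_X$, but it is standard); a Noetherian induction over $\leq_X$ reducing (i) $\Rightarrow$ (ii) to local confluence at a single monomial, split into the disjoint, overlap, and inclusion cases; the normal-form projection $a \mapsto r_{\S}(a)$ giving $RX^* = \Irr(\S) \oplus \I(\S)$ for (ii) $\Rightarrow$ (iii); and the intersection argument $\Irr(\S) \cap \I(\S) = \{0\}$ for (iii) $\Rightarrow$ (i). You also correctly read (iii) as the \emph{internal} decomposition (the sum map $\Irr(\S) \oplus \I(\S) \to RX^*$ is an isomorphism), which is what Bergman's condition (c) asserts and what the loose phrase ``isomorphic as $R$-modules'' must mean for (iii) $\Rightarrow$ (i) to be provable at all.

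One bookkeeping claim is false as stated, although your argument does not in fact depend on it. You assert that if $a$ is reduction-unique then so is $\lambda a \nu$ for monomials $\lambda, \nu$. Without hypothesis (i) this fails: take $X = \{x,y,z\}$ with rules $xy \mapsto x$ and $yz \mapsto y$, compatible with the degree-lexicographic order; then $y$ is irreducible, hence reduction-unique, yet $xyz = x \, y \, z$ has two distinct normal forms, namely $xz$ (reduce $xy$ first) and $x$ (reduce $yz$ first, then $xy$). Under hypothesis (i) the claim is true, but only as a consequence of the full implication (i) $\Rightarrow$ (ii), so it cannot serve as a lemma inside that proof. What is actually needed --- and what Bergman proves (Lemma 1.1 of \cite{Bergman.G:A}) --- runs in the opposite direction: if every product $ABC$ of monomials occurring in $a$, $b$, $c$ is reduction-unique, then $a\, t(b)\, c$ is reduction-unique for any composite reduction $t$, with $r_{\S}(a \, t(b)\, c) = r_{\S}(abc)$. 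In your Noetherian induction the repair is painless: after one rule application to $\mu$, every element in sight lies in the submodule spanned by monomials strictly below $\mu$, where the induction hypothesis and your (correct) submodule-plus-linearity lemma already supply reduction-uniqueness, and this is all the context stability that the disjoint and overlap cases require. With that substitution, your proposal is a sound reconstruction of the standard proof.
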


We will make use of Theorem \ref{bergman} in the next section to prove a result about the free product of two associative algebras.

\end{section}

\begin{section}{Free product of associative algebras}

Let $R$ be a commutative ring with $1$.  Let  $A_1$ and $A_2$ be two associative algebras over $R$ with bases $\B_{1}$ and $\B_{2}$, respectively, each containing $1$.  Let $X=(\B_{1} \cup \B_{2})\backslash  \{1\}$ and consider $X^{*}$.  We wish to describe a set of rules on $X^{*}$ for reducing monomials.  Let $c_{1}, c_{2} \in X$.  That is, each $c_{i}$ is a non-identity element of either $\B_{1}$ or $\B_{2}$.  Recall that if both $c_{1}$ and $c_{2}$ are elements of the same $\B_{j}$, then there exist $\alpha_1, \dots, \alpha_k \in R$ and $b_1, \dots, b_k \in \B_j$ such that 
	$$c_1 c_2=\sum_{i=1}^k \alpha_i b_i.$$
For each such relation, define a rule $s$ on $X^*$ that sends the product of two basis elements from $\B_{j}$ to the corresponding linear combination.  Let $\S$ be the reduction system for $RX^*$ consisting of all such rules.  If $c_{1} \in \B_{1}$ and $c_{2} \in \B_{2}$, then there is no rule for reducing $c_1 c_2$ or $c_2 c_1$.  Define 
	$$A_1*A_2=RX^*/\I(\S).$$
The algebra $A_{1}*A_{2}$ is generated as a unital algebra by the disjoint union of $\B_{1}$ and $\B_{2}$, subject to the corresponding relations of $(A_1, \B_1)$ and $(A_2, \B_2)$.  The point is that there is no interaction between the elements of $\B_1$ and $\B_2$ in $A_1*A_2$.  It turns out that $A_{1}*A_{2}$ is the \emph{free product} of $A_{1}$ and $A_{2}$. 

\bigskip

Next, define $\B_1*\B_2$ to be the union of $\{1\}$ with the set of all finite alternating products of non-identity elements from $\B_1$ and $\B_2$.  We will write an arbitrary non-identity element $\beta \in \B_1*\B_2$ as $\beta=c_1 c_2 \cdots c_n \in \B_1*\B_2$, where each $c_i$ is a non-identity element of either $\B_1$ or $\B_2$ and if $c_{i} \in \B_{1}$ (respectively, $\B_{2}$), then $c_{i+1} \in \B_{2}$ (respectively, $\B_{1}$).  Note that $\Irr(\S)$ is the set of all finite linear combinations of elements from $\B_1*\B_2$.

\bigskip

The following proposition is probably well-known, but we have been unable to find a reference.  So, we provide a proof here.

\begin{proposition}\label{free_product_algebras}
Let $R$ be a commutative ring with $1$ and let $A_1$ and $A_2$ be two associative $R$-algebras with bases $\B_{1}$ and $\B_{2}$, respectively, each containing $1$.  Then $A_1*A_2$ is an associative algebra over $R$ with basis $\B_1*\B_2$ containing $1$.
\end{proposition}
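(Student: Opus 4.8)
The plan is to apply Bergman's diamond lemma (Theorem \ref{bergman}) to the reduction system $\S$ for $RX^*$ described just before the statement. Associativity of $A_1*A_2$ is automatic, since $A_1*A_2 = RX^*/\I(\S)$ is by definition a quotient of the free associative $R$-algebra $RX^*$ by a two-sided ideal; the real content is the basis claim. The strategy is to choose a semigroup partial order $\leq_X$ on $X^*$ satisfying the descending chain condition with which $\S$ is compatible, verify that all ambiguities of $\S$ resolve, and then read off from part (iii) of Theorem \ref{bergman} that $RX^* \cong \Irr(\S)\oplus\I(\S)$ as $R$-modules, so that $A_1*A_2 \cong \Irr(\S)$ and $\Irr(\S)$ has the alternating products as a basis.

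First I would define $\leq_X$ by word length: declare $\mu \leq_X \nu$ if $\mu = \nu$ or the length of $\mu$ (its number of letters from $X$) is strictly less than that of $\nu$. This is a semigroup partial order because concatenation adds lengths, and it satisfies the descending chain condition because lengths are nonnegative integers. Each rule $s$ of $\S$ sends a product $c_1 c_2$ of two non-identity basis elements from a common $\B_j$ (a word of length $2$) to the expansion $\sum_i \alpha_i b_i$ of that product in the basis $\B_j$, a linear combination of monomials of length $\leq 1$ (a term $b_i = 1$ contributes the empty word). Hence every rule strictly decreases length, so $\S$ is compatible with $\leq_X$. These verifications are routine.

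The key step, and the main obstacle, is resolving the ambiguities. Since every left-hand side $\mu_s$ has length exactly $2$, there are no nontrivial inclusion ambiguities, and each overlap ambiguity must have overlap word $c_1 c_2 c_3$ with $\mu_{s_1} = c_1 c_2$ and $\mu_{s_2} = c_2 c_3$; as $c_2$ lies in both $\B_{j}$ and $\B_{j'}$, all three letters lie in one common algebra $A_j$. Resolving this ambiguity amounts to comparing the two reductions of $c_1 c_2 c_3$: reducing $c_1 c_2$ first yields $\sum_i \alpha_i b_i c_3$, whose further reduction computes $(c_1 c_2)c_3$ in $A_j$ and expands it in $\B_j$, while reducing $c_2 c_3$ first computes $c_1(c_2 c_3)$ in $A_j$ and expands it in $\B_j$. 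By associativity of $A_j$ these two elements of $A_j$ coincide, and since $\B_j$ is a basis their expansions are identical, so both reductions reach the same irreducible element. Thus all overlap ambiguities are resolvable, precisely because $A_1$ and $A_2$ are associative.

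Finally I would identify the irreducible monomials: a word $c_1 \cdots c_n$ is irreducible exactly when no two consecutive letters lie in the same $\B_j$, i.e. when it is an alternating product of non-identity elements of $\B_1$ and $\B_2$; together with the empty word $1$ these are exactly the elements of $\B_1 * \B_2$. Hence $\Irr(\S)$ is the free $R$-module on $\B_1 * \B_2$. Invoking the equivalence of (i) and (iii) in Theorem \ref{bergman} gives $RX^* \cong \Irr(\S) \oplus \I(\S)$ as $R$-modules, whence $A_1 * A_2 = RX^*/\I(\S) \cong \Irr(\S)$, an associative $R$-algebra with $R$-basis $\B_1 * \B_2$ containing $1$, as claimed.
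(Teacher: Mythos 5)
Your proposal is correct and takes essentially the same route as the paper's own proof: the identical length-based semigroup partial order on $X^*$, compatibility because each rule replaces a length-two word by a combination of shorter words, resolvability of the (purely overlap) ambiguities via associativity of each $A_j$, and the identification $A_1*A_2\cong \Irr(\S)$ from part (iii) of Bergman's diamond lemma. If anything, you spell out two details the paper leaves implicit --- that expansions may involve the identity (empty word) and exactly why associativity of $A_j$ forces the two reductions of $c_1c_2c_3$ to agree --- so no changes are needed.
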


\begin{proof}
Certainly, $\B_1*\B_2$ is a spanning set for $A_1*A_2$.  We will use Bergman's diamond lemma to show that $\B_1*\B_2$ is in fact a basis.  Define $\leq_X$ on $X^{*}$ to be the partial order such that $c_1c_2\cdots c_n \leq_X c'_1c'_2 \cdots c'_m$ if and only if either $n<m$ or $n=m$ and $c_i=c'_i$ for all $i$.  This is a semigroup partial order on $X^{*}$ and satisfies the descending chain condition.  Since all reductions acting nontrivially send pairs of elements from $X$ to linear combinations of single elements from $X$, $\leq_X$ is compatible with $\S$.  We now argue that all the ambiguities are resolvable.  Since all reductions apply (locally) to pairs of elements from $X$, there are no inclusion ambiguities.  All overlap ambiguities are of the form $\gamma c_1 c_2 c_3 \gamma'$, where $c_1, c_2, c_3 \in \B_j$ for some $j$ and $\gamma, \gamma' \in X^*$.  Since each $A_j$ is associative, every overlap ambiguity must be resolvable.  Hence all ambiguities are resolvable.  By Theorem \ref{bergman}, every element $\beta \in RX^*$ has a unique normal form which equals $t(\beta)$ for some reduction $t$ and 
	$$RX^*=\Irr(\S)\oplus \I(\S)$$
as $R$-modules.  This implies that 
	$$A_1*A_2\cong \Irr(\S)$$
as $R$-modules, where $\Irr(\S)$ is the set of all finite linear combinations of elements from $\B_1*\B_2$.  Thus, $\B_1*\B_2$ is a basis for $A_1*A_2$.  We have also shown that $A_1*A_2$ is an associative algebra.
\end{proof}

\end{section}

\begin{section}{Free product of two Verlinde algebras}

We are interested in a very specific free product of algebras.  Recall the definition of the type II Chebyshev polynomials $\{U_{k}(x)\}$ given in Definition \ref{chebyshev.polys}.  We use these polynomials to define the Verlinde algebra, which first appeared in \cite{Verlinde.E:A}. 

\begin{definition}
Let $r\geq 1$.  The \emph{Verlinde algebra}, $V_{r}$, is defined to be the quotient of $\Z[x]$ by the ideal generated by $U_{r}(x)$.
\end{definition}

\begin{lemma}\label{cheby.basis.verlinde}
The algebra $V_{r}$ has rank $r$ and is equipped with a $\Z$-basis consisting of the images $u_{k}$ of the elements $U_{k}(x)$ for $0\leq k <r$.
\end{lemma}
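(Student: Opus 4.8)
The plan is to realize $V_{r}$ as a free $\Z$-module of rank $r$ in two stages: first by identifying the standard monomial basis coming from division by a monic polynomial, then by passing to the Chebyshev basis through a triangular change of coordinates. The crux throughout is that $U_{r}(x)$ is \emph{monic}, which is what makes polynomial division work over $\Z$ rather than only over a field.

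First I would show by induction on $k$ that $U_{k}(x)$ is monic of degree $k$. The base cases $U_{0}(x)=1$ and $U_{1}(x)=x$ are immediate from Definition \ref{chebyshev.polys}. For the inductive step, the recurrence $U_{k+1}(x)=xU_{k}(x)-U_{k-1}(x)$ shows that $xU_{k}(x)$ is monic of degree $k+1$, and subtracting $U_{k-1}(x)$, which has degree $k-1<k+1$, does not disturb the leading term. Hence $U_{k}$ is monic of degree $k$ for every $k\geq 0$.

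Next I would invoke the division algorithm for a monic divisor, which is valid over any commutative ring: since $U_{r}(x)$ is monic of degree $r$, every $f(x)\in\Z[x]$ can be written uniquely as $f=qU_{r}+s$ with $q,s\in\Z[x]$ and $\deg s<r$. Uniqueness holds because the difference of two admissible remainders would be a multiple of the monic degree-$r$ polynomial $U_{r}$ of degree less than $r$, hence zero. It follows that the images of $1,x,\dots,x^{r-1}$ span $V_{r}$ over $\Z$ and are $\Z$-linearly independent (a nonzero integer combination of degree $<r$ is a nonzero polynomial, which cannot lie in the ideal $(U_{r})$). Thus $V_{r}$ is free of rank $r$ with the standard monomial basis.

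Finally, I would transfer this to the Chebyshev basis. Because each $U_{k}$ is monic of degree $k$, the matrix expressing $U_{0},\dots,U_{r-1}$ in terms of $1,x,\dots,x^{r-1}$ is lower-triangular with $1$'s on the diagonal, so its determinant is $1$ and it is invertible over $\Z$. Consequently the images $u_{0},\dots,u_{r-1}$ form a $\Z$-basis of $V_{r}$, as claimed. There is no serious obstacle here; the only point requiring care is that every invocation of division with remainder relies on monicity of $U_{r}$ (and of the $U_{k}$ for the triangularity argument), so I would make the monicity claim explicit and foreground it before each of the two basis arguments.
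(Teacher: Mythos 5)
Your proof is correct, but it takes a different route from the paper in a trivial sense: the paper offers no argument at all, simply citing \cite[Proposition 1.2.3]{Green.R:M} and moving on. Your write-up fills in exactly the content that citation outsources, and it does so soundly: the induction establishing that $U_{k}(x)$ is monic of degree $k$ is immediate from the recurrence in Definition \ref{chebyshev.polys}; monicity of $U_{r}$ legitimizes division with remainder over $\Z$ (not just over a field), giving the monomial images $1, x, \dots, x^{r-1}$ as a $\Z$-basis of $V_{r}$; and the change of basis to $u_{0}, \dots, u_{r-1}$ is unitriangular, hence invertible over $\Z$, precisely because each $U_{k}$ is monic of degree $k$. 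You are also right to foreground monicity as the load-bearing hypothesis — both the uniqueness of remainders and the triangularity argument would fail without it. What the paper's approach buys is brevity in a thesis where this lemma is peripheral; what yours buys is self-containedness and a clear record of why the statement holds integrally rather than merely over $\mathbb{Q}$, which is the point a reader is most likely to stumble on.
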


\begin{proof}
See \cite[Proposition 1.2.3]{Green.R:M}.
\end{proof}

For example, $V_{3}$ is equal to the quotient of $\Z[x]$ by the ideal generated by the polynomial $U_{3}(x)=x^{3}-2x$.  A basis for $V_{3}$ is given by the images of $U_{0}(x)=1$, $U_{1}(x)=x$, and $U_{2}(x)=x^{2}-1$.  We are interested in the algebra $V_{3}$, but for our current purposes, we need a different basis.  Define $V_{0}(x)=U_{0}(x)$, $V_{1}(x)=U_{1}(x)$, and $V_{2}(x)=x^{2}$ and let $v_{0}$, $v_{1}$, and $v_{2}$ be their respective images in $V_{3}$.  Then it is readily seen that $\{v_{0}, v_{1}, v_{2}\}$ forms a basis for $V_{3}$.  Note that $v_{0}$ is the identity in $V_{3}$ and the other basis elements satisfy the relations
\begin{enumerate}[label=\rm{(\arabic*)}]
\item $v_{1}^{2}=v_{2}$;
\item $v_{1}v_{1}v_{1}=2v_{1}$.
\end{enumerate}

Now, consider two copies of $V_{3}$.  As above, $\{v_{0}, v_{1}, v_{2}\}$ forms a basis for $V_{3}$.  To avoid confusion, let $V'_{3}$ denote the second copy of $V_{3}$ and denote its basis by $\{v'_{0}, v'_{1}, v'_{2}\}$.  It will be very useful for us to establish the following correspondence, which will be used later to add decorations to our diagrams:  
\begin{enumerate}[label=\rm{(\arabic*)}]
\item $v_1 := \bcirc$;
\item $v_2 := \btri$;
\item $v'_{1} := \wcirc$;
\item $v'_{2} := \wtri$;
\end{enumerate}
so that $\{1, \bcirc, \btri \}$ and $\{1, \wcirc, \wtri \}$ are each a basis for $V_{3}$ and $V'_{3}$, respectively.  Then, the relations in $V_{3}$ and $V'_{3}$, respectively, become
\begin{enumerate}[label=\rm{(\arabic*)}]
\item $\bcirc \bcirc = \btri$;
\item $\bcirc \bcirc \bcirc = 2\ \bcirc$;
\item $\wcirc \wcirc =\ \wtri$;
\item $\wcirc \wcirc \wcirc = 2 \wcirc.$
\end{enumerate}

Note that these relations imply that
\begin{enumerate}[label=\rm{(\arabic*)}]
\item $\btri \btri = 2\btri$;
\item $\wtri \wtri =  2\wtri$;
\item $\bcirc \btri=\btri \bcirc=2 \bcirc$;
\item $\wcirc \wtri=\wtri \wcirc=2 \wcirc$.
\end{enumerate}

We will refer to $\bcirc$ and $\btri$ as \emph{closed decorations} and $\wcirc$ and $\wtri$ as \emph{open decorations}.  By Proposition \ref{free_product_algebras}, $\V:=V_{3}*V'_{3}$ is an associative algebra with a basis consisting of the identity and all finite alternating products of open and closed decorations.  For example, 
	$$\bcirc \bcirc \wcirc \bcirc \wcirc \wcirc \bcirc=\btri \wcirc \bcirc \wtri \bcirc,$$
where the expression on the right is a basis element of $\V$, while the expression on the left is not.

\begin{remark}\label{beads}
We can also think of $\V$ as being constructed in the following way.  Let $R=\Z$ and $X=\{\bcirc, \btri, \wcirc, \wtri\}$.  Consider the free $R$-algebra $RX^{*}$.  Then $\V$ is equal to the quotient of $RX^{*}$ by the following relations:
\begin{enumerate}[label=\rm{(\arabic*)}]
\item $\bcirc \bcirc = \btri$;
\item $\bcirc \bcirc \bcirc = 2\ \bcirc$;
\item $\wcirc \wcirc =\ \wtri$;
\item $\wcirc \wcirc \wcirc = 2\ \wcirc$.
\end{enumerate}
\end{remark}
\end{section}
\end{chapter}

%%%%%%%%%%%% Chapter 8 %%%%%%%%%%%

\begin{chapter}{Diagram algebras}

The goal of this chapter is to familiarize the reader with the necessary background on diagram algebras and to define a generating set for the diagram algebra that we are interested.  We will also define a set of diagrams that will turn out to be a basis for this diagram algebra (see Chapter 9).  It is important to note that there is currently no rigorous definition of the term ``diagram algebra.''  However, our diagram algebras possess many of the same features as those already appearing in the literature. 

\begin{section}{Summary of notation}

For the reader's reference, we summarize here the notations used throughout the remainder of this thesis (and indicate where each is defined): 

\begin{center}
\begin{tabular}[c]{|p{.65in}|p{4.1in}|p{1.1in}|}
\hline
$T_{k}(\emptyset)$ & set of (undecorated) pseudo $k$-diagrams & Definition \ref{def.T_k(emptyset)} \\
\hline
$\DTL(A_{n})$ & $\Z[\delta]$-algebra having $T_{k}(\emptyset)$ as a basis & Definition \ref{def.DTL(A_n)} \\
\hline
$T_{k}(A)$ & set of $A$-decorated pseudo $k$-diagrams & Definition \ref{def.T_k(A)} \\
\hline
$\P_{k}(A)$ & $R$-algebra having $T_{k}(A)$ as a basis & Definition \ref{A-decorated_diagram_algebra} \\
\hline
$T_{n+2}^{LR}(\V)$ & set of LR-decorated diagrams from $T_{n+2}(\V)$ & Definition \ref{def.T_n+2^LR(V)} \\
\hline
$\P_{n+2}^{LR}(\V)$ & $\Z[\delta]$-module spanned by $T_{n+2}^{LR}(\V)$ & Definition \ref{def.P_n+2^LR(V)} \\
\hline
$\widehat{\P}_{n+2}^{LR}(\V)$ & quotient of $\P_{n+2}^{LR}(\V)$ by the relations induced by $\V$ & Definition \ref{big.diagram.alg.defn} \\
\hline
$\D_{n}$ & $\Z[\delta]$-algebra generated (as a unital algebra) by $d_{1}, d_{2}, \dots, d_{n+1}$ & Definition \ref{def.D_n}\\
\hline
$\Diag^{b}_{n}(\V)$ & set of all $\C$-admissible $(n+2)$-diagrams & Definition \ref{admissible.def} \\
\hline
$\mathcal{M}[\Diag_{n}^{b}(\V)]$ & $\Z[\delta]$-module spanned by the admissible diagrams & Definition \ref{def.mathcalM[Diag_n^b(V)} \\
\hline
$\DTL(B_{n})$ & $\Z[\delta]$-algebra generated (as a unital algebra) by $d_{1}, d_{2}, \dots, d_{n}$  & Definition \ref{def.DTL(B_n)} \\
\hline
$\DTL(B'_{n})$ & $\Z[\delta]$-algebra generated (as a unital algebra) by $d_{2}, d_{3}, \dots, d_{n+1}$ & Definition \ref{def.DTL(B_n)} \\
\hline
\end{tabular}
\end{center}

\end{section}

\begin{section}{Ordinary Temperley--Lieb pseudo diagrams}

It is worth noting that our development in this chapter is more general than many of the standard developments.  The usual developments are too restrictive to accomplish the task of finding a diagrammatic representation of the infinite dimensional algebra $\TL(\C_{n})$.  Yet, our development is modeled after \cite{Green.R:M}, \cite{Green.R;Martin.P;Parker.A:A}, \cite{Jones.V:A}, and \cite{Martin.P;Green.R;Parker.A:A}.

\begin{definition}\label{k_box}
Let $k$ be a nonnegative integer.  The \emph{standard $k$-box} is a rectangle with $2k$ marks points, called \emph{nodes} (or \emph{vertices}) labeled as follows.

\begin{center}
%-- New mfpic environment, number 73 of 189. (size of end split: 2, should be 2)  ------------------->
\includegraphics{ThesisFigs1.073}
\end{center}

We will refer to the top of the rectangle as the \emph{north face} and the bottom as the \emph{south face}.  Often, it will be useful for us to think of the standard $k$-box as being embedded in the plane.  In this case, we put the lower left corner of the rectangle at the origin such that each node $i$ (respectively, $i'$) is located at the point $(i,1)$ (respectively, $(i,0)$).
\end{definition}

Next, we summarize the construction of the ordinary Temperley--Lieb pseudo diagrams.  

\begin{definition}\label{def.T_k(emptyset)}
A \emph{concrete pseudo $k$-diagram} consists of a finite number of disjoint curves (planar), called \emph{edges}, embedded in the standard $k$-box.  Edges may be closed (isotopic to circles), but not if their endpoints coincide with the nodes of the box.  The nodes of the box are the endpoints of curves, which meet the box transversely.  Otherwise, the curves are disjoint from the box.  We define an equivalence relation on the set of concrete pseudo $k$-diagrams.  Two concrete pseudo $k$-diagrams are \emph{(isotopically) equivalent} if one concrete diagram can be obtained from the other by isotopically deforming the edges such that any intermediate diagram is also a concrete pseudo $k$-diagram.  A \emph{pseudo $k$-diagram} (or an \emph{ordinary Temperley-Lieb pseudo-diagram}) is defined to be an equivalence class of equivalent concrete pseudo $k$-diagrams.  We denote the set of pseudo $k$-diagrams by $T_{k}(\emptyset)$.
\end{definition}

\begin{example}
Here is an example of a concrete pseudo 5-diagram.
\begin{center}
%-- New mfpic environment, number 74 of 189. (size of end split: 2, should be 2)  ------------------->
\includegraphics{ThesisFigs1.074}
\end{center}
Here an example of a drawing that is \textit{not} a concrete pseudo 5-diagram.
\begin{center}
%-- New mfpic environment, number 75 of 189. (size of end split: 2, should be 2)  ------------------->
\includegraphics{ThesisFigs1.075}
\end{center}
\end{example}

\begin{remark}\label{diagram_repn}
When representing a pseudo $k$-diagram with a drawing, we pick an arbitrary concrete representative among a continuum of equivalent choices.  When no confusion can arise, we will not make a distinction between a concrete pseudo $k$-diagram and the equivalence class that it represents.  We say that two concrete pseudo $k$-diagrams are \emph{vertically equivalent} if they are equivalent in the above sense by an isotopy that preserves setwise each vertical cross-section of the $k$-box.  
\end{remark}

We will refer to any closed curves occurring in the pseudo $k$-diagram as a \emph{loop edge}, or simply a \emph{loop}.  The diagram in the example above has a single loop.  Note that we used the word ``pseudo'' in our definition to emphasize that we allow loops to appear in our diagrams.  Most examples of diagram algebras in the literature ``scale away'' loops that appear (i.e., remove and multiply by a scalar).  There are loops in the diagram algebra that we are interested in preserving, so as to obtain infinitely many diagrams.  When no confusion will arise, we will refer to a pseudo $k$-diagram as simply a diagram.  The presence of $\emptyset$ in the definition above is to emphasize that the edges of the diagrams are undecorated.  In the next section, we will allow for the presence of decorations.

\bigskip

Let $d$ be a diagram.  If $d$ has an edge $e$ that joins node $i$ in the north face to node $j'$ in the south face, then $e$ is called a \emph{propagating edge from $i$ to $j'$}.  (Propagating edges are often referred to as ``through strings'' in the literature.)  If a propagating edge joins $i$ to $i'$, then we will call it a \emph{vertical propagating edge}.  If an edge is not propagating, loop edge or otherwise, it will be called \emph{non-propagating}.  

\bigskip

If a diagram $d$ has at least one propagating edge, then we say that $d$ is \emph{dammed}.  If, on the other hand, $d$ has no propagating edges (which can only happen if $k$ is even), then we say that $d$ is \emph{undammed}.  Note that the number of non-propagating edges in the north face of a diagram must be equal to the number of non-propagating edges in the south face.  We define the function $\a: T_{k}(\emptyset) \to \Z^{+}\cup \{0\}$ via
	$$\a(d)=\text{ number of non-propagating edges in the north face of } d.$$
	
\begin{remark}
There is only one diagram with $\a$-value $0$ having no loops; namely
	$$d_{e}:=\begin{tabular}[c]{@{} c@{}}
%-- New mfpic environment, number 76 of 189. (size of end split: 2, should be 2)  ------------------->
\includegraphics{ThesisFigs1.076}
\end{tabular}.$$
The maximum value that $\a(d)$ can take is $\lfloor k/2 \rfloor$.  In particular, if $k$ is even, then the maximum value that $\a(d)$ can take is $k/2$, i.e., $d$ is undammed.  On the other hand, if $\a(d)=\lfloor k/2 \rfloor$ while $k$ is odd, then $d$ has a unique propagating edge.
\end{remark}

We wish to define an associative algebra that has the pseudo $k$-diagrams as a basis.

\begin{definition}\label{defn_P_{k}}
Let $R$ be a commutative ring with $1$.  The associative algebra $\P_{k}(\emptyset)$ over $R$ is the free $R$-module having $T_{k}(\emptyset)$ as a basis, with multiplication defined as follows.  If $d, d' \in T_{k}(\emptyset)$, the product $d'd$ is the element of $T_{k}(\emptyset)$ obtained by placing $d'$ on top of $d$, so that node $i'$ of $d'$ coincides with node $i$ of $d$, rescaling vertically by a factor of $1/2$ and then applying the appropriate translation to recover a standard $k$-box.
\end{definition}

\begin{remark}
For a proof that this procedure does in fact define an associative algebra see \cite[\textsection 2]{Green.R:M} and \cite{Jones.V:A}.
\end{remark}

We will refer to the multiplication of diagrams as \emph{diagram concatenation}.  The (ordinary) Temperley--Lieb diagram algebra (see \cite{Green.R:N, Green.R:M, Jones.V:A, Penrose.R:A}) can be easily defined in terms of this formalism.

\begin{definition}\label{def.DTL(A_n)}
Let $\DTL(A_{n})$ be the associative $\Z[\delta]$-algebra equal to the quotient of $\P_{n+1}(\emptyset)$ by the following relation:
\begin{center}
\begin{tabular}[c]{@{} c@{}}
%-- New mfpic environment, number 77 of 189. (size of end split: 2, should be 2)  ------------------->
\includegraphics{ThesisFigs1.077}
\end{tabular}
= $\delta$
\end{center}
It is well-known that $\DTL(A_{n})$ is the free $\Z[\delta]$-module with basis given by the elements of $T_{n+1}(\emptyset)$ having no loops. The multiplication is inherited from the multiplication on $\P_{n+1}(\emptyset)$ except we multiply by a factor of $\delta$ for each resulting loop and then discard the loop.  We will refer to $\DTL(A_{n})$ as the \emph{(ordinary) Temperley--Lieb diagram algebra}.
\end{definition}

\begin{example}
Here is an example of multiplication of three basis diagrams of $\DTL(A_{4})$.
{\setlength\arraycolsep{1pt}
\begin{eqnarray*}
& & \ \begin{tabular}[c]{l}
%-- New mfpic environment, number 78 of 189. (size of end split: 2, should be 2)  ------------------->
\includegraphics{ThesisFigs1.078}
\end{tabular} \\
= &\  \delta^{3} &\ \begin{tabular}[c]{l}
%-- New mfpic environment, number 79 of 189. (size of end split: 2, should be 2)  ------------------->
\includegraphics{ThesisFigs1.079}
\end{tabular}
\end{eqnarray*}}

\end{example}

For a proof of the following theorem, see \cite{Kauffman.L:B} or \cite{Penrose.R:A}.

\begin{theorem}
As $\Z[\delta]$-algebras, $\TL(A_{n}) \cong \DTL(A_{n})$.  Moreover, each loop-free diagram from $T_{n+1}(\emptyset)$ corresponds to a unique monomial basis element of $\TL(A_{n})$.  \hfill $\qed$
\end{theorem}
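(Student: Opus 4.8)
The plan is to construct an explicit $\Z[\delta]$-algebra homomorphism $\theta \colon \TL(A_{n}) \to \DTL(A_{n})$ from the monomial presentation and then show it carries the monomial basis bijectively onto the loop-free diagrams. First I would introduce, for each $1 \leq i \leq n$, the diagrammatic generator $d_{i} \in \DTL(A_{n})$ that joins node $i$ to node $i+1$ in the north face, joins $i'$ to $(i+1)'$ in the south face, and joins $j$ to $j'$ by a vertical propagating edge for every $j \notin \{i,i+1\}$. A direct diagram concatenation then verifies that these generators satisfy exactly the defining relations of $\TL(A_{n})$ coming from the presentation theorem (the type $A$ specialization of Remark \ref{affineC.relations}, in which every bond has strength $3$): stacking $d_{i}$ on itself creates a single loop and reproduces $d_{i}$, giving $d_{i}^{2}=\delta d_{i}$; generators with non-adjacent indices have disjoint cups and caps, giving $d_{i}d_{j}=d_{j}d_{i}$ for $|i-j|>1$; and the isotopy straightening a zig-zag gives $d_{i}d_{i\pm1}d_{i}=d_{i}$. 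Since the relations are satisfied, $b_{i}\mapsto d_{i}$ extends (by the universal property of the presentation) to a $\Z[\delta]$-algebra homomorphism $\theta$. Here I would also remark that the type $A$ structure constants for the $b$-basis lie in $\Z[\delta]$, so $\TL(A_{n})$ carries a natural $\Z[\delta]$-form that is free on $\{b_{w}: w \in W_{c}(A_{n})\}$, which is the object being compared.

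Next I would prove that $\theta$ is surjective by showing the $d_{i}$ generate $\DTL(A_{n})$ as a unital algebra, i.e.\ every loop-free $(n+1)$-diagram can be written as a product of the $d_{i}$; this is a standard structural fact, provable by induction on the value $\a(d)$ via a normal-form reading of the cups and caps. The isomorphism then follows from a counting argument of the kind alluded to in the abstract for the finite-rank case. The fully commutative elements of $W(A_{n})\cong S_{n+1}$ are precisely the $321$-avoiding permutations, which are counted by the Catalan number $C_{n+1}=\frac{1}{n+2}\binom{2n+2}{n+1}$; the loop-free $(n+1)$-diagrams are the planar matchings of $2(n+1)$ boundary nodes, also counted by $C_{n+1}$. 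Thus $\TL(A_{n})$ and $\DTL(A_{n})$ are free $\Z[\delta]$-modules of the same finite rank $C_{n+1}$, and $\theta$ is a surjection between them. Because $\Z[\delta]$ is an integral domain, a spanning set of cardinality equal to the rank of a free module is automatically a basis (pass to the fraction field), so the images $\theta(b_{w})$ form a basis and $\theta$ is an isomorphism. In particular $\theta$ sends each monomial basis element to a unique loop-free diagram, which is exactly the ``moreover'' clause.

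The main obstacle is the bookkeeping hidden in the phrase ``corresponds to a unique monomial basis element,'' namely verifying that for each fully commutative $w$ with reduced word $\mathsf{w}=\w_{1}\cdots\w_{r}$ the image $\theta(b_{w})=d_{\w_{1}}\cdots d_{\w_{r}}$ is a genuine loop-free basis diagram rather than a $\delta$-power multiple of one. This is precisely where full commutativity is needed: a reduced word of a fully commutative element never forces the repeated-generator configuration that would close off a loop under concatenation, so no spurious factor of $\delta$ arises and $\theta$ really does map the monomial basis \emph{into} the loop-free diagrams. I would isolate this as the key lemma. Once it is in hand, the surjectivity/generation claim and the two Catalan counts combine with the domain-rank argument to finish the proof, and the subtle points to get exactly right are the two enumerations ($321$-avoiding permutations and planar matchings, both $C_{n+1}$) together with the loop-freeness lemma.
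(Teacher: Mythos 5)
You should know at the outset that the thesis does not actually prove this theorem: it is quoted as a classical result, with the proof deferred to \cite{Kauffman.L:B} and \cite{Penrose.R:A}, so any argument you give is necessarily a different route from ``the paper's.'' That said, your outline is a sound reconstruction of the standard argument, and it is worth observing that it is precisely the finite-rank counting strategy that the abstract of the thesis contrasts with its own methods — such counting is unavailable for $\TL(\C_{n})$, which is why the thesis develops the weak star reduction machinery instead. Your three ingredients are all correct: the simple diagrams satisfy the type $A$ specialization of the relations in Remark \ref{affineC.relations}, so $b_{i}\mapsto d_{i}$ extends to a homomorphism $\theta$; the $d_{i}$ generate $\DTL(A_{n})$; and both sides are free $\Z[\delta]$-modules of rank $C_{n+1}$ (Theorem \ref{t-basis} together with the monomial basis remark, plus the $321$-avoidance count on one side and the planar-matching count on the other), so a surjection between free modules of equal finite rank over the domain $\Z[\delta]$ is an isomorphism. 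The one thin spot is your ``key lemma'' that $\theta(b_{w})$ is a single loop-free diagram rather than a $\delta$-power multiple of one: the slogan you give is not yet a proof, since one must show by induction along a reduced word that a loop in $d_{i}\cdot\(d_{\w_{k+1}}\cdots d_{\w_{r}}\)$ would force $s_{i}$ to be a left descent of $\w_{k+1}\cdots \w_{r}$, contradicting reducedness — this is the type $A$ shadow of the paper's Lemma \ref{diagram.descent.set}, whose affine $C$ analogue costs the thesis considerable work. However, you can bypass the lemma entirely: once $\theta$ is known to be an isomorphism, write $\theta(b_{w})=\delta^{m_{w}}d_{w}$ with $d_{w}$ loop-free; since the elements $\theta(b_{w})$ form a basis and $\delta$ is not a unit in $\Z[\delta]$, expanding any loop-free diagram $d$ in this basis forces some $w$ with $m_{w}=0$ and $d_{w}=d$, and comparing cardinalities then yields $m_{w}=0$ for all $w$ together with the bijectivity of $w\mapsto d_{w}$, which is exactly the ``moreover'' clause. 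So your approach is correct and buys a self-contained proof where the paper offers only a citation, but the key lemma should either be proved honestly or replaced by this unit-theoretic observation.
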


\end{section}

\begin{section}{Decorated pseudo diagrams}

Now, we will adorn the edges of a diagram with elements from an associative algebra having a basis containing $1$.  First, we need to develop some terminology and lay out a few restrictions on how we decorate our diagrams.

\bigskip

Let $R$ be a commutative ring with $1$ and let $X=\{x_{i}: i\in I\}$ be a set such that $X \cap R$ does not contain $1 \in R$.  Let $\sim$ be a set of relations on $RX^{*}$ chosen so that $A:=RX^{*}/\sim$ has a basis $\B$ given by a subset of $X^{*}$ containing $1$ and $X$.  Then $A$ is an associative $R$-algebra.  We will refer to each $x_{i}\in X$ as a \emph{decoration}.  Every basis element in $\B$ can be written as a finite product of decorations.  Let $\mathbf{b}=x_{i_{1}}x_{i_{2}}\cdots x_{i_{r}}$ be a finite sequence of decorations in $X^{*}$; note that by definition each decoration is not equal to the identity and that $\mathbf{b}$ may or may not be a basis element of $A$.  We say that $x_{i_{j}}$ and $x_{i_{k}}$ are \emph{adjacent} if $|j-k|=1$ and we will refer to $\mathbf{b}$ as a \emph{block} of decorations of \emph{width} $r$.  Note that a block of width $1$ is a just a single decoration.  

\bigskip

Temporarily, we will ignore the relations of $A$.  That is, at this point, we should consider blocks of decorations as elements of $X^{*}$.

\begin{example}
We remind the reader that according to Remark \ref{beads}, $\V$ with basis consisting of finite alternating products of open and closed decorations is such an algebra, where the decoration set is $\{\bcirc, \wcirc, \btri, \wtri\}$.  Observe that
	$$\bcirc \wcirc \wcirc \btri \wtri \bcirc \wcirc$$
is block of width 7, but is not a basis element of $\V$ since there are two adjacent $\wcirc$ decorations in the second and third positions.
\end{example}

Next, let $d$ be a fixed concrete pseudo $k$-diagram and let $e$ be a non-loop edge of $d$.  We may adorn $e$ with a finite sequence of blocks of decorations $\mathbf{b}_{1}, \dots, \mathbf{b}_{m}$ such that adjacency of blocks and decorations of each block is preserved as we travel along $e$.  The convention we adopt is that the decorations of the block are placed so that we can read off the sequence of decorations from left to right as we traverse $e$ from $i$ to $j'$ if $e$ is propagating or from $i$ to $j$ (respectively, $i'$ to $j'$) with $i < j$ (respectively, $i' < j'$) if $e$ is non-propagating.  Furthermore, we should encounter block $\mathbf{b_{i}}$ before block $\mathbf{b_{i+1}}$.  We may also adorn a loop edge with a sequence of blocks. In this case, reading the corresponding sequence of decorations depends on an arbitrary choice of starting point and direction round the loop. We say two sequences of blocks are \emph{loop equivalent} if one can be changed to the other or its opposite by any cyclic permutation. Note that loop equivalence is an equivalence relation on the set of sequences of blocks.  So, the sequence of blocks on a loop is only defined up to loop equivalence.  That is, if we adorn a loop edge with a sequence of blocks of decorations, we only require that adjacency be preserved.

\bigskip

Again, let $d$ be a fixed concrete pseudo $k$-diagram and let $e$ be an edge of $d$.  Each decoration $x_{i}$ on $e$  has coordinates in the plane.  In particular, each decoration has an associated $y$-value, which we will call its \emph{vertical position}.  We also require the following:
\begin{enumerate}[label=\rm{(\arabic*)}]
\item If $\a(d)\neq 0$ and $e$ is non-propagating (loop edge or otherwise), then we allow adjacent blocks on $e$ to be conjoined to form larger blocks, and in particular, if we conjoin all adjacent blocks on $e$, then there is a unique maximal block.
\item If $\a(d)>1$ and $e$ is propagating, then as in (1), we allow adjacent blocks on $e$ to be conjoined to form larger blocks, and in particular, if we conjoin all adjacent blocks on $e$, then there is a unique maximal block.
\item \label{unusual} If $\a(d)=1$ and $e$ is propagating, then we allow $e$ to be decorated subject to the following constraints.
\begin{enumerate}[label=\rm{(\alph*)}]
\item All decorations occurring on propagating edges must have vertical position lower (respectively, higher) than the vertical positions of decorations occurring on the (unique) non-propagating edge in the north face (respectively, south face) of $d$.

\item If $\mathbf{b}$ is a block of decorations occurring on $e$, then no other decorations occurring on any other propagating edges may have vertical position in the range of vertical positions that $\mathbf{b}$ occupies.

\item If $\mathbf{b}_{i}$ and $\mathbf{b}_{i+1}$ are two adjacent blocks occurring on $e$, then they may be conjoined to form a larger block only if the previous requirements are not violated.
\end{enumerate}
\end{enumerate}

\begin{remark}
Note that \ref{unusual} above is an unusual requirement for decorated diagrams.  We require this feature to ensure faithfulness of our diagrammatic representation on monomial basis elements of $\TL(\C_{n})$ indexed by the type I elements of $W(\C_{n})$.
\end{remark}

If an edge has no decorations on it, then we say that it is \emph{undecorated}, and in this case, we can think of the edge as being adorned with $1 \in \B$.  A diagram is undecorated if all of its edges are undecorated.  We require that all diagrams with $\a$-value 0 be undecorated.  In particular, the unique diagram $d_{e}$ having $\a$-value 0 and no loops is undecorated.

%Last sentence of the above paragraph used to be:  ``We require that the unique diagram $d_{e}$ having $\a$-value $0$ be undecorated.''  But this wrong; need all diagrams with $\a$-value 0 to be undecorated (including those with loops).

\begin{definition}\label{decorated.pseudo.diagram.def}
A \emph{concrete $A$-decorated pseudo $k$-diagram} is any concrete $k$-diagram decorated by elements of the decoration set $X$ that satisfies the conditions given above.
\end{definition}

\begin{example}\label{first.dec.diagram.ex}
Let $A=\V$, so that $\{\bcirc, \btri, \wcirc, \wtri\}$ is our decoration set.
\begin{enumerate}[label=\rm{(\alph*)}]
\item \label{first.dec.diagram.ex.diagram1} Here is an example of a concrete $\V$-decorated pseudo $5$-diagram.

\begin{center}
%-- New mfpic environment, number 80 of 189. (size of end split: 2, should be 2)  ------------------->
\includegraphics{ThesisFigs1.080}
\end{center}

In this example, there are no restrictions on the relative vertical position of decorations since the $\a$-value is greater than 1.  

%Fixed trivial typo:  used to say ``but with $a$-value is 1.''
\item \label{first.dec.diagram.ex.diagram2} Here is another example of a concrete $\V$-decorated pseudo $5$-diagram, but with $\a$-value 1.

\begin{center}
%-- New mfpic environment, number 81 of 189. (size of end split: 2, should be 2)  ------------------->
\includegraphics{ThesisFigs1.081}
\end{center}

We use the horizontal dotted lines to indicate that the three closed decorations on the leftmost propagating edge are in three distinct blocks.  We cannot conjoin these three decorations to form a single block because there are decorations on the last propagating edge occupying vertical positions between them.  Similarly, the open decorations on the last propagating edge form two distinct blocks that may not be conjoined.  

\item \label{first.dec.diagram.ex.diagram3} Lastly, here is an example of a concrete $\V$-decorated pseudo $6$-diagram.

\begin{center}
%-- New mfpic environment, number 82 of 189. (size of end split: 2, should be 2)  ------------------->
\includegraphics{ThesisFigs1.082}
\end{center}
\end{enumerate}

\end{example}

Note that an isotopy of a concrete $A$-decorated pseudo $k$-diagram $d$ that preserves the faces of the standard $k$-box may not preserve the relative vertical position of the decorations even if it is mapping $d$ to an equivalent diagram.  However, if two diagrams are vertically equivalent then the relative vertical position of decorations will be preserved.  This is a bit too restrictive for us.  The only time equivalence is an issue is when $\a(d)=1$.  In this case, we wish to preserve the relative vertical position of the blocks.  We define two concrete pseudo $A$-decorated $k$-diagrams to be \emph{$A$-equivalent} if we can isotopically deform one diagram into the other such that any intermediate diagram is also a concrete pseudo $A$-decorated $k$-diagram.  Note that we do allow decorations from the same maximal block to pass each other's vertical position.  

\begin{definition}\label{def.T_k(A)}
An \emph{$A$-decorated pseudo $k$-diagram} is defined to be an equivalence class of $A$-equivalent concrete $A$-decorated pseudo $k$-diagrams.  We denote the set of $A$-decorated pseudo $k$-diagrams by $T_{k}(A)$.
\end{definition}

\begin{remark}
As in Remark \ref{diagram_repn}, when representing an $A$-decorated pseudo $k$-diagram with a drawing, we pick an arbitrary concrete representative among a continuum of equivalent choices.  When no confusion will arise, we will not make a distinction between a concrete $A$-decorated pseudo $k$-diagram and the equivalence class that it represents. 
\end{remark}

We wish to generalize Definition \ref{defn_P_{k}} to the case of $A$-decorated $k$-diagrams.  First, we need a lemma to justify that our multiplication is well-defined and associative.

\begin{lemma}\label{a-value=1}
Let $d$ be a diagram with $\a(d)=1$.  Suppose that the unique non-propagating edge in the north face of $d$ joins $i$ to $i+1$.  Let $d'$ be any other diagram.  Then $\a(d'd)=1$ if and only if $\a(d')=1$ and the unique non-propagating edge in the south face of $d'$ joins either $\mathrm{(a)}$ $(i-1)'$ to $i'$; $\mathrm{(b)}$ $i'$ to $(i+1)'$; or $\mathrm{(c)}$ $(i+1)'$ to $(i+2)'$.
\end{lemma}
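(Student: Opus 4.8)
The plan is to reduce the whole statement to a count of propagating edges (through-strings) across the interface at which $d'$ is glued on top of $d$. The first step is the monotonicity observation that concatenation never increases the number of propagating edges: a non-propagating edge in the north face of $d'$ survives as one in $d'd$, and a non-propagating edge in the south face of $d$ likewise survives, so that $\a(d'd) \ge \max\{\a(d'),\a(d)\}$. Since $\a(d)=1$ by hypothesis, the assumption $\a(d'd)=1$ forces $\a(d')\le 1$, and the only substantive possibility is $\a(d')=1$ (the degenerate case $\a(d')=0$, in which $d'$ is the identity diagram up to loops, I would dispose of separately).

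Assuming $\a(d')=1$, I would next record a planarity fact pinning down its unique non-propagating south edge: it must join two \emph{adjacent} nodes $p'$ and $(p+1)'$. If it instead joined $p'$ and $q'$ with $q>p+1$, every node strictly between them would be sealed beneath the arc, unable to reach the north face (a propagating edge would have to cross the arc) and unable to pair with another south node (there is only one south cup), contradicting planarity. The same remark explains why the north cup of $d$ is assumed to join the adjacent nodes $i,i+1$.

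The core of the argument is then a three-way trace of the connectivity at the middle interface, where the south cup of $d'$ occupies interface positions $\{p,p+1\}$ and the north cup of $d$ occupies $\{i,i+1\}$. When these position-sets are disjoint, each cup seals off a pair of through-strings on the opposite diagram, so $d'd$ inherits the north cup of $d'$ \emph{and} a newly created north cap, giving $\a(d'd)=2$. When the two position-sets coincide, i.e.\ $p=i$ (configuration (b)), the two cups close up into a loop and only the north cup of $d'$ and the south cup of $d$ survive, so $\a(d'd)=1$. When they overlap in exactly one node, i.e.\ $\{p,p+1\}=\{i-1,i\}$ or $\{i+1,i+2\}$ (configurations (a) and (c)), the two cups together with the two adjacent through-strings re-route into a single propagating edge, again leaving only the north cup of $d'$ and the south cup of $d$, so $\a(d'd)=1$. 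Since the disjoint case is exactly the complement of (a), (b), (c) among the admissible (adjacent) south cups, both directions of the equivalence fall out at once.

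The main obstacle I anticipate is the careful bookkeeping in the three traces — in particular verifying that in the surviving $\a(d'd)=1$ configurations no \emph{additional} caps are created elsewhere, which is cleanest to organize by drawing the interface and following each strand. The one genuine wrinkle is the degenerate $\a(d')=0$ possibility flagged above, which is the only configuration not captured by the adjacency dichotomy and must be addressed (or excluded by the standing convention that $d'$ carries a cap) for the stated equivalence to read cleanly.
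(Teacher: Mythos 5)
Your proof is correct, and its skeleton --- the monotonicity fact $\a(d'd)\geq\a(d')$ followed by a trace of strand connectivity at the glued interface --- is the same as the paper's. However, your version is more complete than the paper's own proof in two respects, and both additions are worth keeping. First, the paper's ``only if'' direction consists of nothing but the monotonicity observation; it never proves that the south cup of $d'$ must occupy positions (a), (b), or (c). Your disjoint case supplies exactly the missing argument: if $\{p,p+1\}$ and $\{i,i+1\}$ are disjoint, the north cup of $d$ caps off two propagating edges of $d'$, producing a second non-propagating edge in the north face of $d'd$, whence $\a(d'd)\geq 2$. Second, the ``wrinkle'' you flag is genuine: monotonicity only yields $\a(d')\leq 1$, and taking $d'$ to be the identity diagram $d_{e}$ (possibly with loops) gives $\a(d'd)=\a(d)=1$ while $\a(d')=0$, so the equivalence as literally stated fails in this degenerate case. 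The paper glosses over this by asserting that $\a(d'd)\geq\a(d')$ ``implies $\a(d')=1$,'' which is not a valid deduction; your instinct that the case must be excluded or handled separately is right (it is harmless in the paper's application, since diagrams of $\a$-value $0$ are undecorated by convention, but it is a real gap in the statement). Your remaining steps --- adjacency of the south cup by planarity, the loop created in case (b), and the re-routing of the two cups into a single propagating edge in cases (a) and (c) --- coincide with the paper's treatment of the converse direction.
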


\begin{proof}
First, assume that $\a(d'd)=1$.  It is a general fact that $\a(d'd)\geq \a(d')$, which implies that $\a(d')=1$.  

\bigskip

Conversely, assume that $\a(d)=1$ and that the unique non-propagating edge in the south face of $d'$ joins either (a) $(i-1)'$ to $i'$; (b) $i'$ to $(i+1)'$; or (c) $(i+1)'$ to $(i+2)'$.  

\bigskip

Assume that we are in situation (a).  Suppose that the propagating edge leaving node $(i+1)'$ in the south face of $d'$ is connected to node $j$ in the north face.  Also, suppose that the propagating edge leaving node $i-1$ in the north face of $d$ is connected to node $l'$ in the south face.  Then $d'd$ has a propagating edge joining node $j$ to node $l'$.  Furthermore, the only non-propagating edge in the north (respectively, south) face of $d'd$ is the same as the unique non-propagating edge in the north (respectively, south) face of $d'$ (respectively, $d$).  It follows that $\a(d'd)=1$.  

\bigskip

Next, assume (b) happens.  Then $d'd$ has one more loop than the sum total of loops from $d'$ and $d$.  Furthermore, the only non-propagating edge in the north (respectively, south) face of $d'd$ is the same as the unique non-propagating edge in the north (respectively, south) face of $d'$ (respectively, $d$), and so $\a(d'd)=1$.  

\bigskip

Lastly, if (c) happens, then the proof that $\a(d'd)=1$ is symmetric to case (a).
\end{proof}

\begin{definition}\label{A-decorated_diagram_algebra}
Let $(A, \B)$ be an associative algebra as in the beginning of this section.  We define $\P_{k}(A)$ to be the free $R$-module having the $A$-decorated pseudo $k$-diagrams $T_{k}(A)$ as a basis.  We define multiplication in $\P_{k}(A)$ by defining multiplication in the case where $d$ and $d'$ are basis elements of $\P_{k}(A)$, and then extend bilinearly.  To calculate the product $d'd$, concatenate $d'$ and $d$ (as in Definition \ref{defn_P_{k}}).  While maintaining $A$-equivalence, conjoin adjacent blocks.
\end{definition}

We emphasize that we are \textit{not} currently applying any of the relations of $A$.  We are simply adorning the edges with decorations subject to certain constraints and describing rules for conjoining blocks when multiplying diagrams.

\begin{remark}
We claim that the multiplication defined above turns $\P_{k}(A)$ into a well-defined associative $R$-algebra.  This claim follows from arguments in \cite[\textsection 3]{Martin.P;Green.R;Parker.A:A} and Lemma \ref{a-value=1} above.  The only case that requires serious consideration is when multiplying two diagrams that both have $\a$-value $1$.  If $\a(d)=\a(d')=1$ while $\a(d'd)>1$, then there are no concerns.  However, if $\a(d'd)=1$, then according to Lemma \ref{a-value=1}, if the unique non-propagating edge $e'$ in the south face of $d'$ joins $i'$ to $(i+1)'$, it must be the case that unique non-propagating edge $e$ in the north face of $d$ joins either (a) $i-1$ to $i$; (b) $i$ to $i+1$; or (c) $i+1$ to $i+2$.  If (a) or (c) happens, then the only blocks that get conjoined are the blocks on $e$ and $e'$, which presents no problems.  If (b) happens, then we get a loop edge and we conjoin the blocks from $e$ and $e'$.  As a consequence, it is possible that the block occurring on a propagating edge of $d'$ having the lowest vertical position may be conjoined with the block occurring on a propagating edge of $d$ having the highest vertical position.  This can only happen if these two edges are joined in $d'd$, and regardless, presents no problems.  Since there are no relations to apply, the product of two elements of $T_{k}(A)$ is equal to a single basis element.
\end{remark}

\end{section}

\begin{section}{The $\V$-decorated diagram algebra $\D_{n}$ and admissible diagrams}

For the remainder of this thesis, we will assume that $R=\Z[\delta]$, where $\delta=v+v^{-1}$, and that $A=\V$ is equipped with basis consisting of the identity and all finite alternating products of open and closed decorations.  Let $n\geq 2$.  We now focus our attention on a particular set of diagrams from $\P_{n+2}(\V)$.  

\begin{definition}\label{def.T_n+2^LR(V)}
Let $d \in T_{n+2}(\V)$.  That is, the blocks on the edges of $d$ consist of sequences of the decorations $\bcirc$, $\btri$, $\wtri$, and $\wcirc$.
\begin{enumerate}[label=\rm{(\arabic*)}]
\item An edge of $d$ is called \emph{L-exposed} (respectively, \emph{R-exposed}) if it can be deformed to touch the left (respectively, right) wall of the diagram without crossing any other edges.
\item We call $d$ \emph{L-decorated} (respectively, \emph{R-decorated}) if the only edges labelled with closed (respectively, open) decorations are L-exposed (respectively, R-exposed).
\item We call $d$ \emph{LR-decorated} if $d$ is both L-decorated and R-decorated, with the added constraint that it must be possible to deform decorated edges so as to take open decorations to the left and closed decorations to the right simultaneously.  We denote the set of LR-decorated diagrams from $T_{n+2}(\V)$ by $T_{n+2}^{LR}(\V)$.

\item We say that a decoration on a non-propagating edge $e$ joining $i$ to $j$ (respectively, $i'$ to $j'$) with $i<j$ (respectively, $i'<j'$) is \emph{first} (respectively, \emph{last}) if it is the first (respectively, last) decoration encountered as we traverse $e$ from $i$ to $j$ (respectively, $i'$ to $j'$).

\item Similarly, if $\a(d)=1$, then we say that a decoration on a propagating edge $e$ from $i$ to $j'$ is \emph{first} (respectively, \emph{last}) if it is the first (respectively, last) decoration encountered as we traverse $e$ from $i$ to $j'$.  

\end{enumerate}
\end{definition}

\begin{remark}\label{LR-decorated}
We make several observations.
\begin{enumerate}[label=\rm{(\arabic*)}]
\item The set of LR-decorated diagrams $T_{n+2}^{LR}(\V)$ is infinite since there is no limit to the number of loops that may appear.

\item \label{closed.under.concatenation} Concatenating diagrams cannot change an L-exposed edge to a non-L-exposed edge, and similarly for R-exposed edges. Thus, the set of LR-decorated diagrams is closed under diagram concatenation.

\item \label{closed.left.open.right} If $d$ is an undammed LR-decorated diagram, then all closed decorations occurring on an edge connecting nodes in the north face (respectively, south face) of $d$ must occur before all of the open decorations occurring on the same edge as we travel the edge from the left node to the right node. Otherwise, we would not be able to simultaneously deform decorated edges to the left and right.  Furthermore, if an edge joining nodes in the north face of $d$ is adorned with an open (respectively, closed) decoration, then no non-propagating edge occurring to the right (respectively, left) in the north face may be adorned with closed (respectively, open) decorations.  We have an analogous statement for non-propagating edges in the south face.

%This remark was wrong in actual thesis.  It used to say ``Loops can only be decorated if $d$ is undammed.''  Of course, I meant decorated by both types of decorations.
\item \label{undammed.dec.loops} Loops can only be decorated by both types of decorations if $d$ is undammed.  Again, we would not be able to simultaneously deform decorated edges to the left and right, otherwise.

\item \label{both.decs.one.prop} If $d$ is a dammed LR-decorated diagram, then closed decorations (respectively, open decorations) only occur to the left of (respectively, right of) and possibly on the leftmost (respectively, rightmost) propagating edge.  The only way a propagating edge can have decorations of both types is if there is a single propagating edge, which can only happen if $n+2$ is odd.
\end{enumerate}
\end{remark}

\begin{example}
The diagram in Example \ref{first.dec.diagram.ex}\ref{first.dec.diagram.ex.diagram3} is an example that illustrates conditions \ref{closed.left.open.right} and \ref{undammed.dec.loops} of Remark \ref{LR-decorated} above while the diagram in Example \ref{first.dec.diagram.ex}\ref{first.dec.diagram.ex.diagram1} illustrates condition \ref{both.decs.one.prop}.
\end{example}

\begin{definition}\label{def.P_n+2^LR(V)}
We denote the $\Z[\delta]$-submodule of $P_{n+2}(\V)$ spanned by the LR-decorated diagrams by $\P_{n+2}^{LR}(\V)$.  
\end{definition}

The following proposition follows immediately from Remark \ref{LR-decorated}\ref{closed.under.concatenation}.

\begin{proposition}
The $\Z[\delta]$-submodule $\P_{n+2}^{LR}(\V)$ is a $\Z[\delta]$-subalgebra of $P_{n+2}(\V)$. \hfill $\qed$
\end{proposition}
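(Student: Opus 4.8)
The plan is to verify the two conditions that upgrade the submodule $\P_{n+2}^{LR}(\V)$ to a subalgebra: closure under the multiplication of $\P_{n+2}(\V)$ and containment of the identity element. Since $\P_{n+2}^{LR}(\V)$ is defined as a $\Z[\delta]$-submodule, closure under addition and scalar multiplication is inherited for free, so the only substantive work is to check multiplicative closure.

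Because the multiplication on $\P_{n+2}(\V)$ is defined bilinearly on the basis $T_{n+2}(\V)$, it suffices to treat the product $d'd$ of two LR-decorated basis diagrams $d, d' \in T_{n+2}^{LR}(\V)$. First I would recall that, by the construction of multiplication in $\P_{n+2}(\V)$ (concatenation followed by conjoining adjacent blocks, with \emph{no} relations of $\V$ applied), the product $d'd$ is again a single basis element of $\P_{n+2}(\V)$. It then remains to show that this single diagram lies in $T_{n+2}^{LR}(\V)$, i.e., that it is again LR-decorated. This is exactly the content of Remark \ref{LR-decorated}\ref{closed.under.concatenation}: concatenation cannot convert an L-exposed edge into a non-L-exposed one (and symmetrically for R-exposed edges), so closed decorations remain on L-exposed edges and open decorations remain on R-exposed edges, and the simultaneous leftward/rightward deformation required by the LR-decorated condition survives. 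Invoking this remark gives $d'd \in T_{n+2}^{LR}(\V)$, and extending bilinearly shows that $\P_{n+2}^{LR}(\V)$ is closed under multiplication.

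For the unit, I would note that the identity of $\P_{n+2}(\V)$ is $d_{e}$, the unique diagram of $\a$-value $0$ having no loops; by our standing convention every diagram of $\a$-value $0$ is undecorated, so $d_{e}$ is trivially LR-decorated and hence lies in $\P_{n+2}^{LR}(\V)$. Together with multiplicative closure, this establishes that $\P_{n+2}^{LR}(\V)$ is a unital $\Z[\delta]$-subalgebra of $\P_{n+2}(\V)$.

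The argument is essentially immediate once Remark \ref{LR-decorated}\ref{closed.under.concatenation} is in hand, so there is no real obstacle here; the only point demanding a moment's care is confirming that the block-conjoining step of the multiplication does not secretly violate the simultaneous-deformation clause of the LR-decorated condition. But this too is guaranteed by the cited remark, which already asserts that $T_{n+2}^{LR}(\V)$ is closed under diagram concatenation.
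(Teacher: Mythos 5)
Your proof is correct and follows essentially the same route as the paper, which simply observes that the proposition follows immediately from Remark \ref{LR-decorated}\ref{closed.under.concatenation} (closure of LR-decorated diagrams under concatenation). Your additional checks (reduction to basis elements by bilinearity, and membership of the identity diagram $d_{e}$) are exactly the details the paper leaves implicit.
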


We remark that since the set of LR-decorated diagrams is infinite, that $\P_{n+2}^{LR}(\V)$ is an infinite dimensional algebra.  Next, we define a particular quotient of $\P_{n+2}^{LR}(\V)$, which is closely related to our diagram algebra of interest.

\begin{definition}\label{big.diagram.alg.defn}
Let $\widehat{\P}_{n+2}^{LR}(\V)$ be the associative $\Z[\delta]$-algebra equal to the quotient of $\P_{n+2}^{LR}(\V)$ by the following set of relations:
\begin{enumerate}[label=\rm{(\arabic*)}]
\item \begin{tabular}[c]{@{}c@{}}
%-- New mfpic environment, number 83 of 189. (size of end split: 2, should be 2)  ------------------->
\includegraphics{ThesisFigs1.083}
\end{tabular}
= 
\begin{tabular}[c]{@{} c@{}}
%-- New mfpic environment, number 84 of 189. (size of end split: 2, should be 2)  ------------------->
\includegraphics{ThesisFigs1.084}
\end{tabular};

\item \begin{tabular}[c]{@{}c@{}}
%-- New mfpic environment, number 85 of 189. (size of end split: 2, should be 2)  ------------------->
\includegraphics{ThesisFigs1.085}
\end{tabular}
= 
\begin{tabular}[c]{@{} c@{}}
%-- New mfpic environment, number 86 of 189. (size of end split: 2, should be 2)  ------------------->
\includegraphics{ThesisFigs1.086}
\end{tabular};

\item \begin{tabular}[c]{@{} c@{}}
%-- New mfpic environment, number 87 of 189. (size of end split: 2, should be 2)  ------------------->
\includegraphics{ThesisFigs1.087}
\end{tabular}
=  
\begin{tabular}[c]{@{} c@{}}
%-- New mfpic environment, number 88 of 189. (size of end split: 2, should be 2)  ------------------->
\includegraphics{ThesisFigs1.088}
\end{tabular}
=   2
\begin{tabular}[c]{@{} c@{}}
%-- New mfpic environment, number 89 of 189. (size of end split: 2, should be 2)  ------------------->
\includegraphics{ThesisFigs1.089}
\end{tabular};

\item \begin{tabular}[c]{@{} c@{}}
%-- New mfpic environment, number 90 of 189. (size of end split: 2, should be 2)  ------------------->
\includegraphics{ThesisFigs1.090}
\end{tabular}
=  
\begin{tabular}[c]{@{} c@{}}
%-- New mfpic environment, number 91 of 189. (size of end split: 2, should be 2)  ------------------->
\includegraphics{ThesisFigs1.091}
\end{tabular}
=   2
\begin{tabular}[c]{@{} c@{}}
%-- New mfpic environment, number 92 of 189. (size of end split: 2, should be 2)  ------------------->
\includegraphics{ThesisFigs1.092}
\end{tabular};

\item \begin{tabular}[c]{@{} c@{}}
%-- New mfpic environment, number 93 of 189. (size of end split: 2, should be 2)  ------------------->
\includegraphics{ThesisFigs1.093}
\end{tabular}
=  
\begin{tabular}[c]{@{} c@{}}
%-- New mfpic environment, number 94 of 189. (size of end split: 2, should be 2)  ------------------->
\includegraphics{ThesisFigs1.094}
\end{tabular}
=  
\begin{tabular}[c]{@{} c@{}}
%-- New mfpic environment, number 95 of 189. (size of end split: 2, should be 2)  ------------------->
\includegraphics{ThesisFigs1.095}
\end{tabular}
=\   $\delta$;
\end{enumerate}
where the decorations on the edges above represent adjacent decorations of the same block.

\end{definition}

Note that with the exception of the relations involving loops, multiplication in $\widehat{\P}_{n+2}^{LR}(\V)$ is inherited from the relations of the decoration set $\V$.   Also, observe that all of the relations are local in the sense that a single reduction only involves a single edge.  Furthermore, as a consequence of the relations above, we also have the following relations:
\begin{enumerate}[label=\rm{(\arabic*)}, resume]
\item  \begin{tabular}[c]{@{}c@{}}
%-- New mfpic environment, number 96 of 189. (size of end split: 2, should be 2)  ------------------->
\includegraphics{ThesisFigs1.096}
\end{tabular}
= 2
\begin{tabular}[c]{@{} c@{}}
%-- New mfpic environment, number 97 of 189. (size of end split: 2, should be 2)  ------------------->
\includegraphics{ThesisFigs1.097}
\end{tabular}; 

\item \begin{tabular}[c]{@{}c@{}}
%-- New mfpic environment, number 98 of 189. (size of end split: 2, should be 2)  ------------------->
\includegraphics{ThesisFigs1.098}
\end{tabular}
= 2
\begin{tabular}[c]{@{} c@{}}
%-- New mfpic environment, number 99 of 189. (size of end split: 2, should be 2)  ------------------->
\includegraphics{ThesisFigs1.099}
\end{tabular}.
\end{enumerate}

\begin{example}

Here is an example of multiplication of three diagrams in $\widehat{\P}_{n+2}^{LR}(\V)$.

{\setlength\arraycolsep{1pt}
\begin{eqnarray*}
& & \ \begin{tabular}[c]{l}
%-- New mfpic environment, number 100 of 189. (size of end split: 2, should be 2)  ------------------->
\includegraphics{ThesisFigs2.001}
\end{tabular} \\ 
= &\  2 &\ \begin{tabular}[c]{l}
%-- New mfpic environment, number 101 of 189. (size of end split: 2, should be 2)  ------------------->
\includegraphics{ThesisFigs2.002}
\end{tabular}
\end{eqnarray*}}

Here is a second example, where each of the diagrams and their product have $\a$-value 1.

{\setlength\arraycolsep{1pt}
\begin{eqnarray*}
& & \ \begin{tabular}[c]{l}
%-- New mfpic environment, number 102 of 189. (size of end split: 2, should be 2)  ------------------->
\includegraphics{ThesisFigs2.003}
\end{tabular} \\
= &\  &\ \begin{tabular}[c]{l}
%-- New mfpic environment, number 103 of 189. (size of end split: 2, should be 2)  ------------------->
\includegraphics{ThesisFigs2.004}
\end{tabular}
\end{eqnarray*}}

Again, we use the dotted line to emphasize that the two closed decorations on the leftmost propagating edge belong to distinct blocks.

\end{example}

Our immediate goal is to show that a basis for $\widehat{\P}_{n+2}^{LR}(\V)$ consists of the set of LR-decorated diagrams having blocks corresponding to nonidentity basis elements in $\V$.  That is, no block may contain adjacent decorations of the same type (open or closed).  To accomplish this task, we will make use of a diagram algebra version of Bergman's diamond lemma.  For other examples of this type of application of Bergman's diamond lemma, see  \cite{Green.R;Martin.P;Parker.A:A} and \cite{Martin.P;Green.R;Parker.A:A}.

\bigskip

Define the function $r: T_{n+2}^{LR}(\V) \to T_{n+2}(\emptyset)$ via
	$$r(d)=d \text{ with all decorations and loops removed}.$$
In the literature, if $d$ has no loops, then $r(d)$ is sometimes referred to as the ``shape'' of $d$.  For example, if 
%-- New mfpic environment, number 104 of 189. (size of end split: 2, should be 2)  ------------------->
%-- New mfpic environment, number 105 of 189. (size of end split: 2, should be 2)  ------------------->
\begin{align*}
d &=\ \begin{tabular}[c]{@{} c@{}}
\includegraphics{ThesisFigs2.005}
\end{tabular},\\
\intertext{then}
r(d) & =\ \begin{tabular}[c]{@{} c@{}}	
\includegraphics{ThesisFigs2.006}
\end{tabular}.
\end{align*}
Next, define a function $h: T_{n+2}^{LR}(\V) \to \Z^{+}\cup \{0\}$ via
	$$h(d)=\text{sum of the number of decorations and the number of loops}.$$
For example, if $d$ is the diagram from above, then $h(d)=8$ (1 loop plus 7 decorations).  Define $\leq_{\widehat{\P}}$ on $T_{n+2}^{LR}(\V)$ via $d < _{\widehat{\P}} d'$ if and only if $r(d)=r(d')$ and $h(d) < h(d')$.

\bigskip

Let $\S$ be the collection of reductions determined by the relations of $\widehat{\P}_{n+2}^{LR}(\V)$ given Definition \ref{big.diagram.alg.defn}.  If we apply any single reduction (loop removal or any other local reduction) to a diagram from $\widehat{\P}_{n+2}^{LR}(\V)$, then we obtain a scalar multiple of a strictly smaller diagram with respect to $\leq_{\widehat{\P}}$.  Thus, our reduction system $\S$ (i.e., diagram relations) is compatible with $\leq_{\widehat{\P}}$.  Now, suppose that $d <_{\widehat{\P}} d'$ and let $d''$ be any other element from $\widehat{\P}_{n+2}^{LR}(\V)$.  Then
	$$r(d''d)=r(d''d')$$
and
	$$r(dd'')=r(d'd'').$$
Since $r(d)=r(d')$, multiplying $d$ or $d'$ on the same side by $d''$ will increase the number of decorations and number of loops by the same amount.  So, we have
	$$h(dd'') < h(d'd'')$$
and
	$$h(d''d) < h(d''d').$$
Therefore, $dd'' <_{\widehat{\P}} d'd''$ and $d''d <_{\widehat{\P}} d''d'$.  This shows that $\leq_{\widehat{\P}}$ is a semigroup partial order on $T_{n+2}^{LR}(\V)$.  Clearly, $\leq_{\widehat{\P}}$ satisfies the descending chain condition.

\begin{proposition}\label{diagram.independence}
The set of LR-decorated diagrams having maximal blocks corresponding to nonidentity basis elements in $\V$ forms a basis for $\widehat{\P}_{n+2}^{LR}(\V)$.
\end{proposition}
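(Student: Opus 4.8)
The plan is to apply the diagram-algebra version of Bergman's diamond lemma (Theorem \ref{bergman}), exactly as the surrounding text has been setting up. All the hypotheses have already been verified in the paragraphs immediately preceding the statement: the set $X$ of generators is $T_{n+2}^{LR}(\V)$, the reduction system $\S$ is the collection of local reductions coming from the relations (1)--(5) of Definition \ref{big.diagram.alg.defn}, the partial order $\leq_{\widehat{\P}}$ has been shown to be a semigroup partial order satisfying the descending chain condition, and $\S$ has been shown to be compatible with $\leq_{\widehat{\P}}$. So the only remaining hypothesis of Theorem \ref{bergman} to check is that \emph{all ambiguities in $\S$ are resolvable}; once that is done, conclusion (iii) of the theorem gives $RX^{*} \cong \Irr(\S) \oplus \I(\S)$, and $\Irr(\S)$ is precisely the span of the LR-decorated diagrams whose maximal blocks are nonidentity basis elements of $\V$ (no two adjacent decorations of the same type). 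This identifies the irreducible diagrams as a basis.

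First I would describe $\Irr(\S)$ explicitly. A diagram is irreducible under $\S$ if and only if it has no loops (relation (5) reduces any loop) and no edge carries two adjacent decorations of the same type (relations (1)--(4), together with the derived relations (6)--(7), reduce $\bcirc\bcirc$, $\wcirc\wcirc$, $\btri\bcirc$, $\wtri\wcirc$, $\btri\btri$, $\wtri\wtri$, etc.). These are exactly the LR-decorated diagrams whose maximal blocks correspond to the nonidentity basis elements of $\V = V_{3} * V'_{3}$ guaranteed by Proposition \ref{free_product_algebras}, since a basis block of $\V$ is a finite alternating product of open and closed decorations. So I would note that the claimed basis set equals $\Irr(\S)$.

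Next I would resolve the ambiguities. Since every reduction in $\S$ is \emph{local} — it acts on the decorations of a single edge (or removes a single loop) — there are no inclusion or overlap ambiguities involving two different edges: reductions on distinct edges commute trivially, so any such ambiguity resolves. The genuine content is therefore confined to a single edge, where the relevant reductions are exactly the defining relations of the decoration algebra $\V$. I would argue that the confluence of the single-edge reductions is \emph{inherited} from the associativity of $\V$: the relations (1)--(7) are precisely the rewriting rules for the basis $\{1,\bcirc,\btri\} * \{1,\wcirc,\wtri\}$ of $V_{3}*V'_{3}$, and Proposition \ref{free_product_algebras} already established (via Bergman's lemma applied to the free product) that all ambiguities among those rules resolve. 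Thus each overlap on a single edge — a triple of decorations or a configuration of the form $\btri\btri$, $\btri\bcirc$, etc. — reduces to a common normal form regardless of the order of application. The loop-removal reduction (5) interacts with decoration reductions only on the loop itself and produces a scalar $\delta$ that does not depend on the order, so those ambiguities resolve as well.

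The main obstacle I anticipate is handling the interaction between decoration reductions and loop creation/removal, particularly in the $\a$-value $1$ case where the blocking conventions of condition \ref{unusual} restrict which adjacent blocks may be conjoined. When concatenation produces a new loop (case (b) of Lemma \ref{a-value=1}), the block at the top of a propagating edge of $d'$ may conjoin with the block at the bottom of a propagating edge of $d$, and one must check that the resulting decoration reductions on that loop, followed by loop removal, yield the same scalar and same normal form as any competing reduction order. I expect this to be routine because loop equivalence only quotients by cyclic permutation and $\V$ is associative, but it is the step requiring genuine care rather than a formal appeal. Having resolved all ambiguities, I would invoke Theorem \ref{bergman}(iii) to conclude that $\Irr(\S)$ — the set of LR-decorated diagrams with maximal blocks corresponding to nonidentity basis elements of $\V$ — is a basis for $\widehat{\P}_{n+2}^{LR}(\V)$, completing the proof.
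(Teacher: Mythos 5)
Your overall strategy is the same as the paper's: verify the hypotheses of Bergman's diamond lemma (Theorem \ref{bergman}) for the reduction system $\S$ of Definition \ref{big.diagram.alg.defn} (the compatibility and descending chain conditions having been checked in the preceding paragraphs), resolve the ambiguities, and identify the basis with $\Irr(\S)$. However, there is a genuine error in your explicit description of $\Irr(\S)$: you claim a diagram is irreducible if and only if it has \emph{no loops}, asserting that ``relation (5) reduces any loop.'' This is false. The loop relations only remove an undecorated loop or a loop carrying a single decoration of one type (a single $\btri$ or a single $\wtri$); a loop whose block contains both open and closed decorations, such as a loop decorated by $\btri\wtri$, admits no reduction whatsoever --- the relations of $\V$ never mix the two decoration types, so they can never eliminate all closed (or all open) decorations from such a loop, and no loop-removal relation applies to it. Such loops are squarely inside the set named in the proposition (their maximal block $\btri\wtri$ is a nonidentity basis element of $\V$); they are exactly the loops permitted by axiom \ref{C1} of Definition \ref{admissible.def}, and preserving them is the whole point of working with \emph{pseudo} diagrams, since they are what makes $\widehat{\P}_{n+2}^{LR}(\V)$ and ultimately $\D_{n}$ infinite rank (Remark \ref{comment.admissible}). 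With your identification of $\Irr(\S)$, Bergman's lemma would deliver a strictly smaller set than the one in the statement, so the proof as written establishes a different (and false) claim.

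The error propagates into your ambiguity analysis: you dispose of loop ambiguities by saying loop removal ``produces a scalar $\delta$ that does not depend on the order,'' which presupposes every decorated loop can eventually be removed. The paper instead runs a three-case analysis on a loop supporting two competing reductions: (a) undecorated loops, which support no competing reductions at all; (b) loops carrying only one type of decoration, where resolvability follows because $V_{3}$ and $V'_{3}$ are commutative and associative; and (c) loops carrying both types, which can \emph{never} be removed, and where the competing decoration reductions commute precisely because $\V$ is associative and no defining relation involves both decoration types simultaneously. Case (c) is the case your argument omits, and it is the one that requires the structural observation about the relations. Separately, the ``main obstacle'' you anticipate --- loop creation when concatenating $\a$-value $1$ diagrams --- is not relevant to this proposition: ambiguities in Bergman's lemma concern two competing reductions applied to a single fixed element, whereas well-definedness and associativity of the multiplication in $\P_{n+2}^{LR}(\V)$ (including the block-conjoining conventions and Lemma \ref{a-value=1}) were settled before Definition \ref{big.diagram.alg.defn} and are inputs to, not consequences of, the diamond lemma argument.
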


\begin{proof}
Let $\leq_{\widehat{\P}}$ be as above.  Following the setup of Bergman's diamond lemma, it remains to show that all of the ambiguities are resolvable.  By inspecting the relations of Definition \ref{big.diagram.alg.defn}, we see that there are no inclusion ambiguities, so we only need to check that the overlap ambiguities are resolvable.  Let $d$ be a diagram from $\widehat{\P}_{n+2}^{LR}(\V)$ and suppose that there are two competing reductions that we could apply.  If both reductions involve the same non-loop edge, then the ambiguity is easily seen to be resolvable since the algebra $\V$ is associative.  In particular, in the $\a$-value $1$ case, the reductions could involve two distinct blocks on the same edge, in which case, the order that we apply the reductions is immaterial.  If the reductions involve distinct edges, loop edges or otherwise, the ambiguity is quickly seen to be resolvable since the reductions commute.  Lastly, suppose that the two competing reductions involve the same loop edge.  There are three possibilities for this loop edge: (a) the loop is undecorated, (b) the loop carries only one type of decoration (open or closed), and (c) the loop carries both types of symbols.  Note that (a) cannot happen since then there could not have been two competing reductions involving this edge to apply.  If (b) happens, then any ambiguity involving this loop edge (including removing the loop) is resolvable since each of $V_{3}$ and $V'_{3}$ are commutative and associative.  Finally, assume (c) happens.  Note that the nature of our relations prevents the complete elimination of closed (respectively, open) decorations from this loop edge.  Since all loop relations involve either undecorated loops or loops decorated with a single type of decoration, this loop edge can never be removed.  Since $\V$ is associative and none of the relations involve both decoration types at the same time, the ambiguity is easily seen to be resolvable since the reductions commute.  According to Bergman's diamond lemma (Theorem \ref{bergman}), we can conclude that the LR-decorated diagrams having no relations to apply are a basis, as desired.
\end{proof}

The algebra $\widehat{\P}_{n+2}^{LR}(\V)$ is still too large for our purposes.  Next, we define a few special diagrams that will form a generating set for a much smaller algebra that will be our object of interest.  Define the diagrams $d_{1}, d_{2}, \dots, d_{n+1}$ via
%-- New mfpic environment, number 106 of 189. (size of end split: 2, should be 2)  ------------------->
%-- New mfpic environment, number 107 of 189. (size of end split: 2, should be 2)  ------------------->
%-- New mfpic environment, number 108 of 189. (size of end split: 2, should be 2)  ------------------->
\begin{align*}
d_{1} &=
\begin{tabular}[c]{@{} c@{}}
\includegraphics{ThesisFigs2.007}
\end{tabular};\\ 
d_{i} &=
\begin{tabular}[c]{@{} c@{}}
\includegraphics{ThesisFigs2.008}
\end{tabular}, & \text{for }1<i<n+1; \\
d_{n+1} &= 
\begin{tabular}[c]{@{} c@{}}
\includegraphics{ThesisFigs2.009}
\end{tabular}. &
\end{align*}

We will refer to each of $d_{1}, d_{2}, \dots, d_{n+1}$ as a \emph{simple diagram}.  Note that the simple diagrams are basis elements of $\widehat{\P}_{n+2}^{LR}(\V)$.

\begin{definition}\label{def.D_n}
Let $\D_{n}$ be the $\Z[\delta]$-subalgebra of $\widehat{\P}_{n+2}^{LR}(\V)$ generated (as a unital algebra) by $d_{1}, d_{2}, \dots, d_{n+1}$ with multiplication inherited from $\widehat{\P}_{n+2}^{LR}(\V)$.
\end{definition}

The next definition describes the set of diagrams that will turn out to form a basis for $\D_{n}$.  This definition is motivated by the definition of $B$-admissible (after an appropriate change of basis) given by R.M. Green in \cite[Definition 2.2.4]{Green.R:H} for diagrams in the context of type $B$.  Since $\C$ is, in some sense, type $B$ on one end and type $B'$ on the other, the general idea is to build the axioms of $B$-admissible into our definition of $\C$-admissible at both ends. 

\begin{definition}\label{admissible.def}
Let $d$ be an LR-decorated diagram.  Then we say that $d$ is \emph{$\C$-admissible}, or simply \emph{admissible}, if the following axioms are satisfied.
\begin{enumerate}[label=\rm{(C\arabic*)}]

\item \label{C1}The only loops that may appear are equivalent to the following.

\begin{center}
%-- New mfpic environment, number 109 of 189. (size of end split: 2, should be 2)  ------------------->
\includegraphics{ThesisFigs2.010}
\end{center}

\item \label{C2} If $d$ is undammed (which can only happen if $n$ is even), then the (non-propagating) edges joining nodes $1$ and $1'$ (respectively, nodes $n+2$ and $(n+2)'$) must be decorated with a $\bcirc$ (respectively, $\wcirc$).  Furthermore, these are the only $\bcirc$ (respectively, $\wcirc$) decorations that may occur on $d$ and must be the first (respectively, last) decorations on their respective edges.

\item \label{C3} Assume $d$ has exactly one propagating edge $e$ (which can only happen if $n$ is odd). Then $e$ may be decorated by an alternating sequence of $\btri$ and $\wtri$ decorations.  If $e$ is decorated by both open and closed decorations and is connected to node 1 (respectively, $1'$), then the first (respectively, last) decoration occurring on $e$ must be a $\bcirc$.  Similarly, if $e$ is connected to node $n+2$ (respectively, $(n+2)'$), then the first (respectively, last) decoration occurring on $e$ must be a $\wcirc$.  If $e$ joins $1$ to $1'$ (respectively, $n+2$ to $(n+2)'$) and is decorated by a single decoration, then $e$ is decorated by a single $\btri$ (respectively, $\wtri$).  Furthermore, if there is a non-propagating edge connected to $1$ or $1'$ (respectively, $n+2$ or $(n+2)'$) it must be decorated only by a single $\bcirc$ (respectively, $\wcirc$).  Finally, no other $\bcirc$ or $\wcirc$ decorations appear on $d$.

\item \label{C4} Assume that $d$ is dammed with $\a(d)>1$ and has more than one propagating edge.  If there is a propagating edge joining $1$ to $1'$ (respectively, $n+2$ to $(n+2)'$), then it is decorated by a single $\btri$ (respectively, $\wtri$).  Otherwise, both edges leaving either of $1$ or $1'$ (respectively, $n+2$ or $(n+2)'$) are each decorated by a single $\bcirc$ (respectively, $\wcirc$) and there are no other $\bcirc$ or $\wcirc$ decorations appearing on $d$.

\item \label{C5} If $\a(d)=1$, then the western end of $d$ is equal to one of the following:

\begin{enumerate}[label=\rm{(\roman*)}]
\item \begin{tabular}[c]{@{}c@{}}%1
%-- New mfpic environment, number 110 of 189. (size of end split: 2, should be 2)  ------------------->
\includegraphics{ThesisFigs2.011}
\end{tabular};

\item \begin{tabular}[c]{@{}c@{}}%2
%-- New mfpic environment, number 111 of 189. (size of end split: 2, should be 2)  ------------------->
\includegraphics{ThesisFigs2.012}
\end{tabular};

\item \begin{tabular}[c]{@{}c@{}}%2
%-- New mfpic environment, number 112 of 189. (size of end split: 2, should be 2)  ------------------->
\includegraphics{ThesisFigs2.013}
\end{tabular};

\item \begin{tabular}[c]{@{}c@{}}%3
%-- New mfpic environment, number 113 of 189. (size of end split: 2, should be 2)  ------------------->
\includegraphics{ThesisFigs2.014}
\end{tabular};

\item \begin{tabular}[c]{@{}c@{}}%4
%-- New mfpic environment, number 114 of 189. (size of end split: 2, should be 2)  ------------------->
\includegraphics{ThesisFigs2.015}
\end{tabular};
\end{enumerate}
where the rectangle represents a sequence of blocks (possibly empty) such that each block is a single $\btri$ and the diagram in (ii) can only occur if $d$ is not decorated by any open decorations.  Also, the occurrences of the $\bcirc$ decorations occurring on the propagating edge have the highest (respectively, lowest) relative vertical position of all decorations occurring on any propagating edge.  In particular, if the rectangle in (iv) (respectively, (v)) is empty, then the $\bcirc$ decoration has the highest (respectively, lowest) relative vertical position among all decorations occurring on propagating edges.  We have an analogous requirement for the eastern end of $e$, where the closed decorations are replaced with open decorations.  Furthermore, if there is a non-propagating edge connected to $1$ or $1'$ (respectively, $n+2$ or $(n+2)'$) it must be decorated only by a single $\bcirc$ (respectively, $\wcirc$).  Finally, no other $\bcirc$ or $\wcirc$ decorations appear on $d$.

\end{enumerate}

Let $\Diag^{b}_{n}(\V)$ denote the set of all $\C$-admissible $(n+2)$-diagrams.
\end{definition}

\begin{remark}\label{comment.admissible}
We collect a few comments concerning admissible diagrams.
\begin{enumerate}[label=\rm{(\arabic*)}]
\item Note that the only time an admissible diagram $d$ can have an edge adorned with both open and closed decorations is if $d$ is undammed (which only happens when $n$ is even) or if $d$ has a single propagating edge (which only happens when $n$ is odd).  See Example \ref{first.dec.diagram.ex} (a) and (c) for examples that demonstrate this restriction.

\item \label{alternating.decorations.a=1} If $d$ is an admissible diagram with $\a(d)=1$, then the restrictions on the relative vertical position of decorations on propagating edges along with axiom \ref{C5} imply that the relative vertical positions of closed decorations on the leftmost propagating edge and open decorations on the rightmost propagating edge must alternate.  In particular, the number of closed decorations occurring on the leftmost propagating edge differs from the number of open decorations occurring on the rightmost propagating edge by at most 1.  For example, if 
$$d=\begin{tabular}[c]{@{}c@{}}
%-- New mfpic environment, number 115 of 189. (size of end split: 2, should be 2)  ------------------->
\includegraphics{ThesisFigs2.016}
\end{tabular},$$
where the leftmost propagating edge carries $k$ $\btri$ decorations, then the rightmost propagating edge must carry $k$ $\wtri$ decorations, as well. 

\item Note that the rectangle in diagram (iii) of axiom \ref{C5} cannot be empty; otherwise, the leftmost propagating edge would not be decorated by a basis element of $\V$.

\item It is clear that $\Diag^{b}_{n}(\V)$ is an infinite set.  If an admissible diagram $d$ is undammed, then there is no limit to the number of loops given in axiom \ref{C1} that may occur.  Also, if $d$ is an admissible diagram with exactly one propagating edge, then there is no limit to the width of the block of decorations that may occur on the lone propagating edge.  Furthermore, if $d$ is admissible with $\a(d)=1$, then there is no limit to the number of $\btri$-blocks (respectively, $\wtri$-blocks) that may occur on the leftmost (respectively, righmost) propagating edge.  

\item \label{admissible.basis} Each of the admissible diagrams is a basis element of $\widehat{\P}_{n+2}^{LR}(\V)$.

\item The symbol $b$ in the notation $\Diag^{b}_{n}(\V)$ is to emphasize that we are constructing a set of diagrams that are intended to correspond to the monomial basis of $\TL(\C_{n})$.  A topic of future research is to construct diagrams that correspond to the ``canonical basis'' of $\TL(\C_{n})$, which is defined for arbitrary Coxeter groups in \cite{Green.R;Losonczy.J:A}.

\end{enumerate}
\end{remark}

\begin{definition}\label{def.mathcalM[Diag_n^b(V)}
Let $\mathcal{M}[\Diag_{n}^{b}(\V)]$ be the $\Z[\delta]$-submodule of $\widehat{\P}_{n+2}^{LR}(\V)$ spanned by the admissible diagrams.
\end{definition}

\begin{proposition}\label{prop.admissible.basis}
The set of admissible diagrams $\Diag_{n}^{b}(\V)$ is a basis for the module $\mathcal{M}[\Diag_{n}^{b}(\V)]$.
\end{proposition}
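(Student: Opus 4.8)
The plan is to reduce the entire statement to linear independence, since the spanning property is immediate. By the definition of $\mathcal{M}[\Diag_{n}^{b}(\V)]$ as the $\Z[\delta]$-submodule of $\widehat{\P}_{n+2}^{LR}(\V)$ spanned by $\Diag_{n}^{b}(\V)$, the admissible diagrams automatically form a spanning set for this module. Thus the whole content of the proposition is the assertion that the admissible diagrams are $\Z[\delta]$-linearly independent in $\widehat{\P}_{n+2}^{LR}(\V)$.

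To establish independence, the key tool is Proposition \ref{diagram.independence}, which tells us that the set of LR-decorated diagrams whose maximal blocks correspond to nonidentity basis elements of $\V$ is itself a basis for $\widehat{\P}_{n+2}^{LR}(\V)$. The strategy is therefore to show that $\Diag_{n}^{b}(\V)$ is a \emph{subset} of this distinguished basis; linear independence of $\Diag_{n}^{b}(\V)$ then follows immediately, since any subset of a linearly independent set is linearly independent.

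The one point that genuinely requires checking — and the main obstacle — is that every admissible diagram really is one of these distinguished basis elements; that is, that on each edge of an admissible diagram, once adjacent blocks have been conjoined, every resulting maximal block is a nonidentity basis element of $\V$ (equivalently, contains no two adjacent decorations of the same open/closed type). This amounts to inspecting the decoration patterns permitted by axioms \ref{C1}--\ref{C5} of Definition \ref{admissible.def} case by case: the decorations allowed are single $\bcirc$, $\btri$, $\wcirc$, or $\wtri$ symbols on non-propagating edges and on loops (axioms \ref{C1}, \ref{C2}, \ref{C4}), and on a lone propagating edge or on the propagating edges in the $\a(d)=1$ setting, alternating sequences of $\btri$ and $\wtri$ (axioms \ref{C3}, \ref{C5}). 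In each case the maximal block is a finite alternating product of open and closed decorations, hence precisely a nonidentity basis element of $\V$ in the sense of Remark \ref{beads}. This verification is essentially already recorded in Remark \ref{comment.admissible}\ref{admissible.basis}, which asserts that each admissible diagram is a basis element of $\widehat{\P}_{n+2}^{LR}(\V)$.

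Combining the two observations completes the argument: $\Diag_{n}^{b}(\V)$ is a linearly independent subset of $\widehat{\P}_{n+2}^{LR}(\V)$, being contained in the basis supplied by Proposition \ref{diagram.independence}, and it spans $\mathcal{M}[\Diag_{n}^{b}(\V)]$ by definition; hence it is a basis for $\mathcal{M}[\Diag_{n}^{b}(\V)]$, as desired.
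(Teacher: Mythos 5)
Your proposal is correct and takes essentially the same route as the paper: the paper's proof consists of citing Remark \ref{comment.admissible}\ref{admissible.basis} (each admissible diagram is a basis element of $\widehat{\P}_{n+2}^{LR}(\V)$, which rests on Proposition \ref{diagram.independence}), so that the admissible diagrams are a linearly independent subset of a basis and span $\mathcal{M}[\Diag_{n}^{b}(\V)]$ by definition. You have simply spelled out the details that the paper compresses into that single citation.
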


\begin{proof}
This follows immediately from \ref{admissible.basis} in Remark \ref{comment.admissible}.
\end{proof}

In the next chapter we will show that $\mathcal{M}[\Diag_{n}^{b}(\V)]$ is, in fact, a subalgebra of $\widehat{\P}_{n+2}^{LR}(\V)$ equal to $\D_{n}$.   And, in the final chapter, we will prove our main result, which states that $\D_{n}$ is a faithful representation of $\TL(\C_{n})$, where the admissible diagrams correspond to the monomial basis.  

\end{section}

\begin{section}{Temperley--Lieb diagram algebras of type $B$}

We conclude this chapter by discussing how $\TL(B_{n})$ and $\TL(B'_{n})$ are related to $\D_{n}$.

\begin{definition}\label{def.DTL(B_n)}
Let $\DTL(B_{n})$ and $\DTL(B'_{n})$ denote the subalgebras of $\D_{n}$ generated by $d_{1}, d_{2}, \dots, d_{n}$ and $d_{2}, d_{3}, \dots, d_{n+1}$, respectively.  We refer to $\DTL(B_{n})$ (respectively, $\DTL(B'_{n})$ as the \emph{Temperley--Lieb diagram algebra of type $B$} (respectively, \emph{type $B'$}). 
\end{definition}

It is clear that $\DTL(B_{n})$ (respectively, $\DTL(B'_{n})$) consist entirely of L-decorated (respectively, R-decorated) diagrams.  Also, note that all of the technical requirements about how to decorate a diagram $d$ when $\a(d)=1$ are irrelevant since only the leftmost (respectively, rightmost) propagating edge can carry decorations in  $\DTL(B_{n})$ (respectively, $\DTL(B'_{n})$).  The following fact is implicit in \cite[\textsection 2]{Green.R:H} after the appropriate change of basis involving a change of basis on the decoration set.

\begin{proposition}
As $\Z[\delta]$-algebras, $\TL(B_{n}) \cong \DTL(B_{n})$ and $\TL(B'_{n}) \cong \DTL(B'_{n})$, where each isomorphism is determined by
	$$b_{i} \mapsto d_{i}$$
for the appropriate restrictions on $i$.  \hfill $\qed$
\end{proposition}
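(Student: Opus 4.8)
The plan is to build the map $b_i \mapsto d_i$ from the presentation of $\TL(B_n)$ and then verify that it is an isomorphism in three stages: well-definedness, surjectivity, and injectivity (faithfulness). Throughout, the type $B'$ case is handled by the identical argument with $s_1, s_2$ replaced by $s_{n+1}, s_n$ and closed decorations replaced by open ones.

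First I would show that $b_i \mapsto d_i$ (for $1 \le i \le n$ in the type $B$ case, $2 \le i \le n+1$ in the type $B'$ case) extends to a $\Z[\delta]$-algebra homomorphism $\phi \colon \TL(B_n) \to \DTL(B_n)$. By the presentation recorded in Remark \ref{affineC.relations}, it suffices to check that the simple diagrams satisfy the corresponding relations: $d_i^2 = \delta d_i$, $d_i d_j = d_j d_i$ when $|i-j|>1$, $d_i d_j d_i = d_i$ when $|i-j|=1$ with $1<i,j\le n$, and $d_1 d_2 d_1 d_2 = 2\, d_1 d_2$. Each is a direct diagram computation inside $\widehat{\P}_{n+2}^{LR}(\V)$ using the loop and decoration relations of Definition \ref{big.diagram.alg.defn}; in particular the factor of $2$ in the last relation is produced by the Verlinde relation $\bcirc \bcirc \bcirc = 2\, \bcirc$ once a pair of closed decorations is created and collapsed on the L-exposed edge. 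Surjectivity of $\phi$ is then immediate, since $\DTL(B_n)$ is generated as a unital algebra by the $d_i$ by Definition \ref{def.DTL(B_n)}.

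The substantive step is injectivity. Since $W(B_n)$ is finite, $\TL(B_n)$ is a free $\Z[\delta]$-module of finite rank $|W_c(B_n)|$ on the monomial basis $\{b_w : w \in W_c(B_n)\}$, so it is enough to produce a basis of $\DTL(B_n)$ of the same cardinality on which $\phi$ acts bijectively up to invertible scalars. Following the comment preceding the statement, I would invoke Green's diagrammatic representation of $\TL(B_n)$ from \cite{Green.R:H}, which is faithful and uses $B$-admissible diagrams carrying a decoration set whose relations differ from ours only by a change of basis on the decorations. The core of this route is to write down that change of basis explicitly, sending Green's decorations to $\Z[\delta]$-combinations of $1$, $\bcirc$, and $\btri$, and to check that it intertwines Green's decoration relations with $\bcirc \bcirc = \btri$, $\bcirc \bcirc \bcirc = 2\, \bcirc$, $\btri \btri = 2\, \btri$, and $\bcirc \btri = \btri \bcirc = 2\, \bcirc$. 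Once verified, the induced map carries Green's $B$-admissible diagrams to the L-decorated admissible diagrams of $\Diag^{b}_{n}(\V)$ lying in $\DTL(B_n)$, and Green's faithfulness theorem transports to the assertion that $\phi$ is an isomorphism identifying each $b_w$ with its matching admissible diagram.

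The hard part will be verifying that the change of basis on the decoration set is simultaneously an isomorphism of the decoration algebra and compatible with diagram multiplication in the $\a(d)=1$ regime, where the relative vertical positions of decorations on the single propagating edge are constrained. A self-contained alternative that avoids Green's diagrams is available: exhibit a bijection between the L-decorated admissible diagrams of $\DTL(B_n)$ and $W_c(B_n)$, show that $\phi(b_w)$ is a nonzero scalar multiple of the admissible diagram matching $w$ by inducting on $l(w)$ with the machinery of Lemma \ref{weak.star.reverse} and Lemma \ref{powers_of_2_and_delta_monomials}, and then conclude from equality of the two finite ranks together with surjectivity that $\phi$ is an isomorphism.
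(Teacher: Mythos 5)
Your proposal is correct and follows essentially the same route as the paper, which likewise obtains this proposition from Green's faithful diagrammatic representation of $\TL(B_{n})$ in \cite{Green.R:H} via the change of basis $v_{i}\mapsto u_{i}$ on the decoration set $V_{3}$ (the Chebyshev basis of Lemma \ref{cheby.basis.verlinde}), with the relation checks and surjectivity left implicit. The one ``hard part'' you flag---compatibility of the change of basis with the $\a(d)=1$ vertical-position constraints---is moot here: as the paper remarks immediately before the proposition, in $\DTL(B_{n})$ (respectively, $\DTL(B'_{n})$) only the leftmost (respectively, rightmost) propagating edge can carry decorations, so those constraints never come into play.
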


Recall from Lemma \ref{cheby.basis.verlinde} that $\{u_{0}, u_{1}, u_{2}\}$ is an alternate basis for $V_{3}$, where $u_{0}$, $u_{1}$, and $u_{2}$ are the images of $U_{0}(x)=1$, $U_{1}(x)=x$, and $U_{2}(x)=x^{2}-1$, respectively.   Using the change of basis $v_{i} \mapsto u_{i}$ (respectively, $v'_{i} \mapsto u'_{i}$) on the decoration set $V_{3}$ (respectively, $V'_{3}$), the basis diagrams in $\DTL(B_{n})$ (respectively, $\DTL(B'_{n})$) become $B$-admissible in the sense of \cite{Green.R:H, Green.R:M}.  Moreover, it is easily verified that the axioms for $B$-admissible given in \cite[Definition 2.2.4]{Green.R:H} imply (again, under the appropriate change of basis involving the decoration set) that all of the basis diagrams in $\DTL(B_{n})$ and $\DTL(B'_{n})$ are $\C$-admissible.  

\end{section}

\end{chapter}

%%%%%%%%%% Chapter 9 %%%%%%%%%%%

\begin{chapter}{A basis for $\D_{n}$}

The main result of this chapter will be that the $\C$-admissible diagrams form a basis for $\D_{n}$.  To achieve this end, we require several intermediate results.

\begin{section}{Preparatory lemmas}

If $d$ is an admissible diagram, then we say that a non-propagating edge joining $i$ to $i+1$ (respectively, $i'$ to $(i+1)'$) is \emph{simple} if it is identical to the edge joining $i$ to $i+1$ (respectively, $i'$ to $(i+1)'$) in the simple diagram $d_{i}$.  That is, an edge is simple if it joins adjacent vertices in the north face (respectively, south face) and is undecorated, except when one of the vertices is 1 or $1'$ (respectively, $n+2$ or $(n+2)'$), in which case it is decorated by only a single $\bcirc$ (respectively, $\wcirc$).  

\bigskip

The next six lemmas mimic Lemmas 5.1.4--5.1.7 in \cite{Green.R:H}.  The proof of each lemma is immediate and throughout we assume that $d$ is admissible.

\begin{lemma}\label{nonprops.connect.adjacent.verts}
Assume that in the north face of $d$ there is an edge, say $e$, connecting node $j$ to node $i$, and assume that there is another undecorated edge, say $e'$, connecting node $i+1$ to node $k$ with $j<i$ and $i+1<k<n+2$.  Further, suppose that $j$ and $k$ are chosen so that $|j-k|$ is minimal.  Then $d_{i}d$ is the admissible diagram that results from $d$ by removing $e'$, disconnecting $e$ from node $i$ and reattaching it to node $k$, and adding a simple edge to $i$ and $i+1$ (note that edge $e$ maintains its original decorations).  That is, 

$$\begin{tabular}[c]{@{}c@{}}
%-- New mfpic environment, number 116 of 189. (size of end split: 2, should be 2)  ------------------->
\includegraphics{ThesisFigs2.017}
\end{tabular},$$
where $x$ represents an arbitrary (possibly empty) block of decorations.
\hfill $\qed$
\end{lemma}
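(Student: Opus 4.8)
The plan is to verify the identity directly from the definition of diagram concatenation (Definition \ref{defn_P_{k}}), tracing the fate of each edge of $d$ under multiplication by the simple diagram $d_i$. Since $1 < i < n+1$, the simple diagram $d_i$ is undecorated and consists of a single non-propagating edge (a ``cup'') joining nodes $i$ and $i+1$ in its north face, a single non-propagating edge (a ``cap'') joining nodes $i'$ and $(i+1)'$ in its south face, and vertical propagating edges joining $m$ to $m'$ for every $m \notin \{i,i+1\}$. To form $d_i d$ I would place $d_i$ on top of $d$, identifying node $m'$ of $d_i$ with node $m$ of $d$ for each $m$, as prescribed by Definition \ref{defn_P_{k}}.

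First I would follow the strands that pass through $d_i$ unchanged: for each $m \notin \{i,i+1\}$ the vertical propagating edge of $d_i$ simply continues the edge of $d$ incident to node $m$, so the north face of the product agrees with that of $d$ away from positions $i$ and $i+1$. The cup of $d_i$ contributes a new non-propagating edge joining $i$ and $i+1$ in the north face of $d_i d$; since the hypotheses force $2 \le i$ and $i+1 \le n$, this edge touches neither node $1$ nor node $n+2$, so it is undecorated and hence is precisely a simple edge. The only remaining feature is the cap of $d_i$, which after identification meets node $i$ of $d$ (the endpoint of $e$) and node $i+1$ of $d$ (the endpoint of $e'$). Consequently $e$, the cap, and $e'$ concatenate into a single edge of $d_i d$ running from $j$ to $k$.

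Finally I would account for the decorations. Because $e'$ is undecorated by hypothesis, the concatenated edge from $j$ to $k$ inherits exactly the decorations originally carried by $e$, in their original order; this realizes the claim that $e$ is disconnected from $i$, reattached to $k$, and keeps its decorations, while $e'$ is removed. It then remains only to confirm that the resulting diagram lies in $\Diag^{b}_{n}(\V)$, i.e.\ that it is admissible in the sense of Definition \ref{admissible.def}. This is the one step that is not purely formal, and I expect it to be the main (if mild) obstacle: I would invoke the minimality of $|j-k|$ to ensure that reattaching $e$ to $k$ introduces no crossings and traps no edge between the new strand and the new cup, so that the L-exposedness and R-exposedness of every decorated edge is preserved and no loop- or decoration-placement axiom of Definition \ref{admissible.def} is violated. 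Since the decorations on $e$ are unchanged and all other edges of $d$ are untouched, the admissibility axioms carry over verbatim, and the stated identity follows; the edge-tracing itself is immediate from the concatenation rule.
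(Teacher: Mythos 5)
Your first three paragraphs are correct and are essentially all the paper has in mind: the paper offers no argument for this lemma (it declares the proofs of all six lemmas in that section ``immediate''), and your explicit tracing of Definition \ref{defn_P_{k}} --- the cup of $d_{i}$ becoming an undecorated, hence simple, edge at $i,i+1$ because the hypotheses force $2\le i$ and $i+1\le n$; the cap splicing $e$ and $e'$ into a single edge from $j$ to $k$; no loop forming since $e\neq e'$; and the merged edge inheriting exactly the decorations of $e$ because $e'$ and the cap are undecorated --- is precisely that immediate verification.

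The flaw is in your final paragraph, where you assign the minimality of $|j-k|$ a job it cannot do. Minimality does not prevent edges from being trapped beneath the relocated edge: if the north face of $d$ has edges joining $2$ to $5$, $3$ to $4$, and $6$ to $7$, with propagating edges at the remaining nodes, then $i=5$ is the only configuration satisfying the hypotheses, hence vacuously minimal, and yet the edge joining $3$ to $4$ stays nested under the new edge from $2$ to $7$. Likewise there is nothing for minimality to do about crossings, since the concatenation of two planar diagrams is planar by construction. So as written, your admissibility step rests on a false implication and does not cover a case that genuinely occurs. The repair is decoration bookkeeping rather than minimality: any edge nested beneath $e$ or $e'$ is neither L-exposed nor R-exposed in $d$, hence is undecorated because $d$ is LR-decorated, so trapping it is harmless; and the relocated edge from $j$ to $k$ is L-exposed (respectively, R-exposed) in $d_{i}d$ if and only if $e$ is in $d$, because the edges incident to nodes outside the span from $j$ to $k$ are untouched, while everything inside the span remains nested beneath the new edge exactly as it was nested beneath $e$, the cap region, or $e'$. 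Since all decorations are unchanged, axioms \ref{C1}--\ref{C4} of Definition \ref{admissible.def} transfer verbatim, and axiom \ref{C5} never enters because the presence of the two distinct non-propagating edges $e$ and $e'$ forces $\a(d)=\a(d_{i}d)\ge 2$. With that substitution your proof is complete; the minimality hypothesis plays no role in the forward statement and matters only for how the lemma is deployed later.
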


\begin{lemma}\label{nonprops.connect.adjacent.verts2}
Assume that in the north face of $d$ there is an edge, say $e$, connecting node $1$ to node $n$ labeled by a single $\bcirc$ (this can only happen if $n$ is even), and assume that there is a simple edge, say $e'$, connecting node $n+1$ to node $n+2$ (which must be labeled by a single $\wcirc$).  Then $d_{n}d$ is the admissible diagram that results from $d$ by joining the right end of $e$ to the left end of $e'$, and adding a simple edge that joins $n$ to $n+1$.  Note that the new edge formed by joining $e$ and $e'$ connects node $1$ to node $n+2$ and is labeled by the block $\bcirc \wcirc$.  That is, 
$$\begin{tabular}[c]{@{}c@{}}
%-- New mfpic environment, number 117 of 189. (size of end split: 2, should be 2)  ------------------->
\includegraphics{ThesisFigs2.018}
\end{tabular}.$$
\hfill $\qed$
\end{lemma}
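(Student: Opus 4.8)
The plan is to establish the identity by direct concatenation, since (exactly as with Lemma \ref{nonprops.connect.adjacent.verts}) all the content lies in reading off the edges of the product and then checking admissibility. First I would record the shape of the factor $d_{n}$: because $n \geq 2$ is even we have $1 < n < n+1$, so $d_{n}$ is an undecorated simple diagram carrying a cap joining its north nodes $n$ and $n+1$, a cup joining its south nodes $n'$ and $(n+1)'$, and a vertical propagating edge $j \to j'$ for each $j \notin \{n, n+1\}$. I would also note that the hypotheses force $d$ to be undammed: the edge $e$ from node $1$ to node $n$ encloses the $n-2$ north nodes $2, \dots, n-1$, which (being trapped by $e$) must be paired among themselves rather than propagate downward, explaining the parity condition that $n$ be even and leaving no propagating edge reaching the north face. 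Hence by axiom \ref{C2} the edge incident to node $1$ is $e$, bearing a single $\bcirc$ as its first decoration, and the edge incident to node $n+2$ is the simple edge $e'$, bearing a single $\wcirc$.

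Next I would perform the concatenation of $d_{n}$ on top of $d$ and trace the edges. Along the seam, the cup of $d_{n}$ identifies the two middle copies of nodes $n$ and $n+1$; these are precisely the right endpoint of $e$ and the left endpoint of $e'$, so $e$ and $e'$ are spliced into a single curve. The vertical edges of $d_{n}$ carry every node $j \notin \{n, n+1\}$ straight through, so the left end of $e$ is lifted to node $1$ of the product and the right end of $e'$ to node $n+2$. Tracing the spliced curve from node $1$ we pass along $e$ (meeting its $\bcirc$), across the cup, and along $e'$ (meeting its $\wcirc$), arriving at node $n+2$; reading left to right this gives precisely the block $\bcirc\wcirc$. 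Meanwhile the cap of $d_{n}$ survives as an undecorated edge on the north nodes $n$ and $n+1$, that is, a simple edge. Since the cup joins two distinct edges rather than the two ends of one, no closed curve is created, so no loop appears and no loop relation of Definition \ref{big.diagram.alg.defn} is invoked.

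Finally I would confirm that the output is admissible. The only new feature is the edge from $1$ to $n+2$ carrying $\bcirc\wcirc$. Because $\bcirc\wcirc$ is an alternating product, it is already a reduced basis element of $\V$ (Remark \ref{beads}), so none of the decoration relations rescale or collapse it; moreover the $\bcirc$ lies toward node $1$ and the $\wcirc$ toward node $n+2$, which is the closed-before-open ordering required on a north edge of an LR-decorated diagram (Remark \ref{LR-decorated}). The product $d_{n}d$ is again undammed, and axiom \ref{C2} is met: the merged edge is incident to both node $1$ and node $n+2$, has $\bcirc$ as its first and $\wcirc$ as its last decoration, and is the unique carrier of each decoration type, an edge carrying both types being permissible precisely because the diagram is undammed (Remark \ref{comment.admissible}). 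The main obstacle is not any single step but the bookkeeping of the admissibility axioms of Definition \ref{admissible.def}: specifically, verifying that fusing the two separately decorated edges of $d$ into one edge bearing both decoration types lands squarely in the undammed case rather than producing an illegal decoration pattern.
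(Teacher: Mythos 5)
Your proof is correct and follows exactly the route the paper intends: the paper offers no written argument (it declares Lemmas \ref{nonprops.connect.adjacent.verts}--\ref{move.decoration2} ``immediate''), and your direct concatenation of $d_{n}$ onto $d$ --- splicing $e$ and $e'$ through the cup, keeping the cap as the new simple edge, noting no loop forms, and checking axiom \ref{C2} for the resulting undammed diagram --- is precisely that immediate verification, carried out in full. Your parity observation that $e$ traps nodes $2,\dots,n-1$ and hence forces $d$ to be undammed is a nice justification of the parenthetical ``this can only happen if $n$ is even'' in the statement.
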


\begin{lemma}\label{swap.prop.nonprop}
Assume that $d$ has a propagating edge, say $e$, joining node $i$ to node $j'$ with $1<i<n$.  Further, assume that there is a simple edge, say $e'$, joining nodes $i+1$ and $i+2$.  Then $d_{i}d$ is the admissible diagram that results from $d$ by removing $e'$, disconnecting $e$ from node $i$ and reattaching it to node $i+2$, and adding a simple edge to $i+1$ and $i+2$ (note that $e$ retains its original decorations).  This procedure has an inverse, since $i<n$ and $d_{i+1}d_{i}d=d$.  That is,
$$\begin{tabular}[c]{@{}c@{}}
%-- New mfpic environment, number 118 of 189. (size of end split: 2, should be 2)  ------------------->
\includegraphics{ThesisFigs2.019}
\end{tabular},$$
where $x$ represents an arbitrary (possibly empty) block of decorations.
\hfill $\qed$
\end{lemma}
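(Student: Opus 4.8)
The plan is to verify the identity by direct diagram concatenation, following the multiplication rule of Definition \ref{A-decorated_diagram_algebra}, since nothing beyond bookkeeping of endpoints is required. First I would recall the shape of the simple diagram $d_{i}$ for $1<i<n+1$: it has a cup joining the (final) north nodes $i$ and $i+1$, a cap joining the south nodes $i'$ and $(i+1)'$, a vertical propagating edge at every other position, and — because neither $i$ nor $i+1$ equals $1$ or $n+2$ — it carries no decorations.

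To compute $d_{i}d$, stack $d_{i}$ on top of $d$, identifying each south node $k'$ of $d_{i}$ with the north node $k$ of $d$. The only nontrivial interaction occurs at positions $i,i+1,i+2$. The cap of $d_{i}$ glues the top of the propagating edge $e$ (arriving at interface node $i$) to the left endpoint of $e'$ (arriving at interface node $i+1$); since $e'$ runs from $i+1$ to $i+2$ and $d_{i}$ has a vertical strand at position $i+2$, the three arcs concatenate into a single propagating edge running from $j'$ up through $e$, across the cap, along $e'$, and up the strand at $i+2$ to the final north node $i+2$. Thus $e$ is reattached to node $i+2$, retaining exactly its original block $x$ of decorations; the edge $e'$ is absorbed and contributes nothing, since the hypothesis $1<i<n$ forces $3\leq i+1$ and $i+2\leq n+1$, so $e'$ touches no extreme node and is undecorated. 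Meanwhile the north cup of $d_{i}$ descends to a fresh undecorated simple edge on the now-vacated adjacent pair $i,i+1$, and every remaining strand passes straight through. This is exactly the diagram displayed in the statement. (Note that the vacated pair must be $i,i+1$ and not $i+1,i+2$, as a literal reading might suggest: node $i+2$ already carries the relocated propagating edge and so cannot also be an endpoint of a simple edge.)

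It then remains to confirm that the output is admissible and that the map is invertible. Admissibility follows at once from Definition \ref{admissible.def}: we have only slid an interior propagating edge two positions to the right and traded one undecorated interior simple edge for another, introducing no new open or closed decorations, creating no loops, and disturbing none of the conditions governing the extreme nodes or the $\a$-value. For the inverse, I would run the same concatenation computation on $d_{i+1}\bigl(d_{i}d\bigr)$; here the hypothesis $i<n$ guarantees $i+1<n+1$, so $d_{i+1}$ is a genuine simple diagram, and tracing its cap and cup against the cup at $(i,i+1)$ and the propagating edge at $i+2$ returns the propagating edge to node $i$ and restores the simple edge $e'$ on $(i+1,i+2)$, recovering $d$.

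The only step requiring any care — and hence the mild ``main obstacle'' — is the endpoint bookkeeping through the concatenation, together with the observation that the block $x$ carried by $e$ survives the relocation intact and that the admissibility axioms continue to hold after moving a decorated propagating strand; everything else is forced by the picture, which is why the author can call the proof immediate.
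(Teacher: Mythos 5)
Your proof is correct and is exactly the direct concatenation bookkeeping that the paper leaves implicit: the paper offers no written argument for this lemma, stating only that the proofs in this section are ``immediate,'' and your verification (the cap of $d_{i}$ splicing $e$ to the undecorated $e'$ and out the strand at $i+2$, the block $x$ surviving intact, admissibility undisturbed, and $d_{i+1}d_{i}d=d$) is precisely what that word stands for. Your parenthetical is also the right reading: the cup of $d_{i}$ places the new simple edge at nodes $i$ and $i+1$, so the phrase ``$i+1$ and $i+2$'' in the lemma's prose is a slip (compare the wording of Lemma \ref{nonprops.connect.adjacent.verts}), as node $i+2$ is already occupied by the relocated propagating edge.
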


\begin{lemma}\label{make.decoration}
Assume that $d$ has simple edges joining node $1$ to node $2$ and node $3$ to node $4$.  Then $d_{1}d_{2}d$ is the admissible diagram that results from $d$ by adding a $\btri$ to the edge joining $3$ to $4$.  That is, 
$$\begin{tabular}[c]{@{}c@{}}
%-- New mfpic environment, number 119 of 189. (size of end split: 2, should be 2)  ------------------->
\includegraphics{ThesisFigs2.020}
\end{tabular}.$$
\hfill $\qed$
\end{lemma}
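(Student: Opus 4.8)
The plan is to prove the identity by direct diagram concatenation, since both sides are single basis diagrams of $\widehat{\P}_{n+2}^{LR}(\V)$ and, by associativity, the product $d_{1}d_{2}d$ may be computed as $d_{1}(d_{2}d)$ by stacking, tracing edges through the glued nodes, and then applying the single decoration relation $\bcirc\bcirc=\btri$ inherited from $\V$. Throughout I would track only the western end of the diagrams (nodes $1,2,3,4$ and $1',2',3',4'$), since all other nodes and edges of $d$ are carried along unchanged; recall that by the definition of a simple edge the edge joining $1$ to $2$ in $d$ carries a single $\bcirc$, while the edge joining $3$ to $4$ is undecorated.

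First I would compute $d_{2}d$, that is, place $d_{2}$ on top of $d$. This is exactly the situation of Lemma \ref{nonprops.connect.adjacent.verts} with $i=2$, taking $e$ to be the $\bcirc$-decorated edge from $1$ to $2$ and $e'$ to be the undecorated edge from $3$ to $4$. Tracing through the glued points, the south cap of $d_{2}$ joining $2'$ to $3'$ splices the two north edges of $d$ into a single non-propagating edge of $d_{2}d$ joining $1$ to $4$ and carrying one $\bcirc$, while the north cup of $d_{2}$ contributes a fresh simple (undecorated) edge joining $2$ to $3$. The south face, and everything east of node $4$, is inherited from $d$ unchanged.

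Next I would compute $d_{1}(d_{2}d)$ by placing $d_{1}$ on top. The south cap of $d_{1}$, which carries a single $\bcirc$, joins the glued nodes $1$ and $2$; starting from the propagating strand at node $3$ one passes through the simple edge $2$--$3$ of $d_{2}d$, then the $\bcirc$-decorated cap of $d_{1}$, then the $\bcirc$-decorated edge $1$--$4$ of $d_{2}d$, arriving at node $4$. This produces a non-propagating edge joining $3$ to $4$ on which two $\bcirc$ decorations sit adjacently in a single block, so by the relation $\bcirc\bcirc=\btri$ the edge acquires exactly one $\btri$. Meanwhile the north cup of $d_{1}$ produces a fresh simple edge joining $1$ to $2$ decorated by a single $\bcirc$, and the south face is again untouched. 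Comparing with $d$, the diagram $d_{1}d_{2}d$ therefore agrees with $d$ except that the edge joining $3$ to $4$ has acquired a single $\btri$, as claimed.

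The points needing care are bookkeeping rather than genuine difficulty. I would verify that the two $\bcirc$ symbols really land on a common edge as adjacent members of one block, so that $\bcirc\bcirc=\btri$ applies cleanly and yields a single $\btri$ (and not, say, $\bcirc\bcirc\bcirc$ or a pair straddling distinct blocks); this is immediate here because neither intermediate edge carries any other decoration. I would also confirm the result is admissible: the $\btri$ sits on a non-propagating interior edge and the only $\bcirc$ is on the simple edge touching node $1$, in accordance with the admissibility axioms. Finally I would note the boundary regime in which node $4$ coincides with $n+2$; in the intended range of application node $4$ is an interior node, so the edge $3$--$4$ is genuinely undecorated and the computation goes through verbatim.
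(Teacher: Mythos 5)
Your proposal is correct and is exactly the computation the paper has in mind: the paper declares this lemma (like its neighbors) ``immediate'' and gives no written proof, and your direct concatenation --- splice via $d_{2}$ to get a $\bcirc$-decorated edge from $1$ to $4$ plus a simple edge $2$--$3$, then cap with $d_{1}$ so the two $\bcirc$'s conjoin into one block and $\bcirc\bcirc=\btri$ --- fills in precisely that omitted verification, including the harmless $n=2$ boundary case where the edge $3$--$4$ also carries a $\wcirc$.
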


\begin{lemma}\label{move.decoration}
Assume that $d$ has two edges, say $e$ and $e'$, joining node $i$ to node $i+1$ and node $i+2$ to node $i+3$, respectively, where $1<i<n-1$ and $e'$ is simple.  Then $d_{i}d_{i+1}d$ is the admissible diagram that results from $d$ by removing the decorations from $e$ and adding them to $e'$.  This procedure has an inverse, since $d_{i+2}d_{i+1}(d_{i}d_{i+1}d)=d$.  That is, 
$$\begin{tabular}[c]{@{}c@{}}
%-- New mfpic environment, number 120 of 189. (size of end split: 2, should be 2)  ------------------->
\includegraphics{ThesisFigs2.021}
\end{tabular},$$
where $x$ represents an arbitrary (possibly empty) block of decorations.  \hfill $\qed$
\end{lemma}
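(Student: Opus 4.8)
The plan is to prove the lemma by direct inspection of the diagram concatenation, exactly as with the preceding lemmas. Since $e$ and $e'$ are both non-propagating edges in the north face of $d$, we have $\a(d) \geq 2$, so we are in the regime where adjacent blocks on a single edge may be freely conjoined into one maximal block and no constraints on the relative vertical positions of decorations are in force. Moreover, because $1 < i < n-1$, both $d_i$ and $d_{i+1}$ are undecorated interior simple diagrams, each consisting of a single non-propagating edge joining adjacent nodes in the north face, a single non-propagating edge joining the corresponding adjacent nodes in the south face, and vertical propagating edges in every other column.

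First I would compute $d_{i+1}d$ by placing $d_{i+1}$ on top of $d$. The south non-propagating edge of $d_{i+1}$ meets the north nodes $i+1$ and $i+2$ of $d$, which are the free ends of $e$ (the block-$x$ edge to node $i$) and of the simple edge $e'$ (to node $i+3$), respectively. Concatenation therefore fuses $e$ and $e'$ into a single non-propagating edge of $d_{i+1}d$ joining node $i$ to node $i+3$ and carrying the block $x$; since $e \neq e'$, no loop is created. The north non-propagating edge of $d_{i+1}$ survives as a fresh simple edge joining $i+1$ to $i+2$, while the remaining propagating edges pass straight through, leaving the south face of $d_{i+1}d$ equal to that of $d$. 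Placing $d_i$ on top of this diagram, its south non-propagating edge meets the north nodes $i$ and $i+1$, the free ends of the block-$x$ edge (to $i+3$) and of the simple edge just created (to $i+2$); these fuse into a non-propagating edge joining $i+2$ to $i+3$ and carrying $x$ --- this is exactly $e'$, now decorated by $x$ --- while the north edge of $d_i$ becomes a fresh simple edge at $(i,i+1)$, namely $e$ with its decorations removed. Again $e \neq e'$ guarantees no loop appears.

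This shows $d_i d_{i+1} d$ is obtained from $d$ by stripping the block $x$ off $e$ and attaching it to $e'$, with everything else unchanged; since the relocated block sits on an interior cup adjacent to the original one and the L- and R-exposed status of all edges is unaffected, the result still satisfies axioms \ref{C1}--\ref{C5} and is admissible. The inverse claim follows by running the identical bookkeeping for the product $d_{i+2}d_{i+1}$ applied to $d_i d_{i+1} d$: placing $d_{i+1}$ on top fuses the now-simple edge $e$ with the decorated edge $e'$ into a single edge from $i$ to $i+3$ carrying $x$, and then $d_{i+2}$ returns the block to an edge joining $i$ to $i+1$, recovering $d$. The only real work, and the main point requiring care, is the node-by-node tracking of connectivity through the two concatenations together with the verification that no loop ever closes up; this is why the statement is immediate once the pictures are drawn.
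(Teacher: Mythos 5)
Your proof is correct and takes essentially the same approach as the paper: the paper declares this lemma (like the others in that group) ``immediate,'' meaning exactly the direct concatenation bookkeeping you carried out --- fusing $e$ with the cup of $d_{i+1}$ and then with $e'$, checking that no loop closes up, and noting that admissibility and the $\a$-value are unaffected. Your write-up just makes that implicit check explicit, including the inverse computation $d_{i+2}d_{i+1}(d_{i}d_{i+1}d)=d$.
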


\begin{lemma}\label{move.decoration2}
Assume that $d$ has two edges, say $e$ and $e'$, joining node $i$ to node $i+1$ and node $i+2$ to node $i+3$, respectively, with $1<i<n-1$. Further, assume that $e$ is decorated by a single $\btri$ decoration only and that $e'$ is decorated by a single $\wtri$ decoration only.  Then $d_{i+2}d_{i+1}d$ is the admissible diagram that results from $d$ by removing the $\wtri$ decoration from $e'$ and adding it to $e$ to the right of the $\btri$ decoration.  That is,
$$\begin{tabular}[c]{@{}c@{}}
%-- New mfpic environment, number 121 of 189. (size of end split: 2, should be 2)  ------------------->
\includegraphics{ThesisFigs2.022}
\end{tabular}.$$  \hfill $\qed$
\end{lemma}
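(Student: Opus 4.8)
The plan is to prove this exactly as the preceding lemmas are proved: by direct diagram concatenation, reading the product $d_{i+2}d_{i+1}d$ from the bottom up (recall that in the convention of Definition \ref{A-decorated_diagram_algebra} the product $d'd$ places $d'$ on top of $d$, so $d$ sits lowest, then $d_{i+1}$, then $d_{i+2}$) and tracking each edge and its decorations. Since $1<i<n-1$, both $d_{i+1}$ and $d_{i+2}$ are ``middle'' simple diagrams, each a standard Temperley--Lieb generator with an undecorated cup in the north face, an undecorated cap in the south face, and vertical propagating edges elsewhere; moreover all four nodes $i,i+1,i+2,i+3$ lie strictly between $1$ and $n+2$. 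This is precisely what the hypothesis $1<i<n-1$ buys us: no edge in the affected region touches a node of the form $1,1',n+2,(n+2)'$, so no $\bcirc$ or $\wcirc$ boundary decorations are created and the entire computation stays in the undecorated interior.

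First I would compute $d_{i+1}d$. The south cap of $d_{i+1}$ joins $(i+1)'$ and $(i+2)'$, which attach to the north nodes $i+1$ and $i+2$ of $d$, namely the right endpoint of $e$ and the left endpoint of $e'$. Hence $e$ and $e'$ merge into a single non-propagating edge from node $i$ to node $i+3$; traversing it from $i$ we read the $\btri$ of $e$ and then the $\wtri$ of $e'$, so the merged edge carries the block $\btri\wtri$. The north cup of $d_{i+1}$ contributes an undecorated cup joining $i+1$ and $i+2$, nested inside the $\btri\wtri$ edge, and no loop is formed since the cap joins two \emph{distinct} edges. Next I would apply $d_{i+2}$ on top: its south cap joins the north nodes $i+2$ and $i+3$ of $d_{i+1}d$, that is, the right end of the cup and the right end of the $\btri\wtri$ edge. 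Tracing $i\to i+3\to i+2\to i+1$, this collapses to a single non-propagating edge from $i$ to $i+1$ carrying $\btri\wtri$ (the intervening cup and caps being undecorated), while the north cup of $d_{i+2}$ supplies a fresh undecorated simple edge joining $i+2$ and $i+3$. Again no loop appears, and the rest of $d$ below the affected region is carried through unchanged by the propagating strands. This is exactly the diagram obtained from $d$ by deleting the $\wtri$ from $e'$ and appending it to $e$ immediately after the $\btri$.

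The only steps needing genuine care, and the main potential obstacle, are confirming two facts about the output: that the decorations end up in the order $\btri\wtri$ rather than $\wtri\btri$, and that the resulting diagram is admissible in the sense of Definition \ref{admissible.def}. The former follows from orienting the two cap-traversals as above. For the latter I would observe that $e$ lies strictly to the left of $e'$ (node $i<i+2$), so placing the closed $\btri$ before the open $\wtri$ keeps closed decorations to the left and open to the right, consistent with the LR-decoration constraint of Definition \ref{def.T_n+2^LR(V)} and Remark \ref{LR-decorated}; and none of the $\bcirc/\wcirc$ conditions of Definition \ref{admissible.def} are disturbed, precisely because the region avoids the boundary nodes. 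Thus the verification reduces to a single inspection of the concatenated picture, which is why the proof is ``immediate'' in the style of Lemma \ref{move.decoration}.
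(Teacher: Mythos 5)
Your proposal is correct and takes exactly the approach the paper intends: the paper gives no written argument for Lemma \ref{move.decoration2}, declaring (just before Lemma \ref{nonprops.connect.adjacent.verts}) that the proofs of these six lemmas are ``immediate,'' and what it deems immediate is precisely the direct concatenation check you carry out. Your explicit tracing of the merged edge through the caps and cups of $d_{i+1}$ and $d_{i+2}$, the verification that the block comes out as $\btri\wtri$ rather than $\wtri\btri$, and the observation that admissibility is undisturbed away from the boundary nodes simply supply the details the paper omits.
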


\begin{remark}
Each of Lemmas \ref{nonprops.connect.adjacent.verts}--\ref{move.decoration2} have left-right symmetric analogues (perhaps involving closed decorations), as well as versions that involve edges in the south face.
\end{remark}

\end{section}

\begin{section}{The admissible diagrams are generated by the simple diagrams}

Next, we state and prove several lemmas that we will use to prove that each admissible diagram can be written as a product of simple diagrams in $\D_{n}$.  (Note that in each of the lemmas of this section, all non-propagating edges are simple.)

\begin{lemma}\label{a-value1.diagrams.gen.by.simples}
If $d$ is an admissible diagram with $\a(d)=1$, then $d$ can be written as a product of simple diagrams.
\end{lemma}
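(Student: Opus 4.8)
The plan is to induct on a complexity measure for $d$, peeling off its features one at a time and at each stage exhibiting $d$ as a simple diagram (or two) times a strictly simpler admissible diagram of $\a$-value $1$. Because $d$ is dammed it carries no loops (the loops permitted by axiom \ref{C1} occur only in undammed diagrams), so $d$ consists of a single, necessarily adjacent, non-propagating edge in each face together with $n$ propagating edges, decorated exactly as prescribed by axiom \ref{C5}: only the leftmost propagating edge may carry closed decorations (a vertical sequence of single-$\btri$ blocks, possibly capped by a $\bcirc$), only the rightmost may carry open decorations (a sequence of $\wtri$ blocks, possibly capped by a $\wcirc$), the middle propagating edges are undecorated, and a non-propagating edge meeting node $1$ or $1'$ (respectively $n+2$ or $(n+2)'$) carries a single $\bcirc$ (respectively $\wcirc$). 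I take as complexity the total number of decorations on $d$ plus the horizontal displacement of its two non-propagating edges from a fixed target position.

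First I would dispose of the decorations. By Remark \ref{comment.admissible}\ref{alternating.decorations.a=1} the $\btri$- and $\wtri$-blocks on the two extreme propagating edges alternate in vertical position, so I peel them from the outside in, always removing the current outermost decoration. To remove a single $\btri$ I first use the invertible slide of Lemma \ref{swap.prop.nonprop}, together with the decoration-transport moves of Lemmas \ref{move.decoration} and \ref{move.decoration2}, to bring the diagram into a configuration in which simple edges sit at $\{1,2\}$ and $\{3,4\}$ with the target $\btri$ on the latter; then Lemma \ref{make.decoration}, read in the forward direction as $d = d_{1}d_{2}\,d'$, displays $d$ as two simple diagrams times the diagram $d'$ obtained by deleting that $\btri$. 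The symmetric right-hand (open) versions of these lemmas strip the $\wtri$ blocks, while $d_{1}$ and $d_{n+1}$ themselves account for the terminal $\bcirc$ and $\wcirc$. Each such step yields an admissible diagram of $\a$-value $1$ with strictly fewer decorations, where Lemma \ref{a-value=1} is what guarantees the $\a$-value is preserved under the relevant multiplications, so the induction hypothesis applies to $d'$.

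Once $d$ is undecorated, only its shape remains: a single cup at some $\{i,i+1\}$ on the north face and a single cap at some $\{j,j+1\}$ on the south face, joined by the unique non-crossing order-preserving family of propagating edges. Here repeated application of Lemma \ref{swap.prop.nonprop} and its south-face analogue slides the two non-propagating edges toward one another; each slide is multiplication by a single simple diagram and strictly decreases the displacement. The process terminates when the cup lies directly above the cap, at which point the diagram is exactly a simple diagram $d_{k}$, giving the base case. Unrolling the sequence of slides and peels then writes $d$ as a product of simple diagrams, as required.

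The main obstacle is the bookkeeping that keeps every intermediate diagram admissible and of $\a$-value $1$. Two points require care. First, each preparatory lemma has hypotheses about which edges are simple and where they sit, so before invoking it I must check that the needed configuration is reachable by admissible slides without disturbing the decorations already processed. Second, the vertical-position and alternation constraints of axiom \ref{C5} and Remark \ref{comment.admissible}\ref{alternating.decorations.a=1} force a definite order of removal (outermost first) and must be verified to survive each slide, since sliding a non-propagating edge also relocates the endpoints of the extreme propagating edges on which the decorations live. Handling these constraints uniformly across the five western-end types of axiom \ref{C5} and their eastern mirrors is the technical heart of the argument.
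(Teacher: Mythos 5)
Your structural description of an $\a$-value-$1$ admissible diagram is accurate, and the sliding step (Lemma \ref{swap.prop.nonprop} and its south-face analogue) is sound---the paper itself uses exactly this device to relocate the cup and cap. But the decoration-stripping step, which is the heart of the lemma, cannot work as you describe. Lemmas \ref{make.decoration}, \ref{move.decoration}, and \ref{move.decoration2} all require \emph{two} non-propagating edges in the north face (simple edges at $\{1,2\}$ and $\{3,4\}$, or edges at $\{i,i+1\}$ and $\{i+2,i+3\}$), and they move decorations that sit on \emph{non-propagating} edges. A diagram with $\a$-value $1$ has exactly one non-propagating edge in each face, so the configuration you propose to ``reach by admissible slides'' does not exist; moreover, since concatenation never decreases $\a$-value (the paper notes $\a(d'd)\geq \a(d')$, and by reflecting diagrams top-to-bottom one also gets $\a(d'd)\geq \a(d)$), any factorization $d = d_{1}d_{2}\,d'$ of your $\a$-value-$1$ diagram forces $\a(d')\leq 1$, so $d'$ can never satisfy the two-cup hypotheses of Lemma \ref{make.decoration}. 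Meanwhile the decorations you actually need to remove---the interleaved $\btri$- and $\wtri$-blocks---sit on the two extreme \emph{propagating} edges, which none of those three lemmas touch.

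There is also no local substitute: no product of boundedly many simple diagrams peels a single $\btri$ off the leftmost propagating edge while preserving admissibility and $\a$-value $1$. Left-multiplying a decorated $\a$-value-$1$ diagram by $d_{1}d_{2}$ conjoins blocks and produces scalars rather than removing a decoration; for instance $d_{2}d_{1}d_{2}d_{1}=2\,d_{2}d_{1}$, the diagram version of relation (4) in Remark \ref{affineC.relations}, destroys the $\btri$ of $d_{1}d_{2}d_{1}$ at the cost of a factor of $2$. The obstruction is precisely Remark \ref{comment.admissible}\ref{alternating.decorations.a=1}: the $\btri$-blocks on the left edge and the $\wtri$-blocks on the right edge interleave vertically, and the only multiplication by simple diagrams that creates or removes one such block without collapsing its neighbors is a full zigzag sweep across the entire width of the diagram. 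This is what the paper's proof does: it is an explicit case analysis over the five configurations of axiom \ref{C5}, with formulas such as $d=(d_{z_{1}}d_{z_{2}})^{k}d_{z_{1}}d_{n+1}$, where $d_{z_{1}}=d_{1}d_{2}\cdots d_{n}$ and $d_{z_{2}}=d_{n+1}d_{n}\cdots d_{2}$; each factor $d_{z_{1}}d_{z_{2}}$---a product of $2n$ simple diagrams, not two---deposits one $\btri$ and one $\wtri$ at the correct interleaved heights, and the remaining cases are reduced to this one by the sliding moves you correctly identified. Your induction on the number of decorations can be salvaged only by replacing the local move $d_{1}d_{2}$ with these global zigzag factors, which is, in substance, the paper's argument.
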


\begin{proof}
Assume that $d$ is an admissible diagram with $\a(d)=1$.  The proof is an exhaustive case by case check, where we consider all the possible diagrams that are consistent with axiom \ref{C5}.  We consider five cases; any remaining cases follow by analogous arguments.  

\bigskip

Case (1): First, assume that
$$d=\begin{tabular}[c]{@{}c@{}}
%-- New mfpic environment, number 122 of 189. (size of end split: 2, should be 2)  ------------------->
\includegraphics{ThesisFigs2.023}
\end{tabular},$$	
where the leftmost propagating edge carries $k$ $\btri$ decorations, and hence, the rightmost propagating edge carries $k$ $\wtri$ decorations by \ref{alternating.decorations.a=1} of Remark \ref{comment.admissible}.  In this case, it can quickly be verified that we can obtain $d$ via
	$$d=(d_{z_{1}}d_{z_{2}})^{k}d_{z_{1}}d_{n+1},$$
where $d_{z_{1}}=d_{1}d_{2}\cdots d_{n}$ and $d_{z_{2}}=d_{n+1}d_{n}\cdots d_{2}$.  Therefore, $d$ can be written as a product of simple diagrams, as desired.

\bigskip

Case (2): For the second case, assume that
$$d=\begin{tabular}[c]{@{}c@{}}
%-- New mfpic environment, number 123 of 189. (size of end split: 2, should be 2)  ------------------->
\includegraphics{ThesisFigs2.024}
\end{tabular}.$$
Note that $d$ does not carry any open decorations.  In this case, $d=d_{1}d_{2}d_{1}$, and so $d$ can be written as a product of simple diagrams, as expected.

\bigskip

Case (3): For the third case, assume that
$$d=\begin{tabular}[c]{@{}c@{}}
%-- New mfpic environment, number 124 of 189. (size of end split: 2, should be 2)  ------------------->
\includegraphics{ThesisFigs2.025}
\end{tabular},$$
where the leftmost propagating edge carries $k-1$ $\btri$ decorations, so that the rightmost propagating edge carries $k$ $\wtri$ decorations.  Then
	$$d=(d_{z_{1}}d_{z_{2}})^{k}d_{1},$$
where $d_{z_{1}}$ and $d_{z_{2}}$ are as in case (1), and hence $d$ can be written as a product of simple diagrams.

\bigskip

Case (4): Next, assume that
$$d=\begin{tabular}[c]{@{}c@{}}
%-- New mfpic environment, number 125 of 189. (size of end split: 2, should be 2)  ------------------->
\includegraphics{ThesisFigs2.026}
\end{tabular},$$
where $1<j<n+1$ and the leftmost propagating edge carries $k$ $\btri$ decorations.  Then by Remark \ref{comment.admissible}\ref{alternating.decorations.a=1}, the rightmost propagating edge carries $l$ $\wtri$ decorations, where $l=k$ or $k+1$.  If $l=k+1$, then define
$$d'=\begin{tabular}[c]{@{}c@{}}
%-- New mfpic environment, number 126 of 189. (size of end split: 2, should be 2)  ------------------->
\includegraphics{ThesisFigs2.027}
\end{tabular},$$
where the leftmost (respectively, rightmost) propagating edge carries $k$ $\btri$ (respectively, $\wtri$) decorations.  By case (1), $d'$ can be written as a product of simple diagrams.  We see that
	$$d=d' d_{n}d_{n-1}\cdots d_{j+1}d_{j},$$
which implies that $d$ can be written as a product of simple diagrams, as desired.  If, on the other hand, $l=k$, then define $d'$ to be identical $d$ except that the last $\btri$ decoration occurring on the leftmost propagating edge has been removed.  Then by the subcase we just completed (where the rightmost propagating edge carried one more $\wtri$ decoration than the leftmost propagating edge carried $\btri$ decorations), $d'$ can be written as a product of simple diagrams.  We see that
	$$d=d' d_{j-1}d_{j-2}\cdots d_{2}d_{1}d_{2} \cdots d_{j-1}d_{j},$$
which implies that $d$ can be written as a product of simple diagrams.

\bigskip

Case (5):  For the final case, assume that
$$d=\begin{tabular}[c]{@{}c@{}}
%-- New mfpic environment, number 127 of 189. (size of end split: 2, should be 2)  ------------------->
\includegraphics{ThesisFigs2.028}
\end{tabular},$$
where $1<i<n+1$, $1<j<n+1$, and the leftmost propagating edge carries $k$ $\btri$ decorations.  Then again by Remark \ref{comment.admissible}\ref{alternating.decorations.a=1}, the rightmost propagating edge carries $l$ $\wtri$ decorations, where $|k-l| \leq 1$.  Without loss of generality, assume that $k \leq l$, so that $l=k$ or $k+1$.  If $k=l$, then without loss of generality, assume that the first decoration occurring on the leftmost propagating edge has the highest relative vertical position of all decorations occurring on propagating edges.  Define the diagram
$$d'=\begin{tabular}[c]{@{}c@{}}
%-- New mfpic environment, number 128 of 189. (size of end split: 2, should be 2)  ------------------->
\includegraphics{ThesisFigs2.029}
\end{tabular},$$
where the leftmost (respectively, rightmost) propagating edge carries $k-1$ $\btri$ (respectively, $\wtri$) decorations.  By case (4), $d'$ can be written as a product of simple diagrams.  Also, we see that
	$$d=d_{i}d_{i-1}\cdots d_{3}d_{2} d' d_{n}d_{n-1} \cdots d_{j+1}d_{j},$$
which implies that $d$ can be written as a product of simple diagrams, as desired.  If, on the other hand, $l=k+1$, define
$$d'=\begin{tabular}[c]{@{}c@{}}
%-- New mfpic environment, number 129 of 189. (size of end split: 2, should be 2)  ------------------->
\includegraphics{ThesisFigs2.030}
\end{tabular},$$
where the leftmost propagating edge carries $k-1$ $\btri$ decorations while the rightmost propagating edge carries $k$ $\wtri$ decorations.  Again, by case (4), $d'$ can be written as a product of simple diagrams.  We see that
	$$d=d_{i}d_{i+1}\cdots d_{n}d_{n+1}d_{n}\cdots d_{3}d_{2}d',$$
which implies that $d$ can be written as a product of simple diagrams.
\end{proof}

\begin{lemma}\label{a-value.max.dammed.diagrams.gen.by.simples}
If $d$ is an admissible diagram with $1< \a(d)< \lfloor \frac{n+2}{2} \rfloor$ such that all non-propagating edges are simple, then $d$ can be written as a product of simple diagrams.
\end{lemma}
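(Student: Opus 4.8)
The plan is to argue by induction on $\a(d)$, taking the preceding Lemma~\ref{a-value1.diagrams.gen.by.simples} (the case $\a(d)=1$) as the base and reducing the case $\a(d)=m\geq 2$ to the case $\a(d)=m-1$. The first thing I would record is that the hypotheses pin the diagram down very tightly. Since $1<\a(d)<\lfloor\tfrac{n+2}{2}\rfloor$, the diagram $d$ is dammed with more than one propagating edge, so axiom~\ref{C4} governs its decorations; and since all non-propagating edges are simple, no cup or cap is nested and each carries only the $\bcirc$ or $\wcirc$ forced by simplicity. Consequently every decoration on $d$ is determined by its underlying shape together with which propagating edge meets nodes $1,1',n+2,(n+2)'$. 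In particular the only genuinely free data are the set of $m$ simple cups in the north face, the set of $m$ simple caps in the south face, and the (necessarily non-crossing, order-preserving) matching of the remaining nodes by propagating edges.

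For the inductive step I would peel simple diagrams off each face so as to drop $\a(d)$ by one. Concretely, I would locate a simple cup in the north face, say joining $i$ to $i+1$, together with a neighbouring propagating edge, and use Lemma~\ref{swap.prop.nonprop} — which is invertible, since $d_{i+1}d_id=d$ — to slide that propagating edge across the cup until the two sit side by side; this exhibits $d$ as a short product of such $d_{i+1}$ times a diagram in which the cup is adjacent to a propagating edge. I would do the symmetric thing in the south face using the south-face analogue of Lemma~\ref{swap.prop.nonprop}. A final multiplication by the appropriate $d_j$, read through Lemma~\ref{nonprops.connect.adjacent.verts} (and its boundary variants when the cup abuts node $1$ or $n+2$), then absorbs the isolated cup--cap pair into the propagating edge and yields an admissible diagram $d'$ with $\a(d')=m-1$ whose non-propagating edges are again simple. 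Since $d$ has thereby been written as a product of simple diagrams times $d'$, applying the induction hypothesis to $d'$ completes the reduction.

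Throughout, the forced boundary decorations must be carried along. Whenever the reduction needs to create, relocate, or split the single $\btri$/$\wtri$ on a propagating edge touching $1,1',n+2,(n+2)'$, or the $\bcirc$/$\wcirc$ pairs demanded by axiom~\ref{C4}, I would invoke Lemmas~\ref{make.decoration}, \ref{move.decoration}, and \ref{move.decoration2} together with their left--right and north--south symmetric versions; these are precisely the tools that move decorations into the positions prescribed by \ref{C4}, so each such step stays inside the class of admissible diagrams with simple non-propagating edges.

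I expect the main obstacle to be bookkeeping rather than any single deep step: one must organize the case analysis — on where the chosen cup and propagating edge sit, and on which boundary configuration of \ref{C4} occurs — so that \emph{every} intermediate diagram is again admissible with all non-propagating edges simple, so that the manipulation lemmas genuinely apply and the induction hypothesis is available for $d'$. The two delicate points are (i) that the cup-sliding move of Lemma~\ref{nonprops.connect.adjacent.verts} is not a priori invertible, so the factorization must be set up in the ``building'' direction — writing $d$ as simple diagrams times a smaller diagram, rather than reducing $d$ by an uninvertible move — which is why I lean on the explicitly invertible Lemmas~\ref{swap.prop.nonprop}, \ref{move.decoration}, and \ref{move.decoration2} to carry the reduction; and (ii) that the boundary $\btri$/$\wtri$ decorations are forced by \ref{C4} and cannot be freely introduced or destroyed, so the reduction must be arranged to respect them at every stage.
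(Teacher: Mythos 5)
Your strategy (induction on $\a(d)$, peeling off cup--cap pairs) is not the paper's, and as written its inductive step has a genuine gap. The step you attribute to Lemma \ref{nonprops.connect.adjacent.verts} is not something that lemma (or any cited lemma) does: that lemma replaces two non-propagating north-face edges of $d$ by two non-propagating north-face edges of $d_id$ (a simple edge at $i,i+1$ plus an edge from $j$ to $k$), so it preserves the number of non-propagating edges and hence $\a$; it never ``absorbs a cup--cap pair into a propagating edge.'' Worse, no multiplication by simple diagrams can lower the $\a$-value at all: every propagating edge of a product must pass through propagating edges of both factors, so $\a(d'd)\geq\max\bigl(\a(d'),\a(d)\bigr)$ (the paper records $\a(d'd)\geq\a(d')$ in the proof of Lemma \ref{a-value=1}). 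Thus ``peeling simple diagrams off $d$'' can never produce a diagram of smaller $\a$-value. What your induction actually requires is a factorization $d=d_i\,d'$ in which $d'$ is built by surgery on $d$ --- delete a cup and a facing cap and replace them by two propagating edges --- together with a proof that $d'$ is admissible and that $d_id'$ equals $d$ exactly. That construction is the real content of the inductive step, and it appears nowhere in your proposal.

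The surgery is also genuinely delicate at the boundary, in a way your decoration bookkeeping (Lemmas \ref{make.decoration}--\ref{move.decoration2}, which only move $\btri/\wtri$ decorations among non-propagating edges) cannot repair: if the deleted cup is the one at nodes $1,2$ (carrying $\bcirc$) and the surgered diagram $d'$ acquires a vertical propagating edge from $1$ to $1'$, axiom \ref{C4} forces that edge to carry $\btri$, and concatenation gives the block $\btri\bcirc=\bcirc\bcirc\bcirc=2\,\bcirc$, i.e.\ $d_1d'=2d$ rather than $d$ (this is exactly the $c=1$ case of Lemma \ref{first.closed.under.mult.lemma}). The paper sidesteps all of this by a different route: if $d$ has a vertical propagating edge $i\to i'$, it cuts $d$ along that edge as $d=d'd''$ with $d'\in\DTL(B_{n})$ and $d''\in\DTL(B'_{n})$, and invokes the type $B$ theory to write each factor as a product of simple diagrams; when no vertical propagating edge exists, it uses the invertible Lemma \ref{swap.prop.nonprop} (plus a $d_1d_2$ trick in the subcase where the leftmost propagating edge ends at node $1'$) to reduce to that situation, never performing any cup--cap surgery and hence never risking stray factors of $2$ or $\delta$. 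To salvage your approach you would have to supply the surgery construction and the boundary decoration analysis yourself; as submitted, the inductive step does not go through.
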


\begin{proof}
Let $d$ be an admissible diagram with $1< \a(d)< \lfloor \frac{n+2}{2} \rfloor$ such that all non-propagating edges are simple.  (Note that the restrictions on $\a(d)$ imply that $d$ has more than one propagating edge and has at least one non-propagating edge.)   We consider two cases, where the second case has two subcases.

\bigskip

Case (1): First, assume that $d$ has a vertical propagating edge, say $e_{i}$, joining $i$ to $i'$.  Now, define the admissible diagrams $d'$ and $d''$ via
	$$d'=\begin{tabular}[c]{@{}c@{}}
%-- New mfpic environment, number 130 of 189. (size of end split: 2, should be 2)  ------------------->
\includegraphics{ThesisFigs2.031}
\end{tabular},$$
and
$$d''=\begin{tabular}[c]{@{}c@{}}
%-- New mfpic environment, number 131 of 189. (size of end split: 2, should be 2)  ------------------->
\includegraphics{ThesisFigs2.032}
\end{tabular},$$
where each of the shaded regions is identical to the corresponding regions of $d$.  Then $d=d'd''$.  Furthermore, $d'$ (respectively, $d''$) is L-exposed (respectively, R-exposed), and hence is only decorated with closed (respectively, open) decorations.  Since $d$ is admissible, $d' \in \DTL(B_{n})$ while $d'' \in \DTL(B'_{n})$.  This implies that both $d'$ and $d''$ can be written as a product of simple diagrams.  Therefore, $d$ can be written as a product of simple diagrams, as desired.

\bigskip

Case (2):  Next, assume that $d$ has no vertical propagating edges.  Suppose that the leftmost propagating edge joins node $i$ in the north face to node $j'$ in the south face, and without loss of generality, assume that $j<i$.  (Note that since $d$ has more than one propagating edge, $i<n+2$.)  We wish to make use of case (1), but we must consider two subcases.

\bigskip

(a): For the first subcase, assume that $j \neq 1$.  Since $d$ is admissible, we must have
$$d=\begin{tabular}[c]{@{}c@{}}
%-- New mfpic environment, number 132 of 189. (size of end split: 2, should be 2)  ------------------->
\includegraphics{ThesisFigs2.033}
\end{tabular},$$
where $x$ on the propagating edge from $i$ to $j'$ is either trivial (i.e., the edge is undecorated) or equal to a single $\btri$ decoration.  Define the admissible diagram
$$d'=\begin{tabular}[c]{@{}c@{}}
%-- New mfpic environment, number 133 of 189. (size of end split: 2, should be 2)  ------------------->
\includegraphics{ThesisFigs2.034}
\end{tabular},$$
where the leftmost propagating edge carries the same decoration as the leftmost propagating edge in $d$ and the shaded region is identical to the corresponding region of $d$.  By case (1), $d' $ can be written as a product of simple diagrams.  By making repeated applications of Lemma \ref{swap.prop.nonprop}, we can transform $d'$ into $d$, which shows that $d$ can be written as a product of simple diagrams, as desired.

\bigskip

(b): For the second subcase, assume that $j=1$, so that
$$d=\begin{tabular}[c]{@{}c@{}}
%-- New mfpic environment, number 134 of 189. (size of end split: 2, should be 2)  ------------------->
\includegraphics{ThesisFigs2.035}
\end{tabular}.$$
Since $1< \a(d)< \lfloor \frac{n+2}{2} \rfloor$, there is at least one other propagating edge occurring to the right of the leftmost propagating edge.  Furthermore, since the number of non-propagating edges in the north face is equal to the number of non-propagating edges in the south face, there is at least one undecorated non-propagating edge in the south face of $d$.  By making repeated applications, if necessary, of the southern version of Lemma \ref{swap.prop.nonprop}, we may assume that
$$d=\begin{tabular}[c]{@{}c@{}}
%-- New mfpic environment, number 135 of 189. (size of end split: 2, should be 2)  ------------------->
\includegraphics{ThesisFigs2.036}
\end{tabular}.$$
Now, define the admissible diagrams $d'$ and $d''$ via
$$d'=\begin{tabular}[c]{@{}c@{}}
%-- New mfpic environment, number 136 of 189. (size of end split: 2, should be 2)  ------------------->
\includegraphics{ThesisFigs2.037}
\end{tabular}$$
and
$$d''=\begin{tabular}[c]{@{}c@{}}
%-- New mfpic environment, number 137 of 189. (size of end split: 2, should be 2)  ------------------->
\includegraphics{ThesisFigs2.038}
\end{tabular}$$
where the shaded regions are identical to the corresponding regions of $d$.  By case (1), $d''$ can be written as a product of simple diagrams.  Also, we see that $d'=d_{1}d_{2}d''$, which implies that $d'$ can be written as a product of simple diagrams, as well.  By making repeated applications of Lemma \ref{swap.prop.nonprop}, we must have $d$ can be written as a product of simple diagrams.  
\end{proof}

\begin{lemma}\label{a-value.max.undammed.diagrams.gen.by.simples}
If $n$ is odd and $d$ is an admissible diagram with $\a(d)=\lfloor \frac{n+2}{2} \rfloor$ such that all non-propagating edges are simple, then $d$ can be written as a product of simple diagrams.
\end{lemma}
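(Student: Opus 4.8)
The plan is to mirror the case analysis used in the proof of Lemma \ref{a-value1.diagrams.gen.by.simples}, exploiting the fact that the hypotheses pin down the gross structure of $d$ almost completely. Since $n$ is odd, $n+2$ is odd, so $\a(d)=\lfloor (n+2)/2 \rfloor = (n+1)/2$ is the maximal possible $\a$-value for an $(n+2)$-diagram; by the remark following the definition of $\a$, this forces $d$ to have exactly one propagating edge $e$. Because all non-propagating edges are simple, the cups in the north face occupy consecutive pairs of nodes on either side of the foot of $e$, and likewise for the caps in the south face; thus, up to decorations, $d$ is determined by the two endpoints of $e$ (both at odd-indexed nodes, by a parity count). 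The decorations are governed by axiom \ref{C3}: the edge $e$ carries an alternating sequence of $\btri$ and $\wtri$ decorations, each wall-adjacent non-propagating edge carries a single $\bcirc$ or $\wcirc$, and the endpoint conventions (a $\bcirc$ nearest node $1$ or $1'$, a $\wcirc$ nearest node $n+2$ or $(n+2)'$) hold.

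First I would record the base building block: the product $d_{1}d_{3}d_{5}\cdots d_{n}$ of the odd-indexed (pairwise commuting) simple diagrams is an admissible maximal-$\a$ diagram consisting of simple cups $1$-$2,\,3$-$4,\ldots,n$-$(n+1)$ in the north, their mirror caps in the south, and a single undecorated vertical propagating edge at node $n+2$. This realizes one representative of the undecorated family literally as a product of simple diagrams. To move the foot of $e$ to any other (necessarily odd) position, and to decouple its north and south endpoints, I would apply Lemma \ref{swap.prop.nonprop} together with its southern and left-right reflected analogues; since each such slide is invertible and amounts to left- or right-multiplication by a simple diagram, it suffices to produce each target diagram up to multiplication by products of simple diagrams. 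This reduces the undecorated case (with all non-propagating edges simple) to the base block above.

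Next I would install the decorations. The alternating $\btri\wtri$ decorations on $e$ are produced by sweep products of the same type as those in Case (1) of Lemma \ref{a-value1.diagrams.gen.by.simples}: letting $d_{z_1}=d_{1}d_{2}\cdots d_{n}$ and $d_{z_2}=d_{n+1}d_{n}\cdots d_{2}$, repeated application of $d_{z_1}d_{z_2}$ deposits successive matched pairs of closed and open triangles along the propagating edge, so that $k$ sweeps realize an edge bearing alternating $\btri$ and $\wtri$ decorations in the vertical order forced by Remark \ref{comment.admissible}\ref{alternating.decorations.a=1}; Lemma \ref{make.decoration} together with the decoration-transport Lemmas \ref{move.decoration} and \ref{move.decoration2} then let me adjust the count and the relative vertical order of these triangles to match $d$ exactly. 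The wall circle decorations are supplied by the endpoint generators $d_{1}$ and $d_{n+1}$, whose cups and caps already carry $\bcirc$ and $\wcirc$ respectively: when $e$ abuts a wall, the leading $\bcirc$ or $\wcirc$ demanded by axiom \ref{C3} is installed by the corresponding wall generator, exactly paralleling the endpoint bookkeeping in Cases (2)--(5) of Lemma \ref{a-value1.diagrams.gen.by.simples}. Splitting into cases according to which walls $e$ meets, and according to whether the numbers of $\btri$ and $\wtri$ decorations agree (they differ by at most one, by Remark \ref{comment.admissible}\ref{alternating.decorations.a=1}), yields an exhaustive verification.

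The main obstacle I expect is purely bookkeeping: verifying that the explicit sweep products deposit the triangle decorations in the correct alternating vertical order and with the correct parity, while simultaneously respecting the circle conventions at whichever walls $e$ happens to meet. There is no conceptual difficulty, since every individual move is licensed by one of Lemmas \ref{swap.prop.nonprop}--\ref{move.decoration2} and the undecorated skeleton is handled by the odd-indexed product; the work lies in organizing the finitely many endpoint-and-decoration configurations so that each is matched by an explicit product of simple diagrams, which is what makes this a genuinely case-by-case argument rather than a single clean formula.
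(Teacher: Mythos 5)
Your structural analysis is sound: maximal $\a$-value with $n$ odd does force a unique propagating edge $e$, the simple cups and caps do occupy consecutive pairs flanking its (necessarily odd-indexed) endpoints, and repositioning those endpoints via Lemma \ref{swap.prop.nonprop} and its reflected/southern versions is legitimate, since each slide is an invertible multiplication by a simple diagram. The gap is in your decoration-installation step. The sweep products $d_{z_1}=d_1d_2\cdots d_n$ and $d_{z_2}=d_{n+1}d_n\cdots d_2$ are the right engine only in the $\a$-value $1$ case, precisely because there the diagram is itself a zigzag; against a maximal-$\a$-value diagram they do not deposit triangles at all, because the simple cups and caps complete the Temperley--Lieb relations $d_id_{i\pm 1}d_i=d_i$ and the product collapses. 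Concretely, take $n=3$ and your base block $d_{\O}=d_1d_3$: then
$$d_{z_2}d_{\O}=(d_4d_3d_2)(d_1d_3)=d_4(d_3d_2d_3)d_1=d_4d_3d_1,$$
whose unique propagating edge carries a single $\wcirc$ --- not a new matched $\btri\wtri$ pair --- and further sweeps collapse in the same way. So $(d_{z_1}d_{z_2})^k$ applied to the base block never produces the diagrams with $k$ alternating triangles on $e$; this is not a bookkeeping problem but the wrong family of products (these sweeps are images of type I elements, whereas the diagrams you must reach are images of type II elements).

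The correct engine, and the one the paper uses, is alternation of the two commuting-generator parity blocks $d_{\O}=d_1d_3\cdots d_n$ and $d_{\E}=d_2d_4\cdots d_{n+1}$. Each alternation winds the unique propagating edge across to the opposite wall, so it picks up a $\bcirc$ or $\wcirc$ on each pass, and consecutive like circles merge via $\bcirc\bcirc=\btri$ and $\wcirc\wcirc=\wtri$; for example $(d_{\E}d_{\O})^kd_{\E}$ realizes the edge from $1$ to $1'$ carrying $\bcirc$, then $k$ alternating triangles, then $\bcirc$, and the endpoint placement is handled by pre- and post-multiplying by partial products such as $d_1d_3\cdots d_{i-2}$, as in the paper's Cases (3)--(5). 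A secondary problem with your write-up: Lemmas \ref{make.decoration}, \ref{move.decoration}, and \ref{move.decoration2} only create and transport decorations on non-propagating edges, so they cannot ``adjust the count and the relative vertical order'' of decorations on the propagating edge; moreover the vertical-position bookkeeping you invoke is an axiom \ref{C5} ($\a$-value $1$) phenomenon, whereas here axiom \ref{C3} only constrains the single alternating block on $e$, so once the sweeps are replaced by the $d_{\O}$/$d_{\E}$ alternation that concern disappears entirely.
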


\begin{proof}
Assume that $n$ is odd and that $d$ is an admissible diagram with $\a(d)=\lfloor \frac{n+2}{2} \rfloor$.  In this case, $d$ has a unique propagating edge.  Also, assume that all of the non-propagating edges of $d$ are simple.  The proof is an exhaustive case by case check, where we consider the possible edges that are consistent with axiom \ref{C3} of Definition \ref{admissible.def}.  We consider five cases; any remaining cases follow by analogous arguments.  

\bigskip

Case (1):  For the first case, assume that
$$d=\begin{tabular}[c]{@{}c@{}}
%-- New mfpic environment, number 138 of 189. (size of end split: 2, should be 2)  ------------------->
\includegraphics{ThesisFigs2.039}
\end{tabular},$$
where the rectangle on the propagating edge is equal to a block consisting of an alternating sequence of $k-1$ $\btri$ decorations and $k$ $\wtri$ decorations.  It is quickly verified that
	$$d=(d_{\E}d_{\O})^{k}d_{\E},$$
where
	$$d_{\E}=d_{2}d_{4}\cdots d_{n+1}$$
and
	$$d_{\O}=d_{1}d_{3}\cdots d_{n}.$$
This shows that $d$ can be written as a product of simple diagrams, as desired.

\bigskip

Case (2):  For the second case, assume that
$$d=\begin{tabular}[c]{@{}c@{}}
%-- New mfpic environment, number 139 of 189. (size of end split: 2, should be 2)  ------------------->
\includegraphics{ThesisFigs2.040}
\end{tabular}.$$
In this case, we see that
	$$d=d_{2}d_{1}d_{2}d_{4}\cdots d_{n-1}d_{n+1},$$
which shows that $d$ can be written as a product of simple diagrams.

\bigskip

Case (3):  Next, assume that
$$d=\begin{tabular}[c]{@{}c@{}}
%-- New mfpic environment, number 140 of 189. (size of end split: 2, should be 2)  ------------------->
\includegraphics{ThesisFigs2.041}
\end{tabular},$$
where the rectangle on the propagating edge is either empty or equal to a block consisting of an alternating sequence of $k$ $\btri$ decorations and $l$ $\wtri$ decorations, where $l=k$ or $k+1$.  (Note that $i$ must be odd.)  If the rectangle is empty, then
	$$d=d_{1}d_{3}\cdots d_{i-2}d_{\E},$$
where $d_{\E}$ is as in case (1).  In this case, $d$ can be written as a product of simple diagrams.  On the other hand, if the rectangle is nonempty, so that the rectangle is equal to a block consisting of an alternating sequence of $k$ $\btri$ decorations and $l$ $\wtri$ decorations, where $l=k$ or $k+1$, define the admissible diagram 
$$d'=\begin{tabular}[c]{@{}c@{}}
%-- New mfpic environment, number 141 of 189. (size of end split: 2, should be 2)  ------------------->
\includegraphics{ThesisFigs2.042}
\end{tabular},$$
where the rectangle on the propagating edge is equal to a block consisting of an alternating sequence of $k-1$ $\btri$ decorations and $k$ $\wtri$ decorations.  By case (1), $d'$ can be written as a product of simple diagrams.  If $k=l$, then we see that 
	$$d=d_{i-2}d_{i-4}\cdots d_{3}d_{1}d',$$
which implies that $d$ can be written as a product of simple diagrams, as desired.  If, on the other hand, $l=k+1$, then we see that
	$$d=d_{i+1}d_{i+3}\cdots d_{n-1}d_{n+1}d_{\O}d',$$
where $d_{\O}$ is as in case (1).  This shows that $d$ can be written as a product of simple diagrams.

\bigskip

Case (4):  Now, assume that
$$d=\begin{tabular}[c]{@{}c@{}}
%-- New mfpic environment, number 142 of 189. (size of end split: 2, should be 2)  ------------------->
\includegraphics{ThesisFigs2.043}
\end{tabular},$$
where $i,j \notin \{1,n+2\}$ and the rectangle on the propagating edge is equal to a block consisting of an alternating sequence of $k$ $\btri$ decorations and $l$ $\wtri$ decorations with $|k-l| \leq 1$.  (Note that $i$ and $j$ must be odd.)  Without loss of generality, assume that $k \leq l$, so that $l=k$ or $k+1$.   Now, assume that the last decoration on the propagating edge is a $\btri$; the case with the last decoration being a $\wtri$ is handled with an analogous argument.  If $l=k$ (respectively, $l=k+1$), then the first decoration on the propagating edge is a $\wtri$ (respectively, $\btri$).  In either case, define the admissible diagram
$$d'=\begin{tabular}[c]{@{}c@{}}
%-- New mfpic environment, number 143 of 189. (size of end split: 2, should be 2)  ------------------->
\includegraphics{ThesisFigs2.044}
\end{tabular},$$
where the rectangle on the propagating edge is equal to a block consisting of an alternating sequence of $k-1$ $\btri$ decorations and $l$ $\wtri$ decorations.  By case (3), $d'$ can be written as a product of simple diagrams.  Then it is quickly verified that
	$$d=d'd_{1}d_{3}\cdots d_{j-2}d_{j},$$
and so $d$ can be written as a product of simple diagrams.

\bigskip

Case (5):  For the final case, assume that
$$d=\begin{tabular}[c]{@{}c@{}}
%-- New mfpic environment, number 144 of 189. (size of end split: 2, should be 2)  ------------------->
\includegraphics{ThesisFigs2.045}
\end{tabular},$$
where the rectangle on the propagating edge is equal to a block consisting of an alternating sequence of $k$ $\btri$ decorations and $k$ $\wtri$ decorations.  It is quickly seen that
	$$d=(d_{\O}d_{\E})^{k+1},$$
where $d_{\O}$ and $d_{\E}$ are as in case (1).  So, $d$ can be written as a product of simple diagrams, as expected.
\end{proof}

Now, we make use of the previous lemmas to prove the next proposition, which states that the admissible diagrams are generated by the simple diagrams.

\begin{proposition}\label{admissibles.in.algebra.gen.by.simples}
Each admissible diagram can be written as a product of simple diagrams.  In particular, the admissible diagrams are contained in $\D_{n}$.
\end{proposition}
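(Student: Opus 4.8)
The plan is to argue by cases on the value of $\a(d)$, reducing every admissible diagram to one all of whose non-propagating edges are simple, and then quoting the three lemmas just proved. I would organize the argument as an induction on a complexity measure for $d$ — the number of decorations plus a measure of how far the non-propagating edges are from being simple — so that each reduction step strictly decreases the measure while preserving both admissibility and the value $\a(d)$.

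First I would clear the degenerate value $\a(d)=0$: such a $d$ carries no decorations, hence (by axiom \ref{C1}) no loops, so $d=d_{e}$ is the identity of $\D_{n}$ and is the empty product of simple diagrams. For $\a(d)\geq 1$ I would reduce to the case of simple non-propagating edges. If some non-propagating edge of $d$ joins non-adjacent nodes or carries decorations, then Lemmas \ref{nonprops.connect.adjacent.verts}--\ref{move.decoration2}, together with their left--right and north--south symmetric versions, allow me to write $d = d_{i}\,d'$ for a simple diagram $d_{i}$ and a strictly less complex admissible diagram $d'$ with $\a(d')=\a(d)$: Lemmas \ref{nonprops.connect.adjacent.verts} and \ref{nonprops.connect.adjacent.verts2} straighten a nested arc into a simple edge, while Lemmas \ref{make.decoration}, \ref{move.decoration}, and \ref{move.decoration2} peel decorations off the non-propagating edges. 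Since a product of simple diagrams stays a product of simple diagrams after multiplication by a further simple diagram, it then suffices to treat a diagram $d_{0}$, of the same $\a$-value as $d$, all of whose non-propagating edges are simple.

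For such a $d_{0}$ I would dispatch by $\a$-value: Lemma \ref{a-value1.diagrams.gen.by.simples} when $\a(d_{0})=1$; Lemma \ref{a-value.max.dammed.diagrams.gen.by.simples} when $1<\a(d_{0})<\lfloor\frac{n+2}{2}\rfloor$; and Lemma \ref{a-value.max.undammed.diagrams.gen.by.simples} when $\a(d_{0})=\lfloor\frac{n+2}{2}\rfloor$ and $d_{0}$ has a unique propagating edge ($n$ odd). The only value then untreated is the genuinely undammed case $\a(d_{0})=\frac{n+2}{2}$ with $n$ even; here I would run the exhaustive check parallel to the proof of Lemma \ref{a-value.max.undammed.diagrams.gen.by.simples}, writing $d_{0}$ as an alternating product of the column diagrams $d_{\O}=d_{1}d_{3}\cdots$ and $d_{\E}=d_{2}d_{4}\cdots$, with the outer $\bcirc$ and $\wcirc$ decorations forced by axiom \ref{C2}.

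The main obstacle is the reduction to simple non-propagating edges. I must verify that each application of a preparatory lemma returns an admissible diagram — in particular that the migration and removal of decorations respects the rigid placement rules for $\bcirc$ and $\wcirc$ imposed by axioms \ref{C2}--\ref{C5}, and that reconnecting arcs never raises $\a(d)$ (which would spawn loops and break the bookkeeping) — and that the chosen complexity measure strictly decreases at every step so that the process terminates. Once only simple non-propagating edges remain, the three cited lemmas finish the proof directly.
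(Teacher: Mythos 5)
Your proposal is correct, and its skeleton is the same as the paper's: use Lemmas \ref{nonprops.connect.adjacent.verts}--\ref{move.decoration2} (with their symmetric analogues) to reduce to an admissible diagram all of whose non-propagating edges are simple, then split on the $\a$-value and quote Lemmas \ref{a-value1.diagrams.gen.by.simples}, \ref{a-value.max.dammed.diagrams.gen.by.simples}, and \ref{a-value.max.undammed.diagrams.gen.by.simples}. The one place you genuinely diverge is the remaining case, the undammed diagrams with $n$ even and $\a(d)=\frac{n+2}{2}$, which the paper also must handle by hand inside the proposition; the whole content there is manufacturing the $k$ loops decorated by $\btri\wtri$ permitted by axiom \ref{C1}. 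The paper does this by using Lemmas \ref{make.decoration}, \ref{move.decoration}, and \ref{move.decoration2} to plant a block $\btri\wtri$ on one simple edge of $d_{\O}$ and then closing that edge into a loop by multiplying by $d_{3}$, giving $d=\bigl(d_{3}(d_{i_{1}}\cdots d_{i_{m}})\bigr)^{k}d_{\O}$. You instead take the alternating product $(d_{\O}d_{\E})^{k}d_{\O}$: each sandwiched copy of $d_{\E}$ closes the zigzag curve running through the two neighboring copies of $d_{\O}$ into a loop carrying $\bcirc\bcirc\wcirc\wcirc=\btri\wtri$, while the outer faces retain the decorations forced by axiom \ref{C2}. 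Both constructions work; yours is the even-$n$ analogue of case (5) in the paper's proof of Lemma \ref{a-value.max.undammed.diagrams.gen.by.simples} and is arguably cleaner, whereas the paper's decorate-then-close device makes the loop decoration visible without computing a concatenation. Two smaller points in your favor: you dispose of $\a(d)=0$ explicitly (the paper's case list starts at $\a(d)=1$, leaving $d=d_{e}$ implicit), and you make precise, via a decreasing complexity measure and the invertibility of the preparatory moves, what the paper compresses into the phrase ``allow us to assume.''
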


\begin{proof}
Let $d$ be an admissible diagram.  We will show that $d$ can be written as a product of simple diagrams.  Lemma \ref{nonprops.connect.adjacent.verts}, and if necessary Lemma \ref{nonprops.connect.adjacent.verts2}, along with their analogues, allow us to assume that all of the non-propagating edges of $d$ join adjacent vertices.  Furthermore, Lemmas \ref{make.decoration}, \ref{move.decoration}, and \ref{move.decoration2}, along with their analogues, allow us to assume that all of the non-propagating edges of $d$ are simple.  We now consider four distinct cases: (1) $\a(d)=1$, (2) $1< \a(d)< \lfloor \frac{n+2}{2} \rfloor$, (3) $\a(d)=\lfloor \frac{n+2}{2} \rfloor$ with $n$ odd (i.e., $d$ has a unique propagating edge), and (4) $\a(d)=\frac{n+2}{2}$ with $n$ even (i.e., $d$ is undammed).

\bigskip

Cases (1), (2), and (3) follow immediately from Lemmas \ref{a-value1.diagrams.gen.by.simples}, \ref{a-value.max.dammed.diagrams.gen.by.simples}, and \ref{a-value.max.undammed.diagrams.gen.by.simples}, respectively.

\bigskip

Case (4): For the final case, assume that $\a(d)=\frac{n+2}{2}$ with $n$ even.  Then $d$ is undammed and based on our simplifying assumptions, we must have
$$d=\begin{tabular}[c]{@{}c@{}}
%-- New mfpic environment, number 145 of 189. (size of end split: 2, should be 2)  ------------------->
\includegraphics{ThesisFigs2.046}
\end{tabular},$$
where there are $k$ loop edges (we allow $k=0$).  Define the admissible diagram 
	$$d_{\O}=d_{1}d_{3} \cdots d_{n+1}.$$
Then
$$d_{\O}=\begin{tabular}[c]{@{}c@{}}
%-- New mfpic environment, number 146 of 189. (size of end split: 2, should be 2)  ------------------->
\includegraphics{ThesisFigs2.047}
\end{tabular}.$$
In particular, $d_{\O}$ is identical to $d$, except that is has no loop edges.  If $d$ has no loop edges (i.e., $k=0$), then we are done.  Suppose $k>0$.  By making the appropriate repeated applications of the left and right-handed versions of Lemmas \ref{make.decoration} and \ref{move.decoration} and a single application of Lemma \ref{move.decoration2}, there exists a sequence of simple diagrams $d_{i_{1}}, d_{i_{2}}, \dots, d_{i_{m}}$ such that
$$(d_{i_{1}}d_{i_{2}} \cdots d_{i_{m}})d_{\O}=\begin{tabular}[c]{@{}c@{}}
%-- New mfpic environment, number 147 of 189. (size of end split: 2, should be 2)  ------------------->
\includegraphics{ThesisFigs2.048}
\end{tabular}.$$
But then
$$d_{3}(d_{i_{1}}d_{i_{2}} \cdots d_{i_{m}})d_{\O}=\begin{tabular}[c]{@{}c@{}}
%-- New mfpic environment, number 148 of 189. (size of end split: 2, should be 2)  ------------------->
\includegraphics{ThesisFigs2.049}
\end{tabular}.$$
To produce $k$ loops, we repeat this process $k-1$ more times.  That is,
	$$d=\left(d_{3}(d_{i_{1}}d_{i_{2}}\cdots d_{i_{m}})\right)^{k}d_{\O}.$$
This shows that $d$ can be written as a product of simple diagrams, as desired.
\end{proof}

\end{section}

\begin{section}{More preparatory lemmas}

Our immediate goal is to show that the $\Z[\delta]$-module $\mathcal{M}[\Diag^{b}_{n}(\V)]$ is closed under multiplication, so that it is, in fact, a $\Z[\delta]$-algebra.  We shall prove a few lemmas that will aid in the process.

\begin{lemma}\label{first.closed.under.mult.lemma}
Let $d$ be an admissible diagram with the following edge configuration at nodes $i$ and $i+1$:
$$\begin{tabular}[c]{@{}c@{}}
%-- New mfpic environment, number 149 of 189. (size of end split: 2, should be 2)  ------------------->
\includegraphics{ThesisFigs2.050}
\end{tabular},$$
where $x$ represents a (possibly trivial) block of decorations.  Then $d_{i}d=2^{c}d'$, where $c \in \{0,1\}$ and $d'$ is an admissible diagram.  Moreover, $c=1$ if and only if $i=1$.
\end{lemma}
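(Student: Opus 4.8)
The plan is to compute the concatenation $d_{i}d$ geometrically and then apply the defining relations of $\widehat{\P}_{n+2}^{LR}(\V)$ (Definition \ref{big.diagram.alg.defn}) to bring the result into admissible form, reading off the scalar along the way. First I would describe the effect of stacking $d_{i}$ on top of $d$, using the concatenation convention in which node $i'$ of $d_{i}$ is identified with node $i$ of $d$. The two endpoints at nodes $i$ and $i+1$ of the displayed edge of $d$ are then joined to one another by the south-face cap of $d_{i}$, so concatenation closes that edge into a \emph{loop} carrying the block $x$ (together with whatever decoration $d_{i}$ contributes near nodes $i,i+1$), while the north-face cup of $d_{i}$ supplies a fresh simple non-propagating edge joining $i$ to $i+1$ at the top of the product. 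All remaining strands of $d_{i}$ are vertical and pass through $d$ unchanged, so the product agrees with $d$ everywhere off this local configuration. This identifies the candidate diagram $d'$ as $d$ with its $i,i+1$ edge replaced by a simple cup, together with the newly created loop.

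Next I would verify that $d'$ is admissible. Since the only new feature is the created loop, the substance is to check axiom \ref{C1} of Definition \ref{admissible.def}: after reducing the sequence of decorations round the loop by the $\V$-relations $\bcirc\,\bcirc=\btri$, $\wcirc\,\wcirc=\wtri$ and their consequences (which collapse adjacent like-decorations), the loop must be equivalent to the single permitted loop type. Replacing the $i,i+1$ edge of $d$ by a simple cup alters no decoration placement elsewhere, so the L/R-decoration condition and the end-decoration requirements (axioms \ref{C2}--\ref{C5}) are inherited from $d$ directly; thus $d'\in\Diag_{n}^{b}(\V)$.

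Finally I would pin down the scalar by a case analysis on whether $i=1$. For $i\neq 1$, the simple diagram $d_{i}$ carries no closed decoration at the relevant nodes, so the loop inherits only the block $x$; reducing $x$ to the canonical loop decoration uses only $\bcirc\,\bcirc=\btri$ (and its open analogue) together with the loop-scaling relations of Definition \ref{big.diagram.alg.defn}, none of which introduces a factor of $2$, giving $c=0$. When $i=1$, however, $d_{1}$ contributes a closed decoration $\bcirc$ at node $1$, and combining it with the closed decoration that admissibility forces on the relevant edge of $d$ produces a configuration to which a doubling relation of Definition \ref{big.diagram.alg.defn} (of the form $\bcirc\,\btri=2\,\bcirc$) applies exactly once, giving $c=1$. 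Since $d_{1}$ is the unique simple diagram contributing a closed decoration in this position, the two subcases establish both implications of the ``moreover'' clause. The main obstacle I anticipate is precisely the bookkeeping in this last step: one must argue that \emph{exactly} one doubling relation fires when $i=1$ (so the power of $2$ is $1$, not higher) and that \emph{none} fires when $i\neq 1$, which requires knowing exactly which decorations admissibility permits on the displayed edge of $d$ and tracking how they interact with the decorations $d_{i}$ deposits on the closed loop.
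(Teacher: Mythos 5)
Your reading of the figure is not the configuration this lemma addresses, and the argument built on that reading produces scalars of the wrong kind. You take the displayed configuration to be a single non-propagating edge joining node $i$ to node $i+1$, so that stacking $d_{i}$ on top closes that edge into a loop. That is the configuration of Lemma \ref{second.closed.under.mult.lemma}, not of this lemma, and the scalars it generates are powers of $\delta$, not of $2$: by Definition \ref{big.diagram.alg.defn}, an undecorated loop, or a loop decorated by a single $\btri$ or $\wtri$, is removed at the cost of $\delta$, while the loop decorated by $\btri\wtri$ is irreducible and is retained under axiom \ref{C1}. So under your interpretation the product would have the form $\delta^{c}d'$, contradicting the very statement $d_{i}d=2^{c}d'$ you are trying to prove; this mismatch should have been a red flag. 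In the configuration of the present lemma, nodes $i$ and $i+1$ lie on two \emph{distinct} non-propagating edges, one nested inside the other, with far endpoints $j$ and $k$; concatenation with $d_{i}$ creates no loop at all. It merges the two edges into a single non-propagating edge joining node $j$ to node $k$ (this is exactly why the paper's proof refers to ``the non-propagating edge joining node $j$ to node $k$ in $d'$'') and installs a new simple edge at $i$, $i+1$ coming from the cup of $d_{i}$. Consequently both your construction of $d'$ and your admissibility check (centered on the loop axiom) are aimed at the wrong diagram; the real issue is which block the merged edge from $j$ to $k$ carries.

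Independently of the misreading, your determination of the scalar does not hold up, and this is where the genuine content of the lemma lies. When $i=1$, every one of axioms \ref{C2}--\ref{C5} forces the \emph{first} decoration on the edge attached to node $1$ to be $\bcirc$, never $\btri$, and the cap of $d_{1}$ carries a single $\bcirc$; moreover the inner nested edge is neither L-exposed nor R-exposed, hence undecorated. So the decorations brought together by the multiplication are $\bcirc$ and $\bcirc$, and the relation available to fire is $\bcirc\,\bcirc=\btri$, which carries no scalar --- not the doubling relation $\bcirc\,\btri=2\,\bcirc$ that your proposal invokes, since no $\btri$ can sit adjacent to the cap's $\bcirc$ in an admissible diagram of this shape. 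Determining exactly which blocks admissibility permits here (the paper asserts $x\in\{\bcirc,\ \bcirc\wtri,\ \bcirc\wcirc\}$ for $i=1$) and then reading off which $\V$-relation applies is the whole proof; you explicitly defer this as an anticipated ``obstacle'' and never resolve it, so in particular the ``moreover'' clause is not established by your argument. I add a caution that this bookkeeping is delicate enough to merit skepticism even of the claimed factor of $2$ in the target statement: since $\bcirc\,\bcirc=\btri$ is scalar-free, a product such as $d_{1}\cdot(d_{2}d_{1}d_{3})=\theta(b_{s_{1}s_{2}s_{1}s_{3}})$, where $d_{2}d_{1}d_{3}$ exhibits precisely this lemma's nested configuration at nodes $1$ and $2$ with $x=\bcirc$, must be a \emph{single} admissible diagram by Proposition \ref{monomials.map.to.single.diagrams}; reconciling that with the ``$c=1$ if and only if $i=1$'' clause requires exactly the careful case analysis that is missing from your proposal.
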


\begin{proof}
The only case that requires serious consideration is if $i=1$; the result follows immediately if $i>1$.  Assume that $i=1$.  Since $d$ is admissible, $x \in \{\bcirc, \bcirc \wtri, \bcirc \wcirc\}$.  In any case, $d_{1}d=2 d'$ for some diagram $d'$, where the non-propagating edge joining node $j$ to node $k$ in $d'$ is one of the following blocks: $\btri, \btri \wtri$, or $\btri \wcirc$.  It follows that $d'$ is admissible.
\end{proof}

\begin{lemma}\label{second.closed.under.mult.lemma}
Let $d$ be an admissible diagram with the following edge configuration at nodes $i$ and $i+1$:
$$\begin{tabular}[c]{@{}c@{}}
%-- New mfpic environment, number 150 of 189. (size of end split: 2, should be 2)  ------------------->
\includegraphics{ThesisFigs2.051}
\end{tabular},$$
where $x$ represents a (possibly trivial) block of decorations.  Then $d_{i}d=\delta^{c}d'$, where $c \in \{0,1\}$ and $d'$ is an admissible diagram.  Moreover, $c=0$ if and only if $x \in \{\bcirc \wtri, \btri \wtri, \btri \wcirc\}$.
\end{lemma}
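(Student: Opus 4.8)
The plan is to read off what the concatenation $d_id$ does to the displayed configuration. With a non-propagating edge of $d$ joining $i$ to $i+1$ and carrying the block $x$, placing $d_i$ on top of $d$ joins the cup of $d_i$ (at $i',(i+1)'$) to this edge, closing it into a single loop edge, while the cap of $d_i$ contributes a fresh simple non-propagating edge at nodes $i,i+1$ in the north face of the product. Since exactly one loop is created, the only scalar that can appear is a single power of $\delta$, so it suffices to decide, for each admissible block $x$, whether the resulting loop is removed (contributing $\delta$, so $c=1$) or survives as an admissible loop (so $c=0$), and to check that what remains is an admissible diagram $d'$.

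First I would enumerate the admissible possibilities for $x$. Because $d$ is LR-decorated and admissible, the block carried by this edge, after conjoining any decoration contributed by the cup of $d_i$ (which is a $\bcirc$ when $i=1$ and a $\wcirc$ when $i=n+1$, and is empty otherwise), is one of finitely many alternating products of closed and open decorations in the basis of $\V$. I would then reduce the loop decorated by each such block, reading its decorations up to loop equivalence, using the decoration relations (1)--(4) of Definition \ref{big.diagram.alg.defn} together with the loop relations. The key dichotomy --- already isolated in the proof of Proposition \ref{diagram.independence} --- is that a loop carrying at most one type of decoration (the undecorated loop, or a loop carrying a single $\btri$ or a single $\wtri$) is removed and contributes the scalar $\delta$ by relation (5), whereas a loop carrying \emph{both} a closed and an open decoration can never be eliminated and reduces to the unique admissible loop of axiom \ref{C1}, namely the loop decorated by $\btri\wtri$.

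Carrying out the reduction, the blocks $x\in\{\bcirc\wtri,\btri\wtri,\btri\wcirc\}$ are exactly those for which the loop retains both types of decoration. Using $\bcirc\bcirc=\btri$ and $\wcirc\wcirc=\wtri$, each of these closes to the admissible loop $\btri\wtri$: the middle block $\btri\wtri$ already has both types; while $\bcirc\wtri$ (which forces $i=1$, so the cup supplies a $\bcirc$) gives $\bcirc\bcirc\wtri=\btri\wtri$, and symmetrically $\btri\wcirc$ (forcing $i=n+1$) gives $\btri\wcirc\wcirc=\btri\wtri$, in each case with no factor of $2$. Hence the loop survives and $c=0$. For every other admissible $x$ the loop collapses to one carrying at most a single type of decoration (for instance a lone $\bcirc$ at $i=1$ becomes $\bcirc\bcirc=\btri$), so relation (5) removes it with a factor of $\delta$ and $c=1$. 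In all cases the remainder of $d'$ differs from $d$ only by the new simple edge contributed by the cap of $d_i$, so $d'$ is admissible.

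The main obstacle will be the bookkeeping in the enumeration step: one must read the block on the loop correctly up to loop equivalence (cyclic rotation together with reversal) and handle the contribution of the cup decoration of $d_i$ in the boundary cases $i\in\{1,n+1\}$. It is precisely those boundary contributions that promote $\bcirc\wtri$ and $\btri\wcirc$ to the both-type loop $\btri\wtri$ rather than to a removable single-type loop, and verifying that no other admissible $x$ can do so is what pins down the characterization of $c=0$.
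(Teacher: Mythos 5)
Your proposal is correct and takes essentially the same route as the paper's proof: both concatenate $d_{i}$ onto the edge at nodes $i,i+1$ to form a single loop, enumerate the blocks $x$ permitted by admissibility (splitting into $1<i<n+1$ versus $i\in\{1,n+1\}$, where the cup of $d_{i}$ contributes the extra $\bcirc$ or $\wcirc$), and then apply the loop relations, with the loop surviving exactly when it carries both decoration types and hence reduces to the admissible $\btri\wtri$ loop. The only cosmetic difference is that the paper lists the admissible blocks explicitly ($x\in\{1,\btri,\wtri,\btri\wtri\}$ in the interior case, $x\in\{\bcirc,\bcirc\wtri\}$ at $i=1$) rather than packaging the case check as the removable-versus-irreducible loop dichotomy you borrow from the proof of Proposition \ref{diagram.independence}.
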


\begin{proof}
We consider two cases.  For the first case, assume that $1<i<n+1$.  Since $d$ is admissible, $x \in \{1, \btri, \wtri, \btri \wtri\}$.  (Note that $x =\btri \wtri$ only if $d$ is undammed; otherwise $d$ would not be LR-decorated.)  In either case, $d_{i}d$ produces a loop decorated with the block $x$ along with a diagram that is identical to $d$, except that the block $x$ has been removed from the edge joining $i$ to $i+1$.  The loop decorated with the block $x$ is equal to $\delta$, unless $x=\btri \wtri$, in which case the loop is irreducible.  Regardless, the resulting diagram is admissible, as desired.  For the second case, assume that $i=1$ or $n$.  Without loss of generality, assume that $i=1$, the other case being symmetric.  Since $d$ is admissible, $x \in \{\bcirc, \bcirc \wtri\}$.  If $x=\bcirc$, then $d_{1}d=\delta d$, as expected.  If, on the other hand, $x=\bcirc \wtri$ (which can only happen if $d$ is undammed), then $d_{1}d$ results in an admissible diagram that is identical to $d$ except that we add a loop decorated by $\btri \wtri$ and remove the $\wtri$ decoration from the edge connecting node 1 to node 2.  
\end{proof}

\begin{lemma}\label{third.closed.under.mult.lemma}
Let $d$ be an admissible diagram with the following edge configuration at nodes $i$ and $i+1$:
$$\begin{tabular}[c]{@{}c@{}}
%-- New mfpic environment, number 151 of 189. (size of end split: 2, should be 2)  ------------------->
\includegraphics{ThesisFigs2.052}
\end{tabular},$$
where $x$ and $y$ represent (possibly trivial) blocks of decorations.  Then $d_{i}d=2^{c}d'$, where $c \in \{0,1\}$ and $d'$ is an admissible diagram.  
\end{lemma}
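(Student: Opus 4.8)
The plan is to treat this lemma exactly as the two preceding closure lemmas were treated: it is the last of the three local configurations that arise when the cap of the simple diagram $d_i$ (the non-propagating edge joining $i'$ to $(i+1)'$ in the south face of $d_i$) is placed against the north face of $d$ at nodes $i$ and $i+1$. In the configuration of this lemma, nodes $i$ and $i+1$ are the endpoints of two \emph{distinct} non-propagating edges of $d$, carrying the blocks $x$ and $y$, so concatenation with $d_i$ fuses these two edges into a single non-propagating edge of the resulting diagram. Because the two edges are distinct, no closed curve is created; hence no loop is removed and no factor of $\delta$ can appear. This is the structural reason the scalar is a power of $2$ rather than a mixed product of $2$'s and $\delta$'s, in contrast to Lemma \ref{second.closed.under.mult.lemma}, where nodes $i$ and $i+1$ belong to the same edge and a loop is produced.

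First I would split on the position of $i$, as in the proofs of Lemmas \ref{first.closed.under.mult.lemma} and \ref{second.closed.under.mult.lemma}. When $1 < i < n+1$ the cap of $d_i$ is undecorated, so the fused edge carries exactly the concatenation of $x$ and $y$ (read in the correct orientation); when $i = 1$ (respectively $i = n+1$) the cap carries a single $\bcirc$ (respectively $\wcirc$), which is inserted between $x$ and $y$ on the fused edge. In each subcase I would invoke admissibility of $d$ to enumerate the finitely many possibilities for $x$ and $y$: by axioms \ref{C2}--\ref{C5} the only decorations permitted on a non-propagating edge meeting nodes $i$ or $i+1$ are a single $\bcirc$ or $\wcirc$ when the edge abuts the boundary, and none otherwise, so the combined block is short.

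Next I would reduce the fused block inside $\V$ using the relations of Remark \ref{beads}, namely $\bcirc \bcirc = \btri$, $\wcirc \wcirc = \wtri$, $\bcirc \bcirc \bcirc = 2\,\bcirc$, and $\wcirc \wcirc \wcirc = 2\,\wcirc$. The single factor of $2$, when it occurs, comes precisely from a triple $\bcirc \bcirc \bcirc$ (or $\wcirc \wcirc \wcirc$) created when the $\bcirc$ (or $\wcirc$) on the cap of $d_1$ (or $d_{n+1}$) lands adjacent to closed (or open) decorations already carried by $x$ and $y$; since $x$ and $y$ are each at most a single boundary decoration and the cap contributes at most one more, no more than three like decorations can ever be adjacent, forcing $c \leq 1$. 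This yields $d_i d = 2^c d'$ with $c \in \{0,1\}$.

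Finally I would verify that $d'$ is admissible, which is routine: fusing two non-propagating edges leaves a valid pseudo-diagram shape, the resulting single block is again one of the permitted boundary decorations, and the $\a$-value is unchanged, so the governing axiom among \ref{C2}--\ref{C5} is the same one already satisfied by $d$. The main obstacle I anticipate is purely bookkeeping in the $i=1$ and $i=n+1$ subcases: confirming that every admissible choice of $x$ and $y$ fuses to a block that is again a legal admissible decoration and that the accompanying scalar is exactly $2^c$ with $c \leq 1$ — in particular checking that no configuration of this type secretly closes into a loop (which would spuriously introduce a $\delta$) and that the $\V$-reductions never force two independent doublings.
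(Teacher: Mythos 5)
Your overall skeleton matches the paper's proof: the cap of $d_{i}$ fuses the two distinct non-propagating edges into one, no closed curve is created (so no factor of $\delta$ can arise), the fused block is reduced inside $\V$, and admissibility is checked by tracking the boundary decorations. However, there is a genuine flaw in the middle step. Your enumeration of the possible blocks --- ``the only decorations permitted on a non-propagating edge meeting nodes $i$ or $i+1$ are a single $\bcirc$ or $\wcirc$ when the edge abuts the boundary, and none otherwise'' --- is false. Admissible diagrams routinely carry $\btri$ and $\wtri$ decorations, and even width-two blocks, on non-propagating edges: Lemma \ref{make.decoration} produces a $\btri$ on an interior edge joining nodes $3$ and $4$, Lemma \ref{move.decoration} moves arbitrary blocks between interior edges, and the proofs of Lemmas \ref{first.closed.under.mult.lemma} and \ref{second.closed.under.mult.lemma} explicitly work with $x \in \{\bcirc,\ \bcirc\wtri,\ \bcirc\wcirc\}$ and $x \in \{1,\ \btri,\ \wtri,\ \btri\wtri\}$. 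Consequently your bound ``no more than three like decorations can ever be adjacent'' also fails (a block $x$ ending in $\btri$ meeting a block $y$ beginning with $\btri$ amounts to four closed decorations in a row), and your admissibility check, which leans on the same enumeration, is not justified as written.

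The paper's argument needs no such enumeration: since $d$ is admissible, $x$ and $y$ are basis elements of $\V$, i.e.\ alternating products of open and closed decorations satisfying the LR-decoration constraints, so in the fused block $xy$ at most one relation of $\V$ can apply --- exactly when the last decoration of $x$ and the first decoration of $y$ have the same type --- and a single product of two $V_{3}$-basis elements contributes at most one factor of $2$ (indeed $\bcirc\,\bcirc = \btri$ gives none, while $\bcirc\,\btri$, $\btri\,\bcirc$, and $\btri\,\btri$ each give exactly one). This is what yields $c \in \{0,1\}$, and it covers the cases your enumeration misses. For admissibility, the paper then tracks only the circled decorations: if $j=1$ (respectively $k=n+2$) the first (respectively last) decoration of $x$ (respectively $y$) is a $\bcirc$ (respectively $\wcirc$), and this property, together with the uniqueness of such decorations on non-propagating edges, survives the reduction to the basis element $z$; if $j \neq 1$ and $k \neq n+2$, then no $\bcirc$ or $\wcirc$ occurs at all. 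A smaller point: your case split on $i=1$ and $i=n+1$ is vacuous, since in this configuration the edge carrying $x$ exits to the left of node $i$ and the edge carrying $y$ exits to the right of node $i+1$, which forces $1 < i < n+1$; in particular the cap of $d_{i}$ is undecorated and the fused block is exactly $xy$, with no inserted decoration to worry about.
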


\begin{proof}
First, observe that $d_{i}d$ has the following edge configuration at nodes $i$ and $i+1$:
$$2^{c}\ \begin{tabular}[c]{@{}c@{}}
%-- New mfpic environment, number 152 of 189. (size of end split: 2, should be 2)  ------------------->
\includegraphics{ThesisFigs2.053}
\end{tabular},$$
where $xy=2^{c}z$ and $z$ is a basis element.  Note that since $d$ is admissible, there will be at most one relation to apply in the product $xy$, which will happen exactly when the last decoration in $x$ and the first decoration in $y$ are of the same type (open or closed).  This implies that $c \in \{0,1\}$.  If $j=1$ (respectively, $k=n+2$), then the first (respectively, last) decoration in $x$ (respectively, $y$) must be a $\bcirc$ (respectively, $\wcirc$) decoration.  Furthermore, if $j=1$ (respectively, $k=n+2$), then this is the only occurrence of a $\bcirc$ (respectively, $\wcirc$) decoration on a non-propagating edge in the north face of $d$.  By inspecting the possible relations we can apply, this implies that if $j=1$ (respectively, $k=n+2$), the first (respectively, last) decoration of $z$ must be a $\bcirc$ (respectively, $\wcirc$) decoration and this is the only occurrence of a $\bcirc$ (respectively, $\wcirc$) decoration on a non-propagating edge of the diagram that results from the product $d_{i}d$.  If, on the other hand, $j\neq 1$ and $k\neq n+2$, then neither of $x$ or $y$ may contain a $\bcirc$ or $\wcirc$ decoration.  In this case, $z$ will not contain any $\bcirc$ or $\wcirc$ decorations either.  This argument shows that the diagram that results from the product $d_{i}d$ must be admissible.
\end{proof}

\begin{lemma}\label{fourth.closed.under.mult.lemma}
Let $d$ be an admissible diagram such that $\a(d)>1$ with the following edge configuration at nodes $i$ and $i+1$:
$$\begin{tabular}[c]{@{}c@{}}
%-- New mfpic environment, number 153 of 189. (size of end split: 2, should be 2)  ------------------->
\includegraphics{ThesisFigs2.054}
\end{tabular},$$
where $x$ and $y$ represent (possibly trivial) blocks of decorations.  Then $d_{i}d=2^{c}d'$, where $c \in \{0,1\}$ and $d'$ is an admissible diagram.  
\end{lemma}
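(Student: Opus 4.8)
The plan is to mimic the proofs of Lemmas \ref{first.closed.under.mult.lemma}--\ref{third.closed.under.mult.lemma}: read off the diagram produced by the concatenation $d_{i}d$, resolve the single block of decorations created at the join using the relations of $\V$, and then verify that the result is admissible. Recall that for $1<i<n+1$ the simple diagram $d_{i}$ acts on $d$ by capping nodes $i$ and $i+1$ in the north face of $d$ and by contributing a new non-propagating edge (the cup of $d_{i}$) at nodes $i,i+1$. In the configuration of this lemma the nodes $i$ and $i+1$ are the northern endpoints of two distinct propagating edges of $d$, carrying the blocks $x$ and $y$; capping their tops merges these into a single non-propagating edge in the south face joining their former southern endpoints, while the cup contributes a non-propagating edge at $i,i+1$ in the north. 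Every other edge of $d$ passes through $d_{i}$ unchanged, so $\a(d_{i}d)=\a(d)+1$ and $d_{i}d$ is a single decorated diagram up to a scalar.

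First I would determine the block on the new south edge. Read in the admissible left-to-right direction, it is the concatenation of the blocks $x$ and $y$ (one of them reversed), so by the relations of $\V$ recorded in Remark \ref{beads} it collapses to $2^{c}z$ for a basis element $z$ and some $c\in\{0,1\}$; exactly as in Lemma \ref{third.closed.under.mult.lemma}, the factor $2$ occurs precisely when the two decorations meeting at the cap are of the same type (open or closed). This is the point at which the hypothesis $\a(d)>1$ is used: it guarantees that $d$ is governed by axiom \ref{C4} rather than by the delicate vertical-position axiom \ref{C5} for $\a$-value $1$, so each of $x$ and $y$ is trivial or a single $\btri$, $\wtri$, $\bcirc$, or $\wcirc$, whence at most one relation applies and $c\in\{0,1\}$. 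This establishes $d_{i}d=2^{c}d'$ for a single diagram $d'$.

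The remaining and principal task is to verify that $d'$ is admissible, and this is the step I expect to be the main obstacle. I would organize the check by how many propagating edges survive in $d'$ and by which extreme nodes, if any, the two capped edges were attached to. If $d$ retained other propagating edges, then $d'$ is still dammed and I verify axiom \ref{C4}; if the two capped edges were the only propagating edges of $d$ (possible only when $n$ is even), then $d'$ is undammed and I verify axiom \ref{C2} instead. In either case the delicate subcases are those in which a capped propagating edge reached node $1$ or $1'$ (respectively $n+2$ or $(n+2)'$), where axiom \ref{C4} forced its block to be the prescribed single $\bcirc$ or $\btri$ (respectively $\wcirc$ or $\wtri$). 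Here I must confirm that the merged south edge still carries its $\bcirc$ (respectively $\wcirc$) as its first (respectively last) decoration, that the single-occurrence constraint on $\bcirc$ and $\wcirc$ decorations is preserved, that no forbidden mixing of open and closed decorations is introduced, and that the LR-decorated property survives. Because $d$ is admissible, at most one of the capped edges can touch each extreme end, so these interactions are finite in number and can be dispatched one at a time, just as in the verification for Lemma \ref{third.closed.under.mult.lemma}.
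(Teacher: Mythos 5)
Your proof is built on the wrong edge configuration, so it does not establish the stated lemma. You assume that nodes $i$ and $i+1$ are the northern endpoints of two distinct propagating edges of $d$. That is the configuration of Lemma \ref{sixth.closed.under.mult.lemma} (and of Lemma \ref{last.closed.under.mult.lemma} when $\a(d)=1$), i.e., case (7) in the proof of Lemma \ref{powers_of_2_and_delta}. The present lemma treats case (5) of that proof, whose $\a(d)=1$ counterpart is Lemma \ref{fifth.closed.under.mult.lemma}: here node $i$ is the northern endpoint of a propagating edge (running to some node $j'$ and carrying the block $x$), while node $i+1$ is the endpoint of a \emph{non-propagating} north-face edge (carrying the block $y$). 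Even without the figure this is recoverable from the surrounding text: the paper's proof speaks of ``the edge joining $i$ to $j'$,'' uses dammed-ness to force $y$ to be trivial or a single open decoration, and allows $x$ to be an alternating sequence when $d$ has a unique propagating edge --- none of which is compatible with node $i+1$ lying on a second propagating edge; likewise Lemma \ref{fifth.closed.under.mult.lemma} explicitly refers to the non-propagating edge joining $i+1$ to $i+2$.

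The misidentification propagates through your argument. In the correct configuration, concatenation splices the propagating edge, the cap of $d_i$, and the non-propagating edge into a single propagating edge from node $k$ (the far end of the old non-propagating edge) down to $j'$; hence $\a(d_id)=\a(d)$, not $\a(d)+1$, the number of propagating edges is unchanged, and the undammed case (your appeal to axiom \ref{C2}) never arises. Your claim that $\a(d)>1$ puts $d$ under axiom \ref{C4}, so that $x$ is at most a single decoration, is also false: $\a(d)>1$ is compatible with $d$ having a unique propagating edge (axiom \ref{C3}, $n$ odd), in which case $x$ may be an arbitrarily long alternating block --- the paper's proof explicitly allows for this, and the count $c\in\{0,1\}$ instead follows because only the single junction with $y$ can trigger a relation. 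Finally, even within the configuration you assumed, your criterion that ``the factor $2$ occurs precisely when the two decorations meeting at the cap are of the same type'' overlooks that the caps of $d_1$ and $d_{n+1}$ are themselves decorated (by $\bcirc$ and $\wcirc$ respectively), so for $i=1$ the relation arises between the cap's $\bcirc$ and the first decoration of $x$; this is exactly the subcase the paper isolates and treats separately.
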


\begin{proof}
Note that $1\leq i <n+1$.  Since $d$ is dammed, $y$ is either equal to the identity in $\V$ or is equal to an open decoration.  On the other hand, $x$ could be equal to the identity in $\V$, a single closed decoration, a single open decoration, or if $d$ has a unique propagating edge, then $x$ could be an alternating sequence of open and closed decorations.  We consider two cases: (1) $1<i<n+1$ and (2) $i=1$.

\bigskip

Case (1):  For the first case, assume that $1<i<n+1$.  In this case, there will not be any relations to apply in the product of $d_{i}$ and $d$ unless the first decoration on the edge joining $i$ to $j'$ in $d$ is open and $y$ is also an open decoration, in which case $d_{i}d$ will be equal to 2 times an admissible diagram, as desired.  

\bigskip

Case (2):  Now, assume that $i=1$.  Since $d$ is admissible, either $x$ is trivial or the first decoration on the edge joining $1$ to $j'$ in $d$ must be closed.  If $x$ is trivial, then $j=1$, in which case, $d_{i}d$ is equal to a single admissible diagram.  If the first decoration is closed, then $d_{i}d$ equals 2 times an admissible diagram, as expected.
\end{proof}

\begin{lemma}\label{fifth.closed.under.mult.lemma}
Let $d$ be an admissible diagram such that $\a(d)=1$ with the following edge configuration at nodes $i$ and $i+1$:
$$\begin{tabular}[c]{@{}c@{}}
%-- New mfpic environment, number 154 of 189. (size of end split: 2, should be 2)  ------------------->
\includegraphics{ThesisFigs2.055}
\end{tabular},$$
where $x$ and $y$ represent (possibly trivial) blocks of decorations.  Then $d_{i}d=2^{c}d'$, where $c \in \{0,1\}$ and $d'$ is an admissible diagram with $\a(d')=1$.  
\end{lemma}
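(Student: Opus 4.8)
The plan is to compute the concatenation $d_i d$ explicitly, track how the two blocks $x$ and $y$ merge at the cup of $d_i$, and then verify the three assertions in turn: that $\a(d_i d)=1$, that the scalar produced is $2^c$ with $c\in\{0,1\}$, and that the underlying diagram is admissible. First I would record the effect of stacking $d_i$ on top of $d$. Since $\a(d)=1$ and $\a(d_i)=1$, Lemma~\ref{a-value=1} applies, with $d$ as the bottom diagram and $d_i$ as the top: the unique non-propagating edge in the south face of $d_i$ joins $i'$ to $(i+1)'$, and the configuration hypothesized in the statement is exactly one of the non-loop, $\a$-preserving cases (a)/(c) of that lemma, in which the unique non-propagating edge of $d$ sits adjacent to — but is not equal to — the pair $\{i,i+1\}$. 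Consequently $\a(d_i d)=1$, which is the content of the final clause. Concretely, the cup of $d_i$ splices the propagating edge of $d$ carrying $x$ to the neighbouring non-propagating edge of $d$ carrying $y$, producing a single propagating edge whose decoration is the concatenation of $x$ and $y$ (together with the lone $\bcirc$ or $\wcirc$ carried by the cup of $d_i$ itself when $i\in\{1,n+1\}$); the new non-propagating edge in the north face is the cap of $d_i$ joining $i$ to $i+1$.

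Next I would analyse the merge of decorations at the splice point. Because $d$ is admissible with $\a(d)=1$, axiom~\ref{C5} together with Remark~\ref{comment.admissible}\ref{alternating.decorations.a=1} forces the blocks occurring on propagating edges to be alternating sequences of $\btri$ and $\wtri$, with $\bcirc$ and $\wcirc$ permitted only at the appropriate boundary end; and the unique non-propagating edge of $d$ is either undecorated or (if it meets node $1$ or $n+2$) carries a single $\bcirc$ or $\wcirc$. Hence $x$ and $y$ are of this restricted form, and when they are concatenated at most the two decorations flanking the junction are of the same type. Thus at most one relation of $\V$ can fire — one of $\btri\btri=2\btri$, $\wtri\wtri=2\wtri$, $\bcirc\btri=2\bcirc$, or $\wcirc\wtri=2\wcirc$, while $\bcirc\bcirc=\btri$ contributes no scalar — and each such reduction again leaves an alternating basis block, so no second relation cascades. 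This gives $d_i d=2^c d'$ with $c\in\{0,1\}$.

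Finally I would check that $d'$ is admissible. The only edge whose decoration has changed is the new propagating edge, whose reduced block is an alternating sequence of $\btri$ and $\wtri$; I must confirm it still satisfies axiom~\ref{C5}, in particular that any $\bcirc$ or $\wcirc$ decoration sits at the correct boundary end and that the relative vertical positions of the closed decorations on the leftmost propagating edge and the open decorations on the rightmost propagating edge still alternate. Because $\a(d')=1$, axioms \ref{C1}--\ref{C4} are either vacuous or inherited unchanged from $d$, so it suffices to handle axiom~\ref{C5}. I would run through the possible forms of $x$ and $y$ — empty, a single $\bcirc$/$\wcirc$, or an alternating $\btri$/$\wtri$ block — together with the sub-cases $i=1$, $i=n$, and $1<i<n+1$, and in each confirm that the spliced block respects these conventions; Remark~\ref{LR-decorated}\ref{both.decs.one.prop} guarantees no single edge illegally acquires both decoration types, since $\a(d)=1$ with $n\geq 2$ rules out a lone propagating edge.

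The main obstacle is precisely this last verification: unlike the $\a(d)>1$ situation of Lemma~\ref{fourth.closed.under.mult.lemma}, here the decorations on propagating edges are governed by the delicate relative-vertical-position conventions of axiom~\ref{C5}, and one must ensure that the splice both preserves the alternation and places the boundary $\bcirc$/$\wcirc$ symbols correctly. Establishing $\a(d')=1$ via Lemma~\ref{a-value=1} and ruling out a cascading second $\V$-relation are the conceptual crux; the remaining checks are routine once the alternating structure is in hand.
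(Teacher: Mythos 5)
Your overall strategy coincides with the paper's: both arguments obtain $\a(d_{i}d)=1$ from Lemma~\ref{a-value=1} and then verify the scalar and the admissibility of $d'$ by a case analysis on $i\in\{1\}$, $\{2,\dots,n-1\}$, $\{n\}$ against the configurations permitted by axiom \ref{C5}. However, the structural claim on which your middle step rests is false, and it is the crux of the $c\in\{0,1\}$ assertion. You assert that for admissible $d$ with $\a(d)=1$, axiom \ref{C5} and Remark~\ref{comment.admissible}\ref{alternating.decorations.a=1} force the propagating edges to carry \emph{alternating sequences of $\btri$ and $\wtri$}. That is the regime of axiom \ref{C3} (a unique propagating edge, which requires $n$ odd and $\a(d)=\lfloor\frac{n+2}{2}\rfloor$), not of axiom \ref{C5}. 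When $\a(d)=1$ and $n\geq 2$ the diagram has $n\geq 2$ propagating edges, so by Remark~\ref{LR-decorated}\ref{both.decs.one.prop} --- which you yourself invoke at the end, contradicting the alternating-sequence claim --- no propagating edge carries both decoration types: closed decorations sit only on the leftmost propagating edge and open ones only on the rightmost, each maximal block is a \emph{single} $\btri$ (or $\bcirc$), respectively $\wtri$ (or $\wcirc$), and the ``alternation'' of Remark~\ref{comment.admissible}\ref{alternating.decorations.a=1} is an alternation of relative \emph{vertical positions across the two extreme edges}, not of symbols along one edge.

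This matters because your no-cascade argument (``each such reduction again leaves an alternating basis block, so no second relation cascades'') is then unsupported: after the splice, the new propagating edge carries a string of same-type single-decoration blocks, e.g.\ $\bcirc,\btri,\btri,\dots$ when $i=1$, and if those blocks could all be conjoined one would get $\btri\btri\btri\mapsto 4\btri$ and hence $c=2$. What actually prevents this is requirement \ref{unusual}(b) of the decoration rules preceding Definition~\ref{decorated.pseudo.diagram.def}: the blocks of $x$ are kept apart by the decorations on the \emph{opposite} extreme propagating edge (the $\wtri$'s on the rightmost edge when $i=1$, the $\btri$'s on the leftmost edge when $i=n$), and those separating decorations are untouched by $d_{i}$, so only the block adjacent to the junction can conjoin with the cup decoration of $d_{i}$ and at most one relation of $\V$ fires. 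Equivalently one can do what the paper does and simply enumerate the \ref{C5}-consistent configurations in each case; this also exposes a simplification your outline misses, namely that for $1<i<n$ both $x$ and $y$ are forced to be empty (the edges involved are neither L- nor R-exposed), so the only genuine work occurs at $i=1$ and $i=n$. With these corrections your outline goes through and is essentially the paper's proof.
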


\begin{proof}
Since $\a(d)=1$, the non-propagating edge joining $i+1$ to $i+2$ is the unique non-propagating edge in the north face of $d$.  Also, note that $1\leq i \leq n$, where $i$ must be odd.  Furthermore, since $\a(d)=1$, the edge configuration at nodes $i$ and $i+1$ forces $j \in \{i, i+2\}$.  According to Lemma \ref{a-value=1}, the diagram that is produced by multiplying $d_{i}$ times $d$ has $\a$-value 1.  We consider three cases: (1) $i=1$, (2) $1< i<n$, and (3) $i=n$.

\bigskip

Case (1):  Assume that $i=1$.  This implies that $j \in \{1, 3\}$.  Then the possible edge configurations at nodes 1 and 2 of $d$ that are consistent with axiom \ref{C5} of Definition \ref{admissible.def} are as follows:

\begin{enumerate}[label=\rm{(\alph*)}]
\item \begin{tabular}[c]{@{}c@{}}%1
%-- New mfpic environment, number 155 of 189. (size of end split: 2, should be 2)  ------------------->
\includegraphics{ThesisFigs2.056}
\end{tabular};

\item \begin{tabular}[c]{@{}c@{}}%3
%-- New mfpic environment, number 156 of 189. (size of end split: 2, should be 2)  ------------------->
\includegraphics{ThesisFigs2.057}
\end{tabular};

\end{enumerate}

where the rectangle represents a (possibly trivial) sequence of blocks such that each block is a single $\btri$. In any case, we see that $d_{i}d=d_{1}d=2^{c}d'$, where $c \in \{0,1\}$ and $d'$ is an admissible diagram.

\bigskip

Case (2):  Next, assume that $1<i<n$.  In this case, since $\a(d)=1$, the restrictions on $i$ and $j'$ imply that both $x$ and $y$ are trivial.  That is, the propagating edge from $i$ to $j'$ and the non-propagating edge from $i+2$ to $i+3$ are undecorated.  Therefore, it is quickly seen that $d_{i}d=d'$ for some admissible diagram $d'$.

\bigskip

Case (3):  For the final case, assume that $i=n$.  This implies that $j \in \{n, n+2\}$.  Then the possible edge configurations at nodes $n$ and $n+1$ of $d$ that are consistent with axiom \ref{C5} of Definition \ref{admissible.def} are as follows:

\begin{enumerate}[label=\rm{(\alph*)}]
\item \begin{tabular}[c]{@{}c@{}}
%-- New mfpic environment, number 157 of 189. (size of end split: 2, should be 2)  ------------------->
\includegraphics{ThesisFigs2.058}
\end{tabular};

\item \begin{tabular}[c]{@{}c@{}}
%-- New mfpic environment, number 158 of 189. (size of end split: 2, should be 2)  ------------------->
\includegraphics{ThesisFigs2.059}
\end{tabular};

\item \begin{tabular}[c]{@{}c@{}}
%-- New mfpic environment, number 159 of 189. (size of end split: 2, should be 2)  ------------------->
\includegraphics{ThesisFigs2.060}
\end{tabular};

\end{enumerate}
where the rectangle represents a nontrivial sequence of blocks such that each block is a single $\wtri$.  In any case, we see that $d_{i}d=d_{n}d=2^{c}d'$, where $c \in \{0,1\}$ and $d'$ is an admissible diagram.
\end{proof}

\begin{lemma}\label{sixth.closed.under.mult.lemma}
Let $d$ be an admissible diagram such that $\a(d)>1$ with the following edge configuration at nodes $i$ and $i+1$:
$$\begin{tabular}[c]{@{}c@{}}
%-- New mfpic environment, number 160 of 189. (size of end split: 2, should be 2)  ------------------->
\includegraphics{ThesisFigs2.061}
\end{tabular},$$
where $x$ and $y$ represent (possibly trivial) blocks of decorations.  Then $d_{i}d=2^{c}d'$, where $c \in \{0,1\}$ and $d'$ is an admissible diagram.  
\end{lemma}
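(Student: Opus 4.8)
The plan is to argue exactly as in Lemmas \ref{first.closed.under.mult.lemma}--\ref{fifth.closed.under.mult.lemma}: use the admissibility of $d$ together with the hypothesis $\a(d)>1$ to pin down the short list of possibilities for the blocks $x$ and $y$, then form the concatenation $d_id$, track the unique relation from $\V$ that can fire, and finally check that the resulting diagram $d'$ still satisfies the axioms of Definition \ref{admissible.def}. In the configuration at hand both nodes $i$ and $i+1$ are north endpoints of propagating edges of $d$; hence $d$ is dammed with at least two propagating edges and $\a(d)>1$, so axiom \ref{C4} (rather than the unique-propagating-edge axiom \ref{C3} that complicated Lemma \ref{fourth.closed.under.mult.lemma}) governs the decorations. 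This is what makes the present case the cleanest of the series.

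First I would record the range $1\le i\le n+1$ and invoke \ref{C4}. Since $d$ is dammed with more than one propagating edge, every interior edge is undecorated, a propagating edge joining $1$ to $1'$ (respectively $n+2$ to $(n+2)'$) carries a single $\btri$ (respectively $\wtri$), and any edge merely incident to node $1$ or $1'$ (respectively $n+2$ or $(n+2)'$) carries a single $\bcirc$ (respectively $\wcirc$); no other closed or open decorations occur. In particular each of $x$ and $y$ is a single symbol or empty, and I would split into the generic interior case $1<i<n+1$ and the boundary cases $i=1$ and $i=n+1$.

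Next I would compute $d_id$ by concatenation. The southern cup of $d_i$ identifies the north nodes $i$ and $i+1$ of $d$, so the two propagating strands incident to them merge into a single non-propagating edge in the south face, while the northern cup of $d_i$ contributes a fresh non-propagating edge at $i,i+1$ in the north; by Lemma \ref{a-value=1} the $\a$-value rises by one, so $\a(d_id)>1$ and, in particular, no loop (hence no factor of $\delta$) is produced. Along the merged edge the block $x$, the decoration carried by $d_i$ itself (a single $\bcirc$ when $i=1$, a single $\wcirc$ when $i=n+1$, and nothing when $1<i<n+1$), and the block $y$ become consecutive; by the constraints above they present at most one pair of adjacent like-type symbols, so at most one of the relations $\bcirc\bcirc=\btri$, $\wcirc\wcirc=\wtri$, $\bcirc\btri=2\bcirc$, $\wcirc\wtri=2\wcirc$ of $\V$ applies, yielding the scalar $2^{c}$ with $c\in\{0,1\}$.

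The main obstacle, as throughout this section, is the concluding bookkeeping that confirms $d'$ is admissible. One must verify that the word left on the merged edge is again an alternating (hence $\V$-basis) word, that any surviving $\bcirc$ or $\wcirc$ remains incident only to node $1$/$1'$ or $n+2$/$(n+2)'$ as \ref{C4} requires, and that no new closed or open decoration has appeared elsewhere. Because admissibility of $d$ already confines all closed decorations to the western boundary and all open decorations to the eastern boundary, and because the single relation that may fire neither carries a decoration across the diagram nor creates a new boundary symbol, every one of these checks is immediate; running through the finitely many cases completes the proof.
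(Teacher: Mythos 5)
Your proposal is correct and takes essentially the same route as the paper's own (very terse) proof, which likewise observes that the LR-decoration/axiom \ref{C4} structure forces $x$ and $y$ to be of opposite types, so that a relation can fire only when $i=1$ (respectively, $i=n+1$) and $x$ (respectively, $y$) is nontrivial, after which admissibility of the resulting diagram is checked directly; your write-up simply fills in more of the case analysis. One small repair: Lemma \ref{a-value=1} does not assert that the $\a$-value rises by one (it characterizes when $\a(d'd)=1$), and in any case ``$\a$-value rises'' does not by itself rule out loops --- the correct and elementary reason no loop, hence no factor of $\delta$, is produced is that the cap of $d_{i}$ joins two \emph{distinct} propagating edges of $d$, so no closed curve can be created.
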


\begin{proof}
Since $d$ is LR-decorated, $x$ and $y$ cannot be of the same type (open or closed).  The only time there is potential to apply any relations when multiplying $d_{i}$ times $d$ is if $i=1$ (respectively, $i=n+1$) and $x$ (respectively, $y$) is nontrivial.  Regardless, it is easily seen that the statement of the lemma is true.
\end{proof}

\begin{lemma}\label{last.closed.under.mult.lemma}
Let $d$ be an admissible diagram such that $\a(d)=1$ with the following edge configuration at nodes $i$ and $i+1$:
$$\begin{tabular}[c]{@{}c@{}}
%-- New mfpic environment, number 161 of 189. (size of end split: 2, should be 2)  ------------------->
\includegraphics{ThesisFigs2.062}
\end{tabular},$$
where $x$ and $y$ represent (possibly trivial) blocks of decorations.  Then $d_{i}d=2^{k}d'$, where $k\geq 0$ and $d'$ is an admissible diagram with $\a(d)>1$. 
\end{lemma}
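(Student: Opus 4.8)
The plan is to analyze $d_i d$ geometrically first, then track decorations, and finally verify admissibility. Since $\a(d)=1$, the north face of $d$ has a unique non-propagating edge, located away from nodes $i,i+1$ because the given configuration has propagating edges leaving both $i$ and $i+1$. When I concatenate $d_i$ on top of $d$, the south cup of $d_i$ (sitting above the tops of these two propagating edges) splices them into a single non-propagating edge in the south face of the product, while the north cap of $d_i$ contributes a fresh non-propagating edge in the north face; the original north cap of $d$ survives untouched. Counting north non-propagating edges, the product therefore has the cap of $d_i$ together with the surviving cap of $d$, so $\a(d_i d)=\a(d)+1=2>1$. This establishes the claim on the $\a$-value of $d'$ (consistent with the general inequality in Lemma \ref{a-value=1}).

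The decorations on the newly spliced (south) edge are exactly the conjunction of the decorations carried by the two propagating edges of $d$. First I would invoke axiom \ref{C5} of Definition \ref{admissible.def} to enumerate the possible western and eastern ends, and Remark \ref{comment.admissible}\ref{alternating.decorations.a=1} to control the relative vertical positions of the closed ($\btri$) and open ($\wtri$) decorations. The key point is that, since $\a$ has risen above $1$, the constraints of the $\a=1$ regime no longer govern the product, so I may freely conjoin the formerly separated single-$\btri$ and single-$\wtri$ blocks. I would then reduce the resulting block inside $\V$ using the relations of Definition \ref{big.diagram.alg.defn} (equivalently Remark \ref{beads}): $\btri\btri=2\btri$ and $\wtri\wtri=2\wtri$, together with $\bcirc\bcirc=\btri$, $\bcirc\bcirc\bcirc=2\bcirc$, and $\bcirc\btri=\btri\bcirc=2\bcirc$ (and their open analogues) whenever $i\in\{1,n+1\}$ forces a $\bcirc$ or $\wcirc$ at an end. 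Each such reduction emits one factor of $2$, so the total scalar is a power $2^{k}$ with $k\geq 0$; unlike the earlier lemmas, $k$ need not be bounded by $1$, since a block of $k_{1}$ closed and $k_{2}$ open triangles collapses to $2^{(k_{1}-1)+(k_{2}-1)}\btri\wtri$.

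Having identified $d_i d = 2^{k} d'$, the final step is to confirm that $d'$ is admissible. Because $\a(d')=2>1$, the relevant axiom is \ref{C4}, so I would check that the surviving $\bcirc$ and $\wcirc$ decorations occur only in the single permitted location at each end, that every decorated propagating edge still carries at most a single $\btri$ or $\wtri$, and that the diagram remains LR-decorated in the sense of Definition \ref{def.T_n+2^LR(V)}; the last of these follows from Remark \ref{LR-decorated}\ref{closed.under.concatenation}, since L-exposedness and R-exposedness are preserved under concatenation, so closed decorations stay on the left and open decorations on the right.

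The main obstacle I anticipate is bookkeeping rather than conceptual: organizing the case analysis over the five western-end configurations of axiom \ref{C5} and their eastern mirror images, pairing them correctly according to the alternating-vertical-position rule, and verifying in each case both the exact power of $2$ produced and the admissibility of the reduced diagram. The subcases with $i=1$ or $i=n+1$ are the delicate ones, since there the $\bcirc$/$\wcirc$ reduction rules interact with the triangle reductions, and one must confirm that exactly one closed (respectively open) decoration survives at the correct end as required by \ref{C4}.
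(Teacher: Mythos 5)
Your overall architecture matches the paper's proof of this lemma: first get that the $\a$-value of the product jumps above $1$ (the paper does this by citing Lemma \ref{a-value=1}; your direct splicing count, which even pins down $\a(d_{i}d)=2$ exactly, is a sharper version of the same observation), then note that the blocks kept apart by the $\a$-value-$1$ decoration rules conjoin in the product and reduce inside $\V$ to produce the scalar $2^{k}$ with $k$ unbounded, and finally verify admissibility by a case analysis over the axiom \ref{C5} configurations of $d$. So the strategy is sound and essentially the paper's.

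There is, however, one concrete error in your admissibility step: the assertion that ``because $\a(d')=2>1$, the relevant axiom is \ref{C4}.'' Axiom \ref{C4} applies only to dammed diagrams with more than one propagating edge, i.e.\ only when $n\geq 4$. When $n=2$ the product $d_{i}d$ is undammed (its $\a$-value $2$ equals $\frac{n+2}{2}$), so axioms \ref{C1} and \ref{C2} are what must be checked; when $n=3$ the product has a unique propagating edge, so axiom \ref{C3} governs. This is not a pedantic point, because your own key computation lands exactly there: a spliced edge carrying both $\btri$ and $\wtri$ decorations (your $2^{(k_{1}-1)+(k_{2}-1)}\btri\wtri$ outcome) can arise only when the edges at $i$ and $i+1$ are simultaneously the leftmost and the rightmost propagating edges of $d$, which forces $n=2$ and hence an undammed product. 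By Remark \ref{comment.admissible}(1), an edge decorated by both types is impossible in any diagram to which \ref{C4} applies, so testing that outcome against \ref{C4} would wrongly conclude that the lemma fails, whereas tested against \ref{C2} (as the paper's undammed case implicitly is) it is fine. The repair is routine---split the verification according to the number of propagating edges of $d_{i}d$---but as written your plan checks the surviving diagram against the wrong axiom in precisely the cases where both decoration types appear. A smaller slip of the same kind: the spliced edge carries not only the decorations of the two propagating edges of $d$ but also the $\bcirc$ (respectively $\wcirc$) on the cup of $d_{i}$ when $i=1$ (respectively $i=n+1$); your later list of $\V$-relations absorbs this, but the sentence describing the spliced edge omits that contribution.
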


\begin{proof}
According to Lemma \ref{a-value=1}, the diagram that is produced by multiplying $d_{i}$ times $d$ has $\a$-value strictly greater than 1.  In this case, the sequence of blocks of decorations occurring on the leftmost (respectively, rightmost) propagating edge of $d$ will conjoin in the product of $d_{i}$ and $d$.  This implies that $d_{i}d=2^{k}d'$ for $k \geq 0$ and some diagram $d'$.  To see that $d'$ is admissible, we consider the following five possibilities for $d$; any remaining possibilities are analogous.

\begin{enumerate}[label=\rm{(\arabic*)}]
\item \begin{tabular}[c]{@{}c@{}}
%-- New mfpic environment, number 162 of 189. (size of end split: 2, should be 2)  ------------------->
\includegraphics{ThesisFigs2.063}
\end{tabular};

\item \begin{tabular}[c]{@{}c@{}}
%-- New mfpic environment, number 163 of 189. (size of end split: 2, should be 2)  ------------------->
\includegraphics{ThesisFigs2.064}
\end{tabular};

\item \begin{tabular}[c]{@{}c@{}}
%-- New mfpic environment, number 164 of 189. (size of end split: 2, should be 2)  ------------------->
\includegraphics{ThesisFigs2.065}
\end{tabular};

\item \begin{tabular}[c]{@{}c@{}}
%-- New mfpic environment, number 165 of 189. (size of end split: 2, should be 2)  ------------------->
\includegraphics{ThesisFigs2.066}
\end{tabular};

\item \begin{tabular}[c]{@{}c@{}}
%-- New mfpic environment, number 166 of 189. (size of end split: 2, should be 2)  ------------------->
\includegraphics{ThesisFigs2.067}
\end{tabular};

\end{enumerate}
where the rectangle on the leftmost (respectively, rightmost) propagating edge represents a (possibly trivial) sequence of blocks such that each block is a single $\btri$ (respectively, $\wtri$).  In each of these cases, if $d$ has propagating edges joined to nodes $i$ and $i+1$ in the north face,  it is quickly seen that the diagram $d'$ that results from multiplying $d_{i}$ times $d$ will be consistent with the axioms of Definition \ref{admissible.def} since $\bcirc \btri \cdots \btri \bcirc$ and $\btri \cdots \btri$ (respectively, $\wcirc \wtri \cdots \wtri \wcirc$ and $\wtri \cdots \wtri$) are equal to a power of 2 times $\btri$ (respectively, $\wtri$).
\end{proof}

\end{section}

\begin{section}{The set of admissible diagrams form a basis for $\D_{n}$}

The next lemma states that the product of a simple diagram and an admissible diagram results in a multiple of an admissible diagram.  The proof relies on stringing together Lemmas \ref{first.closed.under.mult.lemma}--\ref{last.closed.under.mult.lemma}.

\begin{lemma}\label{powers_of_2_and_delta}
Let $d$ be an admissible diagram.  Then 
	$$d_{i}d=2^{k}\delta^{m}d'$$ 
for some $k,m \in \Z^{+}\cup\{0\}$ and admissible diagram $d'$.
\end{lemma}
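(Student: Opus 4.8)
The plan is to deduce the result from a case analysis that is already packaged in Lemmas~\ref{first.closed.under.mult.lemma}--\ref{last.closed.under.mult.lemma}, so that the remaining work is to see that these cases are exhaustive for a fixed adjacent pair $i,i+1$. First I would isolate the only relevant local data. Away from columns $i$ and $i+1$ the simple diagram $d_{i}$ is a collection of vertical propagating strands; when $d_{i}$ is stacked on top of $d$, these strands transmit the north endpoints of $d$ unchanged, so the entire effect of left multiplication by $d_{i}$ is confined to the region where the south cup of $d_{i}$ (the edge joining $i'$ to $(i+1)'$, carrying the forced $\bcirc$ or $\wcirc$ when $i\in\{1,n+1\}$) is glued to the edges of $d$ incident to nodes $i$ and $i+1$. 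Hence $d_{i}d$, together with its admissibility, is completely determined by the configuration of $d$ at this adjacent pair.

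Next I would stratify that configuration according to how the cup of $d_{i}$ connects the edges of $d$ meeting $i$ and $i+1$. There are four geometric possibilities: these two nodes are the endpoints of a single non-propagating edge, so the cup closes it into a loop; they lie on two distinct non-propagating edges, which the cup splices into one; exactly one of them lies on a propagating edge, so a through-string is rerouted and $\a(d)$ is preserved; or both lie on propagating edges, so two through-strings are joined into a single non-propagating edge and $\a$ increases. I would then refine by the value of $\a(d)$ (the cases $\a(d)=1$ and $\a(d)>1$ behave differently, by Lemma~\ref{a-value=1}) and by whether $i$ is an end column, since only for $i=1$ or $i=n+1$ do the forced closed and open decorations of $d_{i}$ participate in a reduction. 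Each resulting case is exactly one of the seven preparatory lemmas: the loop case is Lemma~\ref{second.closed.under.mult.lemma} (producing the factor $\delta^{c}$), the splicing case is Lemma~\ref{third.closed.under.mult.lemma}, the single-propagating reroute is covered by Lemmas~\ref{first.closed.under.mult.lemma}, \ref{fourth.closed.under.mult.lemma} and \ref{fifth.closed.under.mult.lemma}, and the two-propagating join is covered by Lemmas~\ref{sixth.closed.under.mult.lemma} and \ref{last.closed.under.mult.lemma}.

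In every one of these lemmas the output is a single admissible diagram $d'$ multiplied by either a power of $2$ or a power of $\delta$, each with exponent in $\{0,1\}$; since these are precisely the allowed scalars, the conclusion $d_{i}d = 2^{k}\delta^{m}d'$ with $k,m\in\Z^{+}\cup\{0\}$ follows immediately, with no further combination needed. The main obstacle is therefore not any computation but the bookkeeping of exhaustiveness: I must check that for every admissible $d$ and every $i$ the local picture at $i,i+1$ falls under one of the seven lemmas, paying particular attention to the end columns $i=1,n+1$ (where admissibility forces specific $\bcirc$ and $\wcirc$ decorations and can contribute a factor of $2$) and to the transition from $\a(d)=1$ to $\a(d')>1$ handled by Lemma~\ref{last.closed.under.mult.lemma}, which is where an arbitrarily large power $2^{k}$ can first appear. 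Verifying that the admissibility axioms \ref{C1}--\ref{C5} are preserved in each case is exactly the content of the cited lemmas, so once the classification is confirmed the proof is complete.
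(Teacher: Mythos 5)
Your proposal is essentially the paper's own proof: the paper simply enumerates the seven possible edge configurations of $d$ at nodes $i$ and $i+1$ and disposes of each by citing one of Lemmas \ref{first.closed.under.mult.lemma}--\ref{last.closed.under.mult.lemma}, with symmetric arguments for the two mirror-image cases, which is exactly the local case analysis you describe. One bookkeeping correction: in the paper, Lemma \ref{first.closed.under.mult.lemma} treats a configuration of two distinct non-propagating edges (its proof splices them into a single non-propagating edge joining $j$ to $k$), not a propagating-edge reroute, so the one-propagating case is covered by Lemmas \ref{fourth.closed.under.mult.lemma} and \ref{fifth.closed.under.mult.lemma} alone; this relabeling does not affect your plan, since your four geometric cases, refined by $\a$-value and by whether $i$ is an end column, are exhaustive and collectively match the paper's seven.
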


\begin{proof}
Let $d$ be an admissible diagram and consider the product $d_{i}d$.  Observe that the possible edge configurations for $d$ at nodes $i$ and $i+1$ are as follows:
\begin{enumerate}[label=\rm{(\arabic*)}]
\item \begin{tabular}[c]{@{}c@{}}
%-- New mfpic environment, number 167 of 189. (size of end split: 2, should be 2)  ------------------->
\includegraphics{ThesisFigs2.068}
\end{tabular};

\item \begin{tabular}[c]{@{}c@{}}
%-- New mfpic environment, number 168 of 189. (size of end split: 2, should be 2)  ------------------->
\includegraphics{ThesisFigs2.069}
\end{tabular};

\item \begin{tabular}[c]{@{}c@{}}
%-- New mfpic environment, number 169 of 189. (size of end split: 2, should be 2)  ------------------->
\includegraphics{ThesisFigs2.070}
\end{tabular};

\item \begin{tabular}[c]{@{}c@{}}
%-- New mfpic environment, number 170 of 189. (size of end split: 2, should be 2)  ------------------->
\includegraphics{ThesisFigs2.071}
\end{tabular};

\item \begin{tabular}[c]{@{}c@{}}
%-- New mfpic environment, number 171 of 189. (size of end split: 2, should be 2)  ------------------->
\includegraphics{ThesisFigs2.072}
\end{tabular};

\item \begin{tabular}[c]{@{}c@{}}
%-- New mfpic environment, number 172 of 189. (size of end split: 2, should be 2)  ------------------->
\includegraphics{ThesisFigs2.073}
\end{tabular};

\item \begin{tabular}[c]{@{}c@{}}
%-- New mfpic environment, number 173 of 189. (size of end split: 2, should be 2)  ------------------->
\includegraphics{ThesisFigs2.074}
\end{tabular};

\end{enumerate}
Case (1) follows from Lemma \ref{first.closed.under.mult.lemma} and case (2) follows from a symmetric argument.  Case (3) follows from Lemma \ref{second.closed.under.mult.lemma}.  Lemma \ref{third.closed.under.mult.lemma} yields case (4).  Case (5) follows from Lemmas \ref{fourth.closed.under.mult.lemma} and \ref{fifth.closed.under.mult.lemma} and case (6) follows by a symmetric argument.  Finally, Lemmas \ref{sixth.closed.under.mult.lemma} and \ref{last.closed.under.mult.lemma} prove case (7).
\end{proof}

\begin{proposition}\label{module.is.subalgebra}
The $\Z[\delta]$-module $\mathcal{M}[\Diag^{b}_{n}(\V)]$ is a $\Z[\delta]$-subalgebra of $\widehat{\P}_{n+2}^{LR}(\V)$.
\end{proposition}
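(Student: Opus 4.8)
The plan is to exploit the two facts already in hand, namely that every admissible diagram is a product of simple diagrams (Proposition \ref{admissibles.in.algebra.gen.by.simples}) and that multiplying a simple diagram on the left of an admissible diagram returns a scalar multiple of a \emph{single} admissible diagram (Lemma \ref{powers_of_2_and_delta}). Since $\mathcal{M}[\Diag^{b}_{n}(\V)]$ is by construction the $\Z[\delta]$-module with basis the set of admissible diagrams (Proposition \ref{prop.admissible.basis}), it is automatically closed under $\Z[\delta]$-linear combinations, so the only thing left to verify for it to be a subalgebra is closure under multiplication. Because multiplication in $\widehat{\P}_{n+2}^{LR}(\V)$ is bilinear, it suffices to show that the product $dd'$ of any two admissible diagrams $d, d'$ again lies in $\mathcal{M}[\Diag^{b}_{n}(\V)]$. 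I would also remark that the identity diagram $d_{e}$ is vacuously admissible (it is undecorated with no loops, so the conditional hypotheses of axioms \ref{C1}--\ref{C5} are never triggered), whence $\mathcal{M}[\Diag^{b}_{n}(\V)]$ contains $1$ and the subalgebra is unital.

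So fix admissible diagrams $d$ and $d'$. First I would invoke Proposition \ref{admissibles.in.algebra.gen.by.simples} to write $d = d_{i_{1}}d_{i_{2}}\cdots d_{i_{p}}$ as a product of simple diagrams, and then use associativity to read the product as
$$dd' = d_{i_{1}}\bigl(d_{i_{2}}\bigl(\cdots(d_{i_{p}}d')\cdots\bigr)\bigr).$$
The argument is then a finite induction that peels the simple factors off from the inside out. For the base step, $d_{i_{p}}d'$ is a simple diagram times an admissible diagram, so Lemma \ref{powers_of_2_and_delta} gives $d_{i_{p}}d' = 2^{k}\delta^{m}e$ for some nonnegative integers $k,m$ and some admissible diagram $e$.

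For the inductive step, suppose the inner product $d_{i_{j+1}}\cdots d_{i_{p}}d'$ has been shown to equal $2^{K}\delta^{M}e$ with $e$ admissible and $K,M \geq 0$. Multiplying on the left by $d_{i_{j}}$ and factoring the scalar out of the bilinear product yields $2^{K}\delta^{M}(d_{i_{j}}e)$, and a further application of Lemma \ref{powers_of_2_and_delta} to the simple-times-admissible product $d_{i_{j}}e$ produces $2^{K+k'}\delta^{M+m'}e'$ with $e'$ admissible. After $p$ steps we obtain $dd' = 2^{K}\delta^{M}d''$ for some admissible diagram $d''$ and nonnegative integers $K, M$. Since $2^{K}\delta^{M} \in \Z[\delta]$, this is a $\Z[\delta]$-multiple of a basis element, hence an element of $\mathcal{M}[\Diag^{b}_{n}(\V)]$, completing the verification of closure.

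I do not expect a genuine obstacle here: the substantive combinatorial work was already carried out in the preparatory lemmas (Lemmas \ref{first.closed.under.mult.lemma}--\ref{last.closed.under.mult.lemma}) and packaged into Lemma \ref{powers_of_2_and_delta}, so this proposition is essentially a corollary obtained by chaining that lemma along a factorization into simple diagrams. The only point deserving care is the bookkeeping: one must check that at every stage the intermediate diagram remains admissible so that Lemma \ref{powers_of_2_and_delta} continues to apply, and that the accumulated coefficient stays of the form $2^{K}\delta^{M}$ rather than a general polynomial in $\delta$ with $2$-power content hidden elsewhere. Both are guaranteed by the precise output shape recorded in Lemma \ref{powers_of_2_and_delta}, so the induction goes through cleanly.
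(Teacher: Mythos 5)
Your proposal is correct and follows essentially the same route as the paper: decompose one admissible factor into simple diagrams via Proposition \ref{admissibles.in.algebra.gen.by.simples}, then absorb the simple factors one at a time using Lemma \ref{powers_of_2_and_delta}, so that the product is a $2^{K}\delta^{M}$-multiple of a single admissible diagram. The paper states this in one compressed paragraph, while you spell out the induction and the unitality remark explicitly, but the content is identical.
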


\begin{proof}
It remains to show that the product of any two admissible diagrams results in a linear combination of admissible diagrams.  According to Proposition \ref{admissibles.in.algebra.gen.by.simples}, each admissible diagram can be written as product of simple diagrams.  It suffices to show that the product of a simple diagram and an admissible diagram is equal to a linear combination of admissible diagrams.  But this is exactly Lemma \ref{powers_of_2_and_delta}, and so we have our desired result.
\end{proof}

We now state the main result of this chapter.

\begin{theorem}\label{module.of.admissibles.is.the.algebra.gen.by.simples}
The $\Z[\delta]$-algebras $\mathcal{M}[\Diag^{b}_{n}(\V)]$ and $\D_{n}$ are equal.  Moreover, the set of admissible diagrams is a basis for $\D_{n}$.
\end{theorem}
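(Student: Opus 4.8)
The plan is to establish the two inclusions $\D_n \subseteq \mathcal{M}[\Diag^{b}_{n}(\V)]$ and $\mathcal{M}[\Diag^{b}_{n}(\V)] \subseteq \D_n$ separately, and then to read off the basis statement directly from Proposition \ref{prop.admissible.basis}. Almost all of the genuine content has already been isolated in the preparatory results, so the argument here amounts to assembling them in the right order.

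First I would verify that the unital generating set of $\D_n$ lies inside $\mathcal{M}[\Diag^{b}_{n}(\V)]$. By inspection of Definition \ref{admissible.def}, each simple diagram $d_i$ is $\C$-admissible: the only decorated edges among the $d_i$ are the single $\bcirc$ on $d_1$ and the single $\wcirc$ on $d_{n+1}$, which are exactly the lone end decorations permitted by axioms \ref{C4} and \ref{C5}. The identity $d_e$ is likewise admissible, since it is undecorated with $\a$-value $0$ and therefore triggers none of the axioms \ref{C1}--\ref{C5}. Hence $d_e, d_1, \dots, d_{n+1} \in \mathcal{M}[\Diag^{b}_{n}(\V)]$. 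By Proposition \ref{module.is.subalgebra}, $\mathcal{M}[\Diag^{b}_{n}(\V)]$ is a $\Z[\delta]$-subalgebra of $\widehat{\P}_{n+2}^{LR}(\V)$; being a subalgebra that contains the entire unital generating set of $\D_n$, it must contain the subalgebra these diagrams generate, which is $\D_n$ by Definition \ref{def.D_n}. This yields $\D_n \subseteq \mathcal{M}[\Diag^{b}_{n}(\V)]$.

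For the reverse inclusion I would invoke Proposition \ref{admissibles.in.algebra.gen.by.simples}, which shows that every admissible diagram can be written as a product of simple diagrams and so lies in $\D_n$. Since $\mathcal{M}[\Diag^{b}_{n}(\V)]$ is by definition the $\Z[\delta]$-span of the admissible diagrams and $\D_n$ is a $\Z[\delta]$-module, it follows that $\mathcal{M}[\Diag^{b}_{n}(\V)] \subseteq \D_n$. Combining the two inclusions gives $\mathcal{M}[\Diag^{b}_{n}(\V)] = \D_n$. The basis statement is then immediate: Proposition \ref{prop.admissible.basis} asserts that $\Diag^{b}_{n}(\V)$ is a basis for $\mathcal{M}[\Diag^{b}_{n}(\V)]$, and since this module has just been identified with $\D_n$, the admissible diagrams form a basis for $\D_n$.

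The only genuinely delicate ingredients are already in hand, so I do not expect a new obstacle at this stage. The substantive work was (a) closure of $\mathcal{M}[\Diag^{b}_{n}(\V)]$ under multiplication, which rests on the case analysis of Lemma \ref{powers_of_2_and_delta} and its packaging in Proposition \ref{module.is.subalgebra}, and (b) the generation result of Proposition \ref{admissibles.in.algebra.gen.by.simples}, whose proof required the structural Lemmas \ref{a-value1.diagrams.gen.by.simples}--\ref{a-value.max.undammed.diagrams.gen.by.simples}. Once both of these are granted, the present theorem follows formally, and the main point to double-check is simply the (routine) admissibility of the simple diagrams and of $d_e$, so that the generating set genuinely sits inside the spanning module.
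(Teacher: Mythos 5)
Your proposal is correct and follows essentially the same route as the paper: both directions rest on Proposition \ref{admissibles.in.algebra.gen.by.simples} together with Proposition \ref{module.is.subalgebra} (plus the observation that the simple diagrams, and the identity, are admissible), and the basis claim is then read off from Proposition \ref{prop.admissible.basis}. The only difference is cosmetic — you spell out the two inclusions explicitly where the paper phrases one of them as "$\D_n$ is the smallest algebra containing the simple diagrams."
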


\begin{proof}
Propositions \ref{admissibles.in.algebra.gen.by.simples} and \ref{module.is.subalgebra} imply that $\mathcal{M}[\Diag^{b}_{n}(\V)]$ is a subalgebra of $\D_{n}$.  However, $\D_{n}$ is the smallest algebra containing the simple diagrams, which $\mathcal{M}[\Diag^{b}_{n}(\V)]$ also contains since the simple diagrams are admissible.  Therefore, we must have equality of the two algebras.  By Proposition \ref{prop.admissible.basis}, the set of admissible diagrams is a basis for $\mathcal{M}[\Diag^{b}_{n}(\V)]$.  Therefore, the set of admissible diagrams is a basis for $\D_{n}$.
\end{proof}

\end{section}

\end{chapter}

%%%%%%%%%%%% Chapter 10 %%%%%%%%%%%

\begin{chapter}{Main results}

We conclude this chapter with a proof that $\TL(\C_{n})$ and $\D_{n}$ are isomorphic as $\Z[\delta]$-algebras under the correspondence induced by
	$$b_{i} \mapsto d_{i}.$$
Moreover, we show that the admissible diagrams correspond to the monomial basis of $\TL(\C_{n})$.  

\begin{section}{The homomorphism $\theta$ from $\TL(\C_{n})$ to $\D_{n}$}

\begin{proposition}\label{surjective.homomorphism}
Let $\theta: \TL(\C_{n}) \to \D_{n}$ be the function determined by
	$$\theta(b_{i})=d_{i}.$$
Then $\theta$ is a surjective algebra homomorphism.
\end{proposition}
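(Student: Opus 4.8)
The plan is to use the presentation of $\TL(\C_n)$ recorded in Remark \ref{affineC.relations}. Since $\TL(\C_n)$ is presented as a unital algebra on the generators $b_1,\dots,b_{n+1}$ subject to the four families of relations listed there, the assignment $b_i\mapsto d_i$ extends to a (necessarily unique) algebra homomorphism $\theta$ as soon as I verify that the simple diagrams $d_1,\dots,d_{n+1}$ satisfy the images of those relations inside $\D_n$. Thus the entire content of the proposition is the well-definedness check; surjectivity will then be immediate, because $\D_n$ is by Definition \ref{def.D_n} the subalgebra generated by $d_1,\dots,d_{n+1}$, and these all lie in the image of $\theta$. So my work reduces to four diagram computations, carried out using diagram concatenation together with the relations of $\widehat{\P}_{n+2}^{LR}(\V)$ from Definition \ref{big.diagram.alg.defn}.

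First I would handle the quadratic relation $d_i^2=\delta d_i$. For $1<i<n+1$ the generator $d_i$ is the undecorated cup--cap at nodes $i,i+1$, so stacking $d_i$ on itself creates a single undecorated loop and returns $d_i$; the loop evaluates to $\delta$ by the loop relation of Definition \ref{big.diagram.alg.defn}. For the end generators I must track decorations: concatenating $d_1$ with itself joins the two $\bcirc$-decorated non-propagating edges into a loop carrying $\bcirc\bcirc=\btri$, leaving a copy of $d_1$ together with a loop decorated by a single $\btri$, which again equals $\delta$; the case $d_{n+1}^2$ is identical with $\wtri$ in place of $\btri$. Next, $d_id_j=d_jd_i$ for $|i-j|>1$ holds because the cup--caps occupy the disjoint node sets $\{i,i+1\}$ and $\{j,j+1\}$, so concatenation is order-independent. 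The relation $d_id_jd_i=d_i$ for $|i-j|=1$ with $1<i,j<n+1$ is the classical undecorated Temperley--Lieb straightening computation, and since neither generator is an end generator no decorations intervene.

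The substantive step, and the one I expect to be the main obstacle, is the strength-four relation $d_id_jd_id_j=2\,d_id_j$ for $\{i,j\}=\{1,2\}$ or $\{n,n+1\}$. Here I would compute $d_1d_2d_1d_2$ explicitly by successive concatenation and observe that straightening the cup--caps at $\{1,2\}$ and $\{2,3\}$ brings two $\bcirc$ decorations onto the same edge; applying $\bcirc\bcirc\bcirc=2\,\bcirc$ (equivalently $\bcirc\btri=2\,\bcirc$) of Definition \ref{big.diagram.alg.defn} produces exactly the factor $2$, yielding $2\,d_1d_2$. The case $\{n,n+1\}$ is the mirror computation with the open decoration $\wcirc$. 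The delicate points to get right are the precise decoration bookkeeping on the end edges and the verification that the resulting diagram is genuinely $d_1d_2$ (not merely its shape), which is why I would draw the intermediate diagrams carefully rather than argue abstractly.

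Having verified all four relation families, the universal property of the presentation in Remark \ref{affineC.relations} guarantees that $\theta$ is a well-defined $\Z[\delta]$-algebra homomorphism. Finally, surjectivity follows since $\{d_1,\dots,d_{n+1}\}=\{\theta(b_1),\dots,\theta(b_{n+1})\}$ generates $\D_n$ as a unital algebra, so the image of $\theta$ is all of $\D_n$.
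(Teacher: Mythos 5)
Your proposal is correct and follows essentially the same route as the paper's own proof: verify that the simple diagrams $d_{1},\dots,d_{n+1}$ satisfy the defining relations of $\TL(\C_{n})$ from Remark \ref{affineC.relations}, invoke the presentation to get a well-defined homomorphism, and deduce surjectivity from the fact that the $d_{i}$ generate $\D_{n}$ by Definition \ref{def.D_n}. The only difference is one of detail: the paper dismisses the relation checks as ``easily verified,'' while you sketch the actual diagram computations (the decorated loops giving $\delta$ and the $\bcirc\bcirc\bcirc=2\,\bcirc$ reduction giving the factor $2$), which is a harmless elaboration of the same argument.
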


\begin{proof}
Checking that each of the following relations holds for the simple diagrams is easily verified.
\begin{enumerate}[label=\rm{(\arabic*)}]
\item $d_{i}^{2}=\delta d_{i}$ for all $i$,
\item $d_{i}d_{j}=d_{i}d_{j}$ if $|i-j|>1$,
\item $d_{i}d_{j}d_{i}=d_{i}$ if $|i-j|=1$ and $1< i,j < n+1$,
\item $d_{i}d_{j}d_{i}d_{j}=2d_{i}d_{j}$ if $\{i,j\}=\{1,2\}$ or $\{n,n+1\}$.
\end{enumerate}
That is, $\D_{n}$ satisfies the relations of $\TL(\C_{n})$.  This implies that $\theta$ is an algebra homomorphism.  Since the simple diagrams $\{d_{i}: 1\leq i \leq n+1\}$ generate $\D_{n}$, $\theta$ is surjective.
\end{proof}

\begin{lemma}\label{powers.of.2.and.delta.for.images.of.monomials}
Let $w \in W_{c}(\C_{n})$ have reduced expression $s_{i_{1}}\cdots s_{i_{r}}$.  Then
	$$\theta(b_{w})=2^{k}\delta^{m}d,$$
for some $k,m \in \Z^{+}\cup\{0\}$ and admissible diagram $d$.
\end{lemma}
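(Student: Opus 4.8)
The plan is to reduce the statement to a single use of the homomorphism property of $\theta$ followed by an induction whose inductive step is a direct application of Lemma \ref{powers_of_2_and_delta}. First I would invoke that $\theta$ is an algebra homomorphism (Proposition \ref{surjective.homomorphism}) together with the definition of the monomial basis element $b_w$: since $s_{i_1}\cdots s_{i_r}$ is a reduced expression for the fully commutative element $w$, we have $b_w = b_{i_1}\cdots b_{i_r}$ (this factorization is well-defined because $w \in W_c(\C_n)$ has a single commutation class, as noted after the definition of $b_w$). Applying $\theta$ gives $\theta(b_w)=\theta(b_{i_1})\cdots\theta(b_{i_r})=d_{i_1}\cdots d_{i_r}$, so the lemma is equivalent to showing that any product of simple diagrams equals $2^k\delta^m$ times an admissible diagram.

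Next I would induct on the length $r$. For the base case $r=0$ we have $w=e$ and $\theta(b_e)=d_e$, the identity diagram, which is admissible, so the claim holds with $k=m=0$; alternatively one may take $r=1$, where $\theta(b_{i_1})=d_{i_1}$ is a simple diagram and hence admissible. For the inductive step, assume $d_{i_2}\cdots d_{i_r}=2^{k'}\delta^{m'}d''$ for some admissible diagram $d''$. Then $\theta(b_w)=d_{i_1}\bigl(2^{k'}\delta^{m'}d''\bigr)=2^{k'}\delta^{m'}\bigl(d_{i_1}d''\bigr)$, and Lemma \ref{powers_of_2_and_delta} applied to the admissible diagram $d''$ yields $d_{i_1}d''=2^{k''}\delta^{m''}d'$ for some admissible $d'$. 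Collecting scalars gives $\theta(b_w)=2^{k'+k''}\delta^{m'+m''}d'$, which is of the required form with $k=k'+k''$ and $m=m'+m''$ in $\Z^{+}\cup\{0\}$, completing the induction.

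The substantive content has already been carried out in Lemma \ref{powers_of_2_and_delta}, so I do not expect a genuine obstacle here; this lemma is essentially its iterated form. The only points requiring care are purely bookkeeping: verifying that $b_w$ factors as $b_{i_1}\cdots b_{i_r}$ for a reduced expression (which follows from full commutativity of $w$ and the commutation relation $b_ib_j=b_jb_i$ when $m(s_i,s_j)=2$), and confirming the base case, namely that the identity diagram (or each simple diagram) is admissible. If anything is delicate it is only making sure the scalars are accumulated correctly and that admissibility is preserved at each stage, both of which are guaranteed by the statement of Lemma \ref{powers_of_2_and_delta}.
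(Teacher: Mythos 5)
Your proposal is correct and follows essentially the same route as the paper: apply the homomorphism property of $\theta$ to write $\theta(b_{w})=d_{i_{1}}\cdots d_{i_{r}}$, then absorb the simple diagrams one at a time via Lemma \ref{powers_of_2_and_delta}. The paper compresses your induction into the phrase ``by repeated applications of Lemma \ref{powers_of_2_and_delta}''; your version merely makes the bookkeeping (base case, accumulation of the scalars $2^{k}$ and $\delta^{m}$) explicit.
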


\begin{proof}
By repeated applications of  Lemma \ref{powers_of_2_and_delta}, we have
\begin{align*}
\theta(b_{w}) & = \theta(b_{i_{1}} \cdots b_{i_{r}})  \\
& = \theta(b_{i_{1}}) \cdots \theta(b_{i_{r}})\\
& = d_{i_{1}} \cdots d_{i_{r}} \\
& = 2^{k}\delta^{m}d
\end{align*}
for some $k,m \in \Z^{+}\cup\{0\}$ and admissible diagram $d$.  
\end{proof}

\begin{remark}
Since $\theta$ is well-defined, $k$, $m$, and $d$ do not depend on the choice of reduced expression for $w$ that we start with.  We will denote the diagram $d$ from Lemma \ref{powers.of.2.and.delta.for.images.of.monomials} by $d_{w}$.  That is, if $w \in W_{c}$, then $d_{w}$ is the admissible diagram satisfying
	$$\theta(b_{w})=2^{k}\delta^{m}d_{w}.$$
\end{remark}

\begin{remark}
Let $d$ be an admissible diagram.  Since $\theta$ is surjective, there exists $w \in W_{c}(\C_{n})$ such that $\theta(b_{w})=d_{w}=d$.  Suppose that $w$ has reduced expression $\w=s_{i_{1}}\cdots s_{i_{r}}$.  Then $d=d_{i_{1}}\cdots d_{i_{r}}$.   For each $d_{i_{j}}$ fix a concrete representative that has straight propagating edges and no unnecessary ``wiggling'' of the simple non-propagating edges.  Now, consider the concrete diagram that results from stacking the concrete simple diagrams $d_{i_{1}},\dots, d_{i_{r}}$, rescaling to recover the standard $(n+2)$-box, but not deforming any of the edges or applying any relations among the decorations.  We will refer to this concrete diagram as the \emph{concrete simple representation of $d_{\w}$} (which depends on $\w$).  Since $w$ is fully commutative and vertical equivalence respects commutation, given two different reduced expressions $\w$ and $\w'$ for $w$, the concrete simple representations $d_{\w}$ and $d_{\w'}$ will be vertically equivalent.  We define the vertical equivalence class of concrete simple representations to be  the \emph{simple representation of $d_{w}$}.
\end{remark}

\begin{example}
Consider $w=\z_{1,1}^{R,1}$ in $W(\C_{3})$.  Then
$$\begin{tabular}[c]{@{}c@{}}
%-- New mfpic environment, number 174 of 189. (size of end split: 2, should be 2)  ------------------->
\includegraphics{ThesisFigs2.075}
\end{tabular}$$	
is vertically equivalent to the simple representation of $d_{w}$, where the vertical dashed lines in the figure indicate that the two curves are part of the same generator.
\end{example}

\begin{lemma}\label{image_zigzag}
Let $w \in W_{c}(\C_{n})$ be of type I.  Then $\theta(b_{w})=d_{w}$, where $\a(d_{w})=n(w)=1$.  If $w$ and $w'$ are both of type I with $w \neq w'$, then $d_{w} \neq d_{w'}$.  Moreover, if $s_{i} \in \L(w)$ (respectively, $\R(w)$), then there is a simple edge joining $i$ to $i+1$ (respectively, $i'$ to $(i+1)'$).  
\end{lemma}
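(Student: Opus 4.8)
The plan is to fix the unique reduced expression of $w$ (type I elements are rigid), to realize $\theta(b_w)=d_{i_1}\cdots d_{i_r}$ as the simple representation of $d_w$ obtained by stacking the corresponding simple diagrams, and then to read off the outcome geometrically. The governing structural fact is that, since $w$ is type I, Proposition \ref{zigzags} gives $n(w)=1$ and exhibits $w$ as one of the five explicit families of Definition \ref{def.zigzags}; because $n(w)$ is the size of a maximal antichain in the heap poset, the value $1$ means the heap is a single chain, so the reduced expression genuinely zigzags, moving between adjacent generators of the path graph of $\C_n$ at every step. Everything below exploits this chain structure.

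First I would prove $\theta(b_w)=d_w$ together with $\a(d_w)=1$ by induction on $l(w)$, peeling off the unique left descent (so that $\L(w)=\{s_i\}$ is the maximal element of the chain heap): writing $w=s_iw'$ with $w'$ again a type I element, we have $d_w=d_id_{w'}$, and by induction $d_{w'}$ is admissible with $\a(d_{w'})=1$. The inductive step is then an instance of Lemma \ref{powers_of_2_and_delta}, giving $d_id_{w'}=2^{c}\delta^{m}d_w$ with $d_w$ admissible and $\a(d_w)=1$, so the content is to check $c=m=0$. No factor of $\delta$ can appear, since a loop would require a cup of $d_i$ to close against a cap lying immediately below it with nothing propagating through, which the chain structure of the heap forbids. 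No factor of $2$ can appear either: such a factor arises only from merging two like decorations (via $\bcirc\bcirc\bcirc=2\,\bcirc$, $\btri\btri=2\,\btri$, and their open analogues) or from the braid relation $d_1d_2d_1d_2=2\,d_1d_2$ (resp. $d_nd_{n+1}d_nd_{n+1}=2\,d_nd_{n+1}$). The braid relation would force the subword $s_1s_2s_1s_2$ (resp. $s_ns_{n+1}s_ns_{n+1}$), which is not fully commutative and hence never occurs in a type I element; and the decorations deposited by successive bounces off an end of the graph remain in separate single-symbol blocks by axiom \ref{C5} and Remark \ref{comment.admissible}\ref{alternating.decorations.a=1}, so they never merge.

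Once $\a(d_w)=1$ is established, the ``Moreover'' clause falls out of the same simple representation: the topmost simple diagram in the stack is $d_i$ for the unique $s_i\in\L(w)$, whose north cup joins $i$ to $i+1$ and survives as the unique north non-propagating edge of $d_w$ (decorated by a single $\bcirc$ precisely when $i=1$, which is still a simple edge in our sense); the symmetric argument applied to the bottom of the stack gives the south cap joining $i'$ to $(i+1)'$ for $s_i\in\R(w)$. For injectivity I would then recover $w$ from $d_w$: the position of the north cup determines the leftmost generator $i$, the position of the south cap determines the rightmost generator $j$, and the number of $\btri$ decorations on the leftmost propagating edge together with the number of $\wtri$ decorations on the rightmost propagating edge count the bounces off the two ends of the graph. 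Since axiom \ref{C5} and Remark \ref{comment.admissible}\ref{alternating.decorations.a=1} force these decorations to alternate with counts differing by at most one, this data pins down $k$ and the $L/R$, even/odd family; as the five families of Definition \ref{def.zigzags} are distinguished exactly by $(i,j,k,\text{type})$, distinct type I elements produce distinct diagrams.

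The step I expect to be the main obstacle is the scalar-freeness in the second paragraph: verifying uniformly across all five families that neither a loop nor a decoration-merge is ever triggered, equivalently that the symbols accumulated at each end persist as single-symbol blocks separated in vertical position. This is precisely the phenomenon that axiom \ref{C5} and Remark \ref{comment.admissible}\ref{alternating.decorations.a=1} were designed to regulate, so the cleanest route is to carry the exact decoration pattern (alternating closed/open blocks on the two propagating edges, counts differing by at most one) as part of the inductive hypothesis, rather than trying to recover it after the fact.
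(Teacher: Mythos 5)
Your proposal is correct and is, in substance, the paper's own argument: the paper's entire proof of Lemma \ref{image_zigzag} is the single sentence that it ``follows easily from the definition of $\theta$,'' i.e., stack the simple diagrams of the unique reduced expression of the type I element and read off the result, which is exactly what your induction on length (peeling off the unique left descent) formalizes. You have also correctly isolated the one point the paper's ``easily'' conceals --- that no factor of $2$ or $\delta$ can arise because the only decoration merges that occur are $\bcirc\bcirc=\btri$ and $\wcirc\wcirc=\wtri$, the scalar-producing merges being blocked by the vertical-position interleaving of closed and open blocks that the $\a$-value-$1$ decoration conventions and axiom \ref{C5} were designed to enforce.
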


\begin{proof}
This lemma follows easily from the definition of $\theta$.
\end{proof}

\begin{lemma}\label{image_wsrm}
Let $w\in W_{c}(\C_{n})$ be a non-type I irreducible element.  Then $\theta(b_{w})=d_{w}$, where $\a(d_{w})=n(w)>1$.  If $w$ and $w'$ are both non-type I irreducible elements with $w \neq w'$, then $d_{w} \neq d_{w'}$.  Moreover, if $s_{i} \in \L(w)$ (respectively, $\R(w)$), then there is a simple edge joining $i$ to $i+1$ (respectively, $i'$ to $(i+1)'$). 
\end{lemma}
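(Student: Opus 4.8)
The plan is to prove the lemma by a case analysis driven by the classification of non-type I irreducible elements in Theorem \ref{affineCwsrm}. By that theorem, a non-type I irreducible $w \in W_c(\C_n)$ is either (i) a product $uv$ with $u$ a type $B$ irreducible element and $v$ a type $B'$ irreducible element having $\supp(u) \cap \supp(v) = \emptyset$ (and with $n(w) > 1$, so that $w$ is genuinely not of type I), or (iii) a type II element; the family (ii) of $\z$-elements is exactly the type I case excluded by hypothesis. For each remaining family I would fix a reduced expression $\w = s_{i_1} \cdots s_{i_r}$, form the concrete simple representation of $d_\w$ by stacking the simple diagrams $d_{i_1}, \dots, d_{i_r}$, and read the resulting diagram directly off the heap data recorded in Remark \ref{typeI.n(w)=1} and the type II heap pictures. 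The type $B$/$B'$ building blocks of family (i) are described by Theorem \ref{Bwsrm}, and their diagrams live in the subalgebras $\DTL(B_n)$ and $\DTL(B'_n)$.

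First I would establish $\theta(b_w) = d_w$ with no scalar, i.e. that the exponents $k, m$ produced by Lemma \ref{powers.of.2.and.delta.for.images.of.monomials} both vanish. Since $\theta(b_w) = d_{i_1} \cdots d_{i_r}$, it suffices to check that concatenating the simple diagrams yields the admissible diagram $d_w$ after applying only scalar-free moves, namely planar isotopy and the relations $\bcirc\bcirc = \btri$ and $\wcirc\wcirc = \wtri$, while never creating a loop (which would contribute a factor $\delta$) and never triggering one of the factor-$2$ decoration relations such as $\btri\btri = 2\btri$ or $\bcirc\btri = 2\bcirc$. The verification uses the fact (Proposition \ref{diagram.independence}) that admissible diagrams have maximal blocks equal to nonidentity basis elements of $\V$, so the check amounts to confirming, family-by-family, that the decorations accumulating on each edge alternate correctly and that the cups and propagating strands close up without spurious loops. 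Because the admissible diagrams form a basis of $\D_n$ (Theorem \ref{module.of.admissibles.is.the.algebra.gen.by.simples}) and the reductions that carry extra scalars strictly decrease the number of decorations and loops, the terminal admissible diagram must be reached with $k = m = 0$.

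Next I would compute $\a(d_w)$ and compare it with $n(w)$. The a-value counts the non-propagating (cup) edges in the north face of $d_w$, while $n(w)$ is the size of a maximal antichain of $H(w)$ (the remark following Definition \ref{n-value.def}). Reading the north face of the simple representation against the top of the canonical heap puts the cups in bijection with a maximal antichain, so $\a(d_w) = n(w)$, and $n(w) > 1$ is precisely the statement that $w$ is not of type I (Proposition \ref{zigzags}). The descent-set claim is handled in the same breath: $s_i \in \L(w)$ means $s_i$ occupies the top row $\r_1$ of the canonical representation of $H(w)$, and such a generator contributes exactly a simple edge joining $i$ to $i+1$ in the north face of the simple representation; the statement for $\R(w)$ is the mirror image in the south face.

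Finally, for injectivity I would argue that $d_w$ determines $w$. Since $w$ is fully commutative it is determined by its heap $H(w)$, and the simple representation exhibits the propagating edges and cups of $d_w$ as a faithful record of that heap, so distinct non-type I irreducibles give non-isotopic admissible diagrams; in practice this is verified by observing that the classifying parameters — the factors $u, v$ in family (i), and the sub-family label together with the repetition exponent $k$ for the type II elements — are each visible in the shape $r(d_w)$, the open/closed decorations, and the loop count of $d_w$. I expect the main obstacle to be the bookkeeping in the type II case, where the six sub-families $\x_\O$, $\x_\E$, $\y_k$, $\x_\E\y_k$, $\x_\E\y_k\x_\O$, $\y_k\x_\O$ and the parity of $n$ must each be treated, and where the alternation of $\btri$ and $\wtri$ decorations on the strands must be tracked carefully to confirm simultaneously that the diagram is admissible and that no factor-$2$ collision occurs during the concatenation.
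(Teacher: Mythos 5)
Your strategy---invoke the classification of Theorem \ref{affineCwsrm} and then compute $\theta(b_{w})$ directly, family by family, from the simple diagrams---is exactly what the paper does (its entire proof is the sentence that the lemma ``follows from definition of $\theta$ and the classification of the irreducible elements''). However, one step of your verification criterion is wrong and would fail on one of the two families you must treat. You propose to certify that no powers of $2$ or $\delta$ appear by checking that the concatenation ``never creat[es] a loop (which would contribute a factor $\delta$).'' There are two problems with this. First, it is not true that every loop contributes $\delta$: by Definition \ref{big.diagram.alg.defn}, only undecorated loops and loops carrying a single type of decoration reduce to scalars, while a loop decorated by both closed and open decorations is irreducible in $\widehat{\P}_{n+2}^{LR}(\V)$ and is retained---this is the loop of axiom \ref{C1}, and keeping such loops is precisely what makes $\D_{n}$ infinite rank (see Remark \ref{LR-decorated}\ref{undammed.dec.loops} and the proof of Proposition \ref{diagram.independence}). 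Second, these loops genuinely arise in your case analysis: when $n$ is even the type II diagrams are undammed, and already $d_{\y_{2}}=d_{\y_{1}}d_{\y_{1}}$ (take $n=2$) closes the caps of the top factor against the cups of the bottom factor into a loop whose decoration word is, up to cyclic equivalence, $\bcirc\bcirc\wcirc\wcirc=\btri\wtri$; in general $d_{\y_{k}}$ contains $k-1$ such loops. So the check you describe is unsatisfiable for that family, and run as written it would force you to report a factor of $\delta$ exactly where the lemma asserts there is none. Your own injectivity argument betrays the inconsistency: you use ``the loop count of $d_{w}$'' to separate the $\y_{k}$, which is only possible because these loops survive with no scalar attached.

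The repair is local: the correct criterion is that every loop formed during concatenation carries both types of decorations (hence is an axiom \ref{C1} loop, producing no scalar), and that whenever blocks are conjoined, the resulting words reduce to basis elements of $\V$ using only the scalar-free relations $\bcirc\bcirc=\btri$ and $\wcirc\wcirc=\wtri$---for instance, on the unique propagating edge ($n$ odd) one meets words such as $\wcirc\bcirc\bcirc\wcirc\wcirc\bcirc=\wcirc\btri\wtri\bcirc$, and around a loop ($n$ even) words such as $\bcirc\bcirc\wcirc\wcirc$, but never $\bcirc\bcirc\bcirc$ or $\btri\btri$, which are what would trigger factors of $2$. With that correction your plan goes through and coincides with the paper's intent. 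Two smaller cautions: the cup-versus-maximal-antichain ``bijection'' you invoke to get $\a(d_{w})=n(w)$ is not established anywhere in the paper at this stage, so it should be phrased as a per-family computation rather than a general principle; and family (ii) is not ``exactly'' the type I case---single generators, elements such as $s_{1}s_{2}$ inside family (i), and $\x_{\E}$ when $n=2$ are also of type I---though your restriction to $n(w)>1$ renders this harmless.
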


\begin{proof}
This lemma follows from definition of $\theta$ and the classification of the irreducible elements in Theorem \ref{affineCwsrm}.
\end{proof}

\begin{remark}\label{image.of.irred.is.single.diagram}
The upshot of the previous two lemmas is that the image of a monomial indexed by any type I element or any irreducible element is a single admissible diagram (i.e., there are no powers of 2 or $\delta$). 
\end{remark}

Our immediate goal is to show that $\theta(b_{w})=d_{w}$ for any $w \in W_{c}(\C_{n})$.

\end{section}

\begin{section}{Preparatory lemmas}

The next lemma is \cite[Lemma 2.9]{Shi.J:C}.

\begin{lemma}\label{star.ops.preserve.n-value}
Let $X$ be a Coxeter graph and let $w \in W_{c}(X)$.  If $\bigstar^{L}_{s,t}(w)$ (respectively, $\bigstar^{R}_{s,t}(w)$) is defined, then $n(w)=n\left(\bigstar^{L}_{s,t}(w)\right)$ (respectively, $n(w)=n\left(\bigstar^{R}_{s,t}(w)\right)$). \hfill $\Box$
\end{lemma}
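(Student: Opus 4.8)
The plan is to reinterpret $n(w)$ geometrically and then track a single deletion in the heap. By Definition~\ref{n-value.def} together with the remark following it, $n(w)$ equals the width of the heap poset of $w$, i.e.\ the maximum size of an antichain; the same description holds for an arbitrary Coxeter graph $X$, since a reduced factorization $w=uxv$ with $x$ a product of $k$ commuting generators is precisely the data of a $k$-element antichain sitting at a common level of the heap, and conversely every antichain of the heap can be coalesced to a common level and read off as such an $x$. I would take this width description as the working definition of $n(w)$ throughout the proof, and I would treat only the left case, the right case being symmetric.

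Next I would identify exactly what a left star reduction does to the heap. Using the characterization recorded in Chapter~1 --- that for $m(s,t)\geq 3$ the element $w$ is left star reducible by $s$ with respect to $t$ if and only if $w=stv$ is reduced --- we have $\bigstar^{L}_{s,t}(w)=sw=tv$, and the heap of $tv$ is obtained from the heap of $w$ by deleting the single topmost entry $\sigma$ labeled $s$ (the two $s$-entries in one column are always comparable, so this $\sigma$ is well defined and maximal). Passing to the induced subposet on the remaining entries cannot increase the width, since every antichain of the subposet is already an antichain of the full heap; this gives the easy inequality $n\!\left(\bigstar^{L}_{s,t}(w)\right)\leq n(w)$ at once.

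The heart of the argument, and the step I expect to require the most care, is the reverse inequality. Let $A$ be a maximum antichain in the heap of $w$, so $|A|=n(w)$. If $\sigma\notin A$ then $A$ survives verbatim in the heap of $tv$ and we are done, so suppose $\sigma\in A$. I would replace $\sigma$ by the entry $\tau$ labeled $t$ lying immediately below it (the $t$ in $w=stv$). The key local fact to establish is that in the heap of $w$ the entry $\tau$ is covered only by $\sigma$: everything occurring above $\tau$ in the expression $stv$ is just the leading $s$, so $\sigma$ is the unique upper cover of $\tau$. From this it follows that no element of $A\setminus\{\sigma\}$ is comparable to $\tau$, for anything above $\tau$ must pass through $\sigma$ and hence be $\geq\sigma$, while anything below $\tau$ is below $\sigma$, and either conclusion contradicts $A$ being an antichain containing $\sigma$. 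Therefore $A'=(A\setminus\{\sigma\})\cup\{\tau\}$ is an antichain of size $n(w)$ that avoids $\sigma$, hence an antichain in the heap of $tv$, giving $n\!\left(\bigstar^{L}_{s,t}(w)\right)\geq n(w)$. Combining the two inequalities yields equality, and the statement for $\bigstar^{R}_{s,t}$ follows from the upside-down version of the same argument, deleting the bottommost $s$-entry and exchanging it with the $t$ immediately above it.
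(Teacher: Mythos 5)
Your proof is correct, but there is nothing in the paper to compare it against: the thesis does not prove this lemma, it imports it verbatim as \cite[Lemma 2.9]{Shi.J:C}, which is why the statement ends in a box with no argument attached. Your heap-theoretic argument is therefore a genuine, self-contained substitute, and it is sound. The two halves are exactly what is needed. Deleting the entry $\sigma$ corresponding to the leading $s$ of the reduced word $stv$ yields the heap of $tv$ as an induced subposet (any chain between two positions $\geq 2$ uses only intermediate positions, so no comparability can route through position $1$), hence antichains of $H(tv)$ are antichains of $H(w)$ and $n\left(\bigstar^{L}_{s,t}(w)\right)\leq n(w)$. For the reverse inequality, your exchange $(A\setminus\{\sigma\})\cup\{\tau\}$ works precisely because $\sigma$ is the \emph{only} element of $H(w)$ lying above $\tau$ (only the first letter of $stv$ precedes the second, and $m(s,t)\geq 3$ makes them comparable), so any other $a\in A$ comparable to $\tau$ would satisfy either $a>\tau$, forcing $a=\sigma$, or $a<\tau<\sigma$, contradicting that $A$ is an antichain containing $\sigma$. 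Two inputs you invoke should be flagged as standard facts rather than proved in passing, and both sit at the level of rigor the thesis itself adopts: the identification of $n(w)$ with the width of the heap poset (asserted without proof in the remark following Definition \ref{n-value.def}, and needed here for arbitrary $X$; your coalescing justification via the correspondence between linear extensions of the heap and reduced words is the right one), and the fact that star operations preserve full commutativity, so that the heap of $tv$ is well defined (recorded in Chapter 1). What the citation buys the thesis is brevity and deference to the literature; what your proof buys is self-containedness and visible generality --- it works for any Coxeter graph, matching the statement of the lemma, even though Definition \ref{n-value.def} was stated only for $\C_{n}$.
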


\begin{corollary}\label{weak.star.ops.preserve.n-value}
Let $w \in W_{c}(\C_{n})$.  If $\star^{L}_{s,t}(w)$ (respectively, $\star^{R}_{s,t}(w)$) is defined, then $n(w)=n\left(\star^{L}_{s,t}(w)\right)$ (respectively, $n(w)=n\left(\star^{R}_{s,t}(w)\right)$).
\end{corollary}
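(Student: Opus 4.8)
The plan is to derive this immediately from Lemma \ref{star.ops.preserve.n-value} by observing that a (defined) weak star reduction is literally a special case of an ordinary star reduction producing the same group element. So rather than reprove anything about $n$-values, I would simply verify the compatibility of the two notions and invoke the cited lemma.

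First I would unwind the definitions. Suppose $\star^{L}_{s,t}(w)$ is defined for some $w \in W_{c}(\C_{n})$. By condition (1) in the definition of left weak star reducible, $w$ is left star reducible by $s$ with respect to $t$; hence $\bigstar^{L}_{s,t}(w)$ is also defined. Moreover, both partial maps are given by left multiplication by $s$, so that $\star^{L}_{s,t}(w) = sw = \bigstar^{L}_{s,t}(w)$ as elements of $W(\C_{n})$. By Remark \ref{weak=ordinary}(2), the element $sw$ lies in $W_{c}(\C_{n})$, so the $n$-value of the right-hand side is well-defined. I would then apply Lemma \ref{star.ops.preserve.n-value} with $X = \C_{n}$: since $\bigstar^{L}_{s,t}(w)$ is defined, the lemma gives $n(w) = n\left(\bigstar^{L}_{s,t}(w)\right)$, and combining this with the identification $\bigstar^{L}_{s,t}(w) = \star^{L}_{s,t}(w)$ yields $n(w) = n\left(\star^{L}_{s,t}(w)\right)$, as desired. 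The right-handed statement follows by the symmetric argument, replacing left multiplication and $\L$ with right multiplication and $\R$ throughout.

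There is essentially no obstacle here: the entire content resides in the cited Lemma \ref{star.ops.preserve.n-value}, and the only thing to check is the purely definitional fact that the weak star operation and the ordinary star operation agree, as partial maps, wherever the former is defined. The sole point requiring a moment's care is confirming that condition (2) in the weak-star definition ($tw \notin W_{c}(\C_{n})$) is irrelevant to the value produced — it only restricts \emph{when} the operation is defined, not \emph{what} it outputs — so that no discrepancy between $\star^{L}_{s,t}(w)$ and $\bigstar^{L}_{s,t}(w)$ can arise.
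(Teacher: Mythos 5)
Your proposal is correct and is exactly the paper's argument: the paper proves this corollary in one line by noting that weak star reductions, when defined, are a special case of ordinary star reductions, and then invoking Lemma \ref{star.ops.preserve.n-value}. Your write-up simply makes explicit the definitional unwinding (condition (1) of the weak star definition, the identification $\star^{L}_{s,t}(w)=sw=\bigstar^{L}_{s,t}(w)$, and full commutativity of $sw$) that the paper leaves implicit.
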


\begin{proof}
This follows from Lemma \ref{star.ops.preserve.n-value} since weak star reductions (when defined) are a special case of ordinary star reductions.
\end{proof}

We will state an ``if and only if'' version of the next lemma later (see Lemma \ref{diagram.descent.set}); the ``only if'' direction requires results that will depend on results that we have not yet proved.  We do, however, need the ``if'' direction to prove Lemma \ref{weak.star.preserve.a.value} which follows.

\begin{lemma}\label{simple.edge.in.N.face}
Let $w \in W_{c}(\C_{n})$.  If $s_{i} \in \L(w)$ (respectively, $\R(w)$), then there is a simple edge joining node $i$ to node $i+1$ (respectively, node $i'$ to node $(i+1)'$) in the north (respectively, south) face of $d_{w}$.
\end{lemma}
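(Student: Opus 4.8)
The plan is to combine the descent-set factorization of $w$ with the fact that the reductions defining $\widehat{\P}_{n+2}^{LR}(\V)$ are local, so they cannot disturb a self-contained boundary cap. I treat the left-descent case; the right-descent case is entirely symmetric, interchanging the north and south faces and replacing the cap of $d_i$ by its cup (formally via the top-to-bottom flip anti-automorphism of the diagram algebra).

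First I would invoke the fact recorded after the definition of $\L(w)$: $s_i \in \L(w)$ if and only if $w$ has a reduced expression beginning with $s_i$. So write $w = s_i v$ (reduced) with $v \in W_c(\C_n)$ and $l(v) = l(w)-1$. Because this product is reduced, $b_w = b_i b_v$, and applying $\theta$ gives
$$\theta(b_w) = d_i\,\theta(b_v) = 2^a \delta^b (d_i d_v),$$
where $\theta(b_v) = 2^a \delta^b d_v$ for the admissible diagram $d_v$. By Lemma \ref{powers_of_2_and_delta} the product $d_i d_v$ equals $2^c \delta^e d'$ for a single admissible diagram $d'$; comparing with $\theta(b_w) = 2^k \delta^m d_w$ and using that the admissible diagrams are linearly independent (Proposition \ref{prop.admissible.basis}), I conclude $d_w = d'$. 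Thus it suffices to exhibit a simple edge joining $i$ to $i+1$ in the north face of $d_i d_v$.

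The key step is to inspect the concatenation of $d_i$ on top of $d_v$ at the level of concrete diagrams. In each simple diagram $d_i$ the edge joining the north nodes $i$ and $i+1$ is a cap lying entirely in the top half of the box: it is undecorated when $1 < i < n+1$, carries a single $\bcirc$ when $i=1$, and carries a single $\wcirc$ when $i = n+1$ --- which is precisely what it means to be a simple edge joining $i$ to $i+1$. This cap never meets the gluing line, so in the concatenate it persists as an edge joining north node $i$ to north node $i+1$, regardless of how the remaining edges of $d_i$ connect into $d_v$.

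Finally I would argue that the reductions of Definition \ref{big.diagram.alg.defn} used to convert the raw concatenate into $2^c \delta^e d'$ leave this cap untouched. These reductions are local: each either deletes a loop (the cap is not a loop) or simplifies a block of adjacent decorations on a single edge. The cap is undecorated or carries a single boundary decoration, and since it does not reach the gluing line no conjoining of blocks occurs along it; a lone $\bcirc$ or $\wcirc$ triggers none of the relations, which require two or three adjacent like decorations. Hence the cap survives unchanged as a simple edge joining $i$ to $i+1$ in $d' = d_w$. The one point requiring care is exactly this last locality argument --- verifying that passing from the concatenate to its reduced admissible representative never alters the boundary cap --- but once the self-contained nature of the cap is observed, this obstacle is slight, which is consistent with this being only the easy (``if'') direction of the full characterization deferred to Lemma \ref{diagram.descent.set}.
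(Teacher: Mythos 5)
Your proposal is correct and follows essentially the same route as the paper's proof: write $w = s_i v$ (reduced), apply $\theta$ and Lemma \ref{powers.of.2.and.delta.for.images.of.monomials} to reduce to the concatenation $d_i d_v$, and observe that no relations can disturb the cap joining $i$ to $i+1$ in the north face of $d_i$. The only difference is that you spell out in detail the locality argument (the cap never meets the gluing line and a lone $\bcirc$ or $\wcirc$ triggers no relation) that the paper compresses into the single sentence ``there are no relations to interact with the simple edge.''
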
 

\begin{proof}
Assume that $s_{i} \in \L(w)$.  Then we can write $w=s_{i}v$ (reduced).  By Lemma \ref{powers.of.2.and.delta.for.images.of.monomials} applied to $\theta(b_{v})$, we must have
\begin{align*}
\theta(b_{w})&=\theta(b_{i}b_{v})\\
&=\theta(b_{i})\theta(b_{v})\\
&=d_{i} 2^{k}\delta^{m}d_{v}\\
&=2^{k}\delta^{m}d_{i}d_{v}.
\end{align*}
This implies that we obtain $d_{w}$ by concatenating $d_{i}$ on top of $d_{v}$.  In this case, there are no relations to interact with the simple edge from $i$ to $i+1$ in the north face of $d_{i}$.  So, we must have a simple edge joining $i$ to $i+1$ in the north face of $d_{w}$.  

\bigskip

The proof that $s_{i} \in \R(w)$ implies that there is a simple edge joining $i'$ to $(i+1)'$ is symmetric.
\end{proof}

\begin{lemma}\label{weak.star.preserve.a.value}
Suppose $w \in W_{c}(\C_{n})$ is left weak star reducible by $s$ with respect to $t$ to $v$.  Then $\a(d_{w})=\a(d_{v})$.
\end{lemma}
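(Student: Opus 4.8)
The plan is to transfer the problem from diagrams to the monomial basis via the homomorphism $\theta$ of Proposition \ref{surjective.homomorphism}, and to exploit two \emph{reciprocal} monomial identities relating $b_w$ and $b_v$. Write $s = s_i$ and $t = s_j$, so that $|i-j| = 1$. Since $w$ is left weak star reducible by $s$ with respect to $t$, we have $v = \star^{L}_{s,t}(w) = sw$ with $s \in \L(w)$, hence $l(v) = l(w)-1$ and $\w = s\mathsf{v}$ (for $\mathsf{v}$ a reduced expression of $v$) is a reduced, fully commutative expression for $w$; consequently $b_w = b_s b_v$. On the other hand, Remark \ref{monomial.weak.star.reductions} gives $b_t b_w = 2^{c} b_v$, where $c = 0$ if $m(s,t) = 3$ and $c = 1$ if $m(s,t) = 4$ (equivalently, combine Remark \ref{monomial.weak.star.reductions} with Lemma \ref{weak.star.reverse}).

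Next I would apply $\theta$ to each identity. Recall from Lemma \ref{powers.of.2.and.delta.for.images.of.monomials} that $\theta(b_w) = 2^{k}\delta^{m}d_w$ and $\theta(b_v) = 2^{k'}\delta^{m'}d_v$. Applying $\theta$ to $b_w = b_s b_v$ gives $2^{k}\delta^{m}d_w = d_i\,\theta(b_v) = 2^{k'}\delta^{m'}(d_i d_v)$, while Lemma \ref{powers_of_2_and_delta} writes $d_i d_v = 2^{a}\delta^{b}d^{*}$ for some admissible $d^{*}$. Since the admissible diagrams form a $\Z[\delta]$-basis for $\D_n$ (Theorem \ref{module.of.admissibles.is.the.algebra.gen.by.simples}), comparing the two expressions for $\theta(b_w)$ forces $d^{*} = d_w$; that is, the diagram underlying the concatenation $d_i d_v$ is exactly $d_w$, so $\a(d_i d_v) = \a(d_w)$. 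The identical argument applied to $b_t b_w = 2^{c}b_v$ shows that the diagram underlying $d_j d_w$ is $d_v$, so $\a(d_j d_w) = \a(d_v)$.

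Finally I would invoke monotonicity of the $\a$-value under concatenation. The general fact recorded in the proof of Lemma \ref{a-value=1} states $\a(d'd) \ge \a(d')$ (with $d'$ on top); since $\a(d)$ equals both the number of non-propagating edges in the north face and the number in the south face, $\a$ is invariant under the vertical flip $d \mapsto \bar d$, and $\overline{d'd} = \bar{d}\,\bar{d'}$, so the symmetric inequality $\a(d'd) \ge \a(d)$ holds as well (alternatively, multiplying by a single simple diagram changes the number of propagating edges by $0$ or $-2$). Applying this to the two relations above yields $\a(d_w) = \a(d_i d_v) \ge \a(d_v)$ and $\a(d_v) = \a(d_j d_w) \ge \a(d_w)$, whence $\a(d_w) = \a(d_v)$, as required. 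I expect the only delicate point to be the bookkeeping in the second paragraph, namely confirming that the scalar prefactors cancel so that $d_i d_v$ and $d_j d_w$ have $d_w$ and $d_v$ respectively as their underlying admissible diagrams; this is precisely where the basis property of the admissible diagrams is essential, and everything else is a direct application of the cited results.
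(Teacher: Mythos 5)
Your proof is correct, but its decisive step is genuinely different from the paper's. Both arguments begin the same way: apply $\theta$ to the identity $b_{t}b_{w}=2^{c}b_{v}$ of Remark \ref{monomial.weak.star.reductions}, and use Lemma \ref{powers.of.2.and.delta.for.images.of.monomials} together with the basis property to conclude that the concatenation $d_{t}d_{w}$ reduces to a scalar multiple of $d_{v}$. From there the paper argues geometrically: since $s=s_{i}\in \L(w)$, Lemma \ref{simple.edge.in.N.face} supplies a simple edge joining $i$ to $i+1$ in the north face of $d_{w}$, and an inspection of the edge configuration at nodes $i$, $i+1$, $i+2$ shows that stacking $d_{t}$ on top of $d_{w}$ creates no new non-propagating edge in the north face, giving $\a(d_{t}d_{w})=\a(d_{w})$ in one stroke. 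You avoid that local analysis (and Lemma \ref{simple.edge.in.N.face} entirely) by introducing the reciprocal identity $b_{w}=b_{s}b_{v}$, which the paper never uses, so that $d_{w}$ underlies the concatenation $d_{i}d_{v}$ while $d_{v}$ underlies $d_{j}d_{w}$; you then close with the two-sided monotonicity $\a(d'd)\geq \max\left(\a(d'),\a(d)\right)$, where the top-face inequality is the ``general fact'' quoted in the proof of Lemma \ref{a-value=1} and your vertical-flip (or propagating-edge-count) derivation of the bottom-face inequality is valid. Your route buys a configuration-free sandwich argument that is insensitive to where decorations sit; the paper's route needs only one monomial identity and yields explicit local information about $d_{w}$, in the style it reuses later (e.g., in Proposition \ref{monomials.map.to.single.diagrams}). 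The one point of imprecision in your write-up --- reading $\a(d_{i}d_{v})$ as the $\a$-value of the underlying concatenation even though, as an element of $\D_{n}$, the product $d_{i}d_{v}$ is a scalar multiple of an admissible diagram --- is harmless: decoration relations and loop removals never change which nodes are joined, and loops do not meet the north face, so they cannot affect the $\a$-value.
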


\begin{proof}
Since $w$ is left weak star reducible by $s$ with respect to $t$ to $v$, by Remark \ref{monomial.weak.star.reductions}, we have $b_{t}b_{w}=2^{c}b_{v}$, where $c \in \{0,1\}$ and $c=1$ if and only if $m(s,t)=4$.  This implies that
	$$\theta(b_{t}b_{w})=\theta(2^{c}b_{v})=2^{c}2^{k}\delta^{m}b_{v},$$
where $k,m \in \Z^{+}\cup\{0\}$.  But, on the other hand, we have
\begin{align*}
\theta(b_{t}b_{w})&=\theta(b_{t})\theta(b_{w})\\
&=d_{t}2^{k'}\delta^{m'}d_{w}\\
&=2^{k'}\delta^{m'}d_{t}d_{w},
\end{align*}
where $k',m' \in \Z^{+}\cup\{0\}$.  Therefore, we have
	$$2^{c}2^{k}\delta^{m}d_{v}=2^{k'}\delta^{m'}d_{t}d_{w}.$$
This implies that when we multiply $d_{t}$ times $d_{w}$, we obtain a scalar multiple of $d_{v}$.  Also, since $w$ is left weak star reducible by $s$ with respect to $t$, we must have $s \in \L(w)$.  Suppose that $s=s_{i}$.  By Lemma \ref{simple.edge.in.N.face}, there must be a simple edge joining $i$ to $i+1$ in north face of $d_{w}$. Without loss of generality, assume that $t=s_{i+1}$; the case with $t=s_{i-1}$ follows symmetrically.  Then the edge configuration at nodes $i$, $i+1$, and $i+2$ of $d_{w}$ must be as follows:
$$\begin{tabular}[c]{@{}c@{}}
%-- New mfpic environment, number 175 of 189. (size of end split: 2, should be 2)  ------------------->
\includegraphics{ThesisFigs2.076}
\end{tabular},$$
where $x=\bcirc$ if and only if $i=1$ and is trivial otherwise.  Also, the edge leaving node $i+2$ may be decorated and may be propagating or non-propagating.  Then multiplying $d_{w}$ on the left by $d_{t}=d_{i+1}$, we see that the resulting diagram has the same $\a$-value as $d_{w}$.  Therefore, $\a(d_{v})=\a(d_{w})$, as desired.
\end{proof}

\begin{remark}
Lemma \ref{weak.star.preserve.a.value} has an analogous statement involving right weak star reductions.
\end{remark}

\begin{lemma}\label{a=1.implies.typeI}
Let $w \in W_{c}(\C_{n})$.  Then $\a(d_{w})=1$ if and only if $w$ is of type I.
\end{lemma}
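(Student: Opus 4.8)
The plan is to prove the two directions separately, with essentially all the substance lying in the converse. The forward implication is already in hand: if $w$ is of type I, then Lemma \ref{image_zigzag} gives $\theta(b_w)=d_w$ with $\a(d_w)=n(w)=1$, so in particular $\a(d_w)=1$. For the converse the key observation is that both $\a(d_w)$ and $n(w)$ are invariant under weak star reductions. Lemma \ref{weak.star.preserve.a.value}, together with its right-handed analogue, shows that a single weak star reduction preserves $\a(d_w)$, while Corollary \ref{weak.star.ops.preserve.n-value} shows that it preserves $n(w)$. This lets me reduce the statement to the case of irreducible elements, which are completely classified.

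First I would argue that every $w\in W_c(\C_n)$ is weak star reducible to an irreducible element $u$. Since each weak star reduction decreases length by exactly $1$ and sends fully commutative elements to fully commutative elements (Remark \ref{weak=ordinary}(2)), repeatedly applying weak star reductions starting from $w$ produces a strictly length-decreasing sequence that must terminate at an element admitting no further weak star reduction, i.e.\ at an irreducible element $u$ (the sequence is permitted to be trivial when $w$ is already irreducible). Chaining Lemma \ref{weak.star.preserve.a.value} and its right analogue along this sequence yields $\a(d_w)=\a(d_u)$, and chaining Corollary \ref{weak.star.ops.preserve.n-value} yields $n(w)=n(u)$.

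Next I would dispose of the irreducible case by observing that for any irreducible $u$ one has $\a(d_u)=n(u)$: if $u$ is of type I this is Lemma \ref{image_zigzag}, and if $u$ is not of type I this is Lemma \ref{image_wsrm}, and these two cases exhaust the irreducible elements. Now assume $\a(d_w)=1$. Then $\a(d_u)=1$, hence $n(u)=\a(d_u)=1$, and therefore $n(w)=n(u)=1$. By the converse half of Proposition \ref{zigzags}, the condition $n(w)=1$ forces $w$ to be one of the type I elements, which completes the proof.

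The argument is short precisely because the real work has been carried out in the preparatory lemmas; the only points requiring care are bookkeeping ones. I must check that the single-step invariance results of Lemma \ref{weak.star.preserve.a.value} and Corollary \ref{weak.star.ops.preserve.n-value} genuinely compose along an arbitrary reduction chain that may alternate between left and right reductions, and that the chain terminates at an irreducible element rather than cycling (guaranteed by the strict length decrease). The main conceptual obstacle, already resolved upstream, is the identity $\a(d_u)=n(u)$ for irreducible $u$; once that identity and the two invariance statements are granted, everything else is formal.
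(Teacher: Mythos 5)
Your proof is correct and follows essentially the same route as the paper: reduce to an irreducible element by a (length-decreasing) chain of weak star reductions, use Lemma \ref{weak.star.preserve.a.value} and Corollary \ref{weak.star.ops.preserve.n-value} to transport $\a$-value and $n$-value along the chain, settle the irreducible case with Lemmas \ref{image_zigzag} and \ref{image_wsrm}, and conclude via $n(w)=1$. The only differences are cosmetic: you make explicit the termination of the reduction sequence and the final appeal to Proposition \ref{zigzags}, both of which the paper leaves implicit.
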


\begin{proof}
First, assume that $\a(d_{w})=1$.  If $w$ is irreducible, then by Lemmas \ref{image_zigzag} and \ref{image_wsrm}, $w$ must be of type I.  Assume that $w$ is not irreducible.  Then there exists a sequence of weak star reductions that reduce $w$ to an irreducible element.  By Lemma \ref{weak.star.preserve.a.value}, each diagram corresponding to the elements of this sequence have the same $\a$-value as $d_{w}$, namely $\a$-value 1.  Since the sequence of weak star operations terminates at an irreducible element and the diagram corresponding to this element has $\a$-value 1, the irreducible element must have $n$-value 1, as well (again, by Lemmas \ref{image_zigzag} and \ref{image_wsrm}).  Since weak star operations preserve the $n$-value (Corollary \ref{weak.star.ops.preserve.n-value}), it must be the case that $n(w)$ was equal to 1 to begin with.  Therefore, $w$ is of type I.  Conversely, if $w$ is of type I, then according to Lemma \ref{image_zigzag}, $\a(d_{w})=1$.
\end{proof}

\begin{lemma}\label{diagram.version.main.zigzag.lemma}
Let $w \in W_{c}(\C_{n})$.  Suppose that there exists $i$ with $1<i < n+1$ such that $s_{i+1}$ does not occur between two consecutive occurrences of $s_{i}$ in $w$.    Then one or both of the following must be true about $d_{w}$:
\begin{enumerate}[label=\rm{(\roman*)}]
\item the western end of the simple representation of $d_{w}$ is vertically equivalent to
$$\begin{tabular}[c]{@{}c@{}}
%-- New mfpic environment, number 176 of 189. (size of end split: 2, should be 2)  ------------------->
\includegraphics{ThesisFigs2.077}
\end{tabular},$$
where the vertical dashed lines in the figure indicate that the two curves are part of the same generator $d_{j}$ and the free horizontal arrow indicates a continuation of the pattern of the same shape.  Furthermore, there are no other occurrences of the generators $d_{1}, \dots, d_{i}$ in the simple representation of $d_{w}$.

\item $\a(d_{w})=1$; 

\end{enumerate}
\end{lemma}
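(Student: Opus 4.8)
The plan is to deduce this diagrammatic statement from its purely combinatorial counterpart, Lemma \ref{main_zigzag_lemma}, whose hypotheses coincide word-for-word with those stated here. That lemma furnishes exactly two alternatives for $w$: either $w$ is of type I, or $w = u\z^{L,1}_{i,i}v$ (reduced) with $\supp(u), \supp(v)\subseteq\{s_{i+1},\dots,s_{n+1}\}$. I would treat these separately. In the first alternative, Lemma \ref{a=1.implies.typeI} gives at once that $\a(d_w)=1$, which is conclusion (ii). Thus all the remaining work lies in showing that the factorization $w = u\z^{L,1}_{i,i}v$ forces conclusion (i).

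Assuming that factorization, the ``no other occurrences'' clause is nearly automatic: since none of $s_1,\dots,s_i$ lies in $\supp(u)\cup\supp(v)$, every simple diagram among $d_1,\dots,d_i$ appearing in any simple representation of $d_w$ must arise from the middle factor $\z^{L,1}_{i,i}=s_i s_{i-1}\cdots s_2 s_1 s_2\cdots s_{i-1}s_i$. To pin down the western end up to vertical equivalence, I would fix a reduced expression for $w$ in which $\z^{L,1}_{i,i}$ occurs as a contiguous subword (legitimate because it is a subword of $w$), recalling that the simple representation is well-defined up to vertical equivalence precisely because vertical equivalence respects commutation. Since every generator occurring in $u$ and $v$ lies in $\{s_{i+1},\dots,s_{n+1}\}$, the simple diagrams $d_{i+1},\dots,d_{n+1}$ that build $d_u$ and $d_v$ act only on columns $i+1,\dots,n+2$ and contribute nothing but straight propagating strands in columns $1,\dots,i$. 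Hence the western columns of the concrete stacking are determined entirely by the simple diagrams coming from $\z^{L,1}_{i,i}$, merely extended upward through $d_u$ and downward through $d_v$ by vertical strands.

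It then remains to read off the stacking of $d_i d_{i-1}\cdots d_1 d_2\cdots d_i$ directly: the descending run $d_i\cdots d_1$ followed by the ascending run $d_2\cdots d_i$ stacks a sequence of cup--cap generators whose arcs nest into the zigzag of the displayed figure, with the single closed decoration contributed by $d_1$ appearing as drawn, the dashed lines merely recording that the two arms meeting at each column belong to a common generator $d_j$, and the horizontal arrow marking where column $i+1$ joins the eastern part assembled from $u$ and $v$. I expect the main obstacle to be this final verification: one must check that no decoration relations or loop removals are triggered within the western columns, so that the \emph{concrete} simple representation is vertically equivalent to the picture exactly as drawn rather than to some partially reduced form of it. This should follow from the rigidity of the type I element $\z^{L,1}_{i,i}$ (it has a unique reduced expression, so there are no relations among its strands to apply) together with the support restriction that keeps $u$ and $v$ out of columns $1,\dots,i$.
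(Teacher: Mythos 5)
Your proposal is correct and takes essentially the same route as the paper: the paper's proof is a single sentence reducing the statement to Lemma \ref{main_zigzag_lemma} and applying $\theta$ to the monomial indexed by $\z_{i,i}^{L,1}$, which is exactly the two-case reduction you carry out (type I giving $\a(d_{w})=1$ via Lemma \ref{a=1.implies.typeI}, the factorization $w=u\z^{L,1}_{i,i}v$ giving the western-end picture and the absence of other occurrences of $d_{1},\dots,d_{i}$). Your closing worry about decoration relations or loop removals being triggered is actually vacuous, since the simple representation is by definition the concrete stacking with no relations applied, but noting and resolving it does no harm.
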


\begin{proof}
This follows immediately from Lemma \ref{main_zigzag_lemma} by applying $\theta$ to the monomial indexed by the type I element $\z_{i,i}^{L,1}$.
\end{proof}

\begin{lemma}\label{direction.change}
Let $w \in W_{c}(\C_{n})$.  Then the only way that an edge of the simple representation of $d_{w}$ may change direction from right to left is if a convex subset of the simple representation of $d_{w}$ is vertically equivalent to one of
\begin{enumerate}[label=\rm{(\roman*)}]
\item \begin{tabular}[c]{@{}c@{}}
%-- New mfpic environment, number 177 of 189. (size of end split: 2, should be 2)  ------------------->
\includegraphics{ThesisFigs2.078}
\end{tabular};

\item \begin{tabular}[c]{@{}c@{}}
%-- New mfpic environment, number 178 of 189. (size of end split: 2, should be 2)  ------------------->
\includegraphics{ThesisFigs2.079}
\end{tabular};

\item \begin{tabular}[c]{@{}c@{}}
%-- New mfpic environment, number 179 of 189. (size of end split: 2, should be 2)  ------------------->
\includegraphics{ThesisFigs2.080}
\end{tabular};

\end{enumerate}
where the vertical dashed lines in the figure indicate that the two curves are part of the same generator $d_{j}$ and the arrows indicate a continuation of the pattern of the same shape.
\end{lemma}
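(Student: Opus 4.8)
The plan is to reduce the statement to a purely local analysis of the simple representation of $d_{w}$ and then to use full commutativity to forbid every reversal except the three pictured ones. First I would recall the anatomy of the simple diagrams: inside a single $d_{j}$ every propagating strand is vertical, and the only horizontal features are the cup joining the north nodes $j,j+1$, the cap joining the south nodes $j,j+1$, and (when $j\in\{1,n+1\}$) the prescribed end decoration. Thus, tracing an edge of the simple representation, its horizontal coordinate stays constant while it runs through a block of vertical strands and is displaced by a single unit only when it passes through the active cup or cap of some $d_{j}$ at an interface between two stacked generators; this is the same bookkeeping that underlies Lemmas \ref{nonprops.connect.adjacent.verts}--\ref{swap.prop.nonprop}. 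I would make this precise by recording, at each interface, which adjacent columns are capped below and cupped above.

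Next I would translate a right-to-left reversal into combinatorial data. A maximal rightward run of an edge is the staircase produced by a subword $d_{i}d_{i+1}\cdots d_{j}$, the image of a $\z_{i,j}$ as in Remark \ref{typeI.n(w)=1}; a reversal to a leftward run occurs exactly at an occurrence of a single generator $d_{j}$ both of whose active arcs (its cup and its cap) lie on the traced edge, so that the edge enters and leaves the columns $j,j+1$ on the same side. The dashed lines in the figures record precisely this feature: the two curves meeting at the turn belong to the one turning generator $d_{j}$. I would show that such a local maximum of the column index forces, in every reduced expression for $w$, a subword $s_{j-1}s_{j}s_{j-1}$ when the turn is interior, or a length-three pattern $s_{j}s_{j+1}s_{j}$ supported at an end.

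The decisive step is then full commutativity. For $1<j<n+1$ the chain $s_{j-1}s_{j}s_{j-1}$ is one of the impermissible convex subheaps of Lemma \ref{impermissible.heap.configs} (equivalently, it is forbidden by Remark \ref{illegal.convex.chains}), so no interior reversal can occur and the turning generator must sit at a bond-strength-$4$ end; since a right-to-left reversal is a local maximum of the column index, this is the right end, where $s_{n}s_{n+1}s_{n}$ is legal. I would finish by enumerating the admissible decorated reflections at this end, using the admissibility axioms \ref{C1}--\ref{C5} together with Lemma \ref{diagram.version.main.zigzag.lemma} to pin down the surrounding zigzag, and verify that exactly the local pictures (i), (ii), and (iii) survive. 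The hard part will be this last exhaustive step: honestly converting the topological assertion that the curve reverses horizontal direction into the combinatorial occurrence of a peak of the index, and then checking that admissibility together with full commutativity leaves these three end configurations and nothing else---in particular ruling out hidden reversals inside heavily decorated edges or among several propagating strands that reflect simultaneously.
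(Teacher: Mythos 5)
Your decisive step is wrong, and it contradicts the very statement you are trying to prove. You claim that for $1<j<n+1$ an interior reversal would force the impermissible chain $s_{j-1}s_{j}s_{j-1}$ (or $s_js_{j-1}s_j$), so that ``no interior reversal can occur'' and every turn must sit at the strength-$4$ right end. But diagram (ii) of the lemma \emph{is} an interior reversal: the element $\z^{L,1}_{i,i}=s_is_{i-1}\cdots s_2s_1s_2\cdots s_{i-1}s_i$ (already $s_2s_1s_2$, since $m(s_1,s_2)=4$) is fully commutative, and its simple representation has an edge turning from right to left below node $i+1$ for any $2\leq i\leq n$. The flaw is that Lemma \ref{impermissible.heap.configs} forbids $s_{i}s_{i-1}s_{i}$ only as a \emph{convex} chain, whereas the pattern produced by a turn need not be convex: a right-to-left turn below node $i+1$ yields two consecutive occurrences of $s_i$ with no $s_{i+1}$ between them, and full commutativity then forces \emph{two} occurrences of $s_{i-1}$ between them (a single one would indeed be the forbidden convex chain), then two occurrences of $s_{i-2}$ between those, and so on, cascading down to the left wall where the strength-$4$ bond legally terminates the cascade with $s_2s_1s_2$. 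That cascade is precisely Lemmas \ref{zigzag_subword} and \ref{main_zigzag_lemma}, and it is what produces diagram (ii); your argument would instead ``prove'' that (ii) never occurs, so the case analysis you propose can never be completed.

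A secondary error feeds into this: your local model of a reversal is backwards. A right-to-left turn is not ``a single occurrence of $d_j$ both of whose arcs lie on the traced edge'' --- that configuration (e.g.\ the lone $d_1$ inside $d_2d_1d_2$) is the \emph{bottom} of the bulge, a local minimum of the column index. A right-to-left turn uses the cap of one occurrence of $d_i$ and the cup of the \emph{next consecutive} occurrence of $d_i$, with the edge running vertically down column $i+1$ between them; this is why the correct translation is ``two consecutive occurrences of $d_i$ with no $d_{i+1}$ between them'' (which also forces $i>1$, since consecutive $s_1$'s need an $s_2$ between them to be reduced). The paper's proof then splits on $i$: for $1<i<n+1$, Lemma \ref{diagram.version.main.zigzag.lemma} gives diagram (ii); for $i=n+1$, either a single $d_n$ lies between the two $d_{n+1}$'s, giving diagram (i) (legal because $m(s_n,s_{n+1})=4$), or exactly two $d_n$'s do (more than two would contradict Lemma \ref{main_zigzag_lemma}), and applying Lemma \ref{diagram.version.main.zigzag.lemma} to those two consecutive $d_n$'s gives diagram (iii).
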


\begin{proof}
An edge changing direction from right to left directly below node $i+1$ indicates that there are two consecutive occurrences of the simple diagram $d_{i}$ not having an occurrence of $d_{i+1}$ between them. (Note that this forces $i>1$.)  If $1<i<n+1$, then we must be in the situation of Lemma \ref{diagram.version.main.zigzag.lemma}, in which case we have the diagram in (ii).  If, on the other hand, $i=n+1$, then there are two possibilities.  One possibility is that there are two occurrences of $d_{n}$ occurring between the two occurrences of $d_{n+1}$.  (Note that there can only be at most two occurrences of $d_{n}$ occurring between the two consecutive occurrences of $d_{n+1}$; otherwise we contradict Lemma \ref{main_zigzag_lemma}.)  Then applying Lemma \ref{diagram.version.main.zigzag.lemma} to the two consecutive occurrences of $d_{n}$ forces us to have the diagram in (iii).  The second possibility is that there is a single occurrence of $d_{n}$ between the two consecutive occurrences of $d_{n+1}$.  This corresponds to the sequence $b_{n+1}b_{n}b_{n+1}$, which yields the diagram in (i).
\end{proof}

\begin{remark}\label{remark.direction.change}
If case (iii) from Lemma \ref{direction.change} occurs, then $w$ must be of type I by Lemma \ref{is_zigzag}.  
\end{remark}

\begin{lemma}\label{diagram.descent.set}
Let $w \in W_{c}(\C_{n})$.  Then $s_{i} \in \L(w)$ (respectively, $\R(w)$) if and only if there is a simple edge joining $i$ to $i+1$ (respectively, $i'$ to $(i+1)'$) in $d_{w}$.
\end{lemma}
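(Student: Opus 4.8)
The forward implication, namely that $s_i \in \L(w)$ forces a simple edge joining $i$ to $i+1$ in the north face of $d_w$ (and symmetrically for $\R(w)$ and the south face), is precisely Lemma \ref{simple.edge.in.N.face}, which is already available. Moreover, reflecting every diagram top-to-bottom is an anti-automorphism that interchanges north and south faces and sends $d_w$ to $d_{w^{-1}}$ while interchanging $\L$ and $\R$; hence the south-face statement follows from the north-face statement. So the plan is to prove only the converse for the north face: \emph{if there is a simple edge joining $i$ to $i+1$ in the north face of $d_w$, then $s_i \in \L(w)$.} I would argue this by induction on $l(w)$.

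The base case $w=e$ is immediate, since $d_e$ has no non-propagating edges at all, so the hypothesis is vacuous. For the inductive step, assume $l(w)\geq 1$ and pick \emph{any} left descent $s = s_{i_0}\in\L(w)$, so that $w = s v$ is reduced with $v\in W_c(\C_n)$ and $l(v)=l(w)-1$; consequently $b_w = b_s b_v$. Applying the homomorphism $\theta$ (Proposition \ref{surjective.homomorphism}) and Lemma \ref{powers.of.2.and.delta.for.images.of.monomials} gives $\theta(b_w)=d_{i_0}\,\theta(b_v)=2^{k'}\delta^{m'}\,(d_{i_0}d_v)$, while by Lemma \ref{powers_of_2_and_delta} the product $d_{i_0}d_v$ is itself $2^{a}\delta^{b}$ times an admissible diagram. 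Comparing with $\theta(b_w)=2^{k}\delta^{m}d_w$ and using that the admissible diagrams form a basis of $\D_n$ over the integral domain $\Z[\delta]$ (Theorem \ref{module.of.admissibles.is.the.algebra.gen.by.simples}), I conclude that $d_w$ equals $d_{i_0}d_v$ up to the nonzero scalar $2^a\delta^b$. In particular $d_w$ and $d_{i_0}d_v$ have identical north-face edge data (pairing together with decorations).

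It then remains to read off the north face of $d_{i_0}d_v$, obtained by stacking $d_{i_0}$ on top of $d_v$. Its top face consists of the cup of $d_{i_0}$ joining $i_0$ to $i_0+1$, together with vertical strands in every other column that carry the north nodes of $d_w$ straight down onto the north nodes of $d_v$. I would now do the case analysis on the column $i$ of the hypothesized simple edge. If $|i-i_0|\geq 2$, then columns $i,i+1$ avoid $\{i_0,i_0+1\}$, so the simple cup at $(i,i+1)$ is inherited, with unchanged decorations, from a simple cup at $(i,i+1)$ in the north face of $d_v$ (the undecorated verticals of $d_{i_0}$ introduce no conjoining and no new cups); by the induction hypothesis $s_i\in\L(v)$, and since $s_i$ commutes with $s_{i_0}$ and $w=s_{i_0}v$, we get $s_i\in\L(w)$. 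If $i=i_0$, then $s_i=s\in\L(w)$ by choice. Finally, if $i\in\{i_0-1,i_0+1\}$, the node shared with $\{i_0,i_0+1\}$ is absorbed into the top cup of $d_{i_0}$, so $i$ and $i+1$ are \emph{not} joined in the north face; the hypothesis cannot hold, and the implication is vacuously true. This closes the induction.

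The main obstacle I anticipate is the diagrammatic bookkeeping in the middle step: one must verify carefully that passing from $d_{i_0}d_v$ to the scalar multiple $d_w$ preserves the north-face cup-and-decoration information in every column away from $i_0,i_0+1$, that no new north-face cups are created by interface loops or block conjoining, and that the single permissible boundary decorations ($\bcirc$ at column $1$, $\wcirc$ at column $n+1$) transfer correctly in the edge cases $i\in\{1,n+1\}$. A secondary point worth stating explicitly is why I use only left descents (rather than arbitrary weak star reductions): right-hand operations can reroute propagating strands and thereby alter the north face, so they are deliberately avoided and the $\R(w)$ half is recovered purely by the top-bottom symmetry noted at the outset.
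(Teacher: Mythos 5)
Your treatment of the forward direction (via Lemma \ref{simple.edge.in.N.face}) and the reduction of the $\R(w)$ statement to the $\L(w)$ statement by top-to-bottom reflection are fine, and match the paper. The gap is in the heart of your inductive step, in the case $|i-i_0|\geq 2$. You assert that a simple edge joining $i$ to $i+1$ in the north face of $d_{i_0}d_v$ must be \emph{inherited} from a simple cup at $(i,i+1)$ in the north face of $d_v$, justifying this by observing that the vertical strands of $d_{i_0}$ create no new cups. But the verticals are not the danger: the \emph{cap} of $d_{i_0}$ (its southern non-propagating edge, which lies on the interface with $d_v$) can weld two north-face edges of $d_v$ into a single brand-new north-face edge of the product. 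Precisely, if $d_v$ has nested north cups $\{i_0,i+1\}$ and $\{i_0+1,i\}$ with $i\geq i_0+2$, then in $d_{i_0}d_v$ the path $i \to i_0+1 \to i_0 \to i+1$ (cup of $d_v$, cap of $d_{i_0}$, cup of $d_v$) is one new edge joining the adjacent nodes $i$ and $i+1$, undecorated whenever its three constituents are. For instance, with $n\geq 4$ and $v=s_3s_2s_4$, the diagram $d_v$ has nested north cups $\{2,5\}$ and $\{3,4\}$, and the concatenation $d_2d_v$ acquires a new simple cup at $(4,5)$ that is not present in $d_v$. So a simple cup at $(i,i+1)$ in $d_w$ need not come from one in $d_v$, and your appeal to the induction hypothesis for $v$ collapses at exactly this point.

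What you would need to prove is that this welded configuration cannot occur when $w=s_{i_0}v$ is reduced and fully commutative (in the example above $s_2s_3s_2s_4$ contains the braid $s_2s_3s_2$, which is suggestive, but that is a single case, not an argument). That statement is not bookkeeping; it is essentially the entire content of the lemma, and you cannot get it by ``verifying carefully,'' because the inheritance claim is simply false for general concatenations of admissible diagrams. One can attack it with your own induction hypothesis --- e.g., when $i=i_0+2$ the inner cup $\{i_0+1,i_0+2\}$ is unexposed hence undecorated, so the induction hypothesis gives $s_{i_0+1}\in\L(v)$, and peeling it off forces either a failure of reducedness or cups $\{i_0,i_0+1\},\{i_0+2,i_0+3\}$ in the next diagram, whence $w$ contains $s_{i_0}s_{i_0+1}s_{i_0}$ --- but the general nested case requires a genuine structural argument. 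This is precisely why the paper argues directly on the simple representation of $d_w$ instead: assuming $s_i\notin\L(w)$, the edge emanating from node $i$ or $i+1$ must change direction from right to left before meeting the topmost occurrence of $d_i$, and Lemma \ref{direction.change}, Remark \ref{remark.direction.change}, and Lemma \ref{diagram.version.main.zigzag.lemma} show that any such direction change either plants decorations that no relation can ever remove from a non-loop edge or forbids further occurrences of $d_i$, so the edge can never close up into the hypothesized simple cup. A smaller inaccuracy: passing from the concatenation $d_{i_0}d_v$ to the basis element $d_w$ preserves connectivity but not decorations verbatim (the relations of $\V$ rewrite blocks), so ``identical north-face edge data (pairing together with decorations)'' should be weakened to connectivity together with the fact that the relations never strip a non-loop edge of all its decorations.
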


\begin{proof}
First, assume that $s_{i} \in \L(w)$.  Then by Lemma \ref{simple.edge.in.N.face} there is a simple edge joining $i$ to $i+1$ in the north face, as desired.

\bigskip

For the converse, assume that there is a simple edge joining node $i$ to node $i+1$ in the north face of $d_{w}$.  We need to show that $s_{i} \in \L(w)$.  By Lemma \ref{a=1.implies.typeI}, if $\a(d)=1$, then $w$ is of type I.  In this case, $s_{i} \in \L(w)$ by Lemma \ref{image_zigzag}.  Now, assume that $\a(d)>1$.  Consider the simple representation for $d_{w}$ and let $e$ be the edge joining $i$ to $i+1$.  For sake of a contradiction, assume that $s_{i} \notin \L(w)$.  Then either 
\begin{enumerate}[label=\rm{(\alph*)}]
\item it is not the case that the end of $e$ leaving node $i+1$ encounters the northernmost occurrence of $d_{i}$ before any other generator;
\end{enumerate}
or
\begin{enumerate}[label=\rm{(\alph*)}, resume]
\item it is not the case that the end of $e$ leaving node $i$ encounters the northernmost occurrence of $d_{i}$ before any other generator.
\end{enumerate}
(We allow both (a) and (b) to occur.) Note that since $e$ crosses the line $x=i+1/2$, it must encounter $d_{i}$ at some stage.  We consider three distinct cases.

\bigskip

Case (1):  Assume that $i \notin \{1, 2, n, n+1\}$.  Then $e$ is undecorated.  We deal with case (a) from above; case (b) has a symmetric argument.  Since the curve must eventually encounter $d_{i}$, the edge $e$ must change direction from right to left.  Then we must be in one of the three situations of Lemma \ref{direction.change}.  But since we are assuming that $\a(d_{w})>1$, by Remark \ref{remark.direction.change}, there are only two possibilities for the edge leaving node $i+1$ in the simple representation for $d_{w}$:
\begin{enumerate}[label=\rm{(\roman*)}]
\item \begin{tabular}[c]{@{}c@{}}
%-- New mfpic environment, number 180 of 189. (size of end split: 2, should be 2)  ------------------->
\includegraphics{ThesisFigs2.081}
\end{tabular};

\item \begin{tabular}[c]{@{}c@{}}
%-- New mfpic environment, number 181 of 189. (size of end split: 2, should be 2)  ------------------->
\includegraphics{ThesisFigs2.082}
\end{tabular};
\end{enumerate}
where $x=\wcirc$ if $i=n-1$ and is trivial otherwise.  Certainly, we cannot have (i) since $e$ is undecorated and there is no sequence of relations that can completely remove decorations from a non-loop edge.  If (ii) occurs, then it must be the case that $s_{i+2}$ does not occur between two consecutive occurrences of $s_{i+1}$ in $w$.  Then by Lemma \ref{diagram.version.main.zigzag.lemma}, $d_{i}$ cannot occur again in $d_{w}$.  But this prevents the end of $e$ leaving node $i$ to join up with the other end leaving node $i+1$, which contradicts $d_{w}$ having a simple edge joining $i$ to $i+1$.

\bigskip

Case (2):  Assume that $i \in \{2, n\}$.  Then, as in case (1), $e$ is undecorated.  We assume that $i=2$; the case with $i=n$ follows symmetrically.  If (a) from above happens, then we can apply the arguments in case (1) and arrive at the same contradictions.  If (b) occurs, then the end of $e$ leaving node $i=2$ must immediately encounter $d_{1}$.  This adds a $\bcirc$ decoration to $e$, which is again a contradiction since there is no sequence of relations that can completely remove decorations from a non-loop edge.

\bigskip

Case (3):  Assume that $i \in \{1, n+1\}$.  We assume that $i=1$; the case with $i=n+1$ follows symmetrically.  Then $e$ is decorated precisely by a single $\bcirc$.  We must be in the situation described in (a) above.  Since the curve must eventually encounter $d_{1}$, the edge must change direction from right to left.   As in case (1), there are only two possibilities for the edge leaving node $i+1=2$ in the simple representation for $d_{w}$:
\begin{enumerate}[label=\rm{(\roman*)}]
\item \begin{tabular}[c]{@{}c@{}}
%-- New mfpic environment, number 182 of 189. (size of end split: 2, should be 2)  ------------------->
\includegraphics{ThesisFigs2.083}
\end{tabular};

\item \begin{tabular}[c]{@{}c@{}}
%-- New mfpic environment, number 183 of 189. (size of end split: 2, should be 2)  ------------------->
\includegraphics{ThesisFigs2.084}
\end{tabular}.
\end{enumerate}
Either way, we arrive at contradictions similar to those in case (1).

\bigskip

The proof that $s_{i} \in \R(w)$ if and only if there is a simple edge joining $i'$ to $(i+1)'$ is symmetric to the above.
\end{proof}

\end{section}

\begin{section}{Each monomial maps to a single admissible diagram}

\begin{proposition}\label{monomials.map.to.single.diagrams}
If $w \in W_{c}(\C_{n})$, then $\theta(b_{w})=d_{w}$.  That is, each monomial basis element maps to a single admissible diagram.
\end{proposition}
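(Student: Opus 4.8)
The plan is to induct on $l(w)$, using the classification of irreducible elements together with the weak star reduction machinery to peel off one generator at a time.

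If $w$ is irreducible --- either of type I or a non-type I irreducible element --- then Lemmas \ref{image_zigzag} and \ref{image_wsrm} already assert $\theta(b_w) = d_w$ with coefficient $1$, so there is nothing to prove; this also covers the length-$0$ base case, since $e$ is an (empty) product of commuting generators and hence irreducible. So assume $w$ is weak star reducible. After interchanging ``left'' and ``right'' where necessary (the right-handed versions of all the lemmas below hold by symmetry), we may assume $w$ is left weak star reducible by $s = s_i$ with respect to $t$, to $v := \star^L_{s,t}(w) = sw$, where $l(v) = l(w) - 1$. By the inductive hypothesis, $\theta(b_v) = d_v$.

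The first step is to reverse the reduction at the level of monomials. By Remark \ref{monomial.weak.star.reductions} we have $b_t b_w = 2^c b_v$, where $c = 0$ if $m(s,t) = 3$ and $c = 1$ if $m(s,t) = 4$, and by Lemma \ref{weak.star.reverse} we have $b_s b_t b_w = 2^c b_w$ for the same $c$. Substituting the first identity into the second and cancelling $2^c$ yields the length-additive identity $b_s b_v = b_w$. Applying the homomorphism $\theta$ and the inductive hypothesis gives
\[
\theta(b_w) = \theta(b_s)\,\theta(b_v) = d_i\, d_v .
\]
By Lemma \ref{powers_of_2_and_delta} this product equals $2^k \delta^m d_w$ for some admissible diagram $d_w$, so it remains to prove $k = m = 0$; that is, that left-multiplication of $d_v$ by the simple diagram $d_i$ introduces neither an undecorated loop nor a factor of $2$.

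The crucial input is that $l(sv) = l(w) > l(v)$ forces $s_i \notin \L(v)$, so by Lemma \ref{diagram.descent.set} there is no simple edge joining node $i$ to node $i+1$ in the north face of $d_v$. I would first rule out loops: an undecorated loop can arise only if the cap of $d_i$ closes an undecorated cup joining $i$ to $i+1$ in the north face of $d_v$; for a middle index this cup would be a simple edge, and for $i \in \{1, n+1\}$ the cup would be forced to carry its obligatory $\bcirc$ or $\wcirc$ decoration, so in either case such a cup signals $s_i \in \L(v)$, a contradiction. Hence $m = 0$ (any decorated loop that does form is an admissible $\btri\wtri$-loop and contributes no scalar). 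Ruling out the factor of $2$ is the main obstacle: here I would examine the local edge configuration at nodes $i$ and $i+1$ of $d_v$ and appeal to the closed-under-multiplication Lemmas \ref{first.closed.under.mult.lemma}--\ref{last.closed.under.mult.lemma}, which compute $d_i d_v$ as $2^c d'$ and pin down exactly when $c = 1$. The point to verify is that every configuration yielding $c = 1$ is one in which the edges at $i, i+1$ form the simple configuration forbidden by $s_i \notin \L(v)$, so that only the $c = 0$ cases survive. This establishes $\theta(b_w) = d_i d_v = d_w$ with coefficient $1$, completing the induction; consistency with Lemma \ref{powers.of.2.and.delta.for.images.of.monomials} is guaranteed by the uniqueness of expansions in the admissible-diagram basis (Theorem \ref{module.of.admissibles.is.the.algebra.gen.by.simples}).
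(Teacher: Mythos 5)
Your skeleton matches the paper's: induct, use Lemmas \ref{image_zigzag} and \ref{image_wsrm} for the irreducible base case, reduce to the identity $b_sb_v=b_w$ (which is correct, and in fact immediate since $s\in\L(w)$ and $w$ is fully commutative), and then show that $d_id_v$ carries no factor of $2$ or $\delta$. The gap is in the mechanism you propose for killing the scalars: you use only $s_i\notin\L(v)$, i.e.\ (by Lemma \ref{diagram.descent.set}) the absence of a \emph{simple} edge joining $i$ to $i+1$ in $d_v$, and that is genuinely insufficient, because admissible diagrams can have \emph{decorated} non-propagating edges joining $i$ to $i+1$, and decorated blocks can interact across the cap of $d_i$. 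Concretely, for $n\geq 3$ take $v=s_1s_2s_1s_3$, so that $d_v=d_1d_2d_1d_3$ is admissible with a north-face edge joining $3$ to $4$ decorated by a single $\btri$ (and an edge joining $1$ to $2$ decorated by $\bcirc$). Then $s_3\notin\L(v)$, yet $d_3d_v$ closes the $(3,4)$-edge into a $\btri$-loop, which equals $\delta$ by the relations of Definition \ref{big.diagram.alg.defn}; this refutes your parenthetical claim that any decorated loop that forms is a scalar-free $\btri\wtri$-loop. Similarly $s_2\notin\L(v)$, yet the cap of $d_2$ conjoins the $\bcirc$ on the $(1,2)$-edge with the $\btri$ on the $(3,4)$-edge, giving $\bcirc\btri=2\bcirc$, so $d_2d_v=2d'$; this refutes your claim that every configuration yielding $c=1$ involves the forbidden simple edge at $(i,i+1)$. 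In both examples $s_iv$ fails to be fully commutative, which is why the proposition itself is safe; but your argument never uses the full commutativity of $w=s_iv$, nor the generator $t$ witnessing weak star reducibility, in the diagrammatic part, so it cannot detect this failure.

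Supplying that missing input is exactly where the paper's work lies, and it splits into two cases of very different difficulty. When $m(s,t)=3$, the paper exploits $t=s_{i+1}\in\L(sw)$: by Lemma \ref{diagram.descent.set} the diagram $d_{sw}$ has a simple, undecorated edge at the \emph{adjacent} position $(i+1,i+2)$, which occupies node $i+1$ and thereby rules out both loops and decoration interactions in $d_i\,d_{sw}$. When $m(s,t)=4$ (so $\{s,t\}=\{s_1,s_2\}$ up to symmetry), no such soft argument exists: writing $w=stsv'$, the paper first establishes $\theta(b_{v'})=d_{v'}$ and $\theta(b_{sv'})=d_{sv'}$, and then excludes closed decorations on the edge of $d_{v'}$ at node $1$ --- the only source of a stray factor of $2$ --- via a heap-theoretic argument: a closed decoration would force $s_1\in\supp(v')$, and then the right-handed version of Lemma \ref{main_zigzag_lemma} together with Lemma \ref{is_zigzag} would force $w$ to be of type I, a contradiction. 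Your outline has no counterpart to either step, so the assertion ``only the $c=0$ cases survive'' remains unproved, and it cannot be proved from $s_i\notin\L(v)$ and admissibility of $d_v$ alone.
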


\begin{proof}
Let $w \in W_{c}(\C_{n})$.  By Lemma \ref{powers.of.2.and.delta.for.images.of.monomials}, we can write
	$$\theta(b_{w})=2^{k}\delta^{m}d_{w},$$
where $k,m \in \Z^{+}\cup\{0\}$.  We need to show that $m=0$ and $k=0$.  Since $w \in W_{c}(\C_{n})$, there exists a sequence (possibly trivial) of left and right weak star reductions that reduce $w$ to an irreducible element.  We induct on the number of steps in the sequence of weak star operations.  For the base case, assume that $w$ is irreducible.  Then by Lemmas \ref{image_zigzag} and \ref{image_wsrm}, $\theta(b_{w})=d_{w}$, which gives us our desired result.  For the inductive step, assume that $w$ is not irreducible.  If $w$ is of type I, then by Lemma \ref{image_zigzag}, $\theta(b_{w})=d_{w}$.  So, assume that $w$ is not of type I (i.e., $n(w)\neq 1$).  Without loss of generality, suppose that $w$ is left weak star reducible by $s$ with respect to $t$.  Choose $s$ and $t$ such that $\star^{L}_{s,t}(w)$ requires fewer steps to reduce to an irreducible element.  We can write: (1) $w=stv$ (reduced) if $m(s,t)=3$ or (2) $w=stsv$ (reduced) if $m(s,t)=4$.  We consider these two cases separately.

\bigskip

Case (1):  Assume that $m(s,t)=3$.  Without loss of generality, assume that $s=s_{i}$ and $t=s_{i+1}$ with $1<i<n$.  This implies that
%-- New mfpic environment, number 184 of 189. (size of end split: 2, should be 2)  ------------------->
\begin{align*}
\theta(b_{w})&=\theta(b_{i}b_{s_{i+1}v})\\
&=\theta(b_{i})\theta(b_{s_{i+1}v})\\
&=d_{i} d_{s_{i+1}v}\\
&=\begin{tabular}[c]{@{}c@{}}
\includegraphics{ThesisFigs2.085}
\end{tabular},
\end{align*}
where we are applying the induction hypothesis to $\theta(b_{s_{i+1}v})$ ($w$ is left star reducible to $s_{i+1}v$ and requires fewer steps to reduce to an irreducible element) and we are using Lemma \ref{diagram.descent.set} to draw the bottom diagram in the last line.  By inspecting the product $d_{i}d_{s_{i+1}v}$, we see that there are no loops and no new relations to apply since $d_{s_{i+1}v}$ is admissible.  Therefore, $\theta(b_{w})=d_{w}$.

\bigskip

Case (2):  Assume that $m(s,t)=4$.  Without loss of generality, assume that $\{s,t\}=\{s_{1},s_{2}\}$; the case with $\{s,t\}=\{s_{n},s_{n+1}\}$ is symmetric.  Since $w=stsv$ (reduced) and $w$ is fully commutative, neither $s$ nor $t$ are in $\L(v)$. Also, since $w$ is left weak star reducible by $s$ with respect to $t$ to $tsv$, by induction, we have $\theta(b_{tsv})=d_{tsv}$.  By Lemma \ref{powers.of.2.and.delta.for.images.of.monomials}, there exists $k', m' \in \Z^{+}\cup\{0\}$ such that
	$$\theta(b_{v})=2^{k'}\delta^{m'}d_{v}.$$
But then
\begin{align*}
d_{tsv}&=\theta(b_{tsv})\\
&=\theta(b_{t}b_{s}b_{v})\\
&=\theta(b_{t})\theta(b_{s})\theta(b_{v})\\
&=d_{t}d_{s}2^{k'}\delta^{m'}d_{v}\\
&=2^{k'}\delta^{m'}d_{t}d_{s}d_{v}.
\end{align*}
Since the product in the last line is equal to the admissible diagram $d_{tsv}$, we must have $k'=0$ and $m'=0$.  That is, $\theta(b_{v})=d_{v}$, and a similar argument shows that $\theta(b_{sv})=d_{sv}$, as well.  Now, we consider two possible subcases: (a) $s=s_{1}$ and $t=s_{2}$; and (b) $s=s_{2}$ and $t=s_{1}$.  

\bigskip

(a):  Assume that $s=s_{1}$ and $t=s_{2}$.  We see that
%-- New mfpic environment, number 185 of 189. (size of end split: 2, should be 2)  ------------------->
\begin{align*}
2^{k}\delta^{m}d_{w}&=\theta(b_{w})\\
&=\theta(b_{1}b_{2}b_{1}b_{v})\\
&=\theta(b_{1})\theta(b_{2})\theta(b_{1})\theta(b_{v})\\
&=d_{1}d_{2}d_{1}d_{v}\\
&=\begin{tabular}[c]{@{}c@{}}
\includegraphics{ThesisFigs2.086}
\end{tabular}.
\end{align*}
Since $s_{1} \notin \L(v)$, by Lemma \ref{diagram.descent.set}, there cannot be a simple edge joining node 1 to node 2 in the north face of $d_{v}$.  This implies that there can be no loops in the product in the last line above, and so, $m=0$.  It also implies that the edge leaving node 3 of $d_{v}$ is not exposed to the west, and so it cannot be decorated with a closed symbol.  Since $s_{2} \notin \L(v)$, there cannot be a simple edge joining node 2 to node 3 in $d_{v}$.  This implies that in order for $d_{1}d_{2}d_{1}d_{v}$ to be equal to a power of 2 times $d_{w}$, the edge leaving node 1 in $d_{v}$ must be decorated with a closed decoration.  If the first  decoration on the edge leaving node 1 in $d_{v}$ is a $\bcirc$, then in order to produce a power of 2 in the product $d_{1}d_{2}d_{1}d_{v}$, we must have a simple edge between nodes 2 and 3, but we have already said that this cannot happen.  Suppose that the first decoration occurring on the edge leaving node 1 in $d_{v}$ is a $\btri$.  In this case, $d_{1}d_{v}=2 d'$, for some admissible diagram $d'$.  This contradicts 
	$$d_{1}d_{v}=\theta(b_{1})\theta(b_{v})=\theta(b_{sv})=d_{sv}.$$
Therefore, there can be no power of 2 in the product $d_{1}d_{2}d_{1}d_{v}$.  So, $k=0$, as desired.

\bigskip

(b): Now, assume that $s=s_{2}$ and $t=s_{1}$.  In this case, we see that
%-- New mfpic environment, number 186 of 189. (size of end split: 2, should be 2)  ------------------->
\begin{align*}
2^{k}\delta^{m}d_{w}&=\theta(b_{w})\\
&=\theta(b_{2}b_{1}b_{2}b_{v})\\
&=\theta(b_{2})\theta(b_{1})\theta(b_{2})\theta(b_{v})\\
&=d_{2}d_{1}d_{2}d_{v}\\
&=\begin{tabular}[c]{@{}c@{}}
\includegraphics{ThesisFigs2.087}
\end{tabular}.
\end{align*}
Since $s_{2} \notin \L(v)$, by Lemma \ref{diagram.descent.set}, there cannot be a simple edge joining node 2 to node 3 in the north face of $d_{v}$.  This implies that there can be no loops in the product in the last line above, and so, $m=0$.  In order for $d_{2}d_{1}d_{2}d_{v}$ to be equal to a power of 2 times $d_{w}$, the edge leaving node 1 in $d_{v}$ must be decorated with a closed decoration.  For sake of a contradiction, assume that this is the case.  This implies that if $d_{v}$ is written as a product of simple diagrams, there must be at least one occurrence of $d_{1}$ (this is the only way we can acquire closed decorations).  Then we must have $s_{1} \in \supp(v)$, which implies that $tsv=s_{1}s_{2}v$ contains at least two occurrences of $s_{1}$.  Consider the top two occurrences of $s_{1}$ in the canonical representation of $H(tsv)=H(s_{1}s_{2}v)$.  Since $w=s_{2}s_{1}s_{2}v$ (reduced) and $w$ is fully commutative, there must be an entry in $H(v)$ labeled by $s_{2}$ that covers the highest occurrence of $s_{1}$.  By a right-handed version of Lemma \ref{main_zigzag_lemma}, we must have $\z_{1,1}^{R,1}$ as the subword of some reduced expression for $w$.  But by Lemma \ref{is_zigzag}, $w$ must be of type I.  This contradicts our earlier assumption that $w$ is not of type I.  Therefore, the edge leaving node 1 in $d_{v}$ does not carry any closed decorations.  So, there can be no power of 2 in the product $d_{2}d_{1}d_{2}d_{v}$, and hence $k=0$, as desired.
\end{proof}

\end{section}

\begin{section}{Proof of main result}

The next lemma will be useful for simplifying the argument in the proof of our main result (Theorem \ref{main.result}).

\begin{lemma}\label{nugget}
Let $w, w' \in W_{c}(\C_{n})$ such that $d_{w}=d_{w'}$.  Suppose that $w'$ is left weak star reducible by $s$ with respect to $t$.  Then
	$$b_{t}b_{w}=\begin{cases}
b_{w''}, & \text{if } m(s,t)=3, \\
2b_{w''}, & \text{if } m(s,t)=4,
\end{cases}$$
for some $w'' \in W_{c}(\C_{n})$.
\end{lemma}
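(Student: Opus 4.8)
The plan is to transport the reduction that already holds for $w'$ across the diagram equality $d_{w}=d_{w'}$ by pushing everything through the homomorphism $\theta$, and then to recover the scalar by comparing coefficients against the diagram basis. The point of the lemma is that $w$ itself need not visibly be weak star reducible; it is only the shared image $d_{w}=d_{w'}$ that lets us conclude $b_{t}b_{w}$ collapses to a single monomial.

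First I would apply $\theta$ to $b_{t}b_{w}$. Since $\theta$ is an algebra homomorphism (Proposition \ref{surjective.homomorphism}) and $\theta(b_{x})=d_{x}$ for every $x\in W_{c}(\C_{n})$ (Proposition \ref{monomials.map.to.single.diagrams}), we obtain
$$\theta(b_{t}b_{w})=d_{t}d_{w}=d_{t}d_{w'}=\theta(b_{t}b_{w'}),$$
where the middle equality is exactly the hypothesis $d_{w}=d_{w'}$. Because $w'$ is left weak star reducible by $s$ with respect to $t$, Remark \ref{monomial.weak.star.reductions} lets me write $w'=stv'$ (reduced) when $m(s,t)=3$ or $w'=stsv'$ (reduced) when $m(s,t)=4$, and gives $b_{t}b_{w'}=b_{tv'}$ or $b_{t}b_{w'}=2b_{tsv'}$, respectively. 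Applying $\theta$ once more and using Proposition \ref{monomials.map.to.single.diagrams}, the quantity $\theta(b_{t}b_{w})$ equals the single admissible diagram $d_{tv'}$ in the case $m(s,t)=3$, or $2\,d_{tsv'}$ in the case $m(s,t)=4$.

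Next I would invoke Lemma \ref{powers_of_2_and_delta_monomials} to write $b_{t}b_{w}=2^{k}\delta^{m}b_{w''}$ for some $k,m\in\Z^{+}\cup\{0\}$ and $w''\in W_{c}(\C_{n})$; this $w''$ is the element claimed in the statement. Applying $\theta$ gives $\theta(b_{t}b_{w})=2^{k}\delta^{m}d_{w''}$, and comparing with the previous paragraph yields the identity $2^{k}\delta^{m}d_{w''}=d_{tv'}$ (respectively $2^{k}\delta^{m}d_{w''}=2\,d_{tsv'}$) inside $\D_{n}$.

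The final step, and the only place that needs genuine care, is the coefficient comparison. Since the admissible diagrams form a $\Z[\delta]$-basis of $\D_{n}$ (Theorem \ref{module.of.admissibles.is.the.algebra.gen.by.simples}), the equality $2^{k}\delta^{m}d_{w''}=d_{tv'}$ forces $d_{w''}=d_{tv'}$ together with $2^{k}\delta^{m}=1$; as $\delta^{m}$ is a nonconstant element of $\A$ for $m\geq 1$, the scalar $2^{k}\delta^{m}$ can lie in $\Z$ only when $m=0$, whence $k=m=0$ and $b_{t}b_{w}=b_{w''}$. In the case $m(s,t)=4$ the same linear-independence argument gives $2^{k}\delta^{m}=2$, which again forces $m=0$ and then $k=1$, so $b_{t}b_{w}=2b_{w''}$. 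The hard part is really just this scalar bookkeeping, carried out through linear independence of diagrams rather than through an inverse of $\theta$: I must not assume $\theta$ is injective (that is part of what the main theorem will eventually establish), so the argument has to route entirely through the basis property of the admissible diagrams.
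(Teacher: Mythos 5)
Your proposal is correct and follows essentially the same route as the paper: push $b_{t}b_{w}$ through $\theta$, use $d_{w}=d_{w'}$ to replace it by $\theta(b_{t}b_{w'})$, evaluate the latter via Remark \ref{monomial.weak.star.reductions}, and conclude by comparing against the diagram basis. The paper compresses your final two steps into the single phrase ``again by Proposition \ref{monomials.map.to.single.diagrams}''; your explicit use of Lemma \ref{powers_of_2_and_delta_monomials} together with linear independence of the admissible diagrams is exactly the bookkeeping that phrase leaves implicit, and you are right that injectivity of $\theta$ is never needed.
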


\begin{proof}
Since $w'$ is left weak star reducible by $s$ with respect to $t$, we can write
$$w'=\begin{cases}
stv', & \text{if } m(s,t)=3, \\
stsv', & \text{if } m(s,t)=4,
\end{cases}$$
where each product is reduced.  By Remark \ref{monomial.weak.star.reductions}, this implies that
$$b_{t}b_{w'}=\begin{cases}
b_{tv'}, & \text{if } m(s,t)=3, \\
2b_{tsv'}, & \text{if } m(s,t)=4.
\end{cases}$$
By Proposition \ref{monomials.map.to.single.diagrams}, we have
\begin{align*}
\theta(b_{t}b_{w})&=d_{t}d_{w}\\
&=d_{t}d_{w'}\\
&=\theta(b_{t}b_{w'})\\
&=\begin{cases}
\theta(b_{tv'}), & \text{if } m(s,t)=3, \\
2\theta(b_{tsv,}), & \text{if } m(s,t)=4,
\end{cases}\\
&=\begin{cases}
d_{tv'}, & \text{if } m(s,t)=3, \\
2d_{tsv'}, & \text{if } m(s,t)=4.
\end{cases}
\end{align*}
Then again by Proposition \ref{monomials.map.to.single.diagrams}, there must exist $w'' \in W_{c}(\C_{n})$ such that 
$$b_{t}b_{w}=\begin{cases}
b_{w''}, & \text{if } m(s,t)=3, \\
2b_{w''}, & \text{if } m(s,t)=4,
\end{cases}$$
as desired.
\end{proof}

\begin{remark}\label{remark.nugget}
Lemma \ref{nugget} has an analogous statement involving right weak star reductions.
\end{remark}

Finally, we state our main result.

\begin{theorem}\label{main.result}
The map $\theta$ given in Proposition \ref{surjective.homomorphism} is an isomorphism of $\TL(\C_{n})$ and $\D_{n}$.  Moreover, each admissible diagram corresponds to a unique monomial basis element.
\end{theorem}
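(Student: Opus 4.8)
The plan is to deduce the whole statement from a single fact: that the index map $w \mapsto d_w$ is injective. By Proposition~\ref{surjective.homomorphism}, $\theta$ is a surjective algebra homomorphism, and by Proposition~\ref{monomials.map.to.single.diagrams} it sends each monomial basis element $b_{w}$ (for $w \in W_{c}(\C_{n})$) to a single admissible diagram $d_{w}$, which by Theorem~\ref{module.of.admissibles.is.the.algebra.gen.by.simples} lies in the basis $\Diag^{b}_{n}(\V)$ of $\D_{n}$. Thus, once I know $w \mapsto d_{w}$ is injective, the images $\{d_{w} : w \in W_{c}(\C_{n})\}$ are pairwise distinct basis elements, hence linearly independent, so $\theta$ carries the monomial basis $\{b_{w}\}$ of $\TL(\C_{n})$ to a linearly independent set and is therefore injective; combined with surjectivity this yields the isomorphism. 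The ``unique monomial'' claim then follows formally: injectivity shows each admissible diagram equals $d_{w}$ for at most one $w$, while surjectivity of $\theta$ forces $\{d_{w}\}$ to span $\D_{n}$, so this set of distinct basis elements must be all of $\Diag^{b}_{n}(\V)$, and $w \mapsto d_{w}$ is a bijection onto the admissible diagrams.

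So the theorem reduces to showing that $d_{w} = d_{w'}$ implies $w = w'$, which I would prove by induction on $l(w) + l(w')$. For the base cases I reduce to irreducible elements. By Lemma~\ref{a=1.implies.typeI}, $\a(d_{w}) = 1$ if and only if $w$ is of type I, and since $d_{w} = d_{w'}$ forces $\a(d_{w}) = \a(d_{w'})$, the elements $w$ and $w'$ are simultaneously of type I or not. If both are of type I, Lemma~\ref{image_zigzag} gives $w = w'$; if both are irreducible but not of type I, Lemma~\ref{image_wsrm} gives $w = w'$. The mixed case of one type I and one non--type-I irreducible cannot arise, since then $\a(d_{w}) = 1 < \a(d_{w'})$.

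For the inductive step I may assume neither $w$ nor $w'$ is of type I and that one of them, say $w'$, is weak star reducible; without loss of generality $w'$ is left weak star reducible by $s$ with respect to $t$, and I set $v' = \star^{L}_{s,t}(w')$, so $l(v') < l(w')$. Applying Lemma~\ref{nugget} to the pair $w, w'$ produces $w''$ with $b_{t}b_{w} = 2^{c}b_{w''}$ for some $c \in \{0,1\}$ and with $d_{w''} = d_{v'}$; this is a length-lowering reduction (no $\delta$-factor appears, as $d_{v'}$ is loop-free), so $l(w'') < l(w)$, and the induction hypothesis applied to $(w'', v')$ gives $w'' = v'$. It remains to reverse the reduction. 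Lemma~\ref{weak.star.reverse} applied to $w'$ gives $b_{s}b_{v'} = b_{w'}$, so $b_{s}(b_{t}b_{w}) = 2^{c}b_{s}b_{v'} = 2^{c}b_{w'}$; and \emph{if} $w$ is itself left weak star reducible by $s$ with respect to $t$, then Lemma~\ref{weak.star.reverse} gives $b_{s}b_{t}b_{w} = 2^{c}b_{w}$, whence $b_{w} = b_{w'}$ and $w = w'$, closing the induction.

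The main obstacle is precisely the hypothesis invoked in that last sentence: that $w$, like $w'$, is left weak star reducible by $s$ with respect to $t$. Half of this is free, since Lemma~\ref{diagram.descent.set} recovers $\L(w)$ and $\R(w)$ from $d_{w}$, so $d_{w} = d_{w'}$ gives $\L(w) = \L(w')$ and in particular $s \in \L(w)$. What needs work is the obstruction condition $tw \notin W_{c}(\C_{n})$ together with $t \in \L(sw)$, i.e.\ the genuine impossibility of commuting $t$ past the top of $w$. I expect to verify this by a local analysis of $d_{w}$ near the nodes governed by $s$ and $t$: by Lemma~\ref{diagram.descent.set} there is a simple edge where $s$ sits, and the weak star reducibility of $w'$ is witnessed by the configuration of the neighbouring strand, which is identical in $d_{w} = d_{w'}$. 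Translating this configuration back through Lemma~\ref{main_zigzag_lemma} and the heap characterisation of weak star reducibility from Chapter~2 should show that $w$ admits the very same reduction, after which the reversal above applies verbatim. This transfer of reducibility across equal diagrams is the delicate combinatorial heart of the proof.
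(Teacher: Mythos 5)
Your overall skeleton matches the paper: reduce the theorem to injectivity of $w \mapsto d_{w}$ (using Proposition \ref{surjective.homomorphism} and Proposition \ref{monomials.map.to.single.diagrams}), settle the case of two irreducible elements via Lemmas \ref{image_zigzag} and \ref{image_wsrm}, and deduce the ``unique monomial'' claim formally at the end. But the inductive step contains a genuine gap, and it sits exactly where you flag it. First, your justification that $l(w'') < l(w)$ because ``no $\delta$-factor appears'' is incorrect: absence of $\delta$-factors (loops) says nothing about length decrease. If $tw$ is reduced and fully commutative, then $b_{t}b_{w} = b_{tw}$ with $l(tw) = l(w)+1$ --- no $\delta$, no factor of $2$, and the length goes \emph{up}. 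This is not a pathology; it is precisely what happens in the essential case, namely when $w$ is irreducible (the paper first reduces to the situation where one of the two elements is irreducible, by stripping weak star reductions off both sides simultaneously via Lemma \ref{nugget} and reversing them with Lemma \ref{weak.star.reverse}). So the induction hypothesis cannot be applied to the pair $(w'', v')$ without first knowing the ``transfer of reducibility,'' and in the critical case that transfer is not a lemma to be checked but the theorem itself: when $w$ is irreducible, ``$w$ admits the same reduction as $w'$'' is an outright contradiction, so proving the transfer there amounts to proving that $d_{w} = d_{w'}$ cannot happen at all.

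Second, the paper's own proof shows this cannot be settled by ``a local analysis of $d_{w}$ near the nodes governed by $s$ and $t$.'' In the case $w$ irreducible, $w'$ reducible, the paper argues as follows: irreducibility of $w$ together with $s \in \L(w)$ (from Lemma \ref{diagram.descent.set}) forces $tw$ to be reduced and fully commutative, so $b_{t}b_{w} = b_{tw}$; comparing with Lemma \ref{nugget} rules out $m(s,t)=4$; and the contradiction for $m(s,t)=3$ then requires the full classification of irreducible elements (Theorem \ref{affineCwsrm}) --- one shows $w$ is not of type II, hence is a product of commuting type $B$ and type $B'$ irreducibles, hence $d_{w}$ has no edge crossing the line $x = i + 1/2$, while Lemma \ref{direction.change}, Remark \ref{remark.direction.change}, and Lemma \ref{diagram.version.main.zigzag.lemma} force $d_{w'}$ to contain a propagating edge from $1$ to $1'$ decorated by a single $\btri$, a configuration no such irreducible element produces. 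This argument is global: it runs through the classification theorem, the type I/II structure, and the direction-change analysis of simple representations. Nothing in the local edge configuration at nodes $i$, $i+1$, $i+2$ can detect the condition $tw \notin W_{c}(\C_{n})$, because that condition depends on occurrences of $s_{i}$ and $s_{i+1}$ arbitrarily deep inside $w$. The ``delicate combinatorial heart'' you defer is therefore not a routine verification to be filled in later; it is the entire content of the injectivity proof.
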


\begin{proof}
According to Proposition \ref{surjective.homomorphism}, $\theta$ is a surjective homomorphism.  Also, by Proposition \ref{monomials.map.to.single.diagrams}, the image of each monomial basis element is a single admissible diagram.  It remains to show that $\theta$ is injective.  For sake of a contradiction, assume that $\theta$ is not injective.  Then there exist $w, w' \in W_{c}(\C_{n})$ with $w \neq w'$ such that
	$$\theta(b_{w})=\theta(b_{w'}),$$
so that
	$$d_{w}=d_{w'}.$$
By Lemma \ref{diagram.descent.set}, $\L(w)=\L(w')$ and $\R(w)=\R(w')$.  

\bigskip

If either of $w$ or $w'$ are of type I, then according to Lemma \ref{image_zigzag}, $\a(d_{w})=\a(d_{w'})=1$.  In this case, both $w$ and $w'$ are of type I by Lemma \ref{a=1.implies.typeI}.  But then by Lemma \ref{image_zigzag}, we must have $w=w'$.  So, neither of $w$ or $w'$ are of type I.  

\bigskip

Now, we will argue that we may simplify the argument and assume that at least one of $w$ or $w'$ is irreducible.  Suppose neither of $w$ or $w'$ is irreducible.  Then there exist sequences of left and right weak star reductions $\star_{s'_{1},t'_{1}}^{L}, \dots, \star_{s'_{l},t'_{l}}^{L}$ and $\star_{s''_{1},t''_{1}}^{R}, \dots, \star_{s''_{l'},t''_{l'}}^{R}$, respectively, that reduce $w'$ to an irreducible element, say $w''$.  Then
\begin{equation}\label{eqn1}
	b_{t'_{l}}\cdots b_{t'_{1}}b_{w'}b_{t''_{1}}\cdots b_{t''_{l'}}=2^{k}b_{w''},
\end{equation}
where $k\geq 0$.  By Lemma \ref{nugget} and Remark \ref{remark.nugget}, it follows that
\begin{equation}\label{eqn2}
b_{t'_{l}}\cdots b_{t'_{1}}b_{w}b_{t''_{1}}\cdots b_{t''_{l'}}=2^{k}b_{w'''},
\end{equation}
for some $w''' \in W_{c}(\C_{n})$.  Since $\theta(b_{w'})=\theta(b_{w})$, by applying $\theta$ to equations \ref{eqn1} and \ref{eqn2}, we can  conclude that $\theta(b_{w''})=\theta(b_{w'''})$, where $w''$ is irreducible.  By making repeated applications of Lemma \ref{weak.star.reverse}, we see that there exists $k' \geq 0$ such that
	$$b_{s'_{1}}\cdots b_{s'_{l}}b_{t'_{l}}\cdots b_{t'_{1}}b_{w'}b_{t''_{1}}\cdots b_{t''_{l'}}b_{s''_{l'}}\cdots b_{s''_{1}}=2^{k'}b_{w'},$$
which implies that
	$$b_{s'_{1}}\cdots b_{s'_{l}}b_{t'_{l}}\cdots b_{t'_{1}}b_{w}b_{t''_{1}}\cdots b_{t''_{l'}}b_{s''_{l}}\cdots b_{s''_{1}}=2^{k'}b_{w}.$$
That is, we can reverse the the sequences that reduced $b_{w'}$ (respectively, $b_{w}$) to a multiple of $b_{w''}$ (respectively, $b_{w'''}$).  This shows that we may simplify the argument and assume that at least one of $w$ or $w'$ is irreducible.  

\bigskip

Without loss of generality, assume that $w$ is irreducible.  If $w'$ is also irreducible, then we must have $w=w'$ since monomials indexed by distinct irreducible elements map to distinct diagrams (see Lemmas \ref{image_zigzag} and \ref{image_wsrm}).  So, $w'$ is not irreducible.   Without loss of generality, suppose that $w'$ is left weak star reducible by $s$ with respect to $t$.  Then we may write 
$$w'=\begin{cases}
stv', & \text{if } m(s,t)=3, \\
stsv', & \text{if } m(s,t)=4,
\end{cases}$$
where each product is reduced.  By Remark \ref{monomial.weak.star.reductions}, this implies that
$$b_{t}b_{w'}=\begin{cases}
b_{tv'}, & \text{if } m(s,t)=3, \\
2b_{tsv'}, & \text{if } m(s,t)=4.
\end{cases}$$
Note that since $\L(w)=\L(w')$ and $s \in \L(w')$, we have $s \in \L(w)$.  Then since $w$ is irreducible, $tw$ is reduced and fully commutative.  This implies that
	$$b_{t}b_{w}=b_{tw}.$$
This shows that $m(s,t) \neq 4$; otherwise, we contradict Lemma \ref{nugget}.  So, we must have $m(s,t)=3$.  

\bigskip

Without loss of generality, assume that $s=s_{i+1}$ and $t=s_{i}$ with $2<i<n+1$, so that $w'=s_{i+1}s_{i}v'$ (reduced).  By Lemma \ref{diagram.descent.set}, $d_{w'}$, and hence $d_{w}$, has a simple edge joining node $i+1$ to node $i+2$.  For sake of contradiction, assume that $d_{w'}$, and hence $d_{w}$, has a simple edge joining node $i-1$ to node $i$.  Then by Lemma \ref{diagram.descent.set}, $s_{i-1} \in \L(w')=\L(w)$, which contradicts $w'=s_{i+1}s_{i}v'$.  So, there cannot be a simple edge joining node $i-1$ to node $i$, which implies that $s_{i-1} \notin \L(w')=\L(w)$.  Since $s_{i+1} \in \L(w')=\L(w)$ while $s_{i-1} \notin \L(w')=\L(w)$, $w$ cannot be of type II.  Since $w$ is irreducible, but not of type I or II, it follows from Theorem \ref{affineCwsrm} that $w$ can be written as a product of a type $B$ irreducible element times a type $B'$ irreducible element.  This implies that $w$ contains a single occurrence of $s_{i+1}$ and no occurrences of $s_{i}$.  Then $d_{w}$, and hence $d_{w'}$, can be drawn so that no edges intersect the line $x=i+1/2$.  Furthermore, there are no closed (respectively, open) decorations occurring to the right (respectively, left) of the line $x=i+1/2$.  However, we see that
\begin{align*}
%-- New mfpic environment, number 187 of 189. (size of end split: 2, should be 2)  ------------------->
d_{w'}& = d_{i+1}d_{i}d_{v'} \\
& = \begin{tabular}[c]{@{}c@{}}
\includegraphics{ThesisFigs2.088}
\end{tabular}.
\end{align*}
This implies that the edge leaving node $i$ in the simple representation of $d_{w'}$ must change direction from right to left.  By Lemma \ref{direction.change} and Remark \ref{remark.direction.change}, the  simple representation of $d_{w'}$ must be vertically equivalent to one of the following diagrams:
\begin{enumerate}[label=\rm{(\roman*)}]
%-- New mfpic environment, number 188 of 189. (size of end split: 2, should be 2)  ------------------->
\item \begin{tabular}[c]{@{}c@{}}
\includegraphics{ThesisFigs2.089}
\end{tabular};
%-- New mfpic environment, number 189 of 189. (size of end split: 2, should be 2)  ------------------->
\item \begin{tabular}[c]{@{}c@{}}
\includegraphics{ThesisFigs2.090}
\end{tabular}.
\end{enumerate}
We cannot have the diagram in (i) since then we would have open decorations occurring to the left of $x=i+1/2$.  So, we must have the diagram in (ii).  But then we are in the situation of Lemma \ref{diagram.version.main.zigzag.lemma}.  Since $w$ is not of type I, there are no other occurrences of the generators $d_{1}, \dots, d_{i}$ in the simple representation of $d_{w'}$.  This implies that $d_{w}$ has a propagating edge connecting node $1$ to node $1'$ that is labeled by a single $\btri$.  By inspecting the images of monomials indexed by non-type I and non-type II irreducible elements, we see that none of them have this configuration.  Therefore, we have a contradiction, and hence $\theta$ is injective, as desired.
\end{proof}

\end{section}

\end{chapter}

\bibliographystyle{plain}

\bibliography{ErnstPhDThesis}

\end{document}